\def\eqref#1{equation~\ref{#1}}
\def\1{\bm{1}}
\DeclareMathAlphabet{\mathsfit}{\encodingdefault}{\sfdefault}{m}{sl}
\SetMathAlphabet{\mathsfit}{bold}{\encodingdefault}{\sfdefault}{bx}{n}
\newtheorem{definition}{Definition}
\newtheorem{theorem}{Theorem}
\newtheorem{lemma}[theorem]{Lemma}
\newtheorem{remark}{Remark}
\newtheorem{corollary}[theorem]{Corollary}
\title{Global Convergence of Four-Layer Matrix Factorization under Random Initialization}
\author{Minrui Luo \thanks{Work done while Minrui Luo was visiting the University of Washington. } \\
Institute for Interdisciplinary Information Sciences\\
Tsinghua University\\
Beijing, China \\
\texttt{luomr22@mails.tsinghua.edu.cn} \\
\And
Weihang Xu \\
Paul G. Allen School of Computer Science and Engineering \\
University of Washington \\
Seattle, WA 98105, USA \\
\texttt{xuwh@cs.washington.edu} \\ 
\And
Xiang Gao \\
Institute for Interdisciplinary Information Sciences\\
Tsinghua University\\
Beijing, China \\
\texttt{x-gao22@mails.tsinghua.edu.cn} \\ 
\And
Maryam Fazel \\
Electrical and Computer Engineering\\
University of Washington \\
Seattle, WA 98105, USA \\
\texttt{mfazel@uw.edu} \\
\And
Simon Shaolei Du \\
Paul G. Allen School of Computer Science and Engineering \\
University of Washington \\
Seattle, WA 98105, USA \\
\texttt{ssdu@cs.washington.edu} 
}
\begin{document}

\maketitle

\begin{abstract}

Gradient descent dynamics on the deep matrix factorization problem is extensively studied as a simplified theoretical model for deep neural networks. Although the convergence theory for two-layer matrix factorization is well-established, no global convergence guarantee for general deep matrix factorization under random initialization has been established to date. 
To 
address this gap, we provide a polynomial-time global convergence guarantee for randomly initialized gradient descent on four-layer matrix factorization, given certain conditions on the target matrix and a standard balanced regularization term. 
Our analysis employs new techniques to show saddle-avoidance properties of gradient decent dynamics, and extends previous theories to characterize the change in eigenvalues of layer weights. 

\end{abstract}

\section{Introduction}

This paper investigates matrix factorization, a fundamental non-convex optimization problem, which in its canonical form seeks to optimize the following objective:
\begin{equation}\label{eq: loss}
    \mathcal{L}(W_1,\ldots, W_N)\coloneqq \frac{1}{2}\left\|W_N\cdots W_1 -\Sigma\right\|_{F}^2+\mathcal{L}_{\rm reg}(W_1,\ldots, W_N),
\end{equation}

where $W_{j} \in \mathbb{F}^{d \times d}$ denotes the $j^{\text{th}}$ layer weight matrix, $\Sigma \in \mathbb{F}^{d \times d}$ denotes the target matrix and $\mathcal{L}_{\rm reg}$ is a (optional) regularizer. Here $\mathbb{F}\in \{\mathbb{C}, \mathbb{R}\}$ as we consider both real and complex matrices in this paper. Following a long line of works \citep{arora2019convergenceanalysisgradientdescent, jiang2023algorithmic, ye2021globalconvergencegradientdescent, chou2024gradient}, we aim to understand the dynamics of gradient descent (GD) on this problem:
\begin{equation}\label{eq: GD dynamics}
    j=1,\ldots, N: W_j(t+1)=W_j(t)-\eta\nabla_{W_j} \mathcal{L}(W_1(t),\ldots, W_N(t)),
\end{equation}

where $\eta\in\mathbb{R}^+$ is the learning rate.

While global convergence guarantee for the case of two-layer matrix factorization ($N=2$) is well studied \citep{du2018algorithmicregularizationlearningdeep, ye2021globalconvergencegradientdescent, jiang2023algorithmic}, the deep matrix factorization problem, $i.e.$, the $N>2$ case is less explored. While the model representation power is independent of depth $N$, the deep matrix factorization problem is naturally motivated by the goal of understanding benefits of depth in deep learning (see, $e.g.$, \cite{arora2019implicitregularizationdeepmatrix}). A long line of previous works  \citep{hardt2016identity, arora2019implicitregularizationdeepmatrix, arora2019convergenceanalysisgradientdescent, wang2023implicit} studies this regime as it directly captures Deep Linear Networks (DLN), the simplest type of deep neural networks.  However, a general global convergence guarantee is still missing. Therefore, the following open research question can be naturally asked:
\begin{center}
    \emph{Can we prove global convergence of GD for matrix factorization problem (\ref{eq: loss}) with $N>2$ layers?}
\end{center}

In this paper, we provide a positive answer to the question above. Specifically, we consider $4$-layer matrix factorization $(N=4)$ with the standard balancing regularization term (see \cite{park2017non, ge2017spuriouslocalminimanonconvex, zheng2016convergence}) as
\[\mathcal{L}(W_1, W_2, W_3, W_4)\coloneqq \frac{1}{2}\left\|W_4W_3W_2W_1-\Sigma\right\|_{F}^2+ \frac{1}{4} a  \left( \sum_{j=1}^{3} \left\| W_j W_j^H - W_{j+1}^H W_{j+1} \right\|_F^2 \right),\]
where $W_j^H$ denotes the Hermitian transpose of $W_j$ and $a\in \mathrm{R}^+$ is a hyperparameter. We consider both real $(\mathbb{F}=\mathbb{R})$ and complex $(\mathbb{F}=\mathbb{C})$ setting with random Gaussian initialization and prove global convergence of gradient descent. Our main result can be summarized as follows:

\begin{theorem}[Main theorem, informal]\label{Total convergence bound, gd, informal}  
Consider four-layer matrix factorization under gradient descent, random Gaussian initialization with scaling factor $\epsilon \le \sigma_1^{1/4}(\Sigma) / {\rm poly}( 1 / \delta, d)$, regularization factor $a \ge \sigma_1(\Sigma) \cdot {\rm poly}\left( 1 / \delta, d, \ln\left(\sigma_1^{1/4}(\Sigma)/\epsilon\right)\right)$, where $\sigma_1(\Sigma)$ denotes the largest singular value of target matrix $\Sigma$. Then for $\Sigma$ with identical singular values, there exists learning rate $\eta = O\left( 1 / \left[\sigma_1^{3/2}(\Sigma) \cdot {\rm poly}\left(a/\sigma_1(\Sigma), 1 / \delta, d, \sigma_1^{1/4}(\Sigma) / \epsilon\right)\right]\right)$ and convergence time $T(\epsilon_{\rm conv},\eta) = \eta^{-1} \sigma_1^{-3/2}(\Sigma) \cdot {\rm poly}\left(1 / \delta, d, \sigma_1^{1/4}(\Sigma) / \epsilon, \ln\left(d \sigma_1^2(\Sigma) / \epsilon_{\rm conv}\right)\right)$, such that for any $\epsilon_{\rm conv} > 0$, (1) with high probability $1 - \delta$ over the complex initialization, or (2)  with probability close to $\frac{1}{2}(1-\delta)$ over the real initialization, when $t > T(\epsilon_{\rm conv},\eta)$, $\mathcal{L}(t) < \epsilon_{\rm conv}$. 
    
\end{theorem}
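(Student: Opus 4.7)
The plan is to split the gradient descent trajectory into three conceptual phases, each analysed with different tools: a \emph{balancing phase} driven by the large regularizer, a \emph{saddle-escape phase} that moves the iterates away from the highly degenerate critical point at the origin, and a \emph{local contraction phase}. The identical-singular-value assumption on $\Sigma$ is the linchpin that makes the analysis tractable: because $\Sigma \Sigma^H = \sigma_1^2(\Sigma)\,I$, every singular direction of the product obeys the same scalar dynamics up to a unitary change of basis, so after balancing the problem reduces to a one-dimensional ODE per singular direction.

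First I would establish the balancing phase. Denoting $\Delta_j(t) := W_j(t) W_j(t)^H - W_{j+1}(t)^H W_{j+1}(t)$, the gradient of $\mathcal{L}_{\rm reg}$ contributes terms of size $a\,\Delta_j W_j$ to $\nabla_{W_j}\mathcal{L}$, while the reconstruction loss contributes only $O(\sigma_1(\Sigma))$ magnitude terms in directions normal to the balanced manifold. A discrete Lyapunov argument on $\sum_j \|\Delta_j(t)\|_F^2$, in the spirit of existing analyses for two-layer balanced matrix factorization, should give geometric decay with contraction rate $1 - \Theta(\eta a)$, so that after a short burn-in the imbalance becomes polynomially small and stays so throughout. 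The hypothesis $a \gg \sigma_1(\Sigma)$ is what certifies that the gradient-flow approximation is not disturbed by the reconstruction gradient during this phase.

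Next comes the saddle-escape analysis. Once balancing is in force and the singular subspaces of the four layers have been aligned, the top singular value $\tau$ of the product $W_4 W_3 W_2 W_1$ obeys a scalar ODE of the form $\dot{\tau} \approx c\,\tau^{3/2}(\sigma_1(\Sigma)-\tau)$, whose escape from $\tau \approx \tau_0$ to the neighborhood of $\sigma_1(\Sigma)$ takes time $\Theta(1/(\sigma_1(\Sigma)\sqrt{\tau_0}))$. Random Gaussian initialization at scale $\epsilon$ produces $\tau_0 = \Theta(\epsilon^4 / \mathrm{poly}(d))$ with probability $1-\delta$ by standard concentration and anti-concentration for extreme singular values of Gaussian matrices, yielding polynomial escape time in $\sigma_1^{1/4}(\Sigma)/\epsilon$. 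The $\tfrac{1}{2}(1-\delta)$ factor in the real case arises from a sign obstruction: real Gaussian initialization induces a product with a fixed $\mathrm{sign}(\det(\cdot))$, and only half of realizations are compatible with reaching $\Sigma$ without forcing a singular value to cross zero; the complex case evades this by rotational symmetry. Combining with the local contraction phase, where the scalar ODE contracts geometrically at rate $\Theta(\sigma_1^{3/2}(\Sigma))$ near $\sigma_1(\Sigma)$, yields the announced iteration count $\eta^{-1}\sigma_1^{-3/2}(\Sigma)\cdot \mathrm{poly}(\cdot)$ with logarithmic dependence on $1/\epsilon_{\rm conv}$.

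The hardest step, I expect, is the saddle-escape analysis in the four-layer setting. Unlike the two-layer case, where the reconstruction gradient at initialization is already $\Theta(\epsilon^2)$ and a standard strict-saddle argument closes the escape, here the gradient vanishes to third order at the origin and the coupled dynamics of all four layers must be tracked simultaneously. Making the reduction to a scalar ODE rigorous under discrete gradient descent requires carrying three error bounds along the whole trajectory at once: the balancing error $\|\Delta_j\|_F$, the misalignment of singular subspaces across the four layers, and the deviation of the singular values from the scalar trajectory. The large regularization $a$ is what keeps the coupling benign; extending techniques for characterising the evolution of eigenvalues of layer weights to four layers under these simultaneous error bounds is, I expect, where the novel technical work lies.
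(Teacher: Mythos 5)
Your three-phase outline—balancing via the regularizer, saddle-escape, local contraction—matches the paper's decomposition (alignment stage, saddle-avoidance stage, local convergence), and the rough scalings ($\tau_0 \sim \epsilon^4$, escape time $\sim \eta^{-1}\epsilon^{-2}\sigma_1^{-1}(\Sigma)$, geometric contraction at rate $\eta\sigma_1^{3/2}(\Sigma)$) are correct. But there is a real gap in the saddle-escape phase, and it is exactly the place you flag as the ``hardest step'' without supplying the missing ingredient.

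Your reduction to a ``one-dimensional ODE per singular direction'' after balancing is not correct as stated. Even under exact balance, the product $W = U\Sigma_w^N V^H$ has generically $U \neq V$, and the singular-value dynamics (Theorem~3 of Arora et al., reproduced as Eq.~(30) in the paper) read, for $\Sigma=\sigma_1(\Sigma) I$ and $N=4$,
\begin{equation*}
\frac{\mathrm d\sigma_{w,j}}{\mathrm dt} \;=\; \sigma_{w,j}^{3}\!\left(\sigma_1(\Sigma)\,\Re\langle u_j, v_j\rangle - \sigma_{w,j}^{4}\right),
\end{equation*}
so the ``scalar'' ODE has an undetermined factor $\Re\langle u_j,v_j\rangle$ that can be negative or zero, in which case the singular value never grows. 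Showing that this alignment factor improves is \emph{the} technical heart of the argument, and your proposal lists ``misalignment of singular subspaces across the four layers'' as one of three errors to carry, but cross-layer alignment is already implied by balance ($W_jW_j^H = W_{j+1}^H W_{j+1}$); the quantity that must be controlled is the \emph{left-versus-right} alignment $U$ vs.\ $V$ of the product matrix, which balance alone does not constrain. The paper handles this by introducing a skew-Hermitian error $\|\Sigma^{1/2}(U-V)\Sigma_w\|_F$ (implemented as $\|W_1 - W_2^{-1}W_3^H W_4^H\|_F$ in the unbalanced case) and proving it is non-increasing along gradient flow (Theorems~4 and~27), together with a Hermitian main term $\sigma_{\min}((U+V)\Sigma_w)$ (i.e.\ $\sigma_{\min}(W_1 + W_2^{-1}W_3^H W_4^H)$) and proving it grows monotonically (Theorem~5). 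These two monotone Lyapunov quantities are what replaces your hoped-for decoupled scalar ODE. Without an argument of this kind, the escape-time calculation is not grounded.

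A smaller but related discrepancy: your explanation of the $\tfrac12(1-\delta)$ factor in the real case (``forcing a singular value to cross zero'') is a topological heuristic, whereas the paper's argument is dynamical: when $\det W(0)<0$ the polar factor of $W(0)$ has eigenvalue $-1$, so $\sigma_{\min}(W(0) + (W(0)W(0)^H)^{1/2}) = 0$, and Corollary~24 shows this singular value of the Hermitian term stays at exactly zero for all finite time under gradient flow, so the iterates never leave a neighborhood of the saddle. Your phrasing is not wrong as a necessary-condition check, but it does not explain why the dynamics actually get stuck rather than merely passing through a low-rank matrix. Fixing the gap above would simultaneously supply the right mechanism here.
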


The formal version of Theorem \ref{Total convergence bound, gd, informal} is stated in Theorem \ref{Total convergence bound, gd} in Appendix. 

\begin{remark}

A natural question is why the convergence guarantee in the real case holds only with probability close to $\frac{1}{2}$, but not $1$. For the other $\frac{1}{2}$ probability, Theorem \ref{Total convergence bound, balanced, gf, informal} presents a special case - considering gradient flow under the strict balance condition (which can be viewed as the limit as $a \to +\infty$), showing that the optimization process does not converge to a global minimum in finite time (and hence converges to a saddle point). 

\end{remark}

\noindent\textbf{Main contributions. }
Our major contributions can summarized as follows:
\begin{itemize}
    \item We prove global convergence of GD for $4$-layer matrix factorization under random Gaussian initialization. To the best of our knowledge, this is the first global convergence result for general deep linear networks under random initialization beyond the NTK regime in \cite{du2019width}. This result helps provide new insights towards understanding the training dynamics of general deep neural networks.

    \item We construct a novel three-stage convergence analysis of gradient descent dynamics, consisting of an alignment stage, a saddle-avoidance stage, and a local convergence stage. We also develop new techniques to show GD dynamics avoids saddle points and to characterize layer matrix eigenvalue changes, which we believe are of independent interest for deep linear networks analysis.
\end{itemize}

\noindent\textbf{Challenges and techniques. }
Our analysis employs the following key techniques:

\begin{itemize}
    \item Initialization analysis.  To guarantee that gradient descent makes progress, it is necessary to establish a monotonically increasing lower bound for the singular values of the weight matrices. This, in turn, requires analyzing the smallest singular value of a newly introduced term (namely $W+WW^H$, where $W = W_4 W_3 W_2 W_1$), at initialization. This analysis utilizes tools from random matrix theory, particularly the concept of  Circular Ensembles. The detailed proof is given in Appendix \ref{section: initialization}.

    \item Regularity condition of each layer. To bridge the initialization with the subsequent training dynamics, we need to ensure that key matrix properties evolve in a controlled manner even during the rapid changes in the alignment stage. We prove that despite significant updates, the weight matrices retain certain spectral properties from their initial state. A delicate analysis of the smooth evolution of the extreme singular values and the limiting behavior of the Hermitian term after the regularization term converges is provided in Section \ref{subsubsection: Convergence of Regularization term} and \ref{subsubsection: The Limit Behavior of the Hermitian main term}. 
    
    \item Saddle avoidance. To avoid convergence to a saddle point, it is essential to prevent the smallest singular values of the weight matrices from decaying to zero, as such decay would cause the gradient norm to vanish. To this end, we construct a hermitian term providing lower-bounds for these singular values, along with a skew-hermitian error. During the optimization, the skew-hermitian error is approximately non-increasing, which in turn ensures that the minimum singular value of the hermitian term is non-decreasing. This mechanism provides a persistent lower bound, thereby effectively avoiding saddle points. 

    \item Bound of eigenvalue change. Finally, to translate the continuous-time intuition into rigorous guarantees for the discrete gradient descent algorithm, we develop new perturbation bounds for eigenvalues. In continuous time, the time derivatives of eigenvalues are directly characterized by the derivatives of the matrix. In discrete time, however, eigenvalue changes depend on the spectral gap in general, requiring a fine-grained, problem-specific analysis. Similar challenge are noted in Lemma 3.2 of \cite{ye2021globalconvergencegradientdescent}. We address this issue in Lemma \ref{maximum and minimum singular values, general, discrete} and \ref{minimum singular values lower bound, general, discrete} in Appendix \ref{subsection: Lemmas on Eigenvalue Change under Discrete Time}. 
\end{itemize} 

These techniques form a cohesive proof strategy: the initialization analysis provides a favorable starting point; the regularity analysis ensures controlled dynamics throughout training; the saddle avoidance mechanism guarantees persistent progress; and the discrete-time perturbation bounds rigorously translate these insights into a full global convergence proof. 

\section{Related works}

For two-layer matrix factorization, the global convergence of symmetric case has been established under various settings \citep{jain2017globalconvergencenonconvexgradient, li2019algorithmicregularizationoverparameterizedmatrix, Chen_2019}. For asymmetric matrix factorization case with objective $\mathcal{L} = \frac{1}{2} \|UV^\top - \Sigma\|_F^2$, the following homogeneity issue occurs: the prediction result remains the same if one layer is multiplied by a positive constant while the other is divided by the same, introducing significant challenges in convergence analyzing (\cite{lee2016gradientdescentconvergesminimizers}, Proposition 4.11). \cite{tu2016lowranksolutionslinearmatrix} and \cite{ge2017spuriouslocalminimanonconvex} tackles this problem by manually adding a regularization term on the objective function. \cite{du2018algorithmicregularizationlearningdeep} discovers that gradient descent automatically balances the magnitudes of layers under small initialization, providing analysis of global convergence with polynomial time under decayed learning rate, while removing the regularization term. \citet{ye2021globalconvergencegradientdescent} extends the convergence analysis to constant learning rate. 


\cite{kawaguchi2016deeplearningpoorlocal} analyzes landscape for general DLN, showing there exists saddle points with no negative eigenvalues of Hessian for depth over three. \cite{bartlett2018gradientdescentidentityinitialization} analyzes the dynamic under identity initialization, proving polynomial convergence with target matrix near initialization or symmetric positive definite, but such initialization fails to converge when target matrix is symmetric and has a negative eigenvalue. 
\cite{arora2019convergenceanalysisgradientdescent} provides global convergence proof under specific deep linear neural network structures and initialization scheme , requiring the initial loss to be smaller than the loss of any rank-deficient solution. \cite{ji2019gradientdescentalignslayers} conducted the proof of convergence on general deep neural networks with similar requirements on the initial loss. \cite{arora2019implicitregularizationdeepmatrix} simplifies the training dynamics of deep linear neural network into the dynamic of singular values and singular vectors of product matrix under balanced initialization, providing theoretical illustration of local convergence when singular vectors are stationary. \citet{du2019width} proves global convergence for wide linear networks under the neural tangent kernel (NTK) regime. More recent works focus on GD dynamics under (approximately) balanced initialization schemes \citep{min2023convergence} or the $2$-layer case \citep{min2021explicit, xiong2023over, tarmoun2021understanding}. 
\cite{chizat2024infinite} studies the infinite-width limit of DLN in the mean field regime. However, none of these results imply a global convergence guarantee for general DLN with $N>2$ under random initialization.

\section{Preliminaries}\label{section: problem formulation}

\noindent\textbf{Notation. }
Denote the complex conjugate of $M$ as $\bar M$ and adjoint of $M$ as $M^{H}$, $\mathbb{N}$ as the set of non-negative integers, and $\mathbb{N}^*$ as the set of positive integers. For $k_1 < k_2 \in \mathbb{N}$, $\prod_{j=k_2}^{k_1} M_{j} = M_{k_2} M_{k_2-1}\cdots M_{k_1}$. $x \sim \mathcal{N}(0,1)_{\mathbb{C}}$ means that the real and imaginary parts are independently sampled from Gaussian distribution with variance $\frac{1}{2}$: $\Re x, \Im x \overset{\text{i.i.d.}}{\sim} \mathcal{N}(0,1/2)$. $Q \sim U(d,\mathbb{C})$ or $O(d,\mathbb{R})$ means $Q$ is drawn from the unique uniform distribution (Haar measure) on the unitary or orthogonal group, implying its distribution is unitarily/orthogonally invariant. Consider general $N$-layer matrix factorization, for simplicity we define the following notations: 

    \begin{equation}
     \begin{aligned}
      W_{\prod_L , j} &\coloneqq \prod_{k=N}^{j} W_k ,\, W_{\prod_R , j} \coloneqq \prod_{k=j}^{1} W_k,\, W \coloneqq \prod_{k=N}^{1} W_k = W_{\prod_L , 1} = W_{\prod_R , N}, 
      \end{aligned}
    \end{equation}

$W$ is referred to as \textit{product matrix}. The loss is written by $\mathcal{L}(W_1, \cdots,W_N) = \mathcal{L}_{\rm ori} + \mathcal{L}_{\rm reg}$, where $\mathcal{L}_{\rm ori} = \frac{1}{2} \left\| \Sigma - W \right\|_F^2 $, $   \mathcal{L}_{\rm reg} = \frac{1}{4} a  \left( \sum_{j=1}^{N-1} \left\| \Delta_{j,j+1} \right\|_F^2 \right)$. 

\noindent\textbf{Algorithmic setup. } For the real case $(W_{j} \in \mathbb{R}^{d \times d})$, GD dynamics is canonical and described by \eqref{eq: GD dynamics}. Under complex field $(W_{j} \in \mathbb{C}^{d \times d})$, for simplicity and coherence we define $\nabla_{M} = \frac{\partial}{\partial \Re M} + i\frac{\partial}{\partial \Im M}$, which is two times of Wirtinger derivative with $\bar M$: $\frac{\partial}{\partial \bar M} = \frac{1}{2} \left( \frac{\partial}{\partial \Re M} + i \frac{\partial}{\partial \Im M} \right)$. By following the updating rule of complex neural networks (see \cite{guberman2016complexvaluedconvolutionalneural}), the gradient can be uniformly represented by 

    \begin{equation}
     \begin{aligned}
      \nabla_{W_j} \mathcal{L} &= \nabla_{W_j} \mathcal{L}_{\rm ori} + \nabla_{W_j} \mathcal{L}_{\rm reg} \\
      \nabla_{W_j} \mathcal{L}_{\rm ori} &= -W_{\prod_L , j+1}^H \left(\Sigma - W \right) W_{\prod_R , j-1}^H ,\, \nabla_{W_j} \mathcal{L}_{\rm reg} = -a W_j \Delta_{j-1,j} + a \Delta_{j,j+1} W_j ,
     \end{aligned}
    \end{equation}
    
    Under gradient flow, $\frac{\mathrm{d} W_j}{\mathrm{d} t} = - \nabla_{W_j} \mathcal{L}$; under gradient descent, $W_{j}(t+1) = W_j(t) - \eta \nabla_{W_j} \mathcal{L}(t)$. 

\noindent\textbf{Reduction to diagonal target. }Following the simplification process of Section 2.1 in \cite{ye2021globalconvergencegradientdescent}, suppose the singular value decomposition of $\Sigma$ is $\Sigma = U_\Sigma \Sigma^\prime V_\Sigma^H$, by applying the following transformation $W_1\leftarrow W_1 V_\Sigma$ and $W_N \leftarrow U_\Sigma^H W_N$, the dynamics remain the same form, while the distributions of $W_j$ under our initialization schemes remain the same. Hence without loss of generality, we assume the target matrix is \textit{diagonal with real and non-negative entries} throughout our analysis. Detailed analysis is presented in Appendix \ref{Appendix: Reduction To Diagonal (Identical) Target}. 

    For some of the results, we further require target matrix to be \textit{an identity matrix scaled by a positive constant} $\Sigma = \sigma_1(\Sigma) I$, which is equivalent to \textit{requiring the singular values of target matrix are identical}.
    
\noindent\textbf{Balancedness. } Following a long line of works \citep{arora2019convergenceanalysisgradientdescent, arora2019implicitregularizationdeepmatrix, du2018algorithmicregularizationlearningdeep}, we define the balance error between layer $j$ and $j+1$ as 

    \begin{equation}
     \begin{aligned}
      \Delta_{j,j+1} &\coloneqq\begin{cases}
          W_j W_j^H - W_{j+1}^H W_{j+1} &,\, j\in [1,N-1] \cap \mathbb{N}^* \\
          O^{d\times d} &,\, j\in\{0,N\}
      \end{cases}  \quad . 
      \end{aligned}
    \end{equation}

    As discussed in Definition 1 of \cite{arora2019convergenceanalysisgradientdescent}, the weights are approximately balanced (namely $\|\Delta_{j,j+1}\|_F$ are small) throughout the iterations of gradient descent under approximate balancedness at initialization and small learning rate. Notice that approximate balancedness holds for small initialization near origin (small variance for Gaussian initialization). 

    Specifically, under \textit{gradient flow} the balanced condition (defined as $\|\Delta_{j,j+1}\|_F \equiv 0$ or equivalently $\Delta_{j,j+1}\equiv O$, $\forall j \in [1,N-1]\cap\mathbb{N}^*$) \textit{holds strictly at arbitrary time under balanced initialization}, which is defined as $\Delta_{j,j+1}(t=0)\equiv O$, $\forall j \in [1,N-1]\cap\mathbb{N}^*$. 

    \begin{remark}
    
    As previously discussed, balance condition holds approximately under small initialization, so such regularization's affect on the training process is relatively weak, especially when weight matrices grow larger and be away from origin. 
    
    \end{remark}

\section{Training Dynamics under Balanced Gaussian Initialization}\label{section: balanced init, convergence, informal}

To exhibit the convergence dynamics clearly, we present the global convergence under the simplified scenario of balanced Gaussian initialization (formally defined in Section \ref{subsection: Balanced Gaussian Initialization}) and gradient flow. Notice that the adjacent matrices remain balanced due to the non-increasing property of regularization term (Lemma \ref{regularization, total}). 

\begin{theorem}\label{Total convergence bound, balanced, gf, informal} (Informal) Global convergence bound under balanced Gaussian initialization, gradient flow. 
For four-layer matrix factorization under gradient flow, balanced Gaussian initialization with scaling factor $\epsilon \le \sigma_1^{1/4}(\Sigma) / {\rm poly}( 1 / \delta, d)$, then for target matrix with identical singular values, 

1. For $\mathbb{F} = \mathbb{R}$, with probability at least $\frac{1}{2}$ the loss does not converge to zero. 

2. For $\mathbb{F} = \mathbb{C}$ with high probability at least $1-\delta$ and for $\mathbb{F} = \mathbb{R}$ with probability at least $\frac{1}{2}(1-\delta)$, there exists $T(\epsilon_{\rm conv}) =  \sigma_1^{-3/2}(\Sigma) \cdot {\rm poly}\left(1 / \delta, d, \sigma_1^{1/4}(\Sigma) / \epsilon, \ln\left(d \sigma_1^2(\Sigma) / \epsilon_{\rm conv}\right)\right)$, such that for any $\epsilon_{\rm conv} > 0$, when $t > T(\epsilon_{\rm conv})$, $\mathcal{L}(t) < \epsilon_{\rm conv}$. 

\end{theorem}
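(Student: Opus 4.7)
The plan is to combine the preservation of the balance condition under gradient flow with the rotational symmetry of a scaled-identity target to reduce the four-layer dynamics to a tractable system whose convergence can be analyzed by an ODE/Lyapunov argument, and then to use a sign/determinant obstruction to explain the half-probability failure in the real case. Since the paper's Lemma \ref{regularization, total} gives $\Delta_{j,j+1}(t) \equiv O$ for all $t$ under balanced initialization, the identities $W_j W_j^H = W_{j+1}^H W_{j+1}$ hold throughout, forcing all four layers to share the same squared-singular-value spectrum and pinning their singular vectors together via interlocking unitary/orthogonal factors. Writing $W = U D V^H$, each $W_j$ is expressible in terms of $D^{1/4}$ and aligned rotations, so under $\Sigma = \sigma_1(\Sigma) I$ the relevant dynamics reduce to that of the Hermitian part $H = \tfrac{1}{2}(W + W^H)$ and skew-Hermitian part $S = \tfrac{1}{2}(W - W^H)$, as foreshadowed by the paper's saddle-avoidance discussion.

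For the convergence statement (the second claim) I would follow the three-stage blueprint already adopted in the discrete case, but carried out in continuous time. First, I would invoke the initialization analysis (with random matrix / Circular ensemble bounds) to show that with probability at least $1-\delta$ the smallest singular value of the auxiliary quantity $W(0) + W(0) W(0)^H$ is at least inverse polynomial in $d,\, 1/\delta,\, \epsilon$, yielding a nontrivial lower bound on $\sigma_{\min}(W(0))$ (conditioned on $\det W(0) > 0$ in the real case). Second, in the alignment/saddle-avoidance stage, I would show that $\lVert S(t)\rVert$ is non-increasing up to small error while $\sigma_{\min}(H(t))$ stays uniformly positive, ruling out the flow getting trapped at any rank-deficient saddle. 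Third, once $\sigma_{\min}(W(t))$ is a constant fraction of $\sigma_1(\Sigma)$, I would express the loss purely through singular values and obtain linear convergence by a Polyak--{\L}ojasiewicz-type argument; integrating the resulting ODE yields the claimed $\mathrm{poly}\cdot \ln\bigl(d\sigma_1^2(\Sigma)/\epsilon_{\rm conv}\bigr)$ time.

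For the non-convergence claim in the real case, the key observation is that $\det W(t) = \prod_{j=1}^{4} \det W_j(t)$ varies continuously along the gradient flow, and each $\det W_j(0)$ under Gaussian initialization has a distribution symmetric about zero; hence $\det W(0) < 0$ with probability exactly $\tfrac{1}{2}$. Since the target $\sigma_1(\Sigma) I$ has strictly positive determinant, any continuous path from $W(0)$ to the target must cross $\{\det = 0\}$, and in the balanced setting every rank-deficient $W$ is a critical point of $\mathcal{L}$ (the gradient factors through singular vectors annihilated by the rank drop). By uniqueness of ODE solutions the gradient flow cannot reach such a critical point in finite time from a non-critical start, so it cannot reach the target in finite time; combining with standard arguments that the flow accumulates at a critical point, it converges to a saddle and $\mathcal{L}(t) \not\to 0$.

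The main obstacle I expect is the saddle-avoidance step: uniformly bounding $\sigma_{\min}(H(t))$ from below throughout the alignment phase, when all singular values are still small and the splitting between Hermitian and skew-Hermitian parts is delicate. A naive Gr\"onwall estimate loses control because the growth of $\sigma_{\max}(W(t))$, necessary for the loss to decrease, could in principle be accompanied by collapse of $\sigma_{\min}$; one must exploit the specific cubic structure of $\dot W$ arising from four-layer composition together with the approximate conservation of $S$ to decouple these two quantities, and this balance is what dictates the polynomial factors in $d,\, 1/\delta,\, \sigma_1^{1/4}(\Sigma)/\epsilon$ appearing in $T(\epsilon_{\rm conv})$.
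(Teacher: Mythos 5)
Your three-stage outline (initialization bound, saddle avoidance via a monotone split, then a PL-type linear rate) and your determinant obstruction for the real $\tfrac{1}{2}$-probability failure are both aligned in spirit with the paper, but two of the concrete technical choices you make are wrong or unsupported and would block the argument.

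The central gap is the decomposition you propose to track. You reduce the dynamics to the raw Hermitian and skew-Hermitian parts $H=\tfrac{1}{2}(W+W^H)$ and $S=\tfrac{1}{2}(W-W^H)$ and assert that the flow closes in these variables. The paper does not do this, and I do not see how it could: the paper's non-increasing Lyapunov quantity is $\|\Sigma^{1/2}(U-V)\Sigma_w\|_F^2$ and the monotone quantity it drives upward is $\sigma_{\min}\bigl((U+V)\Sigma_w\bigr)$, where $W=U\Sigma_w^N V^H$ is the analytic SVD. These are genuinely different objects from $H$ and $S$: for instance $(U+V)\Sigma_w^N(U+V)^H = (W+W^H) + (WW^H)^{1/2} + (W^HW)^{1/2}$, so the two splits differ by positive-semidefinite terms that are \emph{not} small. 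The derivative $\dot W = \sigma_1(\Sigma)\sum_{j=1}^{N} U\Sigma_w^{2(j-1)}U^H V\Sigma_w^{2(N-j)}V^H - N\,U\Sigma_w^{3N-2}V^H$ does not factor through $H,S$; the proof of Theorems \ref{antisym-error, general balanced} and \ref{dynamics of minimum singular value of hermitian term} exploits the ASVD structure (Lemma \ref{lemma 2, arora, complex}, the $F\odot M$ form of $\dot U,\dot V$) in a way that has no analogue for $H,S$. You would need to supply a replacement for the key computations showing monotonicity, and as stated the proposal has no path to that. Relatedly, your initialization quantity is off: you write $W(0)+W(0)W(0)^H$ and claim it lower-bounds $\sigma_{\min}(W(0))$, but the relevant object is $W(0)+\bigl(W(0)W(0)^H\bigr)^{1/2} = U\Sigma_w^N(U+V)^H$, whose smallest singular value controls $\sigma_{\min}\bigl((U+V)\Sigma_w^N\bigr)$ --- not $\sigma_{\min}(W(0))$, which is easy and not the obstruction.

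For the non-convergence claim your determinant-sign idea is the right intuition, but a step is shaky: it is not true that ``every rank-deficient $W$ is a critical point of $\mathcal{L}$'' under balancedness, and you do not need it. The clean route (used implicitly in Lemma \ref{ASVD, non-negative diagonal}) is that $|\dot\sigma_r| \le C\sigma_r^{2(N-1)/N}$ prevents any $\sigma_r$ from reaching zero in finite time, so $\mathrm{sign}\det W(t)$ is constant; combined with $\det\Sigma>0$ this forbids $W(t)\to\Sigma$. The paper instead shows that with probability $\tfrac{1}{2}$ one has $\sigma_{\min}\bigl((U+V)\Sigma_w\bigr)(0)=0$ and, via Theorem \ref{dynamics of minimum singular value of hermitian term} and a Gr\"onwall bound (Corollary \ref{never converge to optimum, balanced}), that this quantity remains identically zero, which then yields the sharper quantitative bound $\mathcal{L}(t)\ge\tfrac{1}{2}\sigma_1^2(\Sigma)$ for all $t$. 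Your argument would give only $\mathcal{L}(t)\not\to 0$; that suffices for the informal statement, but only after you drop the incorrect ``rank-deficient $\Rightarrow$ critical'' claim in favor of the finite-time full-rank preservation.
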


The formal version is stated in Theorem \ref{Total convergence bound, balanced, gf} in the Appendix. 

\subsection{Balanced Gaussian Initialization}\label{subsection: Balanced Gaussian Initialization}

Generally, random Gaussian initialization does not satisfy strict balancedness. To adapt the random Gaussian initialization to ensure balanced condition, we introduce a \textit{balanced Gaussian initialization} scheme for the analysis below. The procedure is defined as follows: 

(1) Sample $G$ with entries $G_{ij}\overset{\text{i.i.d.}}{\sim} \mathcal{N}(0,1)_{\mathbb{F}}$, $Q_{k,k+1;k\in[0,N]\cap\mathbb{N}} \overset{\text{i.i.d.}}{\sim}\mathrm{Haar}$ on $U(d,\mathbb{C})$ for $\mathbb{F}=\mathbb{C}$ (or $O(d,\mathbb{R})$ for $\mathbb{F}=\mathbb{R}$). $s_{j,j\in[1,N]\cap\mathbb{N}^*} \in \mathbb{F}$ are arbitrary constants with modulus/absolute value $1$. 

(2) For scaling factor $\epsilon \in\mathbb{R}^+$, which is a small positive constant, set the weight matrices by: 

\begin{equation}\label{init, balanced}
 \begin{aligned}
  W_j = \begin{cases}
   s_{j} \epsilon Q_{j,j+1} G Q_{j-1,j}^H &, 2 \nmid j \\
   s_{j} \epsilon Q_{j,j+1} G^H Q_{j-1,j}^H &, 2 \mid j 
  \end{cases} \quad . 
 \end{aligned}
\end{equation}

Intuitively, $Q_{k,k+1;k\in[0,N]\cap\mathbb{N}}$ are i.i.d. uniformly distributed unitary/orthogonal matrices. By Corollary \ref{Balanced Initialization: each matrix is a Gaussian random matrix ensemble} in the Appendix, each matrix is a $\epsilon$-scaled Gaussian random matrix ensemble (but not independent of the others), while satisfying balanced condition $\Delta_{j,j+1}(0)=O$, $\forall j\in [1,N-1]\cap\mathbb{N}^*$. 

\begin{theorem}\label{Balanced initialization, final} 

Under $\epsilon$-scaled balanced Gaussian initialization with even number of depth $2 \mid N$, suppose $W$ is $W = U \Sigma_w^N V^H$, where $U$, $V$ are unitary/orthogonal matrices, $\Sigma_w$ is positive semi-definite and diagonal, denote $s\coloneqq \prod_{j=1}^{N} s_j$, then for some $f_1 = O\left(\frac{1}{\delta}\right)$, $f_2^\prime = O\left( \frac{1}{\delta^2} \right)$: 

1. If $\mathbb{F} = \mathbb{C}$, with probability $1-\delta$ such that 

\begin{equation}
 \begin{aligned}
  \|\Sigma_w\|_{op} \le f_1(\delta) \sqrt{d} \epsilon &,\,
  \|(U-V)\Sigma_w\|_F|_{t=0} \le 2f_1(\delta) d \epsilon \\
  \sigma_{\min}((U+V)\Sigma_w)|_{t=0} &\ge f_2^\prime(\delta)^{-1}d^{-3/2} \epsilon .
 \end{aligned}
\end{equation}

2. If $\mathbb{F} = \mathbb{R}$, $\Pr( s \det(Q_{N,N+1}) \det(Q_{01})=1) = \Pr( s \det(Q_{N,N+1}) \det(Q_{01})=-1) = \frac{1}{2}$. Under $\Pr( s \det(Q_{N,N+1})\det(Q_{01})=-1)$, $\sigma_{\min}((U+V)\Sigma_w)|_{t=0}$; under $\Pr( s \det(Q_{N,N+1})\det(Q_{01})=1)$, with probability $1-\delta$ such that 

\begin{equation}
 \begin{aligned}
  \|\Sigma_w\|_{op} \le f_1(\delta) \sqrt{d} \epsilon &,\,
  \|(U-V)\Sigma_w\|_F|_{t=0} \le 2f_1(\delta) d \epsilon \\
  \sigma_{\min}((U+V)\Sigma_w)|_{t=0} &\ge f_2^\prime(\delta)^{-1}d^{-3/2} \epsilon .
 \end{aligned}
\end{equation}

\end{theorem}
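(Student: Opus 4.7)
The plan is to exploit telescoping of the Haar factors in the product: each $W_j$ is sandwiched by $Q_{j,j+1}$ on the left and $Q_{j-1,j}^H$ on the right, so every intermediate Haar matrix cancels in $W = W_N\cdots W_1$. For even $N$, the odd/even split of the definition turns the product into an alternating chain of $G$ and $G^H$, giving
\[
W \;=\; s\,\epsilon^N\, Q_{N,N+1}\,(G^H G)^{N/2}\, Q_{01}^H.
\]
Writing the SVD $G = P\Lambda R^H$, this becomes $W = (s Q_{N,N+1} R)(\epsilon\Lambda)^N(Q_{01} R)^H$, realizing the stated factorization with $U = s Q_{N,N+1} R$, $V = Q_{01} R$, and $\Sigma_w = \epsilon\Lambda$. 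Since $|s|=1$, both $U$ and $V$ are unitary/orthogonal, and $\Sigma_w$ is PSD diagonal as required.

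The first two bounds then follow from standard Ginibre concentration. The event $\sigma_{\max}(G) \le f_1(\delta)\sqrt d$ holds with probability $1-\delta$, which immediately yields $\|\Sigma_w\|_{op}\le f_1(\delta)\sqrt d\,\epsilon$. Submultiplicativity combined with $\|\Lambda\|_F \le \sqrt d\,\|\Lambda\|_{op}$ then gives
\[
\|(U-V)\Sigma_w\|_F \;\le\; \|sQ_{N,N+1}-Q_{01}\|_{op}\cdot \epsilon\,\|\Lambda\|_F \;\le\; 2\,\epsilon\sqrt d\,\sigma_{\max}(G) \;\le\; 2 f_1(\delta)\,d\,\epsilon.
\]

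The main work is the lower bound on $\sigma_{\min}((U+V)\Sigma_w)$. Applying $\sigma_{\min}(XY) \ge \sigma_{\min}(X)\sigma_{\min}(Y)$ twice and using $\sigma_{\min}(R)=1$, I get
\[
\sigma_{\min}((U+V)\Sigma_w) \;\ge\; \sigma_{\min}(sQ_{N,N+1}+Q_{01})\cdot \epsilon\,\sigma_{\min}(G),
\]
and $\sigma_{\min}(G) \gtrsim \delta/\sqrt d$ with probability $1-\delta$ by Edelman-type anti-concentration for the Ginibre ensemble. For the Haar sum, factor $sQ_{N,N+1}+Q_{01} = Q_{01}(Q+I)$ with $Q \coloneqq Q_{01}^H s Q_{N,N+1}$; bi-invariance of Haar measure together with the independence of $Q_{01}$ and $Q_{N,N+1}$ make $Q$ itself Haar-distributed on $U(d)$ (resp.\ $O(d)$). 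In the complex case, $Q+I$ is normal and its singular values are $|1+e^{i\theta_k}| = 2|\cos(\theta_k/2)|$ for the CUE eigenangles $\{\theta_k\}$. A first-moment estimate using the CUE one-point density $d/(2\pi)$ yields $\Pr(\min_k |1+e^{i\theta_k}| \le r) \le dr/\pi$, hence $\sigma_{\min}(Q+I) \gtrsim \delta/d$ with probability $1-\delta$. A union bound over the three high-probability events then gives $\sigma_{\min}((U+V)\Sigma_w) \gtrsim \delta^2\epsilon/d^{3/2}$, matching $f_2^\prime(\delta) = O(1/\delta^2)$.

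The real orthogonal case splits on the sign $\det Q = s^d\det(Q_{N,N+1})\det(Q_{01}) \in \{\pm 1\}$, each value occurring with probability $\tfrac{1}{2}$ by Haar symmetry on $O(d)$. When this sign is $-1$, a parity argument (complex eigenvalues of a real orthogonal matrix come in conjugate pairs whose products are $+1$, forcing the product of its real $\pm 1$ eigenvalues to equal $\det Q$) yields at least one eigenvalue equal to $-1$, hence $\sigma_{\min}(Q+I)=0$; when it is $+1$, no eigenvalue is pinned at $-1$ and the analogous one-point estimate for the real Circular Ensemble again gives $\sigma_{\min}(Q+I) \gtrsim \delta/d$. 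The hardest step throughout is establishing the sharp arc-avoidance bounds near $-1$ at finite $d$; I would derive them from the explicit joint eigenvalue densities of the Circular Ensembles, being particularly careful in the real case to handle the $SO(d)$ and $O(d)\setminus SO(d)$ sectors separately so that the probability-$\tfrac{1}{2}$ dichotomy in the statement emerges cleanly.
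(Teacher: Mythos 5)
The proposal is correct and follows essentially the same path as the paper: both collapse the alternating product to $W = s\epsilon^N Q_{N,N+1}(G^H G)^{N/2}Q_{01}^H$, reduce the target quantity to $\sigma_{\min}\left((U+V)\Sigma_w\right) \ge \epsilon\,\sigma_{\min}\left(Q_{01}+sQ_{N,N+1}\right)\sigma_{\min}(G)$, and finish with circular-ensemble eigenangle-density bounds together with Ginibre extreme-singular-value bounds and the determinant dichotomy for $O(d)$. Your route through the explicit SVD $G = P\Lambda R^H$ (giving $U = sQ_{N,N+1}R$, $V = Q_{01}R$, $\Sigma_w = \epsilon\Lambda$) is a clean, slightly more elementary way to obtain the same intermediate identity that the paper extracts via the matrix-function form $(WW^H)^{1/N} + (W^H W)^{1/N} + (WW^H)^{-(N-2)/(2N)}W + (W^H W)^{-(N-2)/(2N)}W^H$; you also correctly track $\det Q = s^d\det(Q_{01})\det(Q_{N,N+1})$, a small point the paper glosses.
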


Proof is presented in Appendix \ref{Balanced initialization, final, proof}. 

\subsection{Non-increasing Skew-Hermitian Error}

As presented in Lemma \ref{ASVD, non-negative diagonal} in the Appendix, the product matrix can be factorized in to the form of $W(t) = U(t) \Sigma_w(t)^N V(t)^H$, where $\Sigma_w(t)$ is positive semi-definite and diagonal (consequently real-valued), $U$ and $V$ are unitary/orthogonal matrices, $U$, $V$ and $\Sigma_w$ are analytic. For simplicity, we denote $\sigma_{w,j}$ as the $j^{th}$ diagonal entry of $\Sigma_{w}$, and $u_j$, $v_j$ as the $j^{th}$ column of $U$, $V$. 
Under this representation of product matrix, we obtain a \textit{non-increasing skew-hermitian/symmetric term}: 

\begin{theorem}\label{antisym-error, general, informal}(Informal) Skew-hermitian error term is non-increasing. 

Under balanced initialization with product matrix $W(t) = U(t) \Sigma_w(t)^N V(t)^H$, for depth $N\ge 2$, if 
the singular values of the product matrix at initial $W(0)$ are non-zero and distinct, then the following skew-hermitian error $\left\|\Sigma^{1/2} (U - V) \Sigma_w \right\|_F^2$ is non-increasing: 

\begin{equation}
  \frac{\mathrm{d}}{\mathrm{d} t} \left\|\Sigma^{1/2} (U - V) \Sigma_w \right\|_F^2 \le 0 .
\end{equation}

\end{theorem}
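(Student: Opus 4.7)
The plan is to work entirely at the level of the product matrix and reduce the claim to a pair-by-pair algebraic verification on the analytic SVD. Since the initialization is balanced and gradient flow preserves balance, the standard balanced deep linear network flow on $W=W_N\cdots W_1$ holds:
\[
\dot W = -\sum_{j=1}^{N}(WW^H)^{(N-j)/N}(W-\Sigma)(W^H W)^{(j-1)/N}.
\]
Using the analytic SVD $W(t)=U(t)\Sigma_w(t)^N V(t)^H$ guaranteed by Lemma~\ref{ASVD, non-negative diagonal}, the initial distinctness and non-vanishing of singular values ensure that $U,V,\Sigma_w$ vary analytically in a neighborhood of $t=0$; monotonicity of $F(t):=\|\Sigma^{1/2}(U-V)\Sigma_w\|_F^2$ then extends through any later isolated singular-value crossing by continuity of $W(t)$ and of $F$.

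First I would derive component ODEs. Setting $A:=U^H\dot U$, $B:=V^H\dot V$ (both skew-Hermitian) and $M:=\Sigma_w^N-U^H\Sigma V$, the identity $U^H\dot W V = A\Sigma_w^N+\dot{\Sigma_w^N}-\Sigma_w^N B$ together with $(WW^H)^{(N-j)/N}=U\Sigma_w^{2(N-j)}U^H$ and $(W^H W)^{(j-1)/N}=V\Sigma_w^{2(j-1)}V^H$ yields
\[
A\Sigma_w^N - \Sigma_w^N B + \dot{\Sigma_w^N} = -\sum_{j=1}^{N}\Sigma_w^{2(N-j)}M\Sigma_w^{2(j-1)}.
\]
Reading diagonal entries and taking real parts (since $A_{ii},B_{ii}$ are purely imaginary) gives $\dot\sigma_{w,i}=-\sigma_{w,i}^{N-1}\,\Re M_{ii}$. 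Pairing the $(i,k)$ and $(k,i)$ off-diagonal equations for $i\neq k$ and inverting the resulting $2\times 2$ system, using $\sigma_{w,i}^{2N}-\sigma_{w,k}^{2N}=(\sigma_{w,i}^2-\sigma_{w,k}^2)\sum_{j=1}^{N}\sigma_{w,i}^{2(N-j)}\sigma_{w,k}^{2(j-1)}$, produces the closed forms
\[
A_{ik}=\frac{\sigma_{w,k}^N M_{ik}+\sigma_{w,i}^N\bar M_{ki}}{\sigma_{w,i}^2-\sigma_{w,k}^2},\qquad B_{ik}=\frac{\sigma_{w,i}^N M_{ik}+\sigma_{w,k}^N\bar M_{ki}}{\sigma_{w,i}^2-\sigma_{w,k}^2}.
\]

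Next I would differentiate $F=\mathrm{tr}\bigl(\Sigma_w^2(U-V)^H\Sigma(U-V)\bigr)$, substitute $\dot U=UA$, $\dot V=VB$ and the closed forms above, and group the resulting double sum so each unordered pair $\{i,k\}$ contributes a Hermitian quadratic in $(M_{ik},\bar M_{ki})$, plus a diagonal contribution from $\dot\sigma_{w,i}$ and $M_{ii}$. A sanity check in the scalar case $d=1$ collapses $\dot F$ to
\[
\dot F = -2\sigma\,\sigma_w^N\,(1-\cos\Delta)\bigl[2\sigma_w^N + N\sigma + (N-2)\sigma\cos\Delta\bigr] \le 0,
\]
which is non-positive for $N\ge 2$ because the bracket is bounded below by $2\sigma_w^N+2\sigma\ge 0$; this fixes the sign structure to target in dimension $d>1$.

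The main obstacle will be the final algebraic verification that every pair-summand is non-positive without cancellation across pairs. The natural danger is that $(U-V)^H\Sigma(\dot U-\dot V)$ mixes $U$- and $V$-side contributions, and the expansion produces many cross terms involving $U^H\Sigma U$, $V^H\Sigma V$, and $U^H\Sigma V$. I expect the right device is to rewrite the derivative as $2\Re\,\mathrm{tr}\bigl(\Sigma_w^2[(U^H\Sigma U-V^H\Sigma U)A-(U^H\Sigma V-V^H\Sigma V)B]\bigr)+2\,\mathrm{tr}\bigl(\Sigma_w\dot\Sigma_w(U-V)^H\Sigma(U-V)\bigr)$ and verify that the diagonal $U^H\Sigma U$, $V^H\Sigma V$ contributions cancel against the $\dot\Sigma_w$ term, leaving only off-diagonal pair-sums that factor as manifestly non-positive weighted squares (using $\sigma_i\ge0$ and $\sigma_{w,i}\ge 0$). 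A subsidiary issue is the apparent singularity as $\sigma_{w,i}\to\sigma_{w,k}$, which is removable because $(\sigma_{w,i}^{2N}-\sigma_{w,k}^{2N})/(\sigma_{w,i}^2-\sigma_{w,k}^2)$ extends smoothly to $N\sigma_{w,i}^{2(N-1)}$, matching the analytic extension of $U$, $V$, $\Sigma_w$ across degenerate points.
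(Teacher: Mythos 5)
Your derivation of the closed-form ODEs for $A=U^H\dot U$, $B=V^H\dot V$, and $\dot\sigma_{w,i}$ is correct and matches what the paper establishes in its complex extension of Lemma~2 of \cite{arora2019implicitregularizationdeepmatrix} (Lemma~\ref{lemma 2, arora, complex}): your $A_{ik}$ and $B_{ik}$ are exactly $(F\odot M_U)_{ik}$ and $(F\odot M_V)_{ik}$ in the paper's notation, your $M$ is $U^H(\nabla_W\mathcal{L}_{\rm ori})V$, and your $d=1$ sanity check is correct. So you are on the same path as the paper's proof of Theorem~\ref{antisym-error, general balanced}: reduce the dynamics of $W$ to ODEs on $U$, $V$, $\Sigma_w$ and differentiate $F$ directly.

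The gap is that your proposal stops exactly where the paper's proof begins doing real work. You explicitly defer the ``final algebraic verification'' and only state an \emph{expectation} about how the terms should organize, but this expectation is not quite right. You predict that the diagonal $U^H\Sigma U$ and $V^H\Sigma V$ contributions will ``cancel against the $\dot\Sigma_w$ term, leaving only off-diagonal pair-sums.'' In fact, the paper's final identity is
\[
\dot F \;=\; -2\sum_j \sigma_{w,j}^N\bigl\|\Sigma^{1/2}(u_j'-v_j')\bigr\|^2
\;-\;2\sum_j\sigma_{w,j}^{2N}\bigl\|u_j'-v_j'\bigr\|^2
\;-\;\sum_{j,k} f_N(\sigma_{w,j},\sigma_{w,k})\bigl|u_j'^H v_k' - v_j'^H u_k'\bigr|^2,
\]
with $u_j'=\Sigma^{1/2}u_j$, $v_j'=\Sigma^{1/2}v_j$. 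The diagonal $U^H\Sigma U$, $V^H\Sigma V$ pieces do \emph{not} cancel; instead they recombine with the $u_j'^H\Sigma v_j'$ cross terms to produce the first diagonal sum $-2\sum_j\sigma_{w,j}^N\|\Sigma^{1/2}(u_j'-v_j')\|^2$, and the $\dot\Sigma_w$ contribution survives in part as the $-2\sum_j\sigma_{w,j}^{2N}\|u_j'-v_j'\|^2$ diagonal sum. There is also a genuine subtlety you omit: for general $N$ the ``pair-summand'' coefficient $f_N(x,y)=\frac{x^2y^2(x^{N-2}-y^{N-2})}{x^2-y^2}$ is only provably non-negative after noting that both factors $(x^{N-2}-y^{N-2})$ and $(x^2-y^2)$ have the same sign, and the removable singularity along $x=y$ must be handled. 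Without carrying out this regrouping, the proposal does not establish $\dot F\le 0$ for $d>1$; the heart of the theorem is the lengthy cancellation bookkeeping that you leave as a plan rather than a proof.
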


\textit{Proof sketch. Proof of the Theorem \ref{antisym-error, general, informal} involves technical and lengthy calculations. 
The formal version is stated in Theorem \ref{antisym-error, general balanced}, while a special version for even $N$ is separately discussed in Theorem \ref{antisym-error for 2|N}. 
For the proof of Theorem \ref{antisym-error, general balanced}, the idea is to decompose the derivative of this term into the derivative of $\sigma_{w,j}$ and $u_j$, $v_j$, which have been characterized by Theorem 3 and Lemma 2 in \cite{arora2019implicitregularizationdeepmatrix} respectively. This method is hard to generalize into unbalanced setting. For Theorem \ref{antisym-error for 2|N}, this term is directly derived from derivative of $W_N W_N^H$, $W_1^H W_1$ and $W$. This approach is straight forward and can be extended to unbalanced initialization, but encounters difficulty under odd depth $2\nmid N$. }

\begin{remark}
    This result is under the reduction of target matrix. For general target matrix, suppose its SVD is $\Sigma = U_\Sigma \Sigma^\prime V_\Sigma^H$, then Theorem \ref{antisym-error, general, informal} becomes: 

\begin{equation}
  \frac{\mathrm{d}}{\mathrm{d} t} \left\|{\Sigma^\prime}^{1/2} (U_\Sigma^H U - V_\Sigma^H V) \Sigma_w \right\|_F^2 \le 0.
\end{equation}
\end{remark}

\textbf{Explanation of the result. } This theorem provides an intrinsic non-increasing term (under initialization close to origin, this term is already small at initial) of the system. Though the result is accurately derived under strictly balanced initialization and gradient flow, one may expect similar property to hold under small initialization and gradient descent. 

Moreover, this theorem characterizes when $U$ and $V$ become aligned. The product matrix can be expressed as $W = \sum_{i=1}^{d} \sigma_{w,j}^N u_j v_j^H$, while the error can be rewritten as $\sum_{j=1}^{d} \sigma_{w,j}^2 \left\| \Sigma^{1/2} (u_j - v_j) \right\|_F^2$. Each term $\sigma_{w,j}^N u_j v_j^H$ of the product matrix can be interpreted as \textit{a ``feature" of the linear neural network}, containing one ``value" $\sigma_{w,j}^N$ and two ``directions" $u_j$, $v_j$. When the loss converges, each feature converges to $\sigma_{j} u_{\Sigma,j} u_{\Sigma,j}^H$, where $\Sigma = \sum_{j=1}^{d} \sigma_{j} u_{\Sigma,j} u_{\Sigma,j}^H$ is a SVD of $\Sigma$. This shows that under initialization near origin, once a ``value" of the $j^{th}$ feature \textit{increases to a relatively large value} (comparing to initialization), the directions of this feature \textit{automatically align with each other} (i.e. $\langle u_j, v_j \rangle \approx 1$). Followed by Theoretical illustration part of \cite{arora2019implicitregularizationdeepmatrix}, Section 3, generally the alignment of $U$, $V$ leads to convergence. 

As shown in the proof sketch, the analysis for odd $N$ encounters difficulty when generalized to the unbalanced case, thus this intrinsic non-increasing term becomes considerably more challenging to characterize. This is why we have developed the convergence proof for the four-layer case rather than the three-layer architecture.

\subsection{Non-Decreasing Hermitian Main Term}

This section shows the dynamics of the minimum singular value of hermitian main term $ (U + V) \Sigma_w $. 

The motivation of studying this specific term is that it provides a bound for $\sigma_{k}(\Sigma_{w})$, $k\in[1,N-1]\cap\mathbb{N}^*$, especially a tight bound for $\sigma_{\min}(\Sigma_{w})$ (refer to Lemma \ref{bound of eigenvalues under perturbation}): 

\begin{equation}
 \begin{aligned}
  \frac{1}{2}\sigma_{k} \left( (U + V) \Sigma_w \right) &\le \sigma_{k}(\Sigma_{w}) \le \frac{\sqrt{2}}{2} \sqrt{ \sigma_{k}^{ 2}\left( (U + V) \Sigma_w \right)  + \left\| \left(U-V\right) \Sigma_w \right\|_{op}^2 }\\
  \frac{1}{2}\sigma_{\min}\left( (U + V) \Sigma_w \right) &\le \sigma_{\min}(\Sigma_{w}) \le \frac{1}{2} \sqrt{ \sigma_{\min}^2\left( (U + V) \Sigma_w \right)  + \left\| \left(U-V\right) \Sigma_w \right\|_{op}^2 } . 
 \end{aligned}
\end{equation}

Notice that the extra term in the upper bound is bounded by the skew-hermitian error term discussed in the previous section. 

Although the evolution of $\sigma_{k}((U+V)\Sigma_{w})$ is generally difficult to characterize, we find that in the special case of $\Sigma = \sigma_1(\Sigma) I$ and $N=4$, it exhibits a monotonically increasing pattern before local convergence: 

\begin{theorem}\label{dynamics of minimum singular value of hermitian term} Dynamics of minimum singular value of hermitian term. 

Under balanced initialization with product matrix $W(t) = U(t) \Sigma_w(t)^N V(t)^H$, for target matrix with identical singular values (reduces to $\Sigma = \sigma_1(\Sigma) I$) and depth $N=4$, the time derivative of the $k^{th}$ singular value of the hermitian term $x_k \coloneqq \frac{1}{2}\sigma_k((U+V)\Sigma_w)$ is bounded by: 

\begin{equation}
 \begin{aligned}
  \left(2\sigma_1(\Sigma) - x_k^4 - \frac{1}{2} \|\Sigma_w\|_{op}^2 \|((U-V)\Sigma_w)|_{t=0} \|_{F}^2 \right) x_k^4 - \frac{1}{16} x_k^2 \|\Sigma_w\|_{op}^2 \|((U-V)\Sigma_w)|_{t=0}\|_{F}^4 \\
  \le \frac{\mathrm{d}}{\mathrm{d} t} x_k^2 
  \le \sigma_1(\Sigma) \left(2 \|\Sigma_w\|_{op}^{2} + \|((U-V)\Sigma_w)|_{t=0} \|_{F}^2 \right) x_k^2 .
 \end{aligned}
\end{equation}

\end{theorem}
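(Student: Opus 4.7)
The approach is to differentiate $x_k^2 = \frac{1}{4}\lambda_k(A)$ directly, where $A := (U+V)\Sigma_w^2(U+V)^H$ is Hermitian positive semi-definite. Working in this form is convenient because the Hellmann--Feynman identity $\frac{d}{dt}\lambda_k(A) = w_k^H \dot{A}\, w_k$ applies to the corresponding unit eigenvector $w_k$ whenever the relevant spectral parameters evolve analytically, which is guaranteed by the analyticity of $U$, $V$, and $\Sigma_w$ established in the preliminaries.

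As the first step, I would import closed-form expressions for $\dot{U}$, $\dot{V}$, and $\dot{\Sigma}_w$ under balanced initialization and gradient flow, which follow from Theorem 3 and Lemma 2 of \cite{arora2019implicitregularizationdeepmatrix}: $\dot{\sigma}_{w,j}$ is proportional to $u_j^H(\Sigma - W)v_j$ with a factor $\sigma_{w,j}^{N-1}$, while $\dot{u}_j$ and $\dot{v}_j$ are off-diagonal couplings whose coefficients combine factors $\sigma_{w,j}^{N-1}$, $\sigma_{w,i}^{N-1}$ in the numerator with $\sigma_{w,j}^{2N} - \sigma_{w,i}^{2N}$ in the denominator, so the net coefficients remain finite when singular values are close.

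Next, I would expand $\dot{A} = (\dot{U}+\dot{V})\Sigma_w^2(U+V)^H + 2(U+V)\Sigma_w\dot{\Sigma}_w(U+V)^H + (U+V)\Sigma_w^2(\dot{U}+\dot{V})^H$ and specialize to $\Sigma = \sigma_1(\Sigma)I$ and $N=4$. Under this reduction, $\Sigma - W = \sigma_1(\Sigma)I - U\Sigma_w^4 V^H$, so the quadratic form $w_k^H\dot{A}w_k$ splits into a dominant driving term of order $2\sigma_1(\Sigma)x_k^4$ arising from the coupling between $\sigma_1(\Sigma)I$ and $A$ through $w_k$; a self-saturation term of order $-x_k^8$ arising from the $U\Sigma_w^4 V^H$ piece after repeated use of the identities $(U+V) = 2U + (V-U) = 2V + (U-V)$; and residual cross terms involving $\|(U-V)\Sigma_w\|_F$ and $\|\Sigma_w\|_{op}$. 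I would then invoke Theorem \ref{antisym-error, general, informal} to bound $\|(U-V)\Sigma_w\|_F \le \|((U-V)\Sigma_w)|_{t=0}\|_F$ uniformly in $t$, absorbing all cross-term contributions into the $\|((U-V)\Sigma_w)|_{t=0}\|_F^2$ and $\|((U-V)\Sigma_w)|_{t=0}\|_F^4$ error slots appearing in the stated bounds. The upper bound follows from the triangle inequality applied to $\dot{A}$ together with $\|U+V\|_{op}\le 2$ and $w_k^H A w_k = 4x_k^2$.

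The main obstacle will be the lower-bound bookkeeping. The cross terms arising from $\dot{U}$ and $\dot{V}$ are off-diagonal in the SVD basis of $W$ and can carry either sign, so one must show that after pairing each $\dot{U}$ contribution with its $\dot{V}$ counterpart, the potentially negative portions are absorbed into the skew-Hermitian error budget rather than eroding the leading $2\sigma_1(\Sigma)x_k^4$ coefficient or inflating the $-x_k^8$ term. Establishing this requires carefully reorganising the inner product $w_k^H \dot{A} w_k$ by substituting $(U+V) = 2U + (V-U)$ on one side of each cross term and $(U+V) = 2V + (U-V)$ on the other, so that the mixed pieces pick up a factor of $(U-V)\Sigma_w$ and can be bounded by the non-increasing skew-Hermitian error, while the purely symmetric pieces combine to produce exactly the $-x_k^8$ saturation seen in the lower bound.
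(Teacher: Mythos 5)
Your plan is correct in outline and lands on the same skeleton as the paper's argument: differentiate the Hermitian matrix $(U+V)\Sigma_w^2(U+V)^H$ (the paper works with $PP^H$ where $P=\tfrac{1}{2}(U+V)\Sigma_w$, $Q=\tfrac{1}{2}(U-V)\Sigma_w$), apply the eigenvalue-derivative identity with the eigenvector $\xi_k$, isolate a driving term $2\sigma_1(\Sigma)\,\xi_k^H P\Sigma_w^2P^H\xi_k\ge 2\sigma_1(\Sigma)x_k^4$, a saturation term $-2\xi_k^HP\Sigma_w^6P^H\xi_k$ expanded via $\Sigma_w^2=P^HP+Q^HQ$, and control everything involving $Q$ by the non-increasing skew-Hermitian error of Theorem \ref{antisym-error for 2|N}. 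Where you diverge is in how $\dot A$ is computed. You propose to assemble it from the explicit ODEs for $\dot U$, $\dot V$, $\dot\Sigma_w$ (Theorem 3 and Lemma 2 of Arora et al., i.e.\ Lemma \ref{lemma 2, arora, complex}); the paper instead never touches the singular-vector dynamics here. It uses Lemma \ref{more on uv by inverse}, which obtains $\frac{\mathrm{d}}{\mathrm{d}t}\bigl[(U\pm V)\Sigma_w^{2k}(U\pm V)^H\bigr]$ in closed matrix form directly from $\frac{\mathrm{d}}{\mathrm{d}t}(W_NW_N^H)=\frac{\mathrm{d}}{\mathrm{d}t}(U\Sigma_w^2U^H)$, $\frac{\mathrm{d}}{\mathrm{d}t}(W_1^HW_1)=\frac{\mathrm{d}}{\mathrm{d}t}(V\Sigma_w^2V^H)$ and $\frac{\mathrm{d}}{\mathrm{d}t}W$, with the cross term $U\Sigma_w^{2}V^H$ handled by differentiating $(U\Sigma_w^{N-2}U^H)^{-1}\cdot U\Sigma_w^NV^H$. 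This buys two things your route lacks: (i) no spectral-gap denominators $\sigma_{w,j}^2-\sigma_{w,k}^2$ ever appear, so there is no cancellation bookkeeping when singular values are close or equal, and no need for the distinct-singular-values hypothesis that Lemma \ref{lemma 2, arora, complex} requires (your "the net coefficients remain finite when singular values are close" is precisely the delicate step you would have to prove); (ii) the cross terms come out already paired as $Q(\cdots)P^H-P(\cdots)Q^H$, and for $N=4$, $\Sigma=\sigma_1(\Sigma)I$ the single surviving term collapses to $4\sigma_1(\Sigma)x_k^2\,\xi_k^HQQ^H\xi_k$, which is manifestly nonnegative — so it is simply dropped in the lower bound and estimated by $\|Q\|_{op}^2\le\tfrac14\|((U-V)\Sigma_w)|_{t=0}\|_F^2$ in the upper bound, rather than requiring the sign-absorption argument you anticipate as the "main obstacle." Your route should work with enough care, but the sign analysis and gap cancellations you flag are exactly the burdens the paper's matrix-form derivation is designed to avoid (and the reason it extends to the unbalanced setting later).
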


Detailed proof is presented in \ref{dynamics of minimum singular value of hermitian term, proof}.

This theorem implies that under small initialization, if all singular values $\sigma_k((U+V)\Sigma_w)$ are initially non-zero, they increase monotonically to relatively large values, leading to subsequent local convergence. However, if any singular value is initialized to zero (which occurs with probability at least $1/2$ for $\mathbb{F} = \mathbb{R}$, as shown in Theorem \ref{Balanced initialization, final}), it \textit{remains zero throughout the optimization} (see Corollary \ref{never converge to optimum, balanced}), thereby explaining the $1/2$ convergence probability in Theorem \ref{Total convergence bound, balanced, gf, informal}. Numerical simulations under the identity target setting are provided in Figure \ref{fig: balanced init, log singular values, identity target}, with additional results and discussions for non-identity targets shown in Figure \ref{fig: balanced init, log singular values, non-identity target}.

\section{Convergence under Random Gaussian Initialization}

This section presents the proof sketch for Theorem \ref{Total convergence bound, gd, informal}, extending our analytical framework in the previous section to accommodate random Gaussian initialization. 

For random Gaussian Initialization with balance regularization term, the balanced condition holds approximately. Following the methodology in balanced initialization scheme, Section \ref{section: balanced init, convergence, informal}, we then characterize the skew-hermitian error term and hermitian main term by $\|W_1 - W_2^{-1} W_3^H W_4^H\|_F^2$ and $\lambda_{\min}\left(\left(W_1 + W_2^{-1} W_3^H W_4^H\right)^H \left(W_1 + W_2^{-1} W_3^H W_4^H\right)\right)$ respectively. 


\subsection{random Gaussian Initialization}

We consider the canonical setting of random Gaussian initialization near origin: 

\begin{equation}\label{init, random gaussian}
 \begin{aligned}
  (W_{1,2,\cdots, N})_{i j} \overset{\text{i.i.d.}}{\sim} \epsilon \cdot \mathcal{N}(0,1)_\mathbb{F} . 
 \end{aligned}
\end{equation}

Specifically, we apply Gaussian distribution to generate $W_{1,2,\cdots, N} \in \mathbb{F}^{d\times d}$, $F=\mathbb{R}$ or $\mathbb{C}$ element-wisely and independently. Then the initialization is scaled by a small positive constant $\epsilon\in\mathbb{R}^+$. The scale of $\epsilon$ is determined in the main convergence Theorem \ref{Total convergence bound, gd, informal}. 

\begin{theorem}\label{Gaussian random matrix ensemble product, eigenvalues}

For $\epsilon$-scaled random Gaussian initialization on $W_{k,k=[1,N]\cap\mathbb{N}^*}$ over $\mathbb{F} = \mathbb{R}$ or $\mathbb{C}$, $N\in\mathbb{N}^*$, the initial product matrix $W = \prod_{k=N}^{1} W_k$ satisfy the following properties: 

1. If $\mathbb{F} = \mathbb{C}$, with probability at least $1-\delta$, 

\begin{equation}\label{eq, Gaussian random matrix ensemble product, eigenvalues, complex}
 \begin{aligned}
  \max_{j,k} \sigma_k(W_j) \le f_1(\delta,N) \sqrt{d} \epsilon &,\,
  \min_{j,k} \sigma_k(W_j) \le \frac{\epsilon}{f_1(\delta,N) \sqrt{d}} \\ 
  \sigma_{\min}\left(W + \left(WW^H\right)^{1/2}\right) &\ge f_2(\delta,N)^{-1} \cdot d^{-(N/2+1)} \epsilon^N . 
 \end{aligned}
\end{equation}

2. If $\mathbb{F} = \mathbb{R}$, the determinants $\det(W) > 0$ and $\det(W) < 0$ occur each with probability $1/2$. If $\det(W) < 0$, then $\sigma_{\min}\left(W + \left(WW^\top\right)^{1/2}\right) = 0$; if $\det(W) > 0$, then with conditional probability at least $1-\delta$, 

\begin{equation}\label{eq, Gaussian random matrix ensemble product, eigenvalues, real}
 \begin{aligned}
  \max_{j,k} \sigma_k(W_j) \le f_1(\delta,N) \sqrt{d} \epsilon &,\, 
  \min_{j,k} \sigma_k(W_j) \le \frac{\epsilon}{f_1(\delta,N) \sqrt{d}} \\ 
  \sigma_{\min}\left(W + \left(WW^\top\right)^{1/2} \right) &\ge f_2(\delta,N)^{-1} \cdot d^{-(N/2+1)} \epsilon^N ,
 \end{aligned}
\end{equation}

where $f_1(\delta,N) = O\left( \frac{N}{\delta} \right)$, $f_2(\delta,N) = O\left( \frac{N^N}{\delta^{N+1}} \right)$. 

\end{theorem}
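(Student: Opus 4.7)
The plan is to split the theorem into two essentially independent pieces: tail estimates on the extreme singular values of each individual $W_j$, and the key lower bound on $\sigma_{\min}\!\bigl(W + (WW^H)^{1/2}\bigr)$, which is the real substance.

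For the per-layer singular value bounds I would invoke standard Gaussian random matrix estimates. A Gaussian concentration bound (or even a crude Markov bound on the Frobenius norm) yields $\|W_j\|_{op} \le f_1(\delta,N)\sqrt{d}\,\epsilon$, while the Edelman-type anti-concentration estimate $\Pr(\sigma_{\min}(G) \le t/\sqrt{d}) = O(t)$ yields $\sigma_{\min}(W_j) \ge \epsilon/(f_1(\delta,N)\sqrt{d})$. Applying both with probability $1-\delta/(2N)$ per layer and taking a union bound over $j \in \{1,\dots,N\}$ handles all $N$ matrices at once.

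For $\sigma_{\min}(W + (WW^H)^{1/2})$ I would use the polar decomposition $W = (WW^H)^{1/2} U_W$ to rewrite
\[
W + (WW^H)^{1/2} \;=\; (WW^H)^{1/2}\,(I + U_W),
\]
which yields $\sigma_{\min}(W + (WW^H)^{1/2}) \ge \sigma_{\min}(W)\,\sigma_{\min}(I + U_W)$. The first factor is controlled by submultiplicativity $\sigma_{\min}(W) \ge \prod_{k=1}^{N} \sigma_{\min}(W_k)$ combined with the per-layer bounds above, giving $\sigma_{\min}(W) \gtrsim (\delta/(N\sqrt{d}))^N \epsilon^N$. For the second factor, the crucial observation is that i.i.d.\ Gaussian entries make each $W_k$ bi-unitarily (or bi-orthogonally) invariant in distribution, and this invariance is preserved under matrix products; a standard SVD argument then shows that the polar part $U_W = UV^H$ is Haar distributed on $U(d)$ (complex case) or on $O(d)$ (real case). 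Consequently $\sigma_{\min}(I+U_W) = \min_i |1 + \lambda_i(U_W)|$ is the distance from the spectrum of a Haar matrix to $-1$ on the unit circle; the Weyl integration formula for the Circular Unitary Ensemble (expected number of eigenvalues in an arc of length $\eta$ equal to $d\eta/(2\pi)$) together with Markov then gives $\sigma_{\min}(I+U_W) \ge c\delta/d$ with probability $1-\delta/2$. Multiplying these gives the stated $f_2(\delta,N)^{-1} d^{-(N/2+1)}\epsilon^N$ with $f_2 = O(N^N/\delta^{N+1})$.

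For the real case I would further condition on $\mathrm{sign}(\det W) = \prod_k \mathrm{sign}(\det W_k)$. Row-reflection symmetry of the Gaussian ensemble makes each $\mathrm{sign}(\det W_k)$ uniform on $\{\pm 1\}$, so $\det W > 0$ and $\det W < 0$ each occur with probability $1/2$. When $\det W < 0$, the polar factor $U_W$ lies in the $\det=-1$ component of $O(d)$, and by the parity structure of the real eigenvalues of an orthogonal matrix, $-1$ must appear as an eigenvalue of $U_W$, forcing $W + (WW^\top)^{1/2}$ to be exactly singular. When $\det W > 0$, $U_W$ is Haar on $SO(d)$ and the same factorization strategy applies. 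The hard part will be the eigenvalue-gap estimate on $SO(d)$, since its joint spectral density is not the CUE and the symmetry between $+1$ and $-1$ on the unit circle is broken; a careful application of the $SO(d)$ Weyl integration formula, splitting on the parity of $d$ to handle the forced $\pm 1$ real eigenvalues, should still yield the required $\Omega(\delta/d)$ tail bound on $\min_i |1 + \lambda_i(U_W)|$ and complete the proof.
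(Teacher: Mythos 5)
Your proposal matches the paper's own proof essentially step-for-step: the per-layer singular value estimates via standard Gaussian concentration and the Tao--Vu anti-concentration bound with a union bound over layers, the polar factorization $W + (WW^H)^{1/2} = (WW^H)^{1/2}(I+Q)$, the submultiplicativity bound $\sigma_{\min}(W)\ge\prod_k\sigma_{\min}(W_k)$, the argument that left/right unitary invariance of the Gaussian ensemble forces $Q$ to be Haar distributed, and the 1-point correlation / Weyl integration estimate for $\sigma_{\min}(I+Q)$ on CUE and on the $\det=1$ component of $O(d)$ (the paper isolates this last step as Theorem~\ref{minimum singular value under Haar measure}, proved via the Chebyshev-polynomial orthogonality structure of the circular real ensemble). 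Your treatment of the real case — the $1/2$ determinant-sign split and the observation that $\det Q=-1$ forces $-1$ into the spectrum — is also exactly the paper's argument.
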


Proof is provided in Appendix \ref{Gaussian random matrix ensemble product, eigenvalues, proof}. For $N=4$, $f_1 = O\left( \frac{1}{\delta} \right)$, $f_2 = O\left( \frac{1}{\delta^{5}} \right)$. 

In the convergence proof below, we consider the initialization where (\ref{eq, Gaussian random matrix ensemble product, eigenvalues, complex}) and (\ref{eq, Gaussian random matrix ensemble product, eigenvalues, real}) holds. We divide the training dynamics into three stages consisting of an alignment stage $t\in [0,T_1]$, a saddle-avoidance stage $t\in [T_1, T_1 + T_2]$,  and a local convergence stage $t\in [T_2, +\infty)$, to analyze the convergence process clearly. Here $T_1 = O\left(\frac{1}{\eta \sigma_1^{3/2}(\Sigma) \cdot {\rm poly}\left(f_1, f_2,d,\epsilon /\sigma_1^{1/4}(\Sigma)\right)}\right)$, $T_2 = O\left(\frac{1}{\eta \sigma_1^{3/2}(\Sigma)} \cdot {\rm poly}\left( f_1, f_2, d, \sigma_1^{1/4} / \epsilon \right) \right)$, refer to Theorem \ref{stage 1: alignment stage, GD} and \ref{stage 2: saddle avoidance stage, gd} respectively. 

\subsection{Stage 1: Alignment Stage}

During this stage, the weight matrices align with each other under the convergence of the regularization term, while the hermitian main term stays away from origin at the end of this stage. 

\subsubsection{Convergence of Regularization term: }\label{subsubsection: Convergence of Regularization term}

The convergence rate of the regularization term is related to the smallest singular value of weight matrices: 

\begin{theorem} (Informal) Convergence rate of the regularization term. 

For four-layer matrix factorization, suppose the maximum and minimum singular values of the weight matrices are bounded by $M$ and $\delta$ respectively, then the regularization term decays by

\begin{equation}
  \mathcal{L}_{\rm reg}(t+1) \le \left(1 - \Omega\left( \eta a \delta^4 M^{-2} \right) \right) \cdot \mathcal{L}_{\rm reg}(t) + O(\eta^2 a^2) .
\end{equation}

\end{theorem}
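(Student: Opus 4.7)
The key structural observation is that the data gradient $\nabla_{W_j}\mathcal{L}_{\rm ori}$ does not move the balancedness quantities $\Delta_{j,j+1} = W_jW_j^H - W_{j+1}^HW_{j+1}$ at first order in $\eta$. A direct computation using $\nabla_{W_j}\mathcal{L}_{\rm ori} = -W_{\prod_L,j+1}^H(\Sigma-W)W_{\prod_R,j-1}^H$ shows that the contributions of $-\eta\nabla_{W_j}\mathcal{L}_{\rm ori}$ to $W_jW_j^H$ and of $-\eta\nabla_{W_{j+1}}\mathcal{L}_{\rm ori}$ to $W_{j+1}^HW_{j+1}$ agree after a cyclic rewrite, so they cancel inside $\Delta_{j,j+1}$; equivalently, $\sum_j\langle\nabla_{W_j}\mathcal{L}_{\rm reg},\nabla_{W_j}\mathcal{L}_{\rm ori}\rangle=0$. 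Expanding the quartic $\mathcal{L}_{\rm reg}$ to second order around $W(t)$ along the GD update then yields
$$\mathcal{L}_{\rm reg}(t+1) \;=\; \mathcal{L}_{\rm reg}(t) \;-\; \eta\sum_{j=1}^{4}\|\nabla_{W_j}\mathcal{L}_{\rm reg}\|_F^2 \;+\; R(\eta),$$
where $R(\eta)=O(\eta^2 a^2)$ absorbs the remaining second-order discretization terms, bounded using $\|W_j\|_{op}\le M$ and $\|\Delta_{j,j+1}\|_F^2 \le 4\mathcal{L}_{\rm reg}/a$.

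The remaining task is a Poincar\'e-type inequality
$$\sum_{j=1}^{4}\|\nabla_{W_j}\mathcal{L}_{\rm reg}\|_F^2 \;\ge\; \Omega\!\left(\frac{a\delta^{4}}{M^{2}}\right)\mathcal{L}_{\rm reg}(t),$$
which, plugged into the descent identity above, immediately yields the claimed contraction. I would exploit the boundary layers: since $\Delta_{0,1}=\Delta_{4,5}=0$, the first and last gradients simplify to $\nabla_{W_1}\mathcal{L}_{\rm reg}=a\Delta_{12}W_1$ and $\nabla_{W_4}\mathcal{L}_{\rm reg}=-aW_4\Delta_{34}$. Together with $\sigma_{\min}(W_j)\ge\delta$, these give $\|\nabla_{W_1}\mathcal{L}_{\rm reg}\|_F^2\ge a^2\delta^2\|\Delta_{12}\|_F^2$ and $\|\nabla_{W_4}\mathcal{L}_{\rm reg}\|_F^2\ge a^2\delta^2\|\Delta_{34}\|_F^2$, so the two boundary imbalances cost only a factor $\delta^{-2}$.

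The interior imbalance $\|\Delta_{23}\|_F$ is the bottleneck because $\nabla_{W_2}\mathcal{L}_{\rm reg}=a(\Delta_{23}W_2-W_2\Delta_{12})$ is a difference that may nearly cancel. My plan is to use the reverse triangle inequality
$$\|\Delta_{23}W_2\|_F \;\le\; \|\nabla_{W_2}\mathcal{L}_{\rm reg}\|_F/a + \|W_2\Delta_{12}\|_F \;\le\; \|\nabla_{W_2}\mathcal{L}_{\rm reg}\|_F/a + M\|\Delta_{12}\|_F,$$
combine with $\|\Delta_{23}W_2\|_F\ge\delta\|\Delta_{23}\|_F$, and substitute the boundary bound on $\|\Delta_{12}\|_F$ to obtain
$$\|\Delta_{23}\|_F^2 \;\le\; O\!\left(\frac{1}{a^2\delta^2}\|\nabla_{W_2}\mathcal{L}_{\rm reg}\|_F^2 + \frac{M^2}{a^2\delta^{4}}\|\nabla_{W_1}\mathcal{L}_{\rm reg}\|_F^2\right).$$
Adding the three bounds gives $\sum_j\|\Delta_{j,j+1}\|_F^2 \le O(M^2/(a^2\delta^{4}))\sum_j\|\nabla_{W_j}\mathcal{L}_{\rm reg}\|_F^2$, which is the Poincar\'e inequality above.

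The main obstacle is precisely this interior-imbalance control. A naive one-sided chain of triangle inequalities, bounding $\|\Delta_{12}\|\Rightarrow\|\Delta_{23}\|\Rightarrow\|\Delta_{34}\|$, accumulates one factor of $M/\delta$ per hop and yields the much worse rate $M^{4}/\delta^{6}$. The fix is to use \emph{both} boundaries symmetrically so that no interior imbalance is propagated through more than one hop: bound $\|\Delta_{23}\|_F$ once via $(\nabla_{W_2}\mathcal{L}_{\rm reg},\Delta_{12})$ and once via $(\nabla_{W_3}\mathcal{L}_{\rm reg},\Delta_{34})$, keeping whichever side is more favorable, and then use Young's inequality to combine the results. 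This yields the tight rate $\Omega(\eta a\delta^{4}M^{-2})$, with the additive error cleanly collapsing into $O(\eta^2 a^2)$.
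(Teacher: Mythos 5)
Your proposal is correct and follows essentially the same approach as the paper: the first-order cancellation (your orthogonality $\sum_j\langle\nabla_{W_j}\mathcal{L}_{\rm reg},\nabla_{W_j}\mathcal{L}_{\rm ori}\rangle=0$ is exactly the paper's pairwise identity $(\nabla_{W_j}\mathcal{L}_{\rm ori})W_j^H = W_{j+1}^H\nabla_{W_{j+1}}\mathcal{L}_{\rm ori}$ killing the first-order change of each $\Delta_{j,j+1}$), followed by a Poincar\'e-type inequality obtained by propagating the boundary conditions $\Delta_{0,1}=\Delta_{N,N+1}=O$ inward from both ends. The paper's Theorem~\ref{regularization term, convergence bound} proves that inequality with precisely your two-sided scheme, bounding each $\Delta_{j,j+1}$ from the nearer boundary so no imbalance is carried more than $\lfloor N/2\rfloor$ hops, giving the constant $\tfrac{2}{3}\,\tfrac{\delta^4}{M^2+\delta^2}$ for $N=4$, which matches your $\Omega(\delta^4 M^{-2})$.
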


The formal version can be found in Theorem \ref{regularization term, convergence bound, GD}. A $N$-layer version of this Theorem, along with a generalized loss function under gradient flow is provided in Theorem \ref{regularization term, convergence bound}. This shows the importance of bounding the extreme singular values of $W_j$, otherwise the linear convergence of the regularization term (along with the balancedness) might not be guaranteed. 

\begin{theorem} (Informal) Under a small learning rate, the change in the maximum and minimum singular values is approximately independent of the regularization term: 

\begin{equation}
 \begin{aligned}
  \max_{j,k} \sigma_k^2(W_j(t+1)) - \max_{j,k} \sigma_k^2(W_j(t)) &\le 2\eta \max_{j,k} \sigma_k(W_j(t)) \max_{j}\left\| \nabla_{W_j} \mathcal{L}_{\rm ori}(t) \right\|_{op} + O(\eta^2 a^2) \\ 
  \min_{j,k} \sigma_k^2(W_j(t+1)) - \min_{j,k} \sigma_k^2(W_j(t)) &\ge - 2\eta \min_{j,k} \sigma_k(W_j(t)) \max_{j}\left\| \nabla_{W_j} \mathcal{L}_{\rm ori}(t) \right\|_{op} + O(\eta^2 a^2) .
 \end{aligned}
\end{equation}

Here $a$ is the coefficient of the regularization term. 

\end{theorem}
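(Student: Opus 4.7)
The plan is to track the largest and smallest singular values across a single GD step and isolate the regularizer's contribution via its specific gradient structure. First I would expand
$$W_j(t+1) W_j(t+1)^H = W_j W_j^H + \eta\, C_{\rm ori}(j) + \eta\, C_{\rm reg}(j) + O\bigl(\eta^2(\|\nabla_{W_j}\mathcal{L}_{\rm ori}\|_{op}^2+a^2)\bigr),$$
with $C_{\rm ori}(j) = -(W_j (\nabla_{W_j}\mathcal{L}_{\rm ori})^H + \nabla_{W_j}\mathcal{L}_{\rm ori}\, W_j^H)$. Using Hermiticity of $\Delta_{j,j+1}=W_jW_j^H-W_{j+1}^HW_{j+1}$ and the explicit form $\nabla_{W_j}\mathcal{L}_{\rm reg} = a\Delta_{j,j+1}W_j - aW_j\Delta_{j-1,j}$, the regularizer contribution simplifies to
$$C_{\rm reg}(j) = 2a\, W_j \Delta_{j-1,j} W_j^H - a\{W_j W_j^H,\,\Delta_{j,j+1}\}.$$

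Next I would apply first-order perturbation to $\sigma_1^2(W_j)=\lambda_{\max}(W_jW_j^H)$ evaluated at the top left/right singular vectors $u_j,v_j$ of $W_j(t)$. Using $W_j W_j^H u_j=\sigma_1^2 u_j$ and $W_j^H u_j=\sigma_1 v_j$, the quadratic form collapses into
$$u_j^H C_{\rm reg}(j)\, u_j = 2a\,\sigma_1^2(W_j)\bigl[v_j^H \Delta_{j-1,j} v_j - u_j^H \Delta_{j,j+1} u_j\bigr].$$
Rayleigh bounds give $u_j^H \Delta_{j,j+1} u_j \ge \sigma_1^2(W_j)-\sigma_1^2(W_{j+1})$ and $v_j^H\Delta_{j-1,j}v_j\le \sigma_1^2(W_{j-1})-\sigma_1^2(W_j)$. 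At the arg-max index $j^{\star}$ of $\sigma_1^2(W_j)$ both inequalities take the ``right'' sign, so $u_{j^{\star}}^HC_{\rm reg}(j^{\star})u_{j^{\star}}\le 0$: the regularizer does not push the top singular value upward to first order, leaving only $\eta\,u_{j^{\star}}^H C_{\rm ori}(j^{\star})u_{j^{\star}}\le 2\eta \sigma_1(W_{j^{\star}})\|\nabla_{W_{j^{\star}}}\mathcal{L}_{\rm ori}\|_{op}$ plus the $O(\eta^2 a^2)$ remainder.

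To upgrade from $j=j^{\star}$ to all $j$ (necessary because the max-attaining index can shift between $t$ and $t+1$), introduce the slack $S_j=M(t)-\sigma_1^2(W_j(t))\ge 0$ with $M(t)=\max_{j'}\sigma_1^2(W_{j'}(t))$. The Rayleigh bounds above yield $v_j^H\Delta_{j-1,j}v_j - u_j^H\Delta_{j,j+1}u_j \le 2S_j-S_{j-1}-S_{j+1}$, and combining this with the slack offset $\sigma_1^2(W_j)-M(t)=-S_j$ gives
$$\sigma_1^2(W_j(t+1)) - M(t) \le -S_j\bigl[1-4\eta a \sigma_1^2(W_j)\bigr] - 2\eta a\sigma_1^2(W_j)(S_{j-1}+S_{j+1}) + 2\eta\sqrt{M(t)}\max_{j'}\|\nabla_{W_{j'}}\mathcal{L}_{\rm ori}\|_{op} + O(\eta^2 a^2).$$
Under the small-learning-rate assumption $\eta a M(t)\le 1/4$ the first two right-hand-side terms are non-positive for every $j$, and taking $\max_j$ yields the upper bound on $\max_{j,k}\sigma_k^2(W_j(t+1))$. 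The lower bound on $\min_{j,k}\sigma_k^2(W_j(t+1))$ follows by the symmetric argument with the bottom singular vectors: at the arg-min index the Rayleigh inequalities reverse, so the first-order regularizer contribution is non-negative, and the slack $T_j=\sigma_{\min}^2(W_j)-m(t)\ge 0$ plays the mirror role.

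The main obstacle is that first-order perturbation for eigenvalues of $W_jW_j^H$ rigorously depends on the spectral gap, so the $O(\eta^2\cdot)$ remainder conceals a factor that can blow up when eigenvalues are close, the issue explicitly flagged in the introduction as analogous to Lemma~3.2 of \cite{ye2021globalconvergencegradientdescent}. Controlling this requires the discrete-time eigenvalue-change bounds developed in Lemmas~\ref{maximum and minimum singular values, general, discrete} and~\ref{minimum singular values lower bound, general, discrete}, which replace the standard gap-dependent perturbation by problem-specific bounds tailored to this matrix-factorization dynamics. A secondary, routine obstacle is absorbing the cross term $\eta^2\nabla_{W_j}\mathcal{L}_{\rm ori}(\nabla_{W_j}\mathcal{L}_{\rm reg})^H$ and the pure second-order $\eta^2\nabla_{W_j}\mathcal{L}_{\rm reg}(\nabla_{W_j}\mathcal{L}_{\rm reg})^H$ contribution into the $O(\eta^2 a^2)$ remainder, which uses $\|\nabla_{W_j}\mathcal{L}_{\rm reg}\|_{op}=O(a\|W_j\|\,\|\Delta\|)$ and the smallness of $\eta$ at the moment the lemma is invoked.
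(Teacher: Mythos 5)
Your Rayleigh-quotient computation correctly identifies the sign structure of the regularizer's contribution and the slack argument is a nice way to handle the shifting arg-max index, but the core mechanism of the proposal bounds in the \emph{wrong direction} and the gesture toward the discrete-time lemmas does not repair it. Evaluating $u_j^H\bigl(W_j(t+1)W_j(t+1)^H\bigr)u_j$ at the \emph{old} top eigenvector $u_j$ gives, by the variational characterization of eigenvalues, a \emph{lower} bound on the new $\lambda_{\max}$, not the upper bound the theorem asserts; symmetrically, at the old bottom eigenvector you get an \emph{upper} bound on the new $\lambda_{\min}$. So showing $u_{j^\star}^H C_{\rm reg}(j^\star)u_{j^\star}\le 0$ does not keep $\sigma_{\max}^2$ from increasing — for that you would need a bound on $\lambda_{\max}(C_{\rm reg}(j))$ itself or a spectral-gap estimate, which is exactly the obstacle you flag but do not resolve. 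Saying the lemmas "replace the standard gap-dependent perturbation" is accurate but empty unless you describe what they replace it \emph{with}, and that replacement is not a patch on your perturbative calculation.

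The paper's actual proof is non-perturbative and structurally different. It rewrites the one-step update as a \emph{convex combination of three conjugated positive-semidefinite matrices} plus an explicit $O(\eta^2 a^2)$ remainder:
\begin{equation}
 \begin{aligned}
  W_j(t+1)W_j(t+1)^H &= \tfrac{1}{3}\,W_j\bigl(I+3\eta a\,\Delta_{j-1,j}\bigr)^2 W_j^H + \tfrac{1}{3}\bigl(I-3\eta a\,\Delta_{j,j+1}\bigr)W_j W_j^H\bigl(I-3\eta a\,\Delta_{j,j+1}\bigr) \\
  &\quad + \tfrac{1}{3}\bigl(W_j - 3\eta\nabla_{W_j}\mathcal{L}_{\rm ori}\bigr)\bigl(W_j - 3\eta\nabla_{W_j}\mathcal{L}_{\rm ori}\bigr)^H + \eta^2(\cdots).
 \end{aligned}
\end{equation}
The first summand has the same spectrum as $(I+3\eta a\,\Delta_{j-1,j})W_j^H W_j(I+3\eta a\,\Delta_{j-1,j})$, and since $\Delta_{j-1,j}=W_{j-1}W_{j-1}^H - W_j^H W_j$ this is exactly the form $(I+\alpha(\Sigma-S))S(I+\alpha(\Sigma-S))$ to which Lemma~\ref{maximum and minimum singular values, general, discrete} applies with no spectral-gap assumption; likewise for the second summand with $-\Delta_{j,j+1}=W_{j+1}^HW_{j+1}-W_jW_j^H$, and the third is bounded by submultiplicativity. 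Summing the three per-term eigenvalue bounds and taking $\max_j$ (where the condition $\eta a\max_{j,k}\sigma_k^2(W_j)\le 1/6$ makes the regularizer coefficient in the bound collapse, playing the role your slacks $S_j$ play) gives the stated inequality. Your proposal would need to substitute this conjugation decomposition for the Rayleigh calculation entirely; as written it is a heuristic that predicts the right answer but does not deliver a proof.
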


This Theorem ensures the smooth change of the extreme singular values over short time intervals. Although the regularization term can induce significant fluctuations in individual singular values due to its potentially large coefficient, the largest and smallest singular values remain stable. This theoretical conclusion is corroborated by numerical simulations, as shown in Figure \ref{fig: random init, log singular values, max-min}. The complete formal statement can be found in Theorem \ref{maximum and minimum singular values are irrelevant of the regularization term, GD} (and Theorem \ref{maximum and minimum singular values are irrelevant of the regularization term} for the continuous-time case) in the Appendix.

\subsubsection{The Limit Behavior of the Hermitian main term}\label{subsubsection: The Limit Behavior of the Hermitian main term}

Typically, the dynamics of the smallest singular value of the hermitian main term $W_1 + W_2^{-1}W_3^H W_4^H$ is involved and does not obtain a non-trivial lower bound during this stage. However its limit behavior after the convergence of regularization term can be characterized. 

To simplify the analysis, ignore the original square loss $\mathcal{L}_{\rm ori}$ and consider gradient flow. For $t\to+\infty$, regularization term is exactly zero and thus the adjacent matrices are strictly balanced. Moreover, the product matrix does not change through the optimization: $W(+\infty) = W(0)$. Then under this scenario, the limit behavior of the hermitian main term is $\left.\left(W_1 + W_2^{-1}W_3^H W_4^H\right)\right|_{t\to+\infty} = \left. \left(W_2^{-1} W_3^{-1} W_4^{-1} \right)\right|_{t\to+\infty} \left(W(0) + \left( W(0) W(0)^H \right)^{1/2} \right)$. 

This explains the reason of studying $\sigma_{\min}\left(W(0) + \left( W(0) W(0)^H \right)^{1/2} \right)$ in the initialization section. Detailed analysis considering error terms is presented in Corollary \ref{Main term at the end of alignment stage, gd}. 

\begin{remark}

Note that $\sigma_{\min}\left(W_1 + W_2^{-1} W_3^H W_4^H \right)$ is not necessarily lower-bounded by the above expression minus some error terms during the alignment stage. Instead, it may exhibit oscillations or a transient decrease, achieving stability only upon convergence of the regularization term. This behavior is illustrated in Figure \ref{fig: random init, log singular values, main term} in the Appendix. 

\end{remark}

\subsection{Stage 2: Saddle Avoidance stage}

Intuitively, this section focuses on generalizing Theorem \ref{antisym-error, general, informal} and \ref{dynamics of minimum singular value of hermitian term} into unbalanced case by bounding the error terms introduced by unbalanceness. 

The main technical challenge is to bound the operator norm of the inverse of $W_2$ below infinity, since both the skew-hermitian term and hermitian main term are characterized by $W_2^{-1}$ and hence need to be well-defined. Under small balance error (equivalently small regularization term) which is guaranteed by the previous stage, $W_2^{-1}$, which is rigorously proved in Lemma \ref{bound of w23 inv op, and relevant terms, GD}. 

\begin{lemma}\label{stage 2, skew-hermitian error, GD}

Skew-hermitian error in saddle avoidance stage, gradient descent. For $t \in [T_1, T_1 + T_2]$, 

\begin{equation}
  \left \| W_1 - W_2^{-1} W_3^H W_4^H \right \|_F \le 3 f_1 d \epsilon . 
\end{equation}

\end{lemma}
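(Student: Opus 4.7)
The plan is to extend Theorem \ref{antisym-error, general, informal} from the strictly balanced, gradient-flow regime to the approximately balanced, discrete-time regime. Under exact balance one can write the SVDs coherently as $W_1 = P_1 \Sigma_w Q_0^H$ and $W_j = P_j \Sigma_w P_{j-1}^H$ for $j = 2,3,4$, so that $W_2^{-1} W_3^H W_4^H = P_1 \Sigma_w P_4^H = P_1 \Sigma_w U^H$ while $W_1 = P_1 \Sigma_w V^H$, which gives the clean identification $\|W_1 - W_2^{-1} W_3^H W_4^H\|_F = \|(U-V)\Sigma_w\|_F$. Thus the Frobenius quantity in the lemma is exactly the unbalanced analogue of the skew-hermitian error of Theorem \ref{antisym-error, general, informal}, and the strategy is to prove that it is approximately non-increasing throughout Stage 2.

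Concretely, I would proceed in three steps. First, bound the quantity at the beginning of Stage 2 ($t = T_1$): by the convergence of $\mathcal{L}_{\rm reg}$ proved in the alignment stage, the $W_j$'s are almost balanced at $t = T_1$, so $\|W_1 - W_2^{-1} W_3^H W_4^H\|_F$ lies within a small perturbation of $\|(U-V)\Sigma_w\|_F$; applying the (approximate) non-increasing property during Stage 1 together with the random-matrix estimates of Theorem \ref{Gaussian random matrix ensemble product, eigenvalues} bounds this by roughly $2 f_1 d \epsilon$ plus controlled error. Second, derive a discrete-time analogue of Theorem \ref{antisym-error, general, informal}: expand the one-step difference $\|W_1(t+1) - W_2^{-1}(t+1) W_3^H(t+1) W_4^H(t+1)\|_F^2 - \|W_1(t) - W_2^{-1}(t) W_3^H(t) W_4^H(t)\|_F^2$ under the GD update, and decompose it into (a) a gradient-flow leading term that mirrors the non-positive derivative of the balanced case, (b) an $O(\eta^2)$ discretization residual, and (c) perturbation terms driven by the imbalance matrices $\Delta_{j,j+1}$ and by the variation of $W_2^{-1}$. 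Third, sum this one-step bound over $t \in [T_1, T_1 + T_2]$: Lemma \ref{bound of w23 inv op, and relevant terms, GD} keeps $\|W_2^{-1}\|_{op}$ uniformly bounded, and the continued smallness of $\mathcal{L}_{\rm reg}$ bounds the cumulative imbalance perturbation by an additive $f_1 d \epsilon$. Combining the initial value at $t = T_1$ with the accumulated increment produces the stated $3 f_1 d \epsilon$.

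The main obstacle is the discrete, unbalanced one-step analysis. In the balanced gradient-flow proof of Theorem \ref{antisym-error, general, informal}, the SVD factorizations of the four $W_j$'s dovetail and the time derivative collapses via a clean algebraic cancellation; in the unbalanced discrete setting those cancellations hold only approximately, so cross-terms proportional to $\|\Delta_{j,j+1}\|_F$ must be explicitly tracked, and the perturbation of $W_2^{-1}$ under the GD update further complicates the expansion. A secondary difficulty is that the cumulative $O(\eta^2)$ discretization errors across the relatively long horizon $T_2$ must telescope to something strictly smaller than $f_1 d \epsilon$; this will require both the specific polynomial choice of $\eta$ in Theorem \ref{Total convergence bound, gd, informal} and the uniform spectral bounds on the $W_j$'s inherited from the regularity control developed in Section \ref{subsubsection: Convergence of Regularization term}.
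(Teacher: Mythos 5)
Your proposal captures the paper's three-step structure (initial bound at $T_1$, discrete one-step increment, accumulation over $T_2$), and the decomposition of the one-step increment into a non-positive leading term, an $O(\eta^2)$ residual, and imbalance-driven perturbations matches the paper's computation (Section~\ref{section: skew-hermitian error term, gd} followed by the bound plug-in in Section~\ref{stage 2, skew-hermitian error, GD, proof}). The SVD identity you establish under exact balance, $\|W_1 - W_2^{-1}W_3^H W_4^H\|_F = \|(U-V)\Sigma_w\|_F$, is correct and does motivate why this expression is the right discrete surrogate.

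However, your Step 1 contains a gap. You invoke ``the (approximate) non-increasing property during Stage 1'' to control the quantity up to $t=T_1$, but no such property is available there: during Stage 1 the imbalance $e_\Delta$ is of order $f_1^2 d^{3/2}\epsilon^2$ (it is only being driven small \emph{over the course} of Stage 1 by the regularizer), so the imbalance-driven cross terms in any one-step expansion are not subordinate to the gradient-flow leading term, and the quantity can increase. The paper sidesteps this entirely with an elementary triangle inequality: $\|W_1(T_1) - W_1^\prime(T_1)\|_F \le \|W_1(T_1)\|_F + \|W_4(T_1)\|_F\,\|R(T_1)\|_{op} \le (1+2^{-20}) \cdot 2 f_1 d \epsilon$, using only the singular-value bounds from Theorem~\ref{stage 1: alignment stage, GD}. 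Replacing your Step 1 with this direct bound removes the gap. Two secondary remarks: first, you frame Step 2 as extending Theorem~\ref{antisym-error, general, informal}, but the paper explicitly states that \emph{that} theorem's SVD-decomposition proof ``is hard to generalize into the unbalanced setting''; the paper instead works directly with the matrix expression $W_1 - W_2^{-1}W_3^H W_4^H$ (the approach of Theorem~\ref{antisym-error for 2|N}), computing its GD increment without reference to $U,V,\Sigma_w$. Second, the argument is formally a bootstrap: the one-step bound is established conditional on $\|W_1-W_1^\prime\|_F \le 3f_1 d\epsilon$ already holding, and that invariant is then propagated inductively from $t=T_1$; ``summing'' the increments is only valid inside that inductive hypothesis, and this should be stated explicitly.
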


\begin{lemma}\label{stage 2, main term, GD}

The minimum eigenvalue of Hermitian term. For $t = T_1 + T_2$, 

\begin{equation}
  \sigma_{\min}\left(W_1 + W_2^{-1} W_3^H W_4^H \right)|_{t = T_1 + T_2} \ge 2^{3/4} \sigma_1^{1/4}(\Sigma) . 
\end{equation}

\end{lemma}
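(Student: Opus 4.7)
The strategy is to lift the continuous-time, balanced-initialization growth argument of Theorem \ref{dynamics of minimum singular value of hermitian term} to the discrete-time, approximately-balanced setting reached at $t = T_1$, and then drive $y(t) \coloneqq \sigma_{\min}\!\left(W_1(t) + W_2(t)^{-1}W_3(t)^H W_4(t)^H\right)$ from its very small post-alignment value up past the threshold $2^{3/4}\sigma_1^{1/4}(\Sigma)$. This threshold is strictly below the fixed point $2^{5/4}\sigma_1^{1/4}(\Sigma)$ of the idealized dynamics: in the clean balanced case, $y = \sigma_{\min}((U+V)\Sigma_w) = 2 x_k$ and Theorem \ref{dynamics of minimum singular value of hermitian term} saturates $x_k$ at $(2\sigma_1(\Sigma))^{1/4}$, so $y$ saturates at $2^{5/4}\sigma_1^{1/4}(\Sigma)$. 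Hence the driving term for $y$ can be kept uniformly positive throughout Stage 2.

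\textbf{Initial value and per-step recursion.} First, at $t = T_1$, I would combine Corollary \ref{Main term at the end of alignment stage, gd} (which approximates $Y(t) \coloneqq W_1 + W_2^{-1}W_3^H W_4^H$ by $(W_2W_3W_4)^{-1}\!\left(W(0) + (W(0)W(0)^H)^{1/2}\right)$ after alignment), the initialization bound $\sigma_{\min}\!\left(W(0)+(W(0)W(0)^H)^{1/2}\right) \ge f_2^{-1} d^{-3}\epsilon^4$ from Theorem \ref{Gaussian random matrix ensemble product, eigenvalues}, and the operator-norm control on $W_2^{-1}, W_3^{-1}, W_4^{-1}$ from Lemma \ref{bound of w23 inv op, and relevant terms, GD}; this gives a strictly positive — though tiny — lower bound $y(T_1) \ge \epsilon^4 / \mathrm{poly}(\sigma_1^{1/4}(\Sigma), d, f_1, f_2)$. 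Next, for $t \in [T_1, T_1+T_2]$, I would expand $Y(t{+}1) - Y(t)$ via the GD updates on each $W_j$ and decompose the increment into (a) a driving term that, under $\Sigma = \sigma_1(\Sigma) I$, reproduces the mechanism of Theorem \ref{dynamics of minimum singular value of hermitian term}; (b) an unbalancedness correction bounded by the now-small regularization term from Section \ref{subsubsection: Convergence of Regularization term}; (c) a skew-hermitian coupling error bounded by Lemma \ref{stage 2, skew-hermitian error, GD}; and (d) an $O(\eta^2)$ discretization error. Applying the discrete-time singular value perturbation bounds of Lemmas \ref{maximum and minimum singular values, general, discrete} and \ref{minimum singular values lower bound, general, discrete} would then give a recursion of the form
\begin{equation*}
 y(t{+}1)^2 \,\ge\, y(t)^2 + \eta\bigl[c_1\,\sigma_1(\Sigma)\,y(t)^4 - c_2\,y(t)^8 - E(t)\bigr] - O(\eta^2),
\end{equation*}
with absolute constants $c_1, c_2 > 0$ and an error $E(t)$ polynomially small in the skew-hermitian and regularization errors controlled in Stage 1.

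\textbf{Integrating the recursion.} As long as $y(t) \le 2^{3/4}\sigma_1^{1/4}(\Sigma)$ (equivalently $y(t)^4 \le 8\sigma_1(\Sigma)$, safely below the fixed point $y^4 = 32\sigma_1(\Sigma)$), the bracketed expression is lower-bounded by roughly $\tfrac{1}{2}c_1\sigma_1(\Sigma)y(t)^4$, producing super-exponential growth $\tfrac{d}{dt}(y^2) \gtrsim \sigma_1(\Sigma)(y^2)^2$ in the corresponding ODE. Integrating this inequality from $y(T_1)$ yields blow-up in time of order $1/(\sigma_1(\Sigma) y(T_1)^2) = \mathrm{poly}(\sigma_1^{1/4}/\epsilon, d, f_1, f_2)/\sigma_1^{3/2}(\Sigma)$, which is absorbed by the chosen $T_2 = O\!\left(\eta^{-1}\sigma_1^{-3/2}(\Sigma)\cdot \mathrm{poly}(f_1,f_2,d,\sigma_1^{1/4}/\epsilon)\right)$. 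Hence there is some $t^\star \in [T_1, T_1+T_2]$ at which $y(t^\star)$ first crosses $2^{3/4}\sigma_1^{1/4}(\Sigma)$, and a short monotonicity argument using the same recursion (the bracket stays positive up to $y^4 = 32\sigma_1(\Sigma)$) shows $y$ remains above the threshold thereafter, giving the stated bound at $t = T_1 + T_2$.

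\textbf{Main obstacle.} The principal technical difficulty is the circular dependence between the recursion on $y$ and the regularity needed to derive it: the decomposition of $Y(t{+}1)-Y(t)$ presupposes a lower bound on $\sigma_{\min}(W_j)$ (so that $W_2^{-1}$ is well-defined and the error terms are controllable), while in turn that lower bound comes from $y$ via $\sigma_{\min}(W_j) \gtrsim y/2$ minus a Lemma \ref{stage 2, skew-hermitian error, GD}-sized correction. I would close this loop by an induction on $t$: assume both regularity of the $W_j$ and the one-step recursion hold up through step $t$; use the regularity to justify the recursion step and obtain $y(t{+}1)$; then feed $y(t{+}1)$ back through the hermitian/skew-hermitian decomposition to re-establish the regularity at step $t{+}1$. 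Making this induction watertight in the presence of the $O(\eta^2)$ discretization corrections and the inverse-sensitive perturbation of $W_2^{-1}$ is where the bulk of the care is required.
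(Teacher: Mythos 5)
Your proposal is correct and follows essentially the same route as the paper: the same initial bound from Corollary \ref{Main term at the end of alignment stage, gd}, the same hermitian/skew-hermitian decomposition with $P=\tfrac{1}{2}(W_1+W_1')$, $Q=\tfrac{1}{2}(W_1-W_1')$, the same application of Lemma \ref{minimum singular values lower bound, general, discrete} to obtain the recursion $\sigma_{\min}(t{+}1)^2 \ge \sigma_{\min}(t)^2 + c\,\eta\,\sigma_1(\Sigma)\sigma_{\min}(t)^4 - \tfrac{1}{32}\eta\,\sigma_{\min}(t)^8$, and the same reciprocal/integration argument giving threshold crossing within $T_2$ and persistence above $2^{3/4}\sigma_1^{1/4}(\Sigma)$. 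Two small remarks: the paper sidesteps your self-identified ``circular dependence'' entirely, since in Stage 2 the bound on $\|W_2^{-1}\|_{op}$ is obtained by directly integrating its one-step increments from $t=T_1$ (Lemma \ref{bound of w23 inv op, and relevant terms, GD}) without ever invoking $\sigma_{\min}(W_1+W_1')$ — the coupling $\sigma_{\min}(W_2)\gtrsim y/2$ is only used later, in Stage 3; and your stated initial value $y(T_1)\gtrsim \epsilon^4/\mathrm{poly}$ is loose — dividing the $\epsilon^4$ initialization bound by $\max_{j,k}|\sigma_k(W_j(T_1))|^3 = O((f_1\sqrt{d}\epsilon)^3)$ gives $y(T_1)\gtrsim \epsilon/(2f_1^3f_2d^{9/2})$, which is what makes $T_2 \propto \epsilon^{-2}$ rather than $\epsilon^{-8}$.
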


Proofs are presented in \ref{stage 2, skew-hermitian error, GD, proof} in the Appendix. 

\subsection{Stage 3: Local Convergence Stage}

Since both the balanced error and skew-Hermitian error remain small, the minimal singular values of the weight matrices, after growing to the scale of the target matrix's, are prevented from decaying. This guarantees the local convergence.

\begin{theorem}\label{local convergence, GD, informal}

(Informal) Local convergence. After the second stage ($t\ge T_1 + T_2$), 

\begin{equation}
 \begin{aligned}
  \mathcal{L}(t) \le \mathcal{L}_{\rm ori}(T_1 + T_2) \exp\left(- \eta \sigma_1^{3/2}(\Sigma) (t - T_1 - T_2) \right) \\
  \sigma_{\min}\left(W_1(t) +W_2(t)^{-1} W_3(t)^H W_4(t)^H \right) \ge 2^{3/4} \sigma_1^{1/4}(\Sigma) \\ 
  \left \| W_1(t) - W_2(t)^{-1} W_3(t)^H W_4(t)^H \right \|_F \le 3 f_1 d \epsilon . 
 \end{aligned}
\end{equation}

\end{theorem}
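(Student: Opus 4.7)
\textbf{Proof proposal for Theorem \ref{local convergence, GD, informal}.}
The plan is to proceed by discrete-time induction on $t \ge T_1 + T_2$, maintaining all three bounds simultaneously. Lemmas \ref{stage 2, skew-hermitian error, GD} and \ref{stage 2, main term, GD} already establish the skew-Hermitian and Hermitian bounds at the base case $t = T_1 + T_2$. The task is to show that (a) the three bounds at time $t$ imply a one-step contraction of the loss, and (b) the contraction is mild enough that the Hermitian lower bound and skew-Hermitian upper bound persist to time $t+1$.

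\textbf{Step 1 (from the three bounds to a PL inequality).} First I would convert the bound on $\sigma_{\min}(W_1 + W_2^{-1} W_3^H W_4^H)$ into a lower bound on $\sigma_{\min}(W_1)$ by writing $W_1 = \tfrac{1}{2}(W_1 + W_2^{-1}W_3^H W_4^H) + \tfrac{1}{2}(W_1 - W_2^{-1}W_3^H W_4^H)$ and invoking the skew-Hermitian bound, which is $O(\epsilon)$ and thus negligible compared to $\sigma_1^{1/4}(\Sigma)$ under the assumption $\epsilon \le \sigma_1^{1/4}(\Sigma)/\mathrm{poly}$. Combined with the approximate balancedness inherited from Stage 1 (guaranteed by Theorem \ref{regularization term, convergence bound, GD} and the smoothness of extreme singular values under small $\eta$), this propagates to $\sigma_{\min}(W_j) \gtrsim \sigma_1^{1/4}(\Sigma)$ for all $j \in \{1,2,3,4\}$. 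Then, from $\nabla_{W_j}\mathcal{L}_{\rm ori} = -W_{\prod_L,j+1}^H(\Sigma - W)W_{\prod_R,j-1}^H$, I would derive
\begin{equation*}
\sum_{j=1}^4 \|\nabla_{W_j}\mathcal{L}_{\rm ori}\|_F^2 \;\gtrsim\; \sigma_1^{3/2}(\Sigma)\,\|\Sigma - W\|_F^2 \;=\; 2\sigma_1^{3/2}(\Sigma)\,\mathcal{L}_{\rm ori}.
\end{equation*}

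\textbf{Step 2 (descent lemma and loss contraction).} Using the standard smoothness-based descent inequality $\mathcal{L}(t+1) \le \mathcal{L}(t) - \eta\|\nabla\mathcal{L}(t)\|_F^2/2 + O(\eta^2 L^2)$ with smoothness constant $L = O(\sigma_1^{3/2}(\Sigma)\cdot\mathrm{poly})$ (controlled via the bounds on $\sigma_{\max}(W_j)$ from the regularity analysis of Section \ref{subsubsection: Convergence of Regularization term}), and combining with the PL inequality from Step 1, for $\eta$ as in the theorem I obtain
\begin{equation*}
\mathcal{L}_{\rm ori}(t+1) \le \bigl(1 - \Omega(\eta\sigma_1^{3/2}(\Sigma))\bigr)\mathcal{L}_{\rm ori}(t),
\end{equation*}
plus a small regularization contribution that also contracts linearly by Theorem \ref{regularization term, convergence bound, GD}. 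Iterating gives (i).

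\textbf{Step 3 (preservation of Hermitian lower bound and skew-Hermitian upper bound).} The main obstacle is translating the continuous-time non-increasing property (Theorem \ref{antisym-error, general, informal}) and the non-decreasing behavior of $\sigma_{\min}$ of the Hermitian term (Theorem \ref{dynamics of minimum singular value of hermitian term}) into discrete-time updates on the \emph{unbalanced} quantities $W_1 \pm W_2^{-1}W_3^H W_4^H$. I would first bound $\|W_2^{-1}\|_{op} \le 1/\sigma_{\min}(W_2) \lesssim \sigma_1^{-1/4}(\Sigma)$ from Step 1, then expand the one-step update of both $W_1 + W_2^{-1}W_3^H W_4^H$ and $W_1 - W_2^{-1}W_3^H W_4^H$ using $W_2(t+1)^{-1} = W_2(t)^{-1} - \eta W_2(t)^{-1}(\nabla_{W_2}\mathcal{L})W_2(t)^{-1} + O(\eta^2)$. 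The dominant first-order terms reproduce the continuous-time behavior, and the error terms are controlled by (a) the small balance residual from Stage 1, (b) the $O(\eta^2)$ higher-order corrections, and (c) the shrinking loss $\mathcal{L}_{\rm ori}$ which makes the $\mathcal{L}_{\rm ori}$-induced perturbation integrable over time. The discrete-time eigenvalue perturbation Lemmas \ref{maximum and minimum singular values, general, discrete} and \ref{minimum singular values lower bound, general, discrete} then convert these matrix updates into bounds on $\sigma_{\min}(W_1 + W_2^{-1}W_3^H W_4^H)$.

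\textbf{Main obstacle.} The hardest part will be Step 3: in continuous time, Theorems \ref{antisym-error, general, informal} and \ref{dynamics of minimum singular value of hermitian term} require strict balancedness, and their extension to the unbalanced discrete-time setting requires carefully tracking how $W_2^{-1}$ evolves (since its condition number can amplify perturbations) and how the residual balance error interacts with the Hermitian/skew-Hermitian decomposition. The summability of the $O(\eta^2)$ and $O(\mathcal{L}_{\rm ori})$ error terms along the exponentially contracting loss trajectory is what ultimately closes the induction, keeping both the Hermitian lower bound above $2^{3/4}\sigma_1^{1/4}(\Sigma)$ and the skew-Hermitian error below $3f_1 d\epsilon$ for all $t \ge T_1 + T_2$.
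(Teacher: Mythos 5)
Your proposal follows essentially the same three-part induction as the paper's proof of Theorem~\ref{local convergence, GD}: derive $\sigma_{\min}(W_j)\gtrsim\sigma_1^{1/4}(\Sigma)$ from the Hermitian and skew-Hermitian bounds plus approximate balancedness, deduce a linear contraction of the loss, and re-establish the two bounds at $t+1$ via the discrete-time update and Lemmas~\ref{maximum and minimum singular values, general, discrete} and \ref{minimum singular values lower bound, general, discrete} (the paper routes this through re-running Lemmas~\ref{stage 2, skew-hermitian error, GD} and \ref{stage 2, main term, GD}).

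The one place where your framing elides a genuine subtlety is Step~2. You invoke a ``standard smoothness-based descent inequality'' with smoothness constant ``$L=O(\sigma_1^{3/2}(\Sigma)\cdot\mathrm{poly})$'', but a GD step uses the \emph{full} gradient $\nabla\mathcal{L}=\nabla\mathcal{L}_{\rm ori}+\nabla\mathcal{L}_{\rm reg}$, and $\nabla\mathcal{L}_{\rm reg}$ carries a factor of $a$ that can be very large under the paper's hypotheses. A naive descent lemma applied to $\mathcal{L}_{\rm ori}$ then produces a potentially dominant cross term $-\eta\langle\nabla\mathcal{L}_{\rm ori},\nabla\mathcal{L}_{\rm reg}\rangle$, and a PL argument for the total $\mathcal{L}=\mathcal{L}_{\rm ori}+\mathcal{L}_{\rm reg}$ needs to handle the sign-indefinite cross term in $\|\nabla\mathcal{L}\|^2$. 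The paper's Lemma~\ref{L ori non-increasing, GD} avoids this by computing the one-step change of the \emph{product matrix} $W$, where the regularization contribution telescopes to $a(W\Delta_{0,1}-\Delta_{N,N+1}W)=0$ at first order (the discrete analogue of equation~(\ref{equation: dW/dt is irrelevant to a})), and the remaining regularization-induced error is pushed to $O(\eta^2)$ or multiplied by the already-small $\mathcal{L}_{\rm reg}(t)$. Your write-up would need to either import this cancellation explicitly or verify that the cross terms are dominated by the smallness of $\mathcal{L}_{\rm reg}$ established at the end of the alignment stage; either route is feasible and matches what the paper does, but ``standard descent lemma'' as stated does not get you there by itself.
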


Proof is presented in \ref{local convergence, GD, proof} in the Appendix.

\section{Conclusions, Limitations and Future work}

In this work, we establish a polynomial-time global convergence guarantee for gradient descent applied to four-layer matrix decomposition, under the setting of a target matrix with identical singular values and small random Gaussian initialization beyond the NTK regime. For complex random Gaussian initialization, global convergence is ensured with high probability, whereas for real random Gaussian initialization, it is guaranteed with a probability close to $\frac{1}{2}$. 

The analysis developed in this work reveals intrinsic properties of the training dynamics, such as the effective behavior of the regularization term, the monotonically increasing lower bound for the minimum singular value, and the non-increasing nature of the skew-Hermitian error. These findings might provide deeper insight into the training process of Deep Linear Networks.

We anticipate that this work will stimulate further research on global convergence proofs under general random initialization for matrix factorization with arbitrary depth and arbitrary - possibly low-rank - target matrices. 

The observed divergence in convergence behavior between real and complex initializations also reveals a subtle disparity, suggesting that complex initializations may circumvent certain saddle points that real initializations cannot. This insight might motivate more detailed analysis of the performance gap between complex and real neural networks. 

\section*{Reproducibility Statement}

All theoretical results stated in this paper are proved in full detail in the Appendix , from Section \ref{Appendix: Reduction To Diagonal (Identical) Target} to \ref{section: convergence, staged analysis, gd}, including the proofs of all main-text theorems as well as intermediate lemmas and derivations, so that a reader can verify each step independently. The numerical illustration in Appendix \ref{section: Numerical Simulations}, where we specify the hyper-parameters in that section. Because the experiments are straightforward, we have not released an implementation. 


\subsubsection*{Acknowledgments}

We thank Haoxin Li from University of Science and Technology of China for valuable discussions on Gaussian initializations, especially on the circular ensembles part. We thank Yiran Zhang from Tsinghua University for valuable discussions on the regularization terms. 

\bibliography{iclr2026_conference}
\bibliographystyle{iclr2026_conference}

\newpage

\appendix

\section{Reduction To Diagonal (Identical) Target}\label{Appendix: Reduction To Diagonal (Identical) Target}

For arbitrary ground truth $\Sigma \in \mathbb{F}^{d\times d}$, $\mathbb{F} = \mathbb{C}$ or $\mathbb{R}$, suppose its singular value decomposition is $\Sigma = U_\Sigma \Sigma^\prime V_\Sigma^H$ (replace $\cdot^H$ by $\cdot^\top$ for the real case, same for the rest of the analysis), we apply the following transformation: 

\begin{equation}\label{reduction to diagonal, eq}
 \begin{cases}
  W_1^\prime &=  W_1 V_\Sigma \\
  W_{j}^\prime &= W_j,\, j \in [2,N-1]\cap \mathbb{N}^* \\
  W_N^\prime &= U_\Sigma^H W_N 
 \end{cases} \quad .
\end{equation}

Then the balance error can be rewritten as 

\begin{equation}
  \Delta_{j,j+1} = \begin{cases}
    W_j^\prime{W_j^\prime}^H - {W_{j+1}^\prime}^H W_{j+1}^\prime &,\, j\in[1,N-1]\cap \mathbb{N}^*\\
    O^{d\times d} &,\, j\in\{0,N\} 
  \end{cases} \quad .
\end{equation}

\subsection{Training Dynamics}

For gradient flow, the dynamics becomes 

\begin{equation}
 \begin{aligned}
  \frac{\mathrm{d} W_j^\prime}{\mathrm{d}t} = \left( \prod_{k=j+1}^{N} {W_k^\prime}^H \right) \left(\Sigma^\prime - \prod_{k=N}^{1}W_k^\prime \right) \left( \prod_{k=1}^{j-1} {W_k^\prime}^H \right) + a W_j^\prime \Delta_{j-1,j} - a \Delta_{j,j+1} W_j^\prime .
 \end{aligned}
\end{equation}

For gradient descent, 

\begin{equation}
 \begin{aligned}
  W_j^\prime(t+1) &= W_j^\prime(t) + \eta \left( \prod_{k=j+1}^{N} {W_k^\prime(t)}^H \right) \left(\Sigma^\prime - \prod_{k=N}^{1}W_k^\prime(t) \right) \left( \prod_{k=1}^{j-1} {W_k^\prime(t)}^H \right) \\ &+ \eta a W_j^\prime(t) \Delta_{j-1,j}(t) - \eta a \Delta_{j,j+1}(t) W_j^\prime(t) .
 \end{aligned}
\end{equation}

Both share the same form as the original one (by replacing $\Sigma$ with $\Sigma^\prime$). 

\subsection{Initialization}

However, the distributions of $W_1$ and $W_N$ at initialization change correspondingly. To address this issue, we introduce the following definition: 

\begin{definition}\label{Input-Output Unitary(Orthogonal)-Invariant initialization} Input-Output Unitary(Orthogonal)-Invariant initialization. 

For a $N$-layer complex (real) matrix factorization $W = \prod_{j=N}^{1} W_j$, an initialization is input-output unitary-invariant (in the complex case) or orthogonal-invariant (in the real case) if the distribution of $W_N$ is left unitarily (or orthogonally) invariant and the distribution of $W_1$ is right unitarily (or orthogonally) invariant. That is, for all $U, V \in U(d, \mathbb{C})$ (or $O(d, \mathbb{R})$ in the real case), 

\begin{equation}
  W_N \overset{d}{=} U W_N ,\,W_1 \overset{d}{=} W_1 V .
\end{equation}

\end{definition}

\begin{remark}

The distribution of $W_{j,j\in[1,N]\cap\mathbb{N}^*}$ does not change under transformation \ref{reduction to diagonal, eq} if the initialization is Input-Output Unitary(Orthogonal)-Invariant. 

\end{remark}

Throughout this work, the initialization schemes discussed (including random Gaussian initialization and balanced Gaussian initialization) are Input-Output Unitary(Orthogonal)-Invariant. This is from the left and right invariance under multiplication of unitary/orthogonal matrices. 

Thus without loss of generality, the target matrix can be reduced to positive semi-definite diagonal matrix. Under Input-Output Unitary(Orthogonal)-Invariant initialization discussed in Definition \ref{Input-Output Unitary(Orthogonal)-Invariant initialization}, the initialization on $W_1$ and $W_N$ is not affected by this reduction. 

Moreover, if all singular values of $\Sigma$ are the same (to rephrase, a unitary/orthogonal matrix scaled by a constant), the convergence analysis can be reduced to $\Sigma^\prime = \sigma_1(\Sigma) I$.

\section{Initialization}\label{section: initialization}

First and foremost, we introduce the concept of Circular ensembles \citep{dyson1962threefold} along with some properties. 

\subsection{Lemmas for Gaussian random matrix ensemble and Haar measure on $U(d,\mathbb{C})$ and $O(d,\mathbb{R})$}

In the following derivations, we denote $O(d,\mathbb{R})$ as the $d$-dimensional orthogonal group on real number, and $U(d,\mathbb{C})$ as the $d$-dimensional unitary group on complex number. 

We list the classical conclusions in Linear Algebra without proof: 

\begin{lemma} The eigenvalues of Orthogonal/Unitary Matrices. 

1. Unitary matrices. $\forall U \in U(d,\mathbb{C})$, $d\in\mathbb{N}^*$, the eigenvalues of $U$ are $e^{i\theta_{1,2,\cdots,d}}$, where $\theta_i \in[0,2\pi)$. 

2. Orthogonal matrices. $\forall O \in O(d,\mathbb{R})$, $d\in\mathbb{N}^*$, the eigenvalues of $O$ are: 

\begin{equation}
 \begin{cases}
  1, e^{\pm i\theta_{1,2,\cdots,m}}&,\, d = 2m+1,\, \det(O) = 1 \\ 
  -1, e^{\pm i\theta_{1,2,\cdots,m}}&,\, d = 2m+1,\, \det(O) = -1 \\ 
  e^{\pm i\theta_{1,2,\cdots,m}}&,\, d = 2m,\, \det(O) = 1 \\ 
  1,-1, e^{\pm i\theta_{1,2,\cdots,m-1}}&,\, d = 2m,\, \det(O) = -1 \\ 
 \end{cases} \quad. 
\end{equation}

\end{lemma}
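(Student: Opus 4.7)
The plan is to reduce both parts to two classical facts: (i) a unitary (or orthogonal, viewed over $\mathbb{C}$) matrix is an isometry of the Hermitian inner product, which forces every eigenvalue to lie on the unit circle; and (ii) a real matrix has a real characteristic polynomial, so its non-real eigenvalues come in complex-conjugate pairs. Everything else is a parity/counting exercise driven by the sign of the determinant.

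\textbf{Part 1 (unitary).} Given $U \in U(d,\mathbb{C})$ and any eigenpair $(\lambda, v)$ with $v \neq 0$ and $U v = \lambda v$, I would compute
\[ v^H v \;=\; v^H U^H U v \;=\; |\lambda|^2\, v^H v, \]
which forces $|\lambda| = 1$ and hence $\lambda = e^{i\theta_k}$ for some $\theta_k \in [0, 2\pi)$. Doing this for each of the $d$ eigenvalues (with multiplicity) yields the claimed form.

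\textbf{Part 2 (orthogonal).} I would view $O \in O(d,\mathbb{R})$ as an element of $U(d,\mathbb{C})$, so by Part 1 every eigenvalue still has modulus $1$. Since $\det(xI - O)$ has real coefficients, the non-real eigenvalues group into conjugate pairs $\{e^{i\theta_j}, e^{-i\theta_j}\}$ with $\theta_j \in (0,\pi)$, and any real eigenvalue of modulus $1$ must equal $\pm 1$. Writing $r_+$, $r_-$ for the multiplicities of $+1$ and $-1$, and $c$ for the number of non-real conjugate pairs, we have $r_+ + r_- + 2c = d$ and $\det(O) = (-1)^{r_-}$, since each conjugate pair contributes $e^{i\theta_j} e^{-i\theta_j} = 1$ to the product.

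The remainder is a four-way case analysis on the parity of $d$ and the sign of $\det(O)$. For instance, if $d = 2m+1$ and $\det(O) = +1$, then $r_-$ is even while $r_+ + r_-$ is odd, so $r_+$ is odd and in particular at least one $+1$ eigenvalue appears; I would then package the remaining $2m$ eigenvalues as $m$ pairs $e^{\pm i \theta_j}$, using the convention $\theta_j = 0$ to collapse a pair to $(1,1)$ and $\theta_j = \pi$ to collapse it to $(-1,-1)$ whenever leftover real eigenvalues occur. The three other cases are identical mod parity. The only genuine subtlety — really the sole step needing care rather than invocation — is this last bookkeeping: verifying that the collapsed-pair convention correctly accounts for every possible spectrum of $O$ regardless of how many $\pm 1$ eigenvalues are actually present, so that the statement holds not just generically but for all $O$.
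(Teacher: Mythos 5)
Your proof is correct: the paper itself states this lemma as a classical fact ``without proof,'' so there is nothing to compare against, but your argument is the standard one and it fully fills the gap. The modulus computation in Part 1, the conjugate-pairing from the real characteristic polynomial in Part 2, and the identity $\det(O) = (-1)^{r_-}$ together with $r_+ + r_- + 2c = d$ do correctly force the claimed spectra in all four cases; in particular the collapsed-pair convention ($\theta_j = 0$ for a surplus pair of $+1$'s, $\theta_j = \pi$ for a surplus pair of $-1$'s) works because in each case the leftover multiplicities $r_+$ and $r_-$ (after removing the guaranteed fixed eigenvalues) are both even, exactly as your parity analysis shows.
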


Following the conventions, we call the argument of the eigenvalues as eigenangles. 

\begin{definition} Circular ensembles. (refer to \cite{dyson1962threefold}, \cite{forrester2010log})

The circular ensembles are measures on spaces of unitary(or orthogonal, when generalizing from complex number to real number) matrices. 

1. Unitary circular ensemble. The distribution of the unitary circular ensemble (CUE) is the Haar measure on $d$-dimensional (complex) unitary group $U(d,\mathbb{C})$. 

2. Circular real ensemble. The distribution of the circular real ensemble (CRE) is the Haar measure on $d$-dimensional real orthogonal group $O(d,\mathbb{R})$. 

\end{definition}

\begin{lemma}\label{1-point correlation function of circular ensemble} 1-point correlation function of $\mathrm{CUE}(d)$ and $\mathrm{CRE}(d)$. 

1. CUE. The 1-point correlation function of $\mathrm{CUE}(d)$ is 

\begin{equation}
  \rho_{(1), \mathrm{CUE}}(\theta) = \frac{d}{2\pi} .
\end{equation}

2. CRE, determinant $1$. The 1-point correlation function of $\mathrm{CRE}(d)$ under determinant $1$ is 

\begin{equation}
  \rho_{(1), \mathrm{CRE}, \det=1}(\theta) = \frac{1}{2\pi} \left( d-1 + (-1)^d \frac{\sin (d-1)|\theta|}{\sin |\theta|} \right) ,\,\theta\in(-\pi,\pi] .
\end{equation}

\end{lemma}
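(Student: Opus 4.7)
The plan is to derive both densities from the Weyl integration formula applied to the relevant compact group, and then to recognize the one-point correlation function as the diagonal of a reproducing kernel summed via a Christoffel--Darboux-type identity.

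For $\mathrm{CUE}(d)$, the Weyl integration formula on $U(d)$ gives joint eigenangle density
$$p(\theta_1,\ldots,\theta_d) = \frac{1}{d!\,(2\pi)^d}\prod_{j<k}\bigl|e^{i\theta_j}-e^{i\theta_k}\bigr|^2 = \frac{1}{d!}\bigl|\det[\phi_{k-1}(\theta_j)]_{j,k=1}^{d}\bigr|^2,$$
where $\phi_k(\theta)=e^{ik\theta}/\sqrt{2\pi}$ is an orthonormal family in $L^2([0,2\pi))$. Andr\'eief's identity then exhibits the process as determinantal with reproducing kernel $K_d(\theta,\phi) = \sum_{k=0}^{d-1}\phi_k(\theta)\overline{\phi_k(\phi)}$, from which the one-point function reads off as $\rho_{(1),\mathrm{CUE}}(\theta) = K_d(\theta,\theta) = \sum_{k=0}^{d-1}\tfrac{1}{2\pi} = \tfrac{d}{2\pi}$, proving part 1.

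For $\mathrm{CRE}(d)$ restricted to $\det=+1$ (i.e., $SO(d)$), the eigenvalues come as conjugate pairs $e^{\pm i\theta_j}$ on $(0,\pi)$, together with a forced $+1$ when $d$ is odd. The Weyl integration formulas for $SO(2m)$ and $SO(2m+1)$ give joint densities proportional to $\prod_{j<k}(\cos\theta_j-\cos\theta_k)^2$ times a parity-dependent boundary weight (constant for $D_m$, proportional to $\prod_j(1-\cos\theta_j)$ for $B_m$). Under the substitution $x_j=\cos\theta_j$ this becomes a classical Jacobi orthogonal-polynomial ensemble on $[-1,1]$, again determinantal. I would identify the associated Christoffel--Darboux kernel, evaluate it on the diagonal, and then transport the answer back to $\theta$ coordinates while symmetrizing over $(-\pi,\pi]$ via the $\theta\mapsto-\theta$ redundancy introduced by the pairing $e^{\pm i\theta_j}$. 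After this symmetrization, the summed kernel collapses to a Dirichlet-type trigonometric sum that evaluates to $\frac{\sin((d-1)|\theta|)}{\sin|\theta|}$, which combined with the uniform ``bulk'' contribution $\tfrac{d-1}{2\pi}$ and a parity sign yields the closed form in the lemma.

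The main obstacle I anticipate is the uniform treatment of the two parities $d=2m$ and $d=2m+1$: these correspond to structurally different root systems ($D_m$ versus $B_m$) with different Jacobi weights and different endpoint behavior, yet the target formula packages both cases into a single expression with a $(-1)^d$ prefactor. The delicate bookkeeping lies in matching the endpoint contributions of the Christoffel--Darboux sum across parities, and in isolating the atomic mass from the forced $+1$ eigenvalue in the odd case (which is a Dirac contribution at $\theta=0$ that should not appear in the stated smooth density) so that both parities reduce to the same Dirichlet kernel up to the $(-1)^d$ sign. The CUE part, by contrast, is essentially a one-line consequence of orthonormality.
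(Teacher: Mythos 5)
Your proposal is correct. For the CUE part, your route (exhibit the determinantal structure with kernel $K_d(\theta,\phi)=\sum_{k=0}^{d-1}\phi_k(\theta)\overline{\phi_k(\phi)}$ and read off $\rho_{(1)}=K_d(\theta,\theta)$) is genuinely different from the paper's: the paper observes that the Vandermonde-squared joint density $\prod_{j<k}|e^{i\theta_j}-e^{i\theta_k}|^2$ depends only on angle differences, hence is rotation invariant, hence the one-point function is constant and equals $d/2\pi$ by normalization. The paper's argument is more elementary (no determinantal-point-process machinery needed), while yours is more systematic and generalizes immediately to higher correlations; both are standard. For the CRE part, your plan and the paper's proof are essentially the same approach: change of variables $x=\cos\theta$, identify the resulting orthogonal-polynomial ensemble, evaluate the Christoffel--Darboux/reproducing kernel on the diagonal, and transport back. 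The one piece you leave abstract is exactly the piece the paper makes concrete: for $d=2N$ the weight is $w_2(x)=1/\sqrt{1-x^2}$, whose associated monic orthogonal polynomials are Chebyshev of the first kind $T_n(x)=\cos(n\arccos x)$, while for $d=2N+1$ the weight picks up the $\prod_j(1-x_j)$ factor, giving $w_2(x)=\sqrt{(1-x)/(1+x)}$ and Chebyshev of the fourth kind $W_n(x)=\sin((n+\tfrac12)\theta)/\sin(\theta/2)$. The $(-1)^d$ sign you worry about does not require any special matching across parities: it falls out automatically because $\sum_\nu T_\nu^2$ and $\sum_\nu W_\nu^2$ produce Dirichlet-type sums of opposite sign after simplification. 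Your instinct to exclude the Dirac contribution at $\theta=0$ for odd $d$ is also consistent with the paper, which remarks that $\rho_{(1)}$ is the density of the nontrivial eigenangles, with fixed eigenvalues (e.g.\ the forced $+1$) excluded.
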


\begin{remark}

1-point correlation function $\rho_{(1)}(\theta)$ can be interpreted as the density of eigenangles at $\theta$ (despite probably existed fixed eigenangles, e.g. $0$, $\pi$). 

\end{remark}

\begin{proof}

Part 1. CUE. 

From (146) of \cite{dyson1962threefold} and \cite{forrester2010log}, the joint probability density function of eigenangles is 

\begin{equation}
  p_{\mathrm{CUE}}(\theta_{k,k\in[1,d]\cap\mathbb{N}^*}) \propto \prod_{1\le k < j \le d} \left| e^{i\theta_j} - e^{i\theta_k} \right|^2 = \prod_{1\le k < j \le d} \left| e^{i(\theta_j - \theta_k)} - 1 \right|^2 .
\end{equation}

Notice that it is rotation invariant, that is $\forall \Delta \theta \in [0,2\pi]$, $p_{\mathrm{CUE}}(\theta_{k,k\in[1,d]\cap\mathbb{N}^*}) = p_{\mathrm{CUE}}((\theta_{k} + \Delta \theta)_{k\in[1,d]\cap\mathbb{N}^*})$. Thus the 1-point correlation function (density of eigenangles at $\theta$) is uniform, which is $\frac{d}{2\pi}$. 

Part 2. CRE. 

Below we define $x_{i} = \cos\theta_i$, then $\rho_{(1)}(\theta) = \sin\theta \cdot \rho_{(1)}(x)$, $p(x_{k,k\in[1,N]\cap\mathbb{N}^*}) = \left(\prod_{k=1}^{N} \frac{1}{\sqrt{1-x^2}} \right) p(\theta_{k,k\in[1,N]\cap\mathbb{N}^*})$. 

By combining Proposition 5.1.1 and 5.1.2 in \cite{forrester2010log} together, suppose with $p_k(x)$ a polynomial of degree $k$ which is further more monic (i.e. the coefficient of $x^k$ is unity), $\{p_k(x)\}_{k\in \mathbb{N}}$ is the orthogonal polynomials associated with the weight function $w_2(x)$, 

\begin{equation}
  \int_{-\infty}^{+\infty} p_j(x) p_k(x) w_2(x) \mathrm{d}x \eqqcolon \langle p_j, p_k \rangle_2 = \langle p_j, p_j \rangle_2 \delta_{j,k} .
\end{equation}

and the joint probability density function satisfies 

\begin{equation}
  p(x_{k,k\in[1,N]\cap\mathbb{N}^*}) \propto \prod_{1\le k < j \le N} \left( x_j - x_k \right)^2 \prod_{l=1}^{N} w_2(x) .
\end{equation}

the 1-point correlation function is

\begin{equation}\label{1-point correlation: in common}
  \rho_{(1)}(x) = w_2(x) \sum_{\nu = 0}^{N-1} \frac{p_\nu^2(x)}{\langle p_\nu, p_\nu \rangle_2} .
\end{equation}

Note that the restriction of monic can be ommited since there is a normalization coefficient on the denominator. 

2.1. CRE, determinant 1, $d=2N$. From (135) of \cite{dyson1962threefold}, Section 2.9 of \cite{forrester2010log} and \cite{girko1985distributionOrthogonal},  

\begin{equation}
  p_{\mathrm{CRE}, {\rm even}, \det=1}(\theta_{k,k\in[1,N]\cap\mathbb{N}^*}) \propto \prod_{1\le k < j \le N} \left| \cos \theta_j - \cos \theta_k \right|^2 ,\, \theta_{k,k\in[1,N]\cap\mathbb{N}^*} \in [0,\pi] .
\end{equation}

By the change of variables, 

\begin{equation}
  p_{\mathrm{CRE}, {\rm even}, \det=1}(x_{k,k\in[1,N]\cap\mathbb{N}^*}) \propto \prod_{1\le k < j \le N} \left( x_j - x_k \right)^2 \prod_{l=1}^{N} \frac{1}{\sqrt{1-x_l^2}}  . 
\end{equation}

Here $w_{2}(x) = \frac{1}{\sqrt{1-x^2}}$. From knowledge of orthogonal polynomials ((1.12.3), (4.1.7), \cite{szegő1939orthogonal}), Chebyshev polynomials of the first kind $T_n(x) = \cos(n \arccos x)$ associates with $w_2(x) = \frac{1}{\sqrt{1-x^2}}$: 

\begin{equation}
  \int_{-1}^{1} T_j(x) T_k(x) w_2(x) \mathrm{d}x = \begin{cases}
    \pi, & j=k=0 \\
    \frac{\pi}{2}, & j=k\ge1 \\
    0, & j\ne k
  \end{cases} \quad.
\end{equation}

By (\ref{1-point correlation: in common}), 

\begin{equation}
 \begin{aligned}
  \rho_{(1), \mathrm{CRE}, {\rm even}, \det=1}(x) &= \frac{1}{\sqrt{1-x^2}} \cdot \left( \frac{1}{\pi} + \frac{2}{\pi} \sum_{\nu=1}^{N-1} \cos^2 \nu\theta \right) \\&= \frac{1}{2\pi \sin\theta} \left[2N-1 + \frac{\sin(2N-1)\theta}{\sin \theta}\right] .
 \end{aligned}
\end{equation}

\begin{equation}
  \rho_{(1), \mathrm{CRE}, {\rm even}, \det=1}(\theta) = \frac{1}{2\pi} \left[d-1 + \frac{\sin(d-1)\theta}{\sin \theta}\right],\,\theta \in [0,\pi] .
\end{equation}

From symmetry, $\rho_{(1), \mathrm{CRE}, {\rm even}, \det=1}(-\theta) = \rho_{(1), \mathrm{CRE}, {\rm even}, \det=1}(\theta)$. 

2.2. CRE, determinant 1, $d=2N+1$. From (137) of \cite{dyson1962threefold}, Section 2.9 of \cite{forrester2010log} and \cite{girko1985distributionOrthogonal}, 

\begin{equation}
  p_{\mathrm{CRE}, {\rm odd}, \det=1}(\theta_{k,k\in[1,N]\cap\mathbb{N}^*}) \propto \prod_{1\le k < j \le N} \left| \cos \theta_j - \cos \theta_k \right|^2 \prod_{l=1}^{N} (1-\cos \theta_l) ,\, \theta_{k,k\in[1,N]\cap\mathbb{N}^*} \in [0,\pi] .
\end{equation}

By the change of variables, 

\begin{equation}
  p_{\mathrm{CRE}, {\rm odd}, \det=1}(x_{k,k\in[1,N]\cap\mathbb{N}^*}) \propto \prod_{1\le k < j \le N} \left( x_j - x_k \right)^2 \prod_{l=1}^{N} \sqrt{\frac{1-x_l}{1+x_l}} .
\end{equation}

Here $w_{2}(x) = \sqrt{\frac{1-x}{1+x}}$. From knowledge of orthogonal polynomials ((1.12.3), (4.1.7), \cite{szegő1939orthogonal}), Chebyshev polynomials of the fourth kind $W_n(x) = \frac{\sin \left(\left(n+\frac{1}{2}\right)\theta\right)}{\sin\left(\frac{\theta}{2}\right)}$, $\theta = \arccos x$ associates with $w_{2}(x) = \sqrt{\frac{1-x}{1+x}}$: 

\begin{equation}
  \int_{-1}^{1} W_j(x) W_k(x) w_2(x) \mathrm{d}x = \begin{cases}
    \pi, & j=k \ge 0 \\
    0, & j\ne k
  \end{cases} \quad .
\end{equation}

By (\ref{1-point correlation: in common}), 

\begin{equation}
 \begin{aligned}
  \rho_{(1), \mathrm{CRE}, {\rm odd}, \det=1}(x) &= \sqrt{\frac{1-x}{1+x}} \cdot \left( \frac{1}{\pi} \sum_{\nu=0}^{N-1} \left(\frac{\sin \left(\left(n+\frac{1}{2}\right)\theta\right)}{\sin\left(\frac{\theta}{2}\right)}\right)^2 \right) \\&= \frac{1}{2\pi \sin\left(\theta \right)} \left[2N - \frac{\sin(2N\theta)}{\sin \theta}\right] .
 \end{aligned}
\end{equation}

\begin{equation}
  \rho_{(1), \mathrm{CRE}, {\rm odd}, \det=1}(\theta) = \frac{1}{2\pi} \left[d-1 - \frac{\sin(d-1)\theta}{\sin \theta}\right],\,\theta \in [0,\pi] .
\end{equation}

From symmetry, $\rho_{(1), \mathrm{CRE}, {\rm odd}, \det=1}(-\theta) = \rho_{(1), \mathrm{CRE}, {\rm odd}, \det=1}(\theta)$. 

This completes the proof.

\end{proof}

\begin{theorem}\label{minimum singular value under Haar measure}

For $Q$ sampled from Haar measure on $U(d,\mathbb{C})$ (or $O(d,\mathbb{R})$ if $\mathbb{F} = \mathbb{R}$), 

1. $\mathbb{F} = \mathbb{C}$. $\Pr(\sigma_{\min}(I+Q) \ge \pi\delta d^{-1}) \ge 1-\delta$.

2. $\mathbb{F} = \mathbb{R}$. If $d\ge2$, $\Pr\left(\sigma_{\min}(I+Q) \ge \frac{\pi\delta}{2} (d-1)^{-1}|\det(Q)=1 \right) \ge 1-\delta$. 

\end{theorem}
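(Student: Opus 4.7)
The key observation is that for any $Q$ with $QQ^H=I$, the matrix $I+Q$ is normal, since both $(I+Q)(I+Q)^H$ and $(I+Q)^H(I+Q)$ equal $2I+Q+Q^H$. Therefore, the singular values of $I+Q$ equal the moduli of its eigenvalues. If $e^{i\theta_1},\ldots,e^{i\theta_d}$ are the eigenvalues of $Q$, then
\[
\sigma_{\min}(I+Q)\;=\;\min_{k}\bigl|1+e^{i\theta_k}\bigr|\;=\;\min_{k}2\bigl|\cos(\theta_k/2)\bigr|,
\]
so $\sigma_{\min}(I+Q)$ is small precisely when some eigenangle $\theta_k$ lies in a narrow arc around $\pm\pi$.

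To quantify this, for a threshold $c>0$ define $B_c\coloneqq\{\theta\in(-\pi,\pi]:2|\cos(\theta/2)|<c\}$. Via the change of variables $\phi=\pi-|\theta|$ one checks $|B_c|=4\arcsin(c/2)$, which on $c\in[0,1]$ is bounded by $\pi c$ (from $\arcsin y\le \pi y/2$) and tends to $2c$ as $c\downarrow 0$. The event $\{\sigma_{\min}(I+Q)<c\}$ coincides with $\{\#\{k:\theta_k\in B_c\}\ge 1\}$, so Markov's inequality applied to the counting statistic yields
\[
\Pr\bigl(\sigma_{\min}(I+Q)<c\bigr)\;\le\;\E\bigl[\#\{k:\theta_k\in B_c\}\bigr]\;=\;\int_{B_c}\rho_{(1)}(\theta)\,d\theta,
\]
where $\rho_{(1)}$ is the one-point correlation function of the relevant ensemble (Haar on $U(d,\mathbb{C})$ for part 1, Haar on $O(d,\mathbb{R})$ conditioned on $\det=1$ for part 2). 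This reduces both claims to a single integral, for which Lemma \ref{1-point correlation function of circular ensemble} supplies closed-form expressions.

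For part 1, $\rho_{(1),\mathrm{CUE}}(\theta)=d/(2\pi)$, so the right-hand side equals $(d/(2\pi))\cdot|B_c|$; taking $c=\pi\delta/d$ and using $|B_c|\le 2c$ in the relevant small-$c$ regime yields exactly $\delta$. For part 2, Lemma \ref{1-point correlation function of circular ensemble} gives $\rho_{(1),\mathrm{CRE},\det=1}(\theta)=\frac{1}{2\pi}\bigl(d-1+(-1)^d\sin((d-1)|\theta|)/\sin|\theta|\bigr)$. Applying the geometric-sum identity $\sin(n\psi)/\sin\psi=\sum_{k=0}^{n-1}e^{i(2k-n+1)\psi}$ with $n=d-1$ gives the Dirichlet-type bound $|\sin((d-1)|\theta|)/\sin|\theta||\le d-1$, so that $\rho_{(1),\mathrm{CRE},\det=1}(\theta)\le (d-1)/\pi$. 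Plugging $c=\pi\delta/(2(d-1))$ and $|B_c|\le 2c$ into the integral then produces the stated bound.

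The main obstacle I expect is bookkeeping constants tightly enough to match the thresholds $\pi\delta/d$ and $\pi\delta/(2(d-1))$ exactly: the clean estimate $|B_c|\le 2c$ is only asymptotic as $c\downarrow 0$, while the uniform estimate $|B_c|\le\pi c$ loses a factor of $\pi/2$. To close this gap one either restricts to the regime where $c=O(\delta/d)$ is small enough that $\arcsin(c/2)/(c/2)$ is within the slack absorbed by the statement, or retains $\arcsin$ in the argument and uses monotone inversion on the final threshold. A secondary point in part 2 is that the first-moment bound must be taken under the conditional law $\{\det(Q)=1\}$, but this is automatic since Lemma \ref{1-point correlation function of circular ensemble} computes $\rho_{(1)}$ under precisely that conditioning. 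Beyond these constant-tracking issues the proof is a clean application of the first-moment method.
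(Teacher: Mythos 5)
Your plan mirrors the paper's argument exactly: exploit the normality of $I+Q$ to write $\sigma_{\min}(I+Q)=2\min_k\cos(\theta_k/2)$, apply the first-moment method to the count of eigenangles in a thin arc near $\pm\pi$, and evaluate the expectation via the one-point correlation function from Lemma~\ref{1-point correlation function of circular ensemble}.

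The ``bookkeeping'' concern you flag in your last paragraph is not merely a matter of care --- the stated threshold $\pi\delta d^{-1}$ cannot actually be reached by this method, and neither of your suggested remedies closes the gap. You have $|B_c|=4\arcsin(c/2)$; since $\arcsin y\ge y$ on $[0,1]$ with equality only at $y=0$, the estimate $|B_c|\le 2c$ that would produce the stated constant is \emph{false for every} $c>0$, not merely outside a small-$c$ regime. The best the first-moment bound gives is $\Pr\left(\sigma_{\min}(I+Q)\ge 2\sin(\pi\delta/(2d))\right)\ge 1-\delta$, and $2\sin(\pi\delta/(2d))<\pi\delta/d$ strictly, so ``monotone inversion'' yields a strictly weaker threshold than claimed, and there is no slack in the statement to absorb the difference (for $d=1$ the claim is flatly false for any $\delta\in(0,1]$). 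The paper's own proof hides the same issue behind a reversed inequality: it asserts $\Pr(\sigma_{\min}(I+Q)\ge\delta\theta)\ge\Pr(\sigma_{\min}(I+Q)\ge 2\sin(\delta\theta/2))$, but $\delta\theta\ge 2\sin(\delta\theta/2)$ forces the opposite direction. Part~2 has the identical problem. The flaw is inconsequential for the paper's downstream use, where only the $\Omega(\delta/d)$ scaling enters, but a correct statement of this lemma must replace the thresholds by $2\sin(\pi\delta/(2d))$ and $2\sin(\pi\delta/(4(d-1)))$, or more simply by $2\delta/d$ and $\delta/(d-1)$ via $\sin x\ge 2x/\pi$.
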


\begin{remark}

For $\mathbb{F} = \mathbb{R}$, $d=1$, the eigenvalue of $Q$ is $\det(Q)$, and thus $\Pr(\sigma_{\min}(I+Q) \ge 2 - \Delta |\det(Q)=1) = 1$, $\forall \Delta\in(0,2)$. 

\end{remark}

\begin{remark}

For $\mathbb{F} = \mathbb{R}$, $\Pr(\det(Q)=1) = \Pr(\det(Q)=-1) = \frac{1}{2}$. If $\det(Q)=-1$, $Q$ has an eigenvalue of $-1$, causing $\Pr(\sigma_{\min}(I+Q)) = 0$. 

\end{remark}

\begin{proof}

Consider $\theta_k\in(-\pi,\pi]$, 

\begin{equation}
 \begin{aligned}
  \sigma_{k}(I + Q) &= \sqrt{\lambda_{k}(2I + Q + Q^H)} = \sqrt{2 + e^{i\theta_k} + 1/e^{i\theta_k}} = 2\cos \left(\frac{\theta_k}{2}\right) \\ 
  \sigma_{\min}(I + Q) &= \min_k\cos\left(\frac{\theta_k}{2}\right) .
 \end{aligned}
\end{equation}

The second step is from the fact that $Q^H = Q^{-1}$ shares the same eigenvectors with $Q$, and corresponding eigenvalues are the reciprocal of the original eigenvalues. 

Denote $N(\delta\theta)$ to be number of eigenvectors in $(-\pi, -\pi + \delta\theta]\cup[\pi - \delta\theta, \pi]$, $\delta\theta\in(0,\pi)$. From Markov inequality, 

\begin{equation}
 \begin{aligned}
  \Pr\left(\sigma_{\min}(I+Q) \ge \delta\theta\right) &\ge \Pr\left(\sigma_{\min}(I+Q) \ge 2\sin \frac{\delta\theta}{2}\right) \\
  &= 1-\Pr(N(\delta\theta)\ge1) \\
  &\ge 1 - \mathbb{E}(N(\delta\theta)) = 1 - \int_{\theta \in (-\pi, -\pi + \delta\theta]\cup[\pi - \delta\theta, \pi]} \rho_{(1)}(\theta) \mathrm{d}\theta .
 \end{aligned}
\end{equation}

By invoking Lemma \ref{1-point correlation function of circular ensemble}, 

1. For $\mathbb{F} = \mathbb{C}$, 

\begin{equation}
  \mathbb{E}(N(\delta\theta)) = \frac{d}{2\pi} \cdot 2\delta\theta . 
\end{equation}

By setting $\delta\theta = \pi\delta d^{-1}$, $\Pr\left(\sigma_{\min}(I+Q) \ge \delta\theta\right) \ge 1 - \delta$. 

2. For $\mathbb{F} = \mathbb{R}$ under determinant 1, for $\theta^\prime\in[0,\pi]$, $\rho_{(1)} (\pi - \theta^\prime) = \frac{1}{2\pi} \left( d-1 + \frac{\sin (d-1)\theta^\prime}{\sin \theta^\prime} \right)$. 

If $d=1$, $\rho_{(1)} (\theta)\equiv 0$ and thus $\mathbb{E}(N(\delta\theta)) = 0$. For $d\ge2$:  

From $\frac{\sin (d-1)\theta}{\sin \theta} \le d-1$, 

\begin{equation}
 \begin{aligned}
  \mathbb{E}(N(\delta\theta)) = 2\int_{0}^{\delta\theta} \rho_{(1)} (\pi - \theta^\prime) \mathrm{d}\theta^\prime \le 2\int_{0}^{\delta\theta} \frac{1}{2\pi} \cdot 2(d-1) \mathrm{d}\theta^\prime = \frac{2(d-1)}{\pi}\delta\theta .
 \end{aligned}
\end{equation}

By setting $\delta\theta = \frac{\pi\delta}{2} (d-1)^{-1}$, $\Pr\left(\sigma_{\min}(I+Q) \ge \delta\theta|\det(Q)=1\right) \ge 1 - \delta$. 

This completes the proof. 

\end{proof}

\subsection{Random Gaussian Initialization}\label{subsection: random gaussian initialization}


In the following, we present the proof for Theorem \ref{Gaussian random matrix ensemble product, eigenvalues}. 

For a real/complex Gaussian random matrix of dimension $d\times d$, with probability at least $\delta$, the largest singular value is upper bounded by $ O\left( \left(1 + \sqrt{\frac{\ln\left( \frac{1}{\delta} \right)}{d}} \right) \sqrt{d} \right)$ (Theorem 4.4.5, \cite{Vershynin_2018}), while the smallest is lower bounded by $\Omega\left(\frac{\delta}{\sqrt{d}}\right)$ (Theorem 1.1, \cite{tao2009randommatricesdistributionsmallest}). (also refer to Corollary 2.3.5 and Theorem 2.7.5 of \cite{taotopics} )

\begin{proof}\label{Gaussian random matrix ensemble product, eigenvalues, proof}

The upper and lower bound for singular values of $W_k$ follows immediately. 
The main challenge is the minimum singular value of $W + (W W^H)^{1/2}$. 

At the beginning, we define a modification of Gaussian random matrix ensemble for simplification: 

$W$ is sampled from (complex or real) Gaussian random matrix ensemble, and if $\mathrm{rank}(W)$ is not full, sample $W$ from Gaussian random matrix ensemble again until it is full rank. 

Since the set of $\mathrm{rank}(W)$ not being full is zero measure, the distribution of $W$ shares the same with the one before modification almost surely, and thus changing Gaussian random matrix ensemble to modified version \textit{does not affect} the analysis below essentially. 

This modification is for better expression on definition of left and right unitary (orthogonal) matrix of SVD. For full rank square matrix $W = U \Sigma V^H$, $U$ and $V$ are not unique, but $VU^H$ is (even if the singular values are non-distinct, or changing the order of diagonal elements of $\Sigma$. This is due to the uniqueness of polar decomposition $W = S Q$ under full rank, where $Q=U V^H$, $S = (W W^H)^{1/2}$. ) and thus well-defined. 

Without changing the result, we analysis the initialization scheme of modified Gaussian random matrix ensemble instead. Then $W$ is full rank and thus polar decomposition is unique. 

Generally, suppose the right polar decomposition of $W$ is $W = \left(W W^H \right)^{1/2} Q$, then

\begin{equation}
 \begin{aligned}
  W + \left(WW^H\right)^{1/2} &= \left(W W^H \right)^{1/2} (I + Q) . 
 \end{aligned}
\end{equation}

If $\mathbb{F} = \mathbb{R}$, $\Pr(\det(W)>0) = \Pr(\det(W)<0) = \frac{1}{2}$ due to the symmetry of Gaussian random matrix ensemble. If $\det(W)= \det\left(\left(W W^H \right)^{1/2}\right) \det\left(Q\right) < 0$, $\det\left(Q\right) = -1$, then $\sigma_{\min}(I + Q) = 0$ and further $\sigma_{\min}\left(W + \left(WW^\top\right)^{1/2}\right) = 0$. 

Consider both $\mathbb{F} = \mathbb{C}$ and $\mathbb{F} = \mathbb{R}$, $\det(W) > 0$ (which indicates $\det\left(Q\right) = 1$): 

\begin{equation}
 \begin{aligned}
  \sigma_{\min}\left(W + \left(WW^H\right)^{1/2}\right) &\ge \sigma_{\min}\left(\left(WW^H\right)^{1/2}\right)\sigma_{\min}\left(I + Q \right) \\
  &= \sigma_{\min}(W)\sigma_{\min}\left(I + Q \right) \\
  &\ge \left[\prod_{k=1}^{N}\sigma_{\min}(W_k) \right]\sigma_{\min}\left(I + Q\right) . 
 \end{aligned}
\end{equation}

From Theorem 1.1 of \cite{tao2009randommatricesdistributionsmallest}, by applying union bound, $\sigma_{\min}(W_{k,k\in[1,N]\cap\mathbb{N}^*}) > f_1^{-1}(\delta,N) d^{-1/2} \epsilon$ with high probability $1- \delta/2$, where $f_1(\delta,N) = O\left(\frac{N}{\delta}\right)$. Then $\left[\prod_{k=1}^{N}\sigma_{\min}(W_k) \right] \ge \left(f_1^{-1}(\delta,N) d^{-1/2} \epsilon \right)^N$, and it remains to find lower bound for $\sigma_{\min}\left(I + Q\right)$. 

To apply results in Theorem \ref{minimum singular value under Haar measure}, it is sufficient to show that $Q$ follows Haar measure on $U(d,\mathbb{C})$ (or $O(d,\mathbb{R})$). 

Due to the property of invariance under left and right multiplication of unitary (orthogonal) matrix for Gaussian random matrix ensemble (Section 2.6.2, (2.131), \cite{taotopics}), $\forall$ fixed $Q_0\in U(d,\mathbb{C})$ (or $O(d,\mathbb{R})$ if $\mathbb{F} = \mathbb{R}$), $W_1 Q_0^H$ follows the same distribution as $W_1$ while still independent of $W_{k,k\in[2,N]\cap\mathbb{N}^*}$, resulting that $WQ_0^H$ follows the same distribution as $W$. Since the right polar decomposition of $W Q_0^H$ is $W Q_0^H = \left(W Q_0^H Q_0 W^H \right)^{1/2} Q Q_0^H = \left(W W^H \right)^{1/2} \left(Q Q_0^H\right)$ , we have 

\begin{equation}
  Q_0 Q \overset{d}{=} Q,\,\forall \text{ fixed } Q_0 \in U(d,\mathbb{C}) \text{ (or }O(d,\mathbb{R})\text{ if }\mathbb{F} = \mathbb{R}\text{)} . 
\end{equation}

Likewise 

\begin{equation}
  Q Q_0 \overset{d}{=} Q,\,\forall \text{ fixed } Q_0 \in U(d,\mathbb{C}) \text{ (or }O(d,\mathbb{R})\text{ if }\mathbb{F} = \mathbb{R}\text{)} . 
\end{equation}

From the fact that the only measure invariant under left (or right) multiplication of arbitrary element of a compact lie group is Haar measure, $Q$ follows Haar measure on $U(d,\mathbb{C})$ (or $O(d,\mathbb{R})$), and the proof is completed. 

\end{proof}

By Theorem \ref{Gaussian random matrix ensemble product, eigenvalues}, for depth $N=4$, if $\mathbb{F} = \mathbb{C}$ then with high probability $1 - \delta$ (if $\mathbb{F} = \mathbb{R}$ then with probability $1/2$, $\sigma_{\min}\left(W(0) + \left(W(0)W(0)^\top\right)^{1/2}\right) = 0$, and with probability $(1-\delta)/2$ the following holds), $\exists f_1(\delta) = O\left( \frac{1}{\delta} \right), f_2(\delta) = O(\frac{1}{\delta^5})$ such that 

\begin{equation}\label{random gaussian initialization, conclusions}
 \begin{aligned}
  \max_{j,k} \sigma_k(W_j(0)) &\le f_1(\delta) \sqrt{d} \epsilon \\ 
  \min_{j,k} \sigma_k(W_j(0)) &\le \frac{1}{f_1(\delta) \sqrt{d}}\cdot \epsilon \\ 
  \sigma_{\min}\left(W(0) + \left(W(0)W(0)^H\right)^{1/2}\right) &\ge \frac{1}{f_2(\delta)d^{3}} \cdot \epsilon^4 . 
 \end{aligned}
\end{equation}

Consequently, 

\begin{equation}
  e_{\Delta}(0) \coloneqq \left.\sqrt{\sum_{i=1}^{3} \|\Delta_{i,i+1}\|_F^2} 
  \right|_{t=0} \le \sqrt{3} \cdot 2 \sqrt{d} \cdot \max_{j,k} \sigma_k^2(W_j(0)) = 2\sqrt{3} f_1^2(\delta)d^{3/2}\epsilon^2 . 
\end{equation}

\subsection{Balanced Gaussian Initialization}

This section analyzes the balanced Gaussian initialization scheme. 

\begin{corollary}\label{Balanced Initialization: each matrix is a Gaussian random matrix ensemble}

Under balanced Gaussian initialization scheme (\ref{init, balanced}), each matrix $W_{k,k\in [1,N]\cap\mathbb{N}^*}$ is a Gaussian random matrix ensemble scaled by $\epsilon$. 

\end{corollary}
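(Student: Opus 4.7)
The plan is to verify the claim by computing, for each $j$, the marginal distribution of $W_j$ defined by (\ref{init, balanced}), using two classical invariance facts: (i) the Gaussian random matrix ensemble is bi-invariant under unitary (orthogonal) multiplication, i.e., for fixed $U,V\in U(d,\mathbb{C})$ (resp.\ $O(d,\mathbb{R})$), $UGV^H \overset{d}{=} G$; and (ii) for complex standard Gaussian entries (or real Gaussian, with $s_j\in\{\pm1\}$) and any unit-modulus scalar $s$, $sG\overset{d}{=} G$. Neither fact requires independence from other factors beyond the standard conditional argument.

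First I would handle odd $j$: write $W_j = s_j \epsilon\, Q_{j,j+1} G Q_{j-1,j}^H$. Condition on $(Q_{j,j+1},Q_{j-1,j})$. By fact (i), conditionally on these unitaries, $Q_{j,j+1} G Q_{j-1,j}^H \overset{d}{=} G$; since the conditional distribution does not depend on the conditioning variables, the unconditional distribution of $Q_{j,j+1} G Q_{j-1,j}^H$ is also that of $G$. Applying (ii) with $s=s_j$ absorbs the phase, and the scalar $\epsilon$ simply rescales, yielding $W_j \overset{d}{=} \epsilon G$, i.e.\ an $\epsilon$-scaled Gaussian ensemble.

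For even $j$, write $W_j = s_j \epsilon\, Q_{j,j+1} G^H Q_{j-1,j}^H$. The only extra ingredient is that $G^H \overset{d}{=} G$: in the complex case this is because transposition and entrywise complex conjugation each preserve iid standard complex Gaussian entries (the distribution is invariant under conjugation since $\mathcal{N}(0,1)_{\mathbb{C}}$ is rotationally symmetric); in the real case $G^H=G^\top$ and an iid Gaussian matrix is clearly equal in distribution to its transpose. Once this is noted, the same conditioning-on-$Q$'s argument as above gives $W_j \overset{d}{=} \epsilon G$.

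I do not expect a real obstacle here; this is a routine computation of marginals via unitary/orthogonal invariance. The only point worth flagging, which is already explicit in Corollary~\ref{Balanced Initialization: each matrix is a Gaussian random matrix ensemble}'s statement ``each matrix $W_k$ is a Gaussian random matrix ensemble,'' is that the conclusion concerns the \emph{marginal} distribution of each $W_k$; the $W_k$'s are of course not mutually independent, because they share the underlying variables $G$ and $\{Q_{k,k+1}\}$. This joint dependence is precisely what enforces the balanced condition $\Delta_{j,j+1}(0)=O$ and is needed elsewhere in the paper.
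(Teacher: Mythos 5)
Your proof is correct and takes the same route as the paper, which simply cites the left–right unitary/orthogonal bi-invariance of the Gaussian ensemble and declares the result immediate. You have merely spelled out the conditioning argument, the absorption of the unimodular phase $s_j$, and the additional fact $G^H\overset{d}{=}G$ needed for even $j$ — all routine points the paper leaves implicit.
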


\begin{proof}

This is immediately from the property of invariance under left and right multiplication of unitary (orthogonal) matrix for Gaussian random matrix ensemble (Section 2.6.2, (2.131), \cite{taotopics}). 

\end{proof}

Due to Corollary \ref{ASVD, non-negative diagonal}, the product matrix can be expressed as $U \Sigma_w^{N} V^H$. Then we present the proof of Theorem \ref{Balanced initialization, final}. 

\begin{proof}\label{Balanced initialization, final, proof}

From (\ref{init, balanced}), $W(t=0) = s\epsilon^N Q_{N,N+1} (G^H G)^{N/2} Q_{01}^H$. 

Naturally $\|\Sigma_w\|_{op} = \epsilon \left\|(G^H G)^{1/2}\right\|_{op} = \epsilon \left\|G\right\|_{op} = O\left(1 + \sqrt{\frac{\ln\left( \frac{1}{\delta} \right)}{d}} \right) \sqrt{d} \epsilon$. Last step is from Theorem 4.4.5 of \cite{Vershynin_2018} directly. 

For the other two terms, 

\begin{equation}
 \begin{aligned}
 &\left. \sigma_{\min}\left((U+V)\Sigma_w\right) \right|_{t=0} \\
  =& \left.\sqrt{\lambda_{\min}\left((U+V)\Sigma_w^2(U+V)^H\right)}\right|_{t=0} \\
  =& \left.\sqrt{\lambda_{\min}\left( \left( WW^H \right)^{\frac{1}{N}} + \left(W^H W\right)^{\frac{1}{N}} + \left(WW^H\right)^{-\frac{N-2}{2N}}W + \left(W^H W\right)^{-\frac{N-2}{2N}} W^H \right)}\right|_{t=0} \\
  =& \epsilon \sqrt{\lambda_{\min}\left( \left( Q_{01} + s Q_{N,N+1} \right) (G^H G) \left( Q_{01} + s Q_{N,N+1} \right)^H \right)}\\
  \in& \left[\epsilon \sigma_{\min} (I + s Q_{01}^H Q_{N,N+1}) \sigma_{\min}(G), \epsilon \sigma_{\min} (I + s Q_{01}^H Q_{N,N+1}) \sigma_{\max}(G)\right] . 
 \end{aligned}
\end{equation}

And 

\begin{equation}
 \begin{aligned}
  \|(U-V)\Sigma_w\|_F|_{t=0} &\le 2\sqrt{d} \epsilon \|G\|_{op} . 
 \end{aligned}
\end{equation}

Since $Q_{N,N+1}$ and $Q_{01}$ are independent and both sampled from Haar measure, then $Q_{01}^H Q_{N,N+1} \sim \mathrm{Haar}$ on $U(d,\mathbb{C})$ (or $O(d,\mathbb{R})$ if $\mathbb{F} = \mathbb{R}$) as well. 

For $\mathbb{F} = \mathbb{R}$, since $s$ is independent of $Q_{j,j\in[0,N]\cap\mathbb{N}}$, $\Pr(s\det(Q_{N,N+1})\det(Q_{01})=1) = \Pr(s\det(Q_{N,N+1})\det(Q_{01})=-1) = \frac{1}{2}$ is directly from symmetry of Haar measure. 

Then by combining Theorem \ref{minimum singular value under Haar measure} and Theorem 4.4.5 of \cite{Vershynin_2018}, Theorem 1.1 of \cite{tao2009randommatricesdistributionsmallest} (with high probability $1-\delta^\prime$, $\max\left(\|G\|_{op},\|G^{-1}\|_{op}\right) \le f_1(\delta^\prime) \sqrt{d}$, $f_1(\delta^\prime) = O\left( \frac{1}{\delta^\prime}\right)$), the proof is completed.  

\end{proof}

\subsection{General Balanced Initialization}

This section introduces a property for general balanced and input-output orthogonal-invariant initialization (refer to Definition \ref{Input-Output Unitary(Orthogonal)-Invariant initialization}) under real field. 

\begin{theorem}\label{General init, 1/2 prob zero sigma_min}

For any real matrix factorization, if the initialization is balanced and input-output orthogonal-invariant, then the minimum singular value of $W + \left(WW^\top\right)^{1/2}$ at $t=0$ is exactly $0$ with at least probability $1/2$: 

\begin{equation}
  \Pr\left( \sigma_{\min}\left(W + \left(WW^\top\right)^{1/2} \right) = 0 \right) \ge 1/2 . 
\end{equation} 

\end{theorem}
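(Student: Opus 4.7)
My approach is to combine a deterministic algebraic reduction with a probabilistic symmetry argument. The deterministic step establishes the implication $\det W \le 0 \;\Rightarrow\; \sigma_{\min}(W + (WW^\top)^{1/2}) = 0$. Using the ASVD factorization $W(0) = U \Sigma_w^N V^\top$ from Lemma~\ref{ASVD, non-negative diagonal} with $\Sigma_w \succeq 0$ diagonal and $U, V \in O(d, \mathbb{R})$, I would compute
\begin{equation*}
(WW^\top)^{1/2} = U \Sigma_w^N U^\top , \qquad W + (WW^\top)^{1/2} = U \Sigma_w^N (U+V)^\top ,
\end{equation*}
so the minimum singular value vanishes whenever $\Sigma_w$ has a zero diagonal entry (the case $\det W = 0$) or $U + V$ is singular. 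Writing $\det W = \det(U^\top V) \cdot \det(\Sigma_w)^N$ and noting $\det \Sigma_w \ge 0$, the case $\det W < 0$ forces $\det(U^\top V) = -1$; any real orthogonal matrix with determinant $-1$ must have $-1$ as an eigenvalue (complex conjugate pairs each contribute $+1$ to the determinant, so the sign is carried by the real $\pm 1$ spectrum), so $I + U^\top V$ and hence $U + V = U(I + U^\top V)$ is singular. This reduces the theorem to showing $\Pr(\det W \le 0) \ge \tfrac12$.

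For the probabilistic step, I would fix any reflection $R \in O(d, \mathbb{R})$ with $\det R = -1$ and consider the map $\phi : (W_N, W_{N-1}, \ldots, W_1) \mapsto (R W_N, W_{N-1}, \ldots, W_1)$. This map preserves the balance condition because $(R W_N)^\top (R W_N) = W_N^\top W_N$, so $\phi$ keeps the trajectory on the balanced manifold. By input-output orthogonal invariance of the initialization, the joint law of the layers is preserved by $\phi$. Consequently the product matrix satisfies $W \overset{d}{=} RW$, and therefore $\det W \overset{d}{=} \det(RW) = -\det W$. Hence $\det W$ has a distribution symmetric about $0$, so $\Pr(\det W > 0) = \Pr(\det W < 0) \le \tfrac12$, giving $\Pr(\det W \le 0) \ge \tfrac12$.

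The hard part is justifying the joint invariance used in the probabilistic step. Definition~\ref{Input-Output Unitary(Orthogonal)-Invariant initialization} states only marginal invariance $W_N \overset{d}{=} U W_N$, whereas we need the joint invariance $(W_N, W_{N-1}, \ldots, W_1) \overset{d}{=} (U W_N, W_{N-1}, \ldots, W_1)$. For the two concrete initializations used in this paper the joint version is immediate: under iid Gaussian initialization $W_N$ is independent of the other layers, and under the balanced Gaussian scheme of Section~\ref{subsection: Balanced Gaussian Initialization} the left-orthogonal factor of $W_N$ enters through the independent Haar matrix $Q_{N,N+1}$, whose law is left-invariant under multiplication by $R$. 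For the general statement one should interpret Definition~\ref{Input-Output Unitary(Orthogonal)-Invariant initialization} in its joint form — equivalently, that the conditional law of the left polar factor of $W_N$ given the remaining layers is Haar on $O(d)$ — which is the natural reading consistent with how the definition is invoked elsewhere in the paper.
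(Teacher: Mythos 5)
Your proof is correct, and its skeleton matches the paper's: both reduce $W + (WW^\top)^{1/2}$ to the singularity of $I + Q$ for an orthogonal $Q$ (your $U^\top V$, the paper's right polar factor $UV^\top$), and both rest on the fact that a real orthogonal matrix with determinant $-1$ has $-1$ as an eigenvalue. The probabilistic mechanism differs, though. The paper proves that the polar factor $Q$ of $W$ is exactly Haar-distributed on $O(d,\mathbb{R})$, using two-sided invariance of $W$ together with the uniqueness of Haar measure, and then reads off $\Pr(\det Q = -1) = 1/2$. You instead apply one fixed reflection to $W_N$ to get $\det W \overset{d}{=} -\det W$, which needs only invariance under that single reflection and delivers the one-sided bound $\Pr(\det W \le 0) \ge \tfrac12$ directly, with the $\det W = 0$ case absorbed by rank deficiency of $\Sigma_w$. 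Your route is more elementary (no Haar characterization needed) and handles possibly singular $W$ without fuss; the paper's route buys the sharper ``exactly $1/2$'' when $W$ is a.s.\ full rank.

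Your worry about marginal versus joint invariance is well placed, and it applies equally to the paper's own proof, which asserts $W \overset{d}{=} UWV$ ``as a direct consequence'' of Definition~\ref{Input-Output Unitary(Orthogonal)-Invariant initialization}. As literally stated that definition constrains only the marginals of $W_N$ and $W_1$, and marginal invariance does not give invariance of the product: for $d=1$, $W_2 = W_1 = g$ with $g$ Gaussian is balanced and marginally sign-invariant, yet $W = g^2 > 0$ almost surely. Both proofs therefore need the joint reading you propose (the conditional law of the left orthogonal factor of $W_N$ given the other layers is Haar), which does hold for the two initializations the paper actually uses; this is a shared imprecision of the definition, not a defect of your argument. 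Two cosmetic points: at $t=0$ an ordinary SVD of $W(0)$ with nonnegative diagonal suffices (the analytic SVD of Lemma~\ref{ASVD, non-negative diagonal} is about time-regularity), and the balancedness hypothesis is not actually used in either proof of this $t=0$ statement.
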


\begin{proof}

As a direct consequence of Definition \ref{Input-Output Unitary(Orthogonal)-Invariant initialization}, $W$ is left and right orthogonal invariant: 

\begin{equation}
  W \overset{d}{=} U W V,\,\forall U,V \in O(d,\mathbb{R}) . 
\end{equation}

Suppose the right polar decomposition of $W$ is $W = WW^\top Q$, following the same arguments in the proof (\ref{Gaussian random matrix ensemble product, eigenvalues, proof}) of Theorem \ref{Gaussian random matrix ensemble product, eigenvalues}, 

\begin{equation}
  W + \left(WW^\top\right)^{1/2} = \left(WW^\top\right)^{1/2} (I+Q),\,Q \sim \mathrm{Haar} . 
\end{equation}

From Theorem \ref{minimum singular value under Haar measure}, $\Pr(\sigma_{\min}(I+Q)=0) = \frac{1}{2}$, resulting 

\begin{equation}
  \Pr\left(\sigma_{\min}\left(W + \left(WW^\top\right)^{1/2}\right)=0 \right) \ge \Pr(\sigma_{\min}(I+Q)=0) = \frac{1}{2} . 
\end{equation}

This completes the proof. 

\end{proof}

\section{Basic Lemmas}\label{section: basic lemmas}

\subsection{Classic Matrix Analysis Conclusions}

\begin{lemma}\label{bound of RR^H}
  Let $R\in \mathbb{F}^{d \times d}$, where $\mathbb{F} = \mathbb{C}$ or $\mathbb{R}$. Then: 
  
  1. $I - R R^H$ and $I - R^H R$ (or $I - R R^\top$ and $I - R^\top R$ if $\mathbb{F}=\mathbb{R}$) share the same set of eigenvalues. 

  2. These eigenvalues are real-valued.
\end{lemma}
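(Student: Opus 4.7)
The plan is to reduce both parts to the singular value decomposition (SVD) of $R$, which simultaneously exhibits the eigenstructure of $RR^H$ and $R^HR$.

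First I would write $R = U\Sigma V^H$ (with $U,V$ unitary/orthogonal and $\Sigma$ the diagonal matrix of singular values $\sigma_1,\ldots,\sigma_d \ge 0$), which exists over both $\mathbb{F}=\mathbb{C}$ and $\mathbb{F}=\mathbb{R}$. A direct computation gives
\begin{equation}
RR^H = U\Sigma^2 U^H, \qquad R^H R = V\Sigma^2 V^H,
\end{equation}
so $RR^H$ and $R^H R$ are unitarily (respectively orthogonally) similar to the same diagonal matrix $\Sigma^2$, and hence share the same multiset of eigenvalues $\{\sigma_1^2,\ldots,\sigma_d^2\}$. Consequently
\begin{equation}
I - RR^H = U(I-\Sigma^2)U^H, \qquad I - R^H R = V(I-\Sigma^2)V^H,
\end{equation}
so both matrices have the common eigenvalue multiset $\{1-\sigma_j^2\}_{j=1}^d$, proving Part 1.

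For Part 2, the same identities show that the eigenvalues are $1-\sigma_j^2$ for $j=1,\ldots,d$. Since singular values are real and non-negative, $\sigma_j^2 \in \mathbb{R}$, so every eigenvalue $1 - \sigma_j^2 \in \mathbb{R}$. Equivalently, one can note that $RR^H$ and $R^HR$ are Hermitian (symmetric in the real case) and positive semidefinite, hence $I-RR^H$ and $I-R^HR$ are Hermitian and therefore have real spectrum.

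There is essentially no obstacle here: the SVD immediately diagonalizes both expressions simultaneously, giving both the shared spectrum and its reality in one stroke. The only minor point to mention is that in the real case one should use the real SVD (with orthogonal $U,V$), which exists for any $R \in \mathbb{R}^{d\times d}$, so the argument is uniform across $\mathbb{F}\in\{\mathbb{R},\mathbb{C}\}$.
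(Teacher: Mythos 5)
Your proof is correct and follows essentially the same route as the paper: both write out the SVD $R = U\Sigma V^H$ and observe that $I-RR^H$ and $I-R^HR$ are unitarily/orthogonally similar to the common diagonal matrix $I-\Sigma^2$, from which both the shared spectrum and its reality follow. The only cosmetic difference is that you also note the alternative Hermitian/positive-semidefinite argument for Part~2, which the paper omits.
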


\begin{proof}
  We prove the complex case, and the real case follows. Suppose the singular value decomposition of $R$ is $U_R \Sigma_R V_R^H$,  then 

  \begin{equation}
   \begin{aligned}
    I - R R^H &= I - U_R \Sigma_R^2 U_R^H = U_R \left( I - \Sigma_R^2 \right) U_R^H \\
    I - R^H R &= I - V_R \Sigma_R^2 V_R^H = V_R \left( I - \Sigma_R^2 \right) V_R^H.
   \end{aligned}
  \end{equation}

  Thus both $I - R R^H$ and $I - R^H R$ are unitarily similar to $I - \Sigma_R^2$, which completes the proof. 
\end{proof}

\begin{lemma}\label{error bound, sqrt}
Given symmetric matrices $X, \Delta \in \mathbb{F}^{d\times d}$, where $\mathbb{F} = \mathbb{C}$ or $\mathbb{R}$, suppose $X \succ \|\Delta\|_{op} I \succ O$, then

\begin{equation}
  \left\| X^{1/2} - (X+\Delta)^{1/2} \right\|_{op} \le \frac{\|\Delta\|_{op}}{2(\lambda_{\min}(X) - \|\Delta\|_{op})^{1/2}}.
\end{equation}

\end{lemma}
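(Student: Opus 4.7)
The hypothesis $X \succ \|\Delta\|_{op} I$ gives $\lambda_{\min}(X) > \|\Delta\|_{op}$, and by Weyl's inequality $\lambda_{\min}(X+\Delta) \ge \lambda_{\min}(X) - \|\Delta\|_{op} > 0$, so both $X$ and $X+\Delta$ are Hermitian positive definite and their principal square roots $A := X^{1/2}$ and $B := (X+\Delta)^{1/2}$ are well-defined, Hermitian, and positive definite. Denote $E := A - B$; then $A$ and $B$ satisfy $A^2 - B^2 = -\Delta$, which, by expanding $A^2 - B^2 = A(A-B) + (A-B)B$, yields the Sylvester equation
\begin{equation}
AE + EB = -\Delta.
\end{equation}

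Since $A, B \succ 0$ have spectra contained in the open right half-plane, this Sylvester equation has a unique solution given by the standard integral representation
\begin{equation}
E = -\int_{0}^{\infty} e^{-tA}\, \Delta\, e^{-tB}\, dt,
\end{equation}
which can be verified by differentiating $e^{-tA}\Delta e^{-tB}$ and using the fundamental theorem of calculus together with the fact that $e^{-tA}, e^{-tB} \to 0$ as $t \to \infty$. Taking operator norms and using submultiplicativity gives
\begin{equation}
\|E\|_{op} \le \|\Delta\|_{op} \int_{0}^{\infty} \|e^{-tA}\|_{op}\, \|e^{-tB}\|_{op}\, dt.
\end{equation}

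For the norms of the matrix exponentials I will use that for a Hermitian positive definite matrix $M$, $\|e^{-tM}\|_{op} = e^{-t\lambda_{\min}(M)}$. From $\lambda_{\min}(A) = \lambda_{\min}(X)^{1/2}$ and $\lambda_{\min}(B) \ge (\lambda_{\min}(X) - \|\Delta\|_{op})^{1/2}$ (using Weyl again) I get
\begin{equation}
\|E\|_{op} \le \|\Delta\|_{op} \int_{0}^{\infty} e^{-t\left[\lambda_{\min}(X)^{1/2} + (\lambda_{\min}(X) - \|\Delta\|_{op})^{1/2}\right]} dt = \frac{\|\Delta\|_{op}}{\lambda_{\min}(X)^{1/2} + (\lambda_{\min}(X) - \|\Delta\|_{op})^{1/2}}.
\end{equation}
Finally, since $\lambda_{\min}(X)^{1/2} \ge (\lambda_{\min}(X) - \|\Delta\|_{op})^{1/2}$, the denominator is at least $2(\lambda_{\min}(X) - \|\Delta\|_{op})^{1/2}$, which yields the claimed bound.

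The only delicate step is justifying the integral representation of the Sylvester solution; this is standard but worth stating carefully (the exponential decay of $\|e^{-tA}\|_{op}$ and $\|e^{-tB}\|_{op}$ makes the integral absolutely convergent, and differentiating under the integral sign shows it satisfies $AE + EB = -\Delta$). An alternative would be to simultaneously diagonalize in the special case $[A,B]=0$ and then argue by a perturbation/density argument, but the Sylvester route is cleaner and avoids commutation assumptions.
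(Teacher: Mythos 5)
Your proof is correct. The paper's ``proof'' of this lemma is simply a citation to Theorem X.3.8 and inequality (X.46) in Bhatia's \emph{Matrix Analysis}, which asserts the Lipschitz estimate
$\left\| A^{1/2} - B^{1/2} \right\|_{op} \le \tfrac{1}{2}\mu^{-1/2}\left\| A - B \right\|_{op}$
for positive operators with $\min\left(\lambda_{\min}(A), \lambda_{\min}(B)\right) \ge \mu$; combined with the Weyl bound $\lambda_{\min}(X + \Delta) \ge \lambda_{\min}(X) - \|\Delta\|_{op}$, this yields the stated inequality. You instead prove it from scratch: setting $E = X^{1/2} - (X+\Delta)^{1/2}$ you derive the Sylvester equation $AE + EB = -\Delta$ with $A = X^{1/2}$, $B = (X+\Delta)^{1/2}$, solve it by the integral representation $E = -\int_0^\infty e^{-tA}\Delta e^{-tB}\,dt$ (uniqueness holds since $\operatorname{spec}(A)\cap\operatorname{spec}(-B)=\emptyset$), and bound the integral using $\|e^{-tM}\|_{op} = e^{-t\lambda_{\min}(M)}$ for Hermitian $M\succ 0$. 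This in fact produces the slightly sharper intermediate estimate
\begin{equation}
  \left\| X^{1/2} - (X+\Delta)^{1/2} \right\|_{op} \le \frac{\|\Delta\|_{op}}{\lambda_{\min}(X)^{1/2} + \left(\lambda_{\min}(X) - \|\Delta\|_{op}\right)^{1/2}},
\end{equation}
which you then relax to the claimed form. All steps check out (the expansion $A^2 - B^2 = AE + EB$, the verification of the integral solution via $\frac{\mathrm{d}}{\mathrm{d}t}\left(e^{-tA}\Delta e^{-tB}\right)$, the exponential decay guaranteeing convergence). What your approach buys is a self-contained argument with an explicit, slightly better constant; what the paper's citation buys is brevity, since the Bhatia result covers general operator monotone functions $f$ and the matrix square root is just the case $f(t) = t^{1/2}$.
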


\begin{proof}

Directly by Theorem X.3.8 and inequality (X.46) in \cite{bhatia1996matrix}. 

\end{proof}

\begin{lemma}\label{error bound, inverse}

$\forall X, \Delta \in \mathbb{F}^{d\times d}$, where $\mathbb{F} = \mathbb{C}$ or $\mathbb{R}$, if $X$ and $X+\Delta$ are both invertible, then 

\begin{equation}
  (X + \Delta)^{-1} - \left( X^{-1} - X^{-1} \Delta X^{-1} \right) = X^{-1} \Delta X^{-1} \Delta (X + \Delta)^{-1}.
\end{equation}

\end{lemma}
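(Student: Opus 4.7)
The plan is to verify this identity by a purely algebraic manipulation, relying on the resolvent identity. First I would establish the first-order resolvent identity: multiplying $X + \Delta - X = \Delta$ on the left by $X^{-1}$ and on the right by $(X+\Delta)^{-1}$ (both of which exist by hypothesis) gives
\begin{equation}
  X^{-1} - (X+\Delta)^{-1} = X^{-1}\Delta\,(X+\Delta)^{-1},
\end{equation}
equivalently $(X+\Delta)^{-1} = X^{-1} - X^{-1}\Delta\,(X+\Delta)^{-1}$.

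Next I would iterate this identity once, substituting the expression for $(X+\Delta)^{-1}$ into the right-hand side:
\begin{equation}
  (X+\Delta)^{-1} = X^{-1} - X^{-1}\Delta\bigl[X^{-1} - X^{-1}\Delta\,(X+\Delta)^{-1}\bigr] = X^{-1} - X^{-1}\Delta X^{-1} + X^{-1}\Delta X^{-1}\Delta\,(X+\Delta)^{-1}.
\end{equation}
Rearranging yields exactly the claimed equality. Alternatively, one can verify the identity directly by multiplying both sides on the right by $(X+\Delta)$ and checking that both sides reduce to the same expression, using only $X^{-1}X = I$ and distributivity; this gives a self-contained algebraic check that avoids invoking the resolvent identity as a named ingredient.

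There is no real obstacle here: the statement is an elementary finite-dimensional identity, valid over either $\mathbb{R}$ or $\mathbb{C}$, with no spectral or norm hypotheses needed beyond invertibility of $X$ and $X+\Delta$. The only point worth being careful about is the non-commutativity of matrix multiplication, so I would keep the order of factors strictly preserved throughout the substitution and rearrangement. The resulting formula is precisely the second-order remainder in the Neumann-type expansion $(X+\Delta)^{-1} \approx X^{-1} - X^{-1}\Delta X^{-1}$, which is the form in which it will later be applied (e.g., to control the error between $W_2^{-1}$ and its perturbations in the saddle-avoidance stage).
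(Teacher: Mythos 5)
Your proof is correct, and it reaches the identity via a slightly different route than the paper. The paper's proof is a single direct computation: it writes the left-hand side as $X^{-1}\bigl[X - (X-\Delta)X^{-1}(X+\Delta)\bigr](X+\Delta)^{-1}$ and then simplifies the bracket to $\Delta X^{-1}\Delta$ by expanding. You instead first establish the first-order resolvent identity $X^{-1} - (X+\Delta)^{-1} = X^{-1}\Delta(X+\Delta)^{-1}$ and then iterate it once. Both are two-line elementary algebraic arguments with the same content; yours is perhaps more conceptual and easier to generalize to higher-order Neumann remainders, while the paper's is a slightly more compact direct verification. Your remark about the alternative ``multiply both sides by $(X+\Delta)$ and check'' is also valid and would work equally well. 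No gaps.
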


\begin{proof}

\begin{equation}
 \begin{aligned}
  (X + \Delta)^{-1} - \left( X^{-1} - X^{-1} \Delta X^{-1} \right) &= X^{-1} \left[ X - (X - \Delta) X^{-1} (X + \Delta) \right]  (X+\Delta)^{-1} \\ 
  & = X^{-1} \Delta X^{-1} \Delta (X + \Delta)^{-1}.
 \end{aligned}
\end{equation}

\end{proof}

\begin{lemma}\label{bound of eigenvalues under perturbation}Bound of eigenvalues under perturbation. 

For unitary (or orthogonal, for real field) $d$-dimensional matrices $U$, $V$, positive semi-definite matrix $S$, denote $P \coloneqq \left(\frac{U+V}{2}\right) S \left(\frac{U+V}{2}\right)^H$, then the eigenvalues of $S$ are bounded by 

\begin{equation}
 \begin{aligned}
  \lambda_k\left( P \right) \le \lambda_k(S) \le \begin{cases}
    2 \left[ \lambda_k\left( P \right) + \left\| \left(\frac{U-V}{2}\right) S \left(\frac{U-V}{2}\right)^H \right\|_{op} \right] &,\, 1\le k \le d-1 \\
    \lambda_k\left( P \right) + \left\| \left(\frac{U-V}{2}\right) S \left(\frac{U-V}{2}\right)^H \right\|_{op} &,\, k=d
  \end{cases}\quad.
 \end{aligned}
\end{equation}

\end{lemma}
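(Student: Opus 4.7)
The plan is to reduce both inequalities to elementary Weyl-type estimates after rewriting the quadratic forms in $U$ and $V$. Introducing $A \coloneqq (U+V)/2$ and $B \coloneqq (U-V)/2$ so that $U = A+B$ and $V = A-B$, a direct expansion gives the key identity
\begin{equation*}
 USU^H + VSV^H \;=\; 2\bigl(ASA^H + BSB^H\bigr).
\end{equation*}
Since $U$ and $V$ are unitary/orthogonal, $USU^H$ and $VSV^H$ are similar to $S$ and in particular share its spectrum $\lambda_1(S) \ge \cdots \ge \lambda_d(S) \ge 0$. The lower bound and the upper bound are then handled separately.

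For the lower bound $\lambda_k(P) \le \lambda_k(S)$, I would write $P = (AS^{1/2})(AS^{1/2})^H$, so that $\lambda_k(P) = \sigma_k^2(AS^{1/2})$. Submultiplicativity of singular values yields $\sigma_k(AS^{1/2}) \le \|A\|_{op}\sqrt{\lambda_k(S)}$, and the relation $AA^H + BB^H = (UU^H + VV^H)/2 = I$ (again from $U,V$ unitary) forces $\|A\|_{op}^2 \le 1$. Combining these gives $\lambda_k(P) \le \lambda_k(S)$ at once.

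For the upper bound, I would combine two Weyl inequalities on the identity above. The reverse Weyl bound $\lambda_{i+j-d}(X+Y) \ge \lambda_i(X) + \lambda_j(Y)$ applied with $X = USU^H$, $Y = VSV^H$ and $(i,j) = (k,d)$ gives
\begin{equation*}
 \lambda_k\bigl(USU^H + VSV^H\bigr) \;\ge\; \lambda_k(S) + \lambda_d(S) \;\ge\; \lambda_k(S),
\end{equation*}
since $S \succeq 0$; in the special case $k=d$ the choice $(i,j)=(d,d)$ strengthens this to $\lambda_d(USU^H + VSV^H) \ge 2\lambda_d(S)$. Dividing by $2$ via the identity and combining with the standard Weyl bound $\lambda_k(ASA^H + BSB^H) \le \lambda_k(ASA^H) + \lambda_1(BSB^H) = \lambda_k(P) + \|BSB^H\|_{op}$ (using $BSB^H \succeq 0$) delivers $\lambda_k(S) \le 2\bigl(\lambda_k(P) + \|BSB^H\|_{op}\bigr)$ for $k \le d-1$ and the tighter $\lambda_d(S) \le \lambda_d(P) + \|BSB^H\|_{op}$.

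Since every step reduces to a classical inequality, I do not anticipate a substantive obstacle. The only mildly delicate point is tracking why the factor of two disappears exactly when $k=d$: the smallest eigenvalue admits the bilateral bound $\lambda_d(X+Y) \ge \lambda_d(X) + \lambda_d(Y)$ in which both summands equal $\lambda_d(S)$, whereas for a generic index $k$ only the asymmetric bound $\lambda_k(X+Y) \ge \lambda_k(X) + \lambda_d(Y)$ is available, costing a factor of two after one absorbs $\lambda_d(S) \ge 0$.
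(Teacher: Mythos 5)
Your proposal is correct and follows essentially the same route as the paper: the identity $USU^H+VSV^H=2(ASA^H+BSB^H)$ is the paper's decomposition written before conjugating by $U$ (the paper phrases it via $Q=U^HV$), and both arguments then combine the one-sided Weyl bound $\lambda_k(X+Y)\le\lambda_k(X)+\|Y\|_{op}$ with the reverse Weyl bounds $\lambda_k(X+Y)\ge\lambda_k(X)+\lambda_d(Y)$ and $\lambda_d(X+Y)\ge\lambda_d(X)+\lambda_d(Y)$, which is exactly where the factor of two appears or disappears. The only cosmetic difference is in the lower bound, where you use singular-value submultiplicativity with $\|A\|_{op}\le 1$ while the paper applies Courant--Fischer to $S^{1/2}AA^HS^{1/2}$.
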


\begin{proof}

Let $Q = U^H V$. 

Due to Courant-Fischer min-max Theorem, $A\succeq B$ indicates $\lambda_k(A) \ge \lambda_k(B)$. Then the lower bound is straight forward: 

\begin{equation}
 \begin{aligned}
  &\lambda_k\left(\left(\frac{U+V}{2}\right) S \left(\frac{U+V}{2}\right)^H\right) = \lambda_k\left( S^{1/2}\left(\frac{U+V}{2}\right) \left(\frac{U+V}{2}\right)^H S^{1/2} \right) \\
  \le& \lambda_k\left( S^{1/2} \left(\left\|\frac{U+V}{2}\right\|_{op}^2 I \right) S^{1/2} \right) \\\le& \lambda_k\left( S^{1/2} \left(\left(\frac{\left\|U\right\|_{op}+\left\|V\right\|_{op}}{2}\right)^2 I \right) S^{1/2} \right) = \lambda_k\left( S \right).
 \end{aligned}
\end{equation}

For upper bound, by applying Wely inequality, 

\begin{equation}
 \begin{aligned}
  &\lambda_k\left(\left(\frac{U+V}{2}\right) S \left(\frac{U+V}{2}\right)^H\right) = \lambda_k\left(\left(\frac{I+Q}{2}\right) S \left(\frac{I+Q^H}{2}\right)\right) \\ \ge& \lambda_k\left(\left(\frac{I+Q}{2}\right) S \left(\frac{I+Q^H}{2}\right)+\left(\frac{I-Q}{2}\right) S \left(\frac{I-Q^H}{2}\right)\right) - \left\| \left(\frac{I-Q}{2}\right) S \left(\frac{I-Q^H}{2}\right) \right\|_{op} \\ 
  =& \frac{1}{2}\lambda_k\left( S + Q S Q^H \right) - \left\| \left(\frac{U-V}{2}\right) S \left(\frac{U-V}{2}\right)^H \right\|_{op}.
 \end{aligned}
\end{equation}

For arbitrary $k$, $\lambda_k\left( S + Q S Q^H \right) \ge \lambda_k\left( S \right)$; for $k = d$, $\lambda_d\left( S + Q S Q^H \right) \ge 2\lambda_d\left( S \right)$. This completes the proof. 

\end{proof}

\subsection{Lemmas on Analytic Singular Value Decomposition of Product Matrix under Balanced Initialization and Gradient Flow}

\begin{lemma}\label{ASVD existence}

Existence of analytic singular value decomposition (ASVD). 

Under Section~\ref{section: problem formulation} with gradient flow and balanced initialization, for $t \in \mathbb{R}^+\cup\{0\}$, there exists analytical singular value decompositions for $W_{j,j\in[1,N]\cap\mathbb{N}^*}(t)$ and $W(t)$. 

\end{lemma}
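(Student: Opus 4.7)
The plan is to reduce the statement to two ingredients: analyticity of the gradient-flow trajectory, and the classical existence theorem for analytic singular value decompositions of analytic matrix-valued curves.

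First, I would establish that each $W_j(t)$ is a real-analytic (resp.\ complex-analytic) function of $t$ on $\mathbb{R}^+\cup\{0\}$. The vector field on the right-hand side of the gradient-flow ODE $\dot W_j = -\nabla_{W_j}\mathcal{L}$ is polynomial in the entries of $(W_1,\dots,W_N)$, since it is a finite sum of products of $W_k$, $W_k^H$, $\Sigma$, and the balance terms $\Delta_{k,k+1}$. In particular the vector field is entire, so by the standard analytic-dependence theorem for ODEs (Cauchy--Kowalevski / Picard--Lindel\"of with analytic right-hand side), the unique solution $W_j(t)$ from any initial datum is real-analytic (resp.\ complex-analytic) on its interval of existence, which covers $\mathbb{R}^+\cup\{0\}$ under our setting. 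The product matrix $W(t)=W_N(t)\cdots W_1(t)$ is then an analytic matrix-valued function of $t$ as a finite product of analytic matrix-valued functions.

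Next, I would invoke the existence theorem for analytic SVD of analytic matrix curves (Bunse-Gerstner--Byers--Mehrmann--Nichols, extended to the complex-analytic case): for any analytic map $t\mapsto M(t)\in\mathbb{F}^{d\times d}$ on a real interval, there exist analytic unitary (resp.\ orthogonal) $U(t),V(t)$ and an analytic real diagonal $\Sigma(t)$ such that $M(t)=U(t)\Sigma(t)V(t)^H$ for all $t$. Applying this theorem to each of the $N+1$ analytic curves $W_1(t),\ldots,W_N(t),W(t)$ yields the claimed analytic SVDs.

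The main obstacle to address is that the naive SVD factorization is not globally smooth through points where singular values cross or vanish, and that the classical analytic SVD theorem only guarantees an analytic diagonal $\Sigma(t)$ with possibly negative entries and with unordered singular values; if one insists on the usual non-negative, non-increasing convention, analyticity can fail at crossings. I would therefore only claim here the weaker analytic factorization (signs and ordering unconstrained), and defer the upgrade to a non-negative diagonal form $\Sigma_w(t)$ to Lemma \ref{ASVD, non-negative diagonal}, where sign flips of singular values can be absorbed into corresponding columns of $U(t)$ or $V(t)$ without breaking analyticity, provided one tracks each analytic branch rather than reordering by magnitude. This is the step that requires care, and it is exactly the point the subsequent lemma is designed to handle.
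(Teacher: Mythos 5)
Your proposal is correct and follows essentially the same route as the paper: analyticity of the trajectory from the polynomial right-hand side of the gradient-flow ODE, then invoking the classical ASVD existence theorem (the paper cites Bunse-Gerstner--Byers--Mehrmann--Nichols for $\mathbb{F}=\mathbb{R}$ and De Moor--Boyd for $\mathbb{F}=\mathbb{C}$), with the non-negative/ordering issue deferred to Lemma~\ref{ASVD, non-negative diagonal} exactly as you suggest. One terminological caution, which the paper addresses in a remark after this lemma: even when $\mathbb{F}=\mathbb{C}$, the solutions $W_j(t)$ are \emph{not} complex-analytic in the usual holomorphic sense (the gradient of $\mathcal{L}$ involves $W_k^H$, hence conjugation, so the vector field is not holomorphic and certainly not ``entire''); the correct statement is that the real and imaginary parts of $W_j(t)$ are real-analytic in the real parameter $t$, and the ASVD theorem of De Moor applies to exactly this kind of complex matrix curve of a real parameter. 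Your phrase ``(resp.\ complex-analytic)'' should therefore read ``real-analytic in $t$'' in both cases; with that fix the argument is the paper's.
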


\begin{proof}

For $\mathbb{F} = \mathbb{R}$, the proof is exactly the same as Lemma 1 in \cite{arora2019implicitregularizationdeepmatrix}: real analytic matrices have ASVD (Theorem 1 in \cite{Bunse1991/92}), and $W_j(t)$ are analytic then so does $W(t)$. For complex case, Theorem 1 and 3 in \cite{demoor1989analyticProperties} gives that complex analytic matrices (of a real parameter) have ASVD, then the rest of proof follows.

\end{proof}

\begin{remark}

For complex field here, the "analytic" here has \textbf{no relation with the standard definition of "complex analytic function"}, who has complex parameters and consequently more restrictions on definition of derivatives. 

Throughout the proof for gradient flow (continuous time), we only deal with real-valued parameter $t\in\mathbb{R}^+\cup\{0\}$, so any "analytic" means real-analytic (for $\mathbb{F} = \mathbb{C}$, it means the real and imaginary part are both real-analytic), not complex-analytic. 

\end{remark}

\begin{lemma}\label{singular value derivative}

Suppose the analytic singular value decomposition of $M(t)$ exists and is $U(t) \Sigma_M(t) V^H(t)$, $M(t)\in \mathbb{F}^{d\times d}$, where $\mathbb{F} = \mathbb{C}$ or $\mathbb{R}$, then the derivative of the $k^{th}$ singular value is 

\begin{equation}
  \frac{\mathrm{d} \sigma_k(M)}{\mathrm{d} t} = \Re \left(u_k^H \frac{\mathrm{d} M}{\mathrm{d} t} v_k\right),
\end{equation}

where $u_k$, $v_k$ are the $k^{th}$ column vectors of left and right unitary (or orthogonal if $\mathbb{F} = \mathbb{R}$) matrices respectively. 
\end{lemma}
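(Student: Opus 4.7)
The plan is to reduce the claim to a direct differentiation of a scalar identity coming from the ASVD, and then to use the unit-norm constraints on the singular vectors to cancel the unwanted terms.

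First, I would exploit the fact that the ASVD $M = U \Sigma_M V^H$ gives $M v_k = \sigma_k u_k$ and $u_k^H u_k = 1$, so one has the pointwise-in-$t$ identity
\begin{equation*}
\sigma_k(M(t)) \;=\; u_k(t)^H M(t) v_k(t).
\end{equation*}
Because $U$, $V$, and $\Sigma_M$ are analytic (guaranteed by Lemma~\ref{ASVD existence}), every factor on the right is differentiable, so I may differentiate this identity directly with the product rule:
\begin{equation*}
\frac{\mathrm{d}\sigma_k}{\mathrm{d} t}
\;=\; \dot u_k^H M v_k \;+\; u_k^H \dot M v_k \;+\; u_k^H M \dot v_k
\;=\; \sigma_k\bigl(\dot u_k^H u_k\bigr) \;+\; u_k^H \dot M v_k \;+\; \sigma_k\bigl(v_k^H \dot v_k\bigr),
\end{equation*}
where in the last step I used $M v_k = \sigma_k u_k$ together with $u_k^H M = \sigma_k v_k^H$.

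Next I would dispose of the first and third terms. Differentiating $u_k^H u_k \equiv 1$ gives $\dot u_k^H u_k + u_k^H \dot u_k = 0$; since $\dot u_k^H u_k$ is the complex conjugate of $u_k^H \dot u_k$, this identity says $2\Re(u_k^H \dot u_k)=0$, so $u_k^H \dot u_k$ is purely imaginary, and hence so is $\dot u_k^H u_k$. The same argument applied to $v_k$ shows $v_k^H \dot v_k$ is purely imaginary. Since $\sigma_k$ is real, the two boundary terms $\sigma_k \dot u_k^H u_k$ and $\sigma_k v_k^H \dot v_k$ are purely imaginary. Taking real parts of the displayed equation, and noting that the left-hand side $\dot\sigma_k$ is real, yields
\begin{equation*}
\frac{\mathrm{d}\sigma_k}{\mathrm{d} t} \;=\; \Re\bigl(u_k^H \dot M v_k\bigr),
\end{equation*}
which is exactly the claim. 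In the real case, $u_k^\top \dot u_k = 0$ and $v_k^\top \dot v_k = 0$ outright, so the spurious terms vanish before any real-part extraction is needed.

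I do not anticipate a serious obstacle; the only mild subtlety is the complex case, where one must be careful that $u_k^H \dot u_k$ need not vanish and only its real part does. A second small point is that for a genuine ASVD the diagonal entries of $\Sigma_M(t)$ are allowed to change sign in order to preserve analyticity (so $\sigma_k$ is signed). The identity $\sigma_k = u_k^H M v_k$ and the entire argument above still go through unchanged, so no modification is required. The whole proof is at most a few lines.
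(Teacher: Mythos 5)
Your proof is correct and is essentially the same as the paper's: both differentiate the ASVD identity via the product rule (the paper expands $\dot M = \dot U\Sigma_M V^H + U\dot\Sigma_M V^H + U\Sigma_M\dot V^H$ and contracts with $u_k^H,\,v_k$; you equivalently differentiate $\sigma_k = u_k^H M v_k$), and both kill the extra terms by observing that $u_k^H\dot u_k$ and $v_k^H\dot v_k$ are purely imaginary from the unit-norm constraints. Your remark that signed singular values in the ASVD cause no trouble is also consistent with how the paper uses the lemma.
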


\begin{proof} We prove the case when $\mathbb{F} = \mathbb{C}$. For $\mathbb{F} = \mathbb{R}$, replace $\cdot^H$ by $\cdot^\top$.

\begin{equation}
  \frac{\mathrm{d} M}{\mathrm{d} t} = \frac{\mathrm{d} U}{\mathrm{d} t} \Sigma_M V^H + U\frac{\mathrm{d} \Sigma_M}{\mathrm{d} t} V^H  + U \Sigma_M \frac{\mathrm{d} V}{\mathrm{d} t}^H .
\end{equation}

Then 

\begin{equation}
 \begin{aligned}
  \Re \left( u_k^H \frac{\mathrm{d} M}{\mathrm{d} t} v_k \right) &= \Re\left( u_k^H \frac{\mathrm{d} U}{\mathrm{d} t} \Sigma_M V^H v_k + u_k^H U\frac{\mathrm{d} \Sigma_M}{\mathrm{d} t} V^H v_k  + u_k^H U \Sigma_M \frac{\mathrm{d} V}{\mathrm{d} t}^H v_k \right) \\&= \frac{\mathrm{d} \sigma_k(M)}{\mathrm{d} t} + \sigma_k(M) \left( \Re \left( u_k^H \frac{\mathrm{d} u_k}{\mathrm{d} t}\right) + \Re \left( \frac{\mathrm{d} v_k^H}{\mathrm{d} t} v_k \right) \right) .
 \end{aligned}
\end{equation}

From $\Re \left( u_k^H \frac{\mathrm{d} u_k}{\mathrm{d} t} \right) = \frac{\mathrm{d} }{\mathrm{d} t} \left(\frac{1}{2} \|u_k\|^2 \right) = 0$, $\Re \left( \frac{\mathrm{d} v_k^H}{\mathrm{d} t} v_k \right) = \frac{\mathrm{d} }{\mathrm{d} t} \left(\frac{1}{2} \|v_k\|^2 \right) = 0$, the proof is done.

\end{proof}

\begin{remark}

If $M$ is hermitian, then the $\Re$ can be omitted. 

\end{remark}

\begin{remark}

This generalizes Lemma 2 in \cite{arora2019implicitregularizationdeepmatrix} from real field into complex field by adding a $\Re$ on the right side: 

\begin{equation}\label{thm 3, arora, complex}
  \frac{\mathrm{d}\sigma_r(S)}{\mathrm{d}t} = -N (\sigma_r^{2}(S))^{1-1/N} \cdot \Re \left( \left\langle \nabla_W \mathcal{L}(W), u_r v_r^H \right\rangle \right) .
\end{equation}

\end{remark}

\begin{lemma}\label{L ori non-increasing}

Under Section \ref{section: problem formulation} with gradient flow, $\mathcal{L}_{\rm ori}$ is non-increasing. 

For $t\in[0,+\infty)$, 

\begin{equation}
  \frac{\mathrm{d}}{\mathrm{d} t} \mathcal{L}_{\rm ori} \le - 2N \min_{j,k} |\sigma_{k}(W_j)|^{2(N-1)} \mathcal{L}_{\rm ori} .
\end{equation}

\end{lemma}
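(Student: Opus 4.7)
The key idea is to bypass differentiating each $W_j$ separately and instead work directly with the product matrix $W=\prod_{k=N}^{1}W_k$. Under gradient flow, $\dot W_j = -\nabla_{W_j}\mathcal{L}_{\rm ori}-\nabla_{W_j}\mathcal{L}_{\rm reg}$, and the product rule gives $\dot W = \sum_{j=1}^{N} W_{\prod_L, j+1}\dot W_j W_{\prod_R, j-1}$. Substituting $\nabla_{W_j}\mathcal{L}_{\rm reg}=-aW_j\Delta_{j-1,j}+a\Delta_{j,j+1}W_j$ and defining $B_j \coloneqq W_{\prod_L,j+1}\Delta_{j,j+1}W_{\prod_R,j}$, the regularization contribution to $\dot W$ becomes $a\sum_{j=1}^{N}(B_j-B_{j-1})$, which telescopes to $aB_N - aB_0 = 0$ by the boundary conventions $\Delta_{0,1}=\Delta_{N,N+1}=O$. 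Consequently,
\[
\dot W \;=\; \sum_{j=1}^{N} W_{\prod_L, j+1} W_{\prod_L, j+1}^H(\Sigma - W) W_{\prod_R, j-1}^H W_{\prod_R, j-1},
\]
independent of the regularizer.

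\textbf{Main inequality.} Next I would compute $\dot{\mathcal{L}}_{\rm ori}=-\Re\Tr((\Sigma-W)^H\dot W)$ and use the cyclic property of trace to rewrite each summand as $-\bigl\|W_{\prod_L,j+1}^H(\Sigma-W)W_{\prod_R,j-1}^H\bigr\|_F^2$. To extract a rate, I apply the standard bound $\|AXB\|_F^2\ge\sigma_{\min}^2(A)\sigma_{\min}^2(B)\|X\|_F^2$, which follows from $A^HA\succeq\sigma_{\min}^2(A)I$ and $BB^H\succeq\sigma_{\min}^2(B)I$ together with two applications of cyclic trace. Then I lower-bound
\[
\sigma_{\min}^2(W_{\prod_L, j+1})\;\ge\;\prod_{k=j+1}^{N}\sigma_{\min}^2(W_k),\qquad \sigma_{\min}^2(W_{\prod_R, j-1})\;\ge\;\prod_{k=1}^{j-1}\sigma_{\min}^2(W_k),
\]
so their product contains $N-1$ factors, each at least $\min_{j,k}\sigma_k^2(W_j)$. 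Summing over the $N$ values of $j$ and using $\|\Sigma-W\|_F^2=2\mathcal{L}_{\rm ori}$ yields the stated bound $\dot{\mathcal{L}}_{\rm ori}\le -2N\min_{j,k}\sigma_k(W_j)^{2(N-1)}\mathcal{L}_{\rm ori}$.

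\textbf{Main obstacle.} The only non-routine point is the telescoping cancellation of the regularization contribution to $\dot W$; once this is established, the remaining steps are standard trace manipulations and singular-value inequalities valid over both $\mathbb{R}$ and $\mathbb{C}$. Notably, this approach does not require a balanced initialization (the regularizer cancels in $\dot W$ regardless of $\Delta_{j,j+1}$), so the lemma holds under the general gradient-flow setup of Section~\ref{section: problem formulation}.
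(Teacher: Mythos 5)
Your proof is correct and takes essentially the same route as the paper: compute $\dot W$, observe that the regularization contribution telescopes to zero (the paper writes the residual as $a(W\Delta_{0,1}-\Delta_{N,N+1}W)$; your $B_j$ bookkeeping is the same cancellation, modulo an immaterial overall sign in how you wrote the telescoping sum), rewrite $\dot{\mathcal{L}}_{\rm ori}$ as $-\sum_{j}\|W_{\prod_L,j+1}^H(\Sigma-W)W_{\prod_R,j-1}^H\|_F^2$, and lower-bound each term by $\min_{j,k}|\sigma_k(W_j)|^{2(N-1)}\cdot 2\mathcal{L}_{\rm ori}$. Your closing remark that balancedness is not needed is also exactly what the paper exploits.
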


\begin{proof}

Naturally we have the derivative of product matrix $W(t)$: 

\begin{equation}\label{equation: dW/dt is irrelevant to a}
 \begin{aligned}
  \frac{\mathrm{d} W}{\mathrm{d} t} &= \sum_{j=1}^{N} W_{\prod_L , j+1} \left[W_{\prod_L , j+1}^H \left(\Sigma - W \right) W_{\prod_R , j-1}^H + a \left(W_j \Delta_{j-1,j} - \Delta_{j,j+1} W_j \right) \right] W_{\prod_R , j-1} \\ 
  &= \sum_{j=1}^{N} W_{\prod_L , j+1} W_{\prod_L , j+1}^H \left(\Sigma - W \right) W_{\prod_R , j-1}^H W_{\prod_R , j-1} \\ 
  &+ a \sum_{j=1}^{N} W_{\prod_L , j} \Delta_{j-1,j} W_{\prod_R , j-1} - a \sum_{j=1}^{N} W_{\prod_L , j+1} \Delta_{j,j+1} W_{\prod_R , j}\\ 
  &= \sum_{j=1}^{N} W_{\prod_L , j+1} W_{\prod_L , j+1}^H \left(\Sigma - W \right) W_{\prod_R , j-1}^H W_{\prod_R , j-1} + a \left(W \Delta_{0,1} - \Delta_{N,N+1} W\right) \\ 
  &= \sum_{j=1}^{N} W_{\prod_L , j+1} W_{\prod_L , j+1}^H \left(\Sigma - W \right) W_{\prod_R , j-1}^H W_{\prod_R , j-1} .
 \end{aligned}
\end{equation}

Then 

\begin{equation}
 \begin{aligned}
  \frac{\mathrm{d}}{\mathrm{d} t} \mathcal{L}_{\rm ori} &= - \Re\left(\left\langle \Sigma - W, \frac{\mathrm{d} W}{\mathrm{d} t}\right\rangle\right) \\ 
  &= - \Re\left(\left\langle \Sigma - W, \sum_{j=1}^{N} W_{\prod_L , j+1} W_{\prod_L , j+1}^H \left(\Sigma - W \right) W_{\prod_R , j-1}^H W_{\prod_R , j-1} \right\rangle\right) \\ 
  &= - \sum_{j=1}^{N}\Re\left(\left\langle \Sigma - W, W_{\prod_L , j+1} W_{\prod_L , j+1}^H \left(\Sigma - W \right) W_{\prod_R , j-1}^H W_{\prod_R , j-1} \right\rangle\right) \\ 
  &= - \sum_{j=1}^{N}\Re\left(\left\langle W_{\prod_L , j+1}^H \left(\Sigma - W \right) W_{\prod_R , j-1}^H, W_{\prod_L , j+1}^H \left(\Sigma - W \right) W_{\prod_R , j-1}^H \right\rangle\right) \\ 
  &= - \sum_{j=1}^{N} \left\| W_{\prod_L , j+1}^H \left(\Sigma - W \right) W_{\prod_R , j-1}^H \right\|_F^2 .
 \end{aligned}
\end{equation}

From $\|LXR\|_F \ge \sigma_{\min}(L) \sigma_{\min}(R) \|X\|_F$, $\sigma_{\min}\left(W_{\prod_L , j+1}^H\right) \ge \min_{j,k} |\sigma_{k}(W_j)|^{N-j}$ and $\sigma_{\min}\left(W_{\prod_R , j-1}^H\right) \ge \min_{j,k} |\sigma_{k}(W_j)|^{j-1}$, the proof is completed. 

\end{proof}

\begin{lemma}\label{ASVD, non-negative diagonal}

Analytic singular value decomposition of product matrix with positive semi-definite diagonal matrix. 

Under Section \ref{section: problem formulation} with gradient flow and any bounded (i.e. $W_{j,j\in[1,N]\cap\mathbb{N}^*}(t=0)$ is bounded) balanced initialization, $\forall N\in[2,+\infty)\cap\mathbb{N}^*$, the product matrix $W(t)$ can be expressed as: 

\begin{equation}
  W(t) = U(t) S(t) V(t)^H ,
\end{equation}

where: $U(t) \in \mathbb{F}^{d\times d}$, $S(t) \in \mathbb{R}^{d\times d}$ and $V(t) \in \mathbb{F}^{d\times d}$ are analytic functions of $t$, $U(t)$ and $V(t)$ are orthogonal matrices, $S(t)$ is diagonal and \textit{positive semi-definite} (elements on its diagonal may appear in any order), $\Sigma_w(t) \coloneqq S(t)^{1/N}$ is \textit{well-defined} (meaning the real-valued operation $S_{ii} \mapsto (S_{ii})^{1/N}$ is applied to each diagonal element of $S(t)$, resulting in another semi-positive diagonal matrix) and analytic. 

Moreover, if the singular values of product matrix $W$ are non-zero, then throughout the optimization $W$ remains full rank in finite time. 

\end{lemma}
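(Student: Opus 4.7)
The plan is to apply the analytic singular value decomposition (Lemma~\ref{ASVD existence}) directly to the product matrix $W(t)$ and then use the gradient flow dynamics under balance to rule out sign changes in the signed singular values, yielding a non-negative analytic diagonal whose entrywise $N$-th root remains analytic.

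First, by Lemma~\ref{ASVD existence} there exist analytic unitary/orthogonal $\tilde U(t),\tilde V(t)$ and an analytic real diagonal $\tilde\Sigma(t)$, with entries $\tilde\sigma_i(t)\in\mathbb{R}$ (possibly signed), such that $W(t)=\tilde U(t)\tilde\Sigma(t)\tilde V(t)^H$. Exploiting the sign freedom of ASVD (flipping one column of $\tilde U$ flips the sign of a single $\tilde\sigma_i$ while preserving analyticity), I may assume $\tilde\sigma_i(0)\ge 0$ for every $i$. I then set $U:=\tilde U$, $V:=\tilde V$, $S:=\tilde\Sigma$ and define $\Sigma_w:=S^{1/N}$ entrywise; it remains only to show $\tilde\sigma_i(t)\ge 0$ for all $t\ge 0$ and that the entrywise $N$-th root is analytic.

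The core of the argument is the following dichotomy: for each $i$, either $\tilde\sigma_i\equiv 0$ or $\tilde\sigma_i(t)>0$ for all $t\ge 0$. Under balance, formula~(\ref{thm 3, arora, complex}) applies to the tracked singular value of $W$, yielding $|\dot{\tilde\sigma}_i|\le N\,|\tilde\sigma_i|^{2-2/N}\,\|W-\Sigma\|_F$. Lemma~\ref{L ori non-increasing} bounds $\|W-\Sigma\|_F\le\sqrt{2\mathcal L_{\rm ori}(0)}$, and since $N\ge 2$ gives $2-2/N\ge 1$, the map $x\mapsto x^{2-2/N}$ is Lipschitz on any bounded subinterval of $[0,\infty)$. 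Hence on any bounded time window where $|\tilde\sigma_i|$ stays bounded (which it does, since the product matrix remains bounded), $|\dot{\tilde\sigma}_i|\le C|\tilde\sigma_i|$ for a constant $C$ depending on the initial loss. Gronwall then gives $|\tilde\sigma_i(t)|\ge|\tilde\sigma_i(0)|e^{-Ct}$, so positive initial values keep $|\tilde\sigma_i|$ strictly positive, and by the intermediate value theorem a continuous nonvanishing function with positive initial value stays positive; meanwhile ODE uniqueness (enabled by the same Lipschitz estimate, since the exponent is $\ge 1$) forces $\tilde\sigma_i\equiv 0$ whenever $\tilde\sigma_i(0)=0$. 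Combined with the sign normalization of the preceding paragraph, this gives $\tilde\sigma_i(t)\ge 0$ for all $t\ge 0$.

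With the dichotomy in hand, the entrywise $N$-th root of $S$ is analytic: for each $i$, $S_{ii}$ is either identically zero (so $S_{ii}^{1/N}\equiv 0$ is analytic) or strictly positive analytic (so $S_{ii}^{1/N}$ is a composition of real analytic maps, since $x\mapsto x^{1/N}$ is real analytic on $(0,\infty)$). The ``moreover'' clause is immediate from the same Gronwall lower bound applied to every $i$ with $\sigma_i(W(0))\ne 0$. The hard part will be the dichotomy: one must combine Gronwall to prevent positive values from decaying to zero with ODE uniqueness to prevent zero values from escaping zero, and both rely on the exponent $2-2/N$ in the balanced derivative formula being at least $1$, i.e., on $N\ge 2$.
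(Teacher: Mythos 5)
Your proposal is correct and follows essentially the same route as the paper: apply Lemma~\ref{ASVD existence} to get a signed analytic SVD, use Lemma~\ref{L ori non-increasing} together with the balanced derivative formula~(\ref{thm 3, arora, complex}) to obtain $|\dot{\sigma}_r|\le C\,|\sigma_r|^{2-2/N}$ with exponent at least $1$, conclude that each signed singular value either stays strictly positive or is identically zero (hence never changes sign), fix signs by flipping columns of $U$, and read off analyticity of the entrywise $N$-th root from this dichotomy. Your write-up in fact makes the Gronwall lower bound and the ODE-uniqueness step (zero stays zero) more explicit than the paper does, but the argument is the same.
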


\begin{proof}

From Lemma \ref{ASVD existence}, it is left to construct a new ASVD (analytic singular value decomposition) of $W(t)$ using existed ASVD $W(t) = U(t) S(t) V(t)^H$ ($S(t)$ is not guaranteed to be positive semi-definite). 

By Lemma \ref{L ori non-increasing}, $\left\| \Sigma -W \right\|_{F} \le \left\| \Sigma -W(t=0) \right\|_{F}$. Then the following term is bounded by a constant for all $t\in \mathbb{R}^+\cup\{0\}$:

\begin{equation}
 \begin{aligned}
  \left|\left\langle \nabla l(W(t)), u_r(t) v_r(t)^H \right\rangle\right| &\le \left\| \nabla l(W(t))\right\|_{op} = \left\| \Sigma -W \right\|_{op} \\&\le \left\| \Sigma -W \right\|_{F} \le \left\| \Sigma -W(t=0) \right\|_{F} .
 \end{aligned}
\end{equation}

By invoking Theorem 3 in \cite{arora2019implicitregularizationdeepmatrix} (for complex case, add $\Re$), the absolute value of time derivative of $\sigma_r(t)$ is bounded by: 

\begin{equation}
  \left|\frac{\mathrm{d} \sigma_r(t)}{\mathrm{d}t} \right| \le \left\| \Sigma -W(t=0) \right\|_{F} \cdot N\left(\sigma_r^2(t)\right)^{1-1/N} .
\end{equation}

Thus all $\sigma_r(t)$ do not change sign for $t\in \mathbb{R}^+\cup\{0\}$. Moreover, if $|\sigma_r(t=0)|>0$, the it never decrease to $0$ in finite time. 

Then we construct $S_{\rm new}(t)$ by flipping the sign of negative diagonal terms, and $U_{\rm new}(t)$ by changing the sign of corresponding columns of $U(t)$. Now $W(t) = U_{\rm new}(t) S_{\rm new}(t) V(t)^H$ is also an ASVD of $W(t)$, $U_{\rm new}(t)$ is analytic and unitary (orthogonal), $S_{\rm new}(t)$ is analytic, diagonal and positive semi-definite. 

Specially, if for some $r$, $\sigma_r(t) = 0$ at time $t$, then it remains zero. Thus, from $S_{\rm new}(t)$ is analytic, so is $\Sigma_w(t)$. This completes the proof. 

\end{proof}

Finally, we generalize Lemma 2 in \cite{arora2019implicitregularizationdeepmatrix} into complex field. Here we assume all matrices are square matrices of dimension $d\times d$. 

\begin{lemma}\label{lemma 2, arora, complex}

Under balanced initialization, assume the singular values of $W(t) = U(t) S(t) V(t)^H$ ($U$, $V$ are unitary, $S$ is real-valued and diagonal) are distinct and different from zero at initialization, then the derivatives of $U$, $V$ satisfy 

\begin{equation}
 \begin{aligned}
  \frac{\mathrm{d} U}{\mathrm{d} t} = U \left( F \odot M_U + D_U\right),\, \frac{\mathrm{d} V}{\mathrm{d} t} = V \left( F \odot M_V + D_V \right) , 
 \end{aligned}
\end{equation}

where $D_U$, $D_V$ are diagonal matrices with pure imaginary entries (and thus skew-Hermitian) satisfying

\begin{equation}\label{lemma 2, arora, complex, detail1}
 \begin{aligned}
  (D_U)_{jj} - (D_V)_{jj} &= - \frac{N}{2} \left(\sigma_j^2(S)\right)^{1/2 -1/N} \left[ \left( U^H (\nabla_W \mathcal{L}_{\rm ori}) V \right)_{jj} - \left( V^H (\nabla_W \mathcal{L}_{\rm ori})^H U \right)_{jj} \right] ,  
 \end{aligned}
\end{equation}

and 

\begin{equation}\label{lemma 2, arora, complex, detail2}
 \begin{aligned}
  M_U &= - \left[ U^H (\nabla_W \mathcal{L}_{\rm ori}) V S + S V^H (\nabla_W \mathcal{L}_{\rm ori})^H U \right] \\ 
  M_V &= - \left[ V^H (\nabla_W \mathcal{L}_{\rm ori})^H U S + S U^H (\nabla_W \mathcal{L}_{\rm ori}) V \right] . 
 \end{aligned}
\end{equation}

Here $\odot$ stands for Hadamard (element-wise) product and $F$ is defined by 

\begin{equation}
  F_{jk} = \begin{cases}
      0 &,\, j=k \\
      \frac{1}{\left(\sigma_{k}^2(S)\right)^{1/N} - \left(\sigma_{j}^2(S)\right)^{1/N}} &,\, j\ne k . 
  \end{cases}
\end{equation}

\end{lemma}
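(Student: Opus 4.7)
The plan is to adapt the proof of Lemma~2 in \cite{arora2019implicitregularizationdeepmatrix} to the complex field, keeping careful track of Hermitian adjoints and pure-imaginary diagonal contributions. I would begin by invoking the balanced condition: under balanced initialization with gradient flow, all $\Delta_{j,j+1}\equiv 0$, so the regularization gradient vanishes and the dynamics reduce to those driven by $\mathcal{L}_{\rm ori}$ alone (the telescoping computation in \eqref{equation: dW/dt is irrelevant to a} confirms this is true even without balancedness). Under strict balancedness, one can further derive the clean closed-form evolution
\begin{equation*}
\frac{\mathrm{d}W}{\mathrm{d}t} \;=\; -\sum_{j=1}^{N} (WW^H)^{(N-j)/N}\, \nabla_W \mathcal{L}_{\rm ori}\, (W^HW)^{(j-1)/N},
\end{equation*}
which is the complex analogue of Theorem~1 in \cite{arora2019implicitregularizationdeepmatrix}; its proof transfers verbatim upon replacing transposes with Hermitian adjoints, since the only ingredients are the identity $W_j^H W_j = W_{j+1}W_{j+1}^H$ and the standard functional calculus on positive Hermitian matrices.

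Next I would differentiate the ASVD $W = USV^H$ and exploit unitarity. Since $U^HU=V^HV=I$, the matrices $A_U := U^H (\mathrm{d}U/\mathrm{d}t)$ and $A_V := V^H (\mathrm{d}V/\mathrm{d}t)$ are skew-Hermitian; their diagonals are purely imaginary (these will be $D_U$ and $D_V$), and their off-diagonals satisfy $(A_U)_{\ell k} = -\overline{(A_U)_{k\ell}}$ and likewise for $A_V$. Sandwiching the product rule for $W$ between $U^H$ and $V$ gives
\begin{equation*}
U^H \frac{\mathrm{d}W}{\mathrm{d}t} V \;=\; A_U S \,+\, \frac{\mathrm{d}S}{\mathrm{d}t} \,-\, S\, A_V.
\end{equation*}
On the other hand, substituting $WW^H = US^2U^H$ and $W^HW = VS^2V^H$ into the closed-form evolution and writing $G := U^H \nabla_W \mathcal{L}_{\rm ori}\, V$, I obtain
\begin{equation*}
U^H \frac{\mathrm{d}W}{\mathrm{d}t} V \;=\; -\sum_{j=1}^N S^{\,2(N-j)/N}\, G\, S^{\,2(j-1)/N}.
\end{equation*}

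Matching entries with $s_k := S_{kk}$, the right-hand side has $(k,\ell)$-entry equal to $-G_{k\ell} \sum_{j=1}^N s_k^{2(N-j)/N} s_\ell^{2(j-1)/N}$; summing the geometric progression gives $-G_{k\ell}(s_\ell^2 - s_k^2)/(s_\ell^{2/N}-s_k^{2/N})$ when $k\neq \ell$ (well defined by the distinctness assumption, which by real-analyticity of the singular values from Lemma~\ref{ASVD, non-negative diagonal} propagates to a neighborhood of $t=0$), and $-N s_k^{2(N-1)/N} G_{kk}$ when $k=\ell$. For $k\neq \ell$ the identity for $U^H(\mathrm{d}W/\mathrm{d}t)V$ reads $s_\ell (A_U)_{k\ell} - s_k (A_V)_{k\ell}$; writing the analogous equation at position $(\ell,k)$ and conjugating via skew-Hermiticity produces a second linear relation in the same two unknowns $(A_U)_{k\ell}$ and $(A_V)_{k\ell}$. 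Solving this $2\times 2$ system (the determinant $s_\ell^2-s_k^2$ is nonzero by distinctness) yields precisely $(A_U)_{k\ell}=F_{k\ell}(M_U)_{k\ell}$ and $(A_V)_{k\ell}=F_{k\ell}(M_V)_{k\ell}$, matching the definitions in \eqref{lemma 2, arora, complex, detail2}.

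Finally, for the diagonal entries $k=\ell$, I would extract $\mathrm{d}S_{kk}/\mathrm{d}t$ as the real part (it is forced to be real since $S$ is real diagonal) and equate imaginary parts to obtain $s_k((D_U)_{kk}-(D_V)_{kk}) = -N s_k^{2(N-1)/N} \cdot i\,\Im(G_{kk})$. Rewriting $i\,\Im(G_{kk}) = \tfrac{1}{2}(G_{kk}-\overline{G_{kk}}) = \tfrac{1}{2}[(U^H \nabla_W\mathcal{L}_{\rm ori} V)_{kk} - (V^H (\nabla_W\mathcal{L}_{\rm ori})^H U)_{kk}]$ and dividing by $s_k$ recovers formula \eqref{lemma 2, arora, complex, detail1}, noting that $s_k^{2(N-1)/N}/s_k = s_k^{(N-2)/N} = (s_k^2)^{1/2-1/N}$. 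The main obstacle is the bookkeeping around complex conjugation: ensuring that the convention $\nabla_M = \partial/\partial \Re M + i\,\partial/\partial \Im M$ from Section~\ref{section: problem formulation} is applied consistently when forming $\nabla_W \mathcal{L}_{\rm ori} = -(\Sigma - W)$, and correctly tracking which quantities are Hermitian versus skew-Hermitian so that the decomposition $A = F\odot M + D$ with $M$ Hermitian, $F$ real skew-symmetric, and $D$ pure-imaginary diagonal yields a skew-Hermitian $A$ as required.
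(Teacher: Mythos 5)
Your proposal is correct and follows essentially the same route as the paper's proof: reduce to the closed-form evolution of $W$ under balancedness, sandwich the ASVD product rule between $U^H$ and $V$, recover the off-diagonal entries of the skew-Hermitian matrices $U^H\dot U$ and $V^H\dot V$ from the $(k,\ell)$ and $(\ell,k)$ relations (your explicit $2\times 2$ solve is algebraically the same elimination the paper performs via the Hermitian combination $U^H\dot W V S + S V^H\dot W^H U$ restricted to off-diagonal entries), and extract $(D_U)_{jj}-(D_V)_{jj}$ from the imaginary part of the diagonal. All the key identities you state, including the geometric-sum evaluation and the exponent bookkeeping $s_k^{(N-2)/N}=(s_k^2)^{1/2-1/N}$, check out.
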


\begin{remark}

Note that only the difference $D_U - D_V$ is uniquely determined. Adding the same purely imaginary diagonal matrix to both $D_U$ and $D_V$ leaves the dynamics of $W$ unchanged, corresponding to a shared phase rotation of $U$ and $V$. 

For real matrices, R.H.S. of equation (\ref{lemma 2, arora, complex, detail1}) is zero, $D_U = D_V = O$, then this Lemma degenerates into Lemma 2 of \cite{arora2019implicitregularizationdeepmatrix}. 

\end{remark}

\begin{proof}

We calculate the time derivative of $U$ and the time derivative of $V$ follows the same way. 

Following the derivations in \cite{arora2019implicitregularizationdeepmatrix}, 

\begin{equation}\label{lemma 2, arora, complex, proof, eq1}
 \begin{aligned}
  U^H \frac{\mathrm{d} W}{\mathrm{d} t} V = U^H \frac{\mathrm{d} U}{\mathrm{d} t} S + \frac{\mathrm{d} S}{\mathrm{d} t} + S \frac{\mathrm{d} V}{\mathrm{d} t}^H V , 
 \end{aligned}
\end{equation}

where $U^H \frac{\mathrm{d} U}{\mathrm{d} t} = - \frac{\mathrm{d} U}{\mathrm{d} t}^H U$ and $V^H \frac{\mathrm{d} V}{\mathrm{d} t} = - \frac{\mathrm{d} V}{\mathrm{d} t}^H V$ are skew-Hermitian matrices, whose diagonal entries are therefore purely imaginary. Since $S$ is real, denote $\bar{I}_d$ to be a matrix holding zeros on its diagonal and ones elsewhere, 

\begin{equation}
 \begin{aligned}
  \Re \left( \bar{I}_d \odot \left( U^H \frac{\mathrm{d} W}{\mathrm{d} t} V S + S V^H \frac{\mathrm{d} W}{\mathrm{d} t}^H U \right) \right) &= \Re \left( U^H \frac{\mathrm{d} U}{\mathrm{d} t} S^2 - S^2 U^H \frac{\mathrm{d} U}{\mathrm{d} t} \right) \\ 
  \Im \left( U^H \frac{\mathrm{d} W}{\mathrm{d} t} V S + S V^H \frac{\mathrm{d} W}{\mathrm{d} t}^H U \right) &= \Im \left( U^H \frac{\mathrm{d} U}{\mathrm{d} t} S^2 - S^2 U^H \frac{\mathrm{d} U}{\mathrm{d} t} \right) . 
 \end{aligned}
\end{equation}

Since $U^H \frac{\mathrm{d} W}{\mathrm{d} t} V S + S V^H \frac{\mathrm{d} W}{\mathrm{d} t}^H U$ is Hermitian, its diagonal entries are real, further giving $\Im \left( U^H \frac{\mathrm{d} W}{\mathrm{d} t} V S + S V^H \frac{\mathrm{d} W}{\mathrm{d} t}^H U \right) = \Im \left( \bar{I}_d \odot \left( U^H \frac{\mathrm{d} W}{\mathrm{d} t} V S + S V^H \frac{\mathrm{d} W}{\mathrm{d} t}^H U \right) \right)$. Combining the real and imaginary parts gives 

\begin{equation}
  \bar{I}_d \odot \left( U^H \frac{\mathrm{d} W}{\mathrm{d} t} V S + S V^H \frac{\mathrm{d} W}{\mathrm{d} t}^H U \right) = U^H \frac{\mathrm{d} U}{\mathrm{d} t} S^2 - S^2 U^H \frac{\mathrm{d} U}{\mathrm{d} t} . 
\end{equation}

Here $U^H \frac{\mathrm{d} W}{\mathrm{d} t} V = - \sum_{j=1}^{N} (S^2)^{\frac{j-1}{N}} U^H (\nabla_W \mathcal{L}_{\rm ori}) V (S^2)^{\frac{N-j}{N}}$. Then the non-diagonal entries of $U^H \frac{\mathrm{d} U}{\mathrm{d} t}$ follows by the proof of Lemma 2 in \cite{arora2019implicitregularizationdeepmatrix}. 

For the diagonal entries of $U^H \frac{\mathrm{d} U}{\mathrm{d} t}$, by taking imaginary part of equation (\ref{lemma 2, arora, complex, proof, eq1}), 

\begin{equation}
 \begin{aligned}
  &\sigma_j(S) \left( \left( U^H \frac{\mathrm{d} U}{\mathrm{d} t}\right)_{jj} - \left( V^H \frac{\mathrm{d} V}{\mathrm{d} t} \right)_{jj} \right) = i\Im \left(\sigma_j(S) \left( \left( U^H \frac{\mathrm{d} U}{\mathrm{d} t}\right)_{jj} - \left( V^H \frac{\mathrm{d} V}{\mathrm{d} t} \right)_{jj} \right)\right) \\=& i \Im\left(U^H \frac{\mathrm{d} W}{\mathrm{d} t} V \right)_{jj} = \frac{1}{2} \left( \left(U^H \frac{\mathrm{d} W}{\mathrm{d} t} V \right)_{jj} - \left(V^H \frac{\mathrm{d} W}{\mathrm{d} t}^H U \right)_{jj} \right) . 
 \end{aligned}
\end{equation}

The last step uses the fact that $i\Im(z) = \frac{1}{2} \left( z - \bar{z} \right) $. This deduces that 

\begin{equation}
  \left( U^H \frac{\mathrm{d} U}{\mathrm{d} t}\right)_{jj} - \left( V^H \frac{\mathrm{d} V}{\mathrm{d} t} \right)_{jj} = - \frac{N}{2}  \left(\sigma_j^2(S)\right)^{1/2 -1/N} \left[ \left( U^H (\nabla_W \mathcal{L}_{\rm ori}) V \right)_{jj} - \left( V^H (\nabla_W \mathcal{L}_{\rm ori})^H U \right)_{jj} \right] . 
\end{equation}

This completes the proof. 

\end{proof}

\subsection{Lemmas on Regularization, Gradient Flow}

\begin{lemma}\label{regularization, total}

Consider optimizing a generalized loss function coupled with a generalized regularization term using gradient flow: 

\begin{equation}\label{generalized loss and reg}
  \mathcal{L}(W_1, \cdots,W_N) \coloneqq \mathcal{L}_{\rm ori}\left( \prod_{j=N}^{1} W_j \right) + \frac{1}{4} \sum_{j=1}^{N-1} a_{j,j+1} \| \Delta_{j,j+1} \|_F^2,\,a_{j,j+1}\in \mathbb{R}^+\cup\{0\} .
\end{equation}

Then the regularization terms decays by: 

\begin{equation}
  \frac{\mathrm{d} }{\mathrm{d} t} \left( \sum_{j=1}^{N-1} a_{j,j+1} \| \Delta_{j,j+1} \|_F^2 \right) = -4 \sum_{j=1}^{N} \left\| a_{j,j+1} \Delta_{j,j+1} W_j -  a_{j-1,j} W_j \Delta_{j-1,j} \right\|_F^2.
\end{equation}
  
\end{lemma}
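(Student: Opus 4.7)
The plan is to differentiate $4\mathcal{L}_{\rm reg}=\sum_{j=1}^{N-1}a_{j,j+1}\|\Delta_{j,j+1}\|_F^2$ in time via the chain rule and leverage the classical fact that the gradient flow of $\mathcal{L}_{\rm ori}$ alone preserves every balance term $\Delta_{j,j+1}$ exactly. Introduce the shorthand $P_j \coloneqq a_{j-1,j}W_j\Delta_{j-1,j} - a_{j,j+1}\Delta_{j,j+1}W_j$ with the boundary convention $\Delta_{0,1}=\Delta_{N,N+1}=O$ and $a_{0,1}=a_{N,N+1}=0$; the paper's formulas then give $\nabla_{W_j}\mathcal{L}_{\rm reg}=-P_j$ and $\dot W_j=-\nabla_{W_j}\mathcal{L}_{\rm ori}+P_j$, while the claimed right-hand side reads exactly $-4\sum_{j=1}^N\|P_j\|_F^2$.

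Applying the real-inner-product chain rule $\dot g=\operatorname{Re}\sum_j\operatorname{tr}\bigl((\nabla_{W_j}g)^H\dot W_j\bigr)$ (valid for $\mathbb{F}=\mathbb{R}$ and, under the paper's Wirtinger convention $\nabla_W=\partial_{\Re W}+i\partial_{\Im W}$, for $\mathbb{F}=\mathbb{C}$) to $g=4\mathcal{L}_{\rm reg}$, whose gradient in $W_j$ equals $-4P_j$, yields
\begin{equation*}
\frac{d(4\mathcal{L}_{\rm reg})}{dt}=4\sum_{j=1}^N\operatorname{Re}\operatorname{tr}\bigl(P_j^H\,\nabla_{W_j}\mathcal{L}_{\rm ori}\bigr)\;-\;4\sum_{j=1}^N\|P_j\|_F^2.
\end{equation*}
The cross term is a purely algebraic expression in $W$ (no time derivatives remain). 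I would kill it by recognizing it, through the same chain rule read backwards, as $\frac{d}{dt}4\mathcal{L}_{\rm reg}(\tilde W)$ along the auxiliary pure flow $\dot{\tilde W}_j=-\nabla_{\tilde W_j}\mathcal{L}_{\rm ori}$, and then showing that each $\tilde\Delta_{j,j+1}$ is time-invariant along that flow.

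The invariance of $\tilde\Delta_{j,j+1}$ is the technical core. Substituting $-\nabla_{W_j}\mathcal{L}_{\rm ori}=W_{\prod_L,j+1}^H(\Sigma-W)W_{\prod_R,j-1}^H$ into the product-rule expansions of $\frac{d}{dt}(W_jW_j^H)$ and $\frac{d}{dt}(W_{j+1}^HW_{j+1})$, and using the telescoping identities $W_jW_{\prod_R,j-1}=W_{\prod_R,j}$ and $W_{\prod_L,j+2}W_{j+1}=W_{\prod_L,j+1}$, the four resulting terms match and cancel pairwise. This is the classical balancedness of \cite{du2018algorithmicregularizationlearningdeep}, which extends verbatim to complex weights since every step uses only Hermitian transposes and cyclic invariance of the trace. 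The main obstacle I anticipate is purely notational: verifying once that the paper's Wirtinger gradient reproduces the real chain rule with unit (not one-half) factor so that the constant $-4$ is matched exactly; once this is in place, both the cancellation of the cross term and the diagonal structure $-4\sum\|P_j\|_F^2$ fall out mechanically.
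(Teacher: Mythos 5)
Your proposal is correct, and it reaches the identity by a genuinely different route than the paper. The paper works at the level of the balance matrices themselves: it expands $\frac{\mathrm{d}}{\mathrm{d}t}(W_jW_j^H)$ and $\frac{\mathrm{d}}{\mathrm{d}t}(W_{j+1}^HW_{j+1})$, cancels the $\mathcal{L}_{\rm ori}$ contributions via $(\nabla_{W_j}\mathcal{L}_{\rm ori})W_j^H = W_{j+1}^H(\nabla_{W_{j+1}}\mathcal{L}_{\rm ori})$ to get a closed-form evolution for each $\Delta_{j,j+1}$, and then completes squares inside $\frac{\mathrm{d}}{\mathrm{d}t}\|\Delta_{j,j+1}\|_F^2$ so that the weighted sum over $j$ telescopes into $-4\sum_j\|P_j\|_F^2$. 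You instead work at the level of the loss: splitting $\dot W_j = -\nabla_{W_j}\mathcal{L}_{\rm ori}+P_j$ in the chain rule makes the diagonal term $-4\sum_j\|P_j\|_F^2$ appear immediately as the standard "descent at rate $\|\nabla\|^2$" of the regularizer along its own gradient flow, and reduces the entire remaining work to the vanishing of the cross term, which you correctly identify as the directional derivative of $\mathcal{L}_{\rm reg}$ along $-\nabla\mathcal{L}_{\rm ori}$ and kill by the conservation of each $\Delta_{j,j+1}$ under the pure product-loss flow. The underlying algebraic identity is the same in both proofs (and it does hold for any $\mathcal{L}_{\rm ori}$ depending on the weights only through the product, and in the complex case, since only Hermitian transposes and cyclicity of the trace are used), but your organization avoids the completing-the-square and telescoping-sum bookkeeping entirely; the one point you rightly flag — that the paper's convention $\nabla_M=\partial_{\Re M}+i\,\partial_{\Im M}$ gives the real chain rule $\dot g=\Re\sum_j\operatorname{tr}((\nabla_{W_j}g)^H\dot W_j)$ with unit factor — does check out, so the constant $-4$ is matched exactly.
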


\begin{proof}

\begin{equation}
  \begin{aligned}
    \frac{\mathrm{d} }{\mathrm{d} t} W_j W_j^H &= - \Big[ \left(\nabla_{W_j} \mathcal{L}_{\rm ori}\right) W_j^H + W_j \left(\nabla_{W_j} \mathcal{L}_{\rm ori}\right)^H \\ & - 2a_{j-1,j} W_j \Delta_{j-1,j} W_j^H \\&+ a_{j,j+1} \left(\Delta_{j,j+1} W_j W_j^H + W_j W_j^H \Delta_{j,j+1} \right) \Big]\\
    \frac{\mathrm{d} }{\mathrm{d} t} W_{j+1}^H W_{j+1} &= - \Big[ \left(\nabla_{W_{j+1}} \mathcal{L}_{\rm ori}\right)^H W_{j+1} + W_{j+1}^H \left(\nabla_{W_{j+1}} \mathcal{L}_{\rm ori}\right) \\ &+ 2a_{j+1,j+2} W_{j+1}^H \Delta_{j+1,j+2} W_{j+1} \\&- a_{j,j+1} \left( \Delta_{j,j+1} W_{j+1}^H W_{j+1} + W_{j+1}^H W_{j+1} \Delta_{j,j+1} \right) \Big] .
  \end{aligned}
\end{equation}

Denote $W_{\prod_L , j} \coloneqq \prod_{k=N}^{j} W_k ,\, W_{\prod_R , j} \coloneqq \prod_{k=j}^{1} W_k,\, W \coloneqq \prod_{k=N}^{1} W_k = W_{\prod_L , 1} = W_{\prod_R , N}$. From property of the loss $\mathcal{L}_{\rm ori}$, 

\begin{equation}
  \begin{aligned}
    \left(\nabla_{W_j} \mathcal{L}_{\rm ori}\right) W_j^H = W_{\prod_L,j+1}^H \left(\nabla_{W} \mathcal{L}_{\rm ori}(W)\right) W_{\prod_R,j} = W_{j+1}^H \left(\nabla_{W_{j+1}} \mathcal{L}_{\rm ori}\right),\, \forall j\in [1,N-1] \cap \mathbb{N}^* .
  \end{aligned}
\end{equation}

Thus we have 

\begin{equation}
  \begin{aligned}
    \frac{\mathrm{d} }{\mathrm{d} t} \Delta_{j,j+1} &= 2a_{j-1,j} W_j \Delta_{j-1,j} W_j^H + 2a_{j+1,j+2} W_{j+1}^H \Delta_{j+1,j+2} W_{j+1} \\ &- a_{j,j+1} \left(\Delta_{j,j+1} \left( W_j W_j^H + W_{j+1}^H W_{j+1} \right) + \left( W_j W_j^H + W_{j+1}^H W_{j+1} \right) \Delta_{j,j+1} \right) , 
  \end{aligned}
\end{equation}

\begin{equation}
  \begin{aligned}
    \frac{\mathrm{d} \| \Delta_{j,j+1} \|_F^2}{\mathrm{d} t} &= 4a_{j-1,j} \mathrm{tr} \left( W_j \Delta_{j-1,j} W_j^H \Delta_{j,j+1} \right) \\&+ 4a_{j+1,j+2} \mathrm{tr} \left( W_{j+1} \Delta_{j,j+1} W_{j+1}^H \Delta_{j+1,j+2} \right) \\
    &- 4a_{j,j+1} \mathrm{tr}\left( ( W_j W_j^H + W_{j+1}^H W_{j+1} ) \Delta_{j,j+1}^2 \right) \\ 
    &= -\frac{2}{a_{j,j+1}} \bigg[ \left\| a_{j,j+1} \Delta_{j,j+1} W_j - a_{j-1,j} W_j \Delta_{j-1,j} \right\|_F^2 \\&+ \left\| a_{j+1,j+2} \Delta_{j+1,j+2} W_{j+1} - a_{j,j+1} W_{j+1} \Delta_{j,j+1} \right\|_F^2 \\ &+ a_{j,j+1}^2\left( \| \Delta_{j,j+1} W_j \|_F^2 + \| W_{j+1} \Delta_{j,j+1} \|_F^2 \right) \\&- a_{j-1,j}^2 \| W_{j} \Delta_{j-1,j} \|_F^2 - a_{j+1,j+2}^2 \| \Delta_{j+1,j+2} W_{j+1} \|_F^2 \bigg] . 
  \end{aligned}
\end{equation}

By taking weighted sum, 

\begin{equation}
  \frac{\mathrm{d} }{\mathrm{d} t} \left( \sum_{j=1}^{N-1} a_{j,j+1} \| \Delta_{j,j+1} \|_F^2 \right) = -4 \sum_{j=1}^{N} \left\| a_{j,j+1} \Delta_{j,j+1} W_j - a_{j-1,j} W_j \Delta_{j-1,j} \right\|_F^2 . 
\end{equation}
\end{proof}

Below we back to $a_{j,j+1} \equiv a\in \mathbb{R}^+\cup\{0\}$, $\forall j \in [1,N-1] \cap \mathbb{N}^*$. Then \ref{generalized loss and reg} becomes

\begin{equation}\label{generalized loss and reg, same a}
  \mathcal{L}(W_1, \cdots,W_N) \coloneqq \mathcal{L}_{\rm ori}\left( \prod_{j=N}^{1} W_j \right) + \frac{1}{4} \sum_{j=1}^{N-1} a \| \Delta_{j,j+1} \|_F^2,\,a\in \mathbb{R}^+\cup\{0\} . 
\end{equation}

\begin{theorem}\label{regularization term, convergence bound}

Suppose for all $j \in [1,N]\cap \mathbb{N}^*$, $\sigma_{\min}(W_j) \ge \delta$, $\sigma_{\max}(W_j) \le M$. Consider optimizing \ref{generalized loss and reg, same a} using gradient flow, then the convergence rate of the regularization term is lower bounded: 

\begin{equation}
  \frac{\mathrm{d}}{\mathrm{d} t} \left(\sum_{j=1}^{N-1} \| \Delta_{j,j+1} \|_F^2 \right) \le  -4 a \cdot \frac{2}{N - 1} \frac{M^2 - \delta^2}{\left( \frac{M}{\delta} \right)^{2\lfloor N/2 \rfloor} - 1} \cdot \left(\sum_{j=1}^{N-1} \| \Delta_{j,j+1} \|_F^2 \right) . 
\end{equation}

\end{theorem}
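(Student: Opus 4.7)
The plan is to apply Lemma~\ref{regularization, total} with the uniform choice $a_{j,j+1} \equiv a$, which (after cancelling the common factor of $a$) gives
\[
\frac{d}{dt}\sum_{j=1}^{N-1}\|\Delta_{j,j+1}\|_F^2 \;=\; -4a\sum_{j=1}^{N}\|E_j\|_F^2,\qquad E_j := \Delta_{j,j+1}W_j - W_j\Delta_{j-1,j}.
\]
It then suffices to prove the reverse-Poincar\'{e} inequality $\sum_{j=1}^{N}\|E_j\|_F^2 \ge c\,\sum_{j=1}^{N-1}\|\Delta_{j,j+1}\|_F^2$ with $c$ equal to the stated prefactor. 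I would treat this as a purely algebraic chain inequality for the Hermitian matrices $X_j := \Delta_{j,j+1}$, subject to the boundary conditions $X_0 = X_N = O$ built into the definition of $\Delta$.

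The key algebraic step is to invert the relation $E_j = X_j W_j - W_j X_{j-1}$ using that $W_j$ is invertible ($\sigma_{\min}(W_j) \ge \delta > 0$), yielding $X_j = W_j X_{j-1} W_j^{-1} + E_j W_j^{-1}$. Taking Frobenius norms and setting $r := M/\delta$ gives the scalar recurrence
\[
\|X_j\|_F \;\le\; r\,\|X_{j-1}\|_F + \tfrac{1}{\delta}\|E_j\|_F.
\]
Unrolling from the left boundary $X_0 = O$ produces $\|X_j\|_F \le \tfrac{1}{\delta}\sum_{k=1}^{j} r^{j-k}\|E_k\|_F$, and the symmetric rearrangement $X_{j-1} = W_j^{-1} X_j W_j - W_j^{-1} E_j$ unrolled from the right boundary $X_N = O$ gives $\|X_j\|_F \le \tfrac{1}{\delta}\sum_{k=j+1}^{N} r^{k-j-1}\|E_k\|_F$. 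For each interior index $j$ I would keep whichever chain is shorter, so the relevant geometric sum has at most $\min(j, N-j) \le \lfloor N/2 \rfloor$ terms.

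A weighted Cauchy--Schwarz then converts each one-sided bound into $\|X_j\|_F^2 \le \frac{r^{2\lfloor N/2\rfloor}-1}{\delta^2(r^2-1)}\sum_{k \in I_j}\|E_k\|_F^2$, where $I_j \subset \{1,\dots,N\}$ is the corresponding half-sided index set, and the identity $M^2 - \delta^2 = \delta^2(r^2-1)$ brings the prefactor into the form appearing in the theorem. Summing over $j = 1,\dots,N-1$ and tracking how many sets $I_j$ contain each $k$ produces the advertised constant. The main obstacle is this last combinatorial step: splitting $\{1,\dots,N-1\}$ at the midpoint, the per-$k$ multiplicity turns out to be exactly $(N-1)/2$ for odd $N$ (matching the theorem at once), whereas for even $N$ a slightly sharper accounting is needed---either exploiting that the two half-chains differ in length by one, or averaging the left- and right-sided bounds for the middle index---to shave the naive $\lfloor N/2\rfloor$ down to $(N-1)/2$.
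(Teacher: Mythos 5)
Your proposal is correct and follows essentially the same route as the paper's own proof: invoke Lemma~\ref{regularization, total} with uniform $a_{j,j+1}\equiv a$, invert the relation $D_j = \Delta_{j,j+1}W_j - W_j\Delta_{j-1,j}$ to get the scalar recurrence $\|\Delta_{j,j+1}\|_F \le \tfrac{M}{\delta}\|\Delta_{j-1,j}\|_F + \tfrac{1}{\delta}\|D_j\|_F$, unroll from both boundary conditions $\Delta_{0,1}=\Delta_{N,N+1}=O$, apply Cauchy--Schwarz, and uniformize both the geometric factor and the multiplicity. The one point you flag as an obstacle (sharpening $\lfloor N/2\rfloor$ to $(N-1)/2$ for even $N$) is resolved exactly as you suspect: the paper averages the left- and right-sided bounds at the middle index $j=N/2$, making the per-$k$ multiplicity $m-1+\tfrac12=\tfrac{N-1}{2}$.
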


\begin{proof}

Denote $D_j = \Delta_{j,j+1} W_j - W_j \Delta_{j-1,j}$. Then 

\begin{equation}
  \Delta_{j,j+1} = (D_j + W_j \Delta_{j-1,j}) W_j^{-1} . 
\end{equation}

Deducing 

\begin{equation}
  \|\Delta_{j,j+1}\|_F \le \left\|W_j^{-1}\right\|_{op} \left( \|D_j\|_F + \|\Delta_{j-1,j}\|_F \|W_j\|_{op} \right) \le \frac{1}{\delta}\|D_j\|_F + \frac{M}{\delta} \|\Delta_{j-1,j}\|_F . 
\end{equation}

From $\Delta_{0,1} =O$, inductively we have

\begin{equation}
 \begin{aligned}
  \|\Delta_{j,j+1}\|_F^2 &\le \frac{1}{\delta^2} \left( \sum_{k=1}^{j} \left( \frac{M}{\delta} \right)^{j-k} \|D_{k}\|_F \right)^2 \le \frac{1}{\delta^2} \left( \sum_{k=1}^{j}  \left( \frac{M}{\delta} \right)^{2(j-k)} \right) \left( \sum_{k=1}^{j} \|D_{k}\|_F^2 \right) \\&= \frac{1}{\delta^2} \frac{\left( \frac{M}{\delta} \right)^{2 j} - 1}{\left( \frac{M}{\delta} \right)^{2}-1} \sum_{k=1}^{j} \|D_{k}\|_F^2 . 
 \end{aligned}
\end{equation}

The last two step use Cauchy-Schwarz inequality. 

From $\Delta_{N,N+1} =O$, following the same procedure we have 

\begin{equation}
  \|\Delta_{N-j,N-j+1}\|_F^2 \le \frac{1}{\delta^2} \frac{\left( \frac{M}{\delta} \right)^{2 j} - 1}{\left( \frac{M}{\delta} \right)^{2}-1} \sum_{k=N-j+1}^{N} \|D_{k}\|_F^2 . 
\end{equation}

Summing all terms up, for odd $N$ we have

\begin{equation}
  \begin{aligned}
    \sum_{j=1}^{N-1} \|\Delta_{j,j+1}\|_F^2 &= \sum_{j=1}^{(N-1)/2} \left( \|\Delta_{j,j+1}\|_F^2 + \|\Delta_{N-j,N-j+1}\|_F^2 \right) \\
    &\le \sum_{j=1}^{(N-1)/2} \left( \frac{1}{\delta^2} \frac{\left( \frac{M}{\delta} \right)^{2 j} - 1}{\left( \frac{M}{\delta} \right)^{2}-1} \sum_{k=1}^{j} \left( \|D_{k}\|_F^2 + \|D_{N+1-k}\|_F^2 \right)\right) \\
    &= \sum_{k=1}^{(N-1)/2} \left( \left( \|D_{k}\|_F^2 + \|D_{N+1-k}\|_F^2 \right) \sum_{j=k}^{(N-1)/2} \left(\frac{1}{\delta^2} \frac{\left( \frac{M}{\delta} \right)^{2 j} - 1}{\left( \frac{M}{\delta} \right)^{2}-1}\right) \right) \\
    &\le \frac{N - 1}{2}\frac{\left( \frac{M}{\delta} \right)^{N-1} - 1}{M^2 - \delta^2} \left( \sum_{k=1}^{N}\|D_k\|^2 \right) . 
  \end{aligned}
\end{equation}

For even $N$, 

\begin{equation}
  \begin{aligned}
    \sum_{j=1}^{N-1} \|\Delta_{j,j+1}\|_F^2 &= \sum_{j=1}^{N/2-1} \left( \|\Delta_{j,j+1}\|_F^2 + \|\Delta_{N-j,N-j+1}\|_F^2 \right) + \|\Delta_{N/2,N/2+1}\|_F^2 \\
    &\le \sum_{j=1}^{N/2-1} \left( \frac{1}{\delta^2} \frac{\left( \frac{M}{\delta} \right)^{2 j} - 1}{\left( \frac{M}{\delta} \right)^{2}-1} \sum_{k=1}^{j} \left( \|D_{k}\|_F^2 + \|D_{N+1-k}\|_F^2 \right)\right) \\&+ \frac{1}{2\delta^2} \frac{\left( \frac{M}{\delta} \right)^{N} - 1}{\left( \frac{M}{\delta} \right)^{2}-1} \sum_{k=1}^{N/2} \left( \|D_{k}\|_F^2 + \|D_{N+1-k}\|_F^2 \right) \\
    &= \sum_{k=1}^{N/2-1} \left( \left( \|D_{k}\|_F^2 + \|D_{N+1-k}\|_F^2 \right) \sum_{j=k}^{N/2-1} \left(\frac{1}{\delta^2} \frac{\left( \frac{M}{\delta} \right)^{2 j} - 1}{\left( \frac{M}{\delta} \right)^{2}-1}\right) \right) \\&+ \frac{1}{2\delta^2} \frac{\left( \frac{M}{\delta} \right)^{N} - 1}{\left( \frac{M}{\delta} \right)^{2}-1} \sum_{k=1}^{N/2} \left( \|D_{k}\|_F^2 + \|D_{N+1-k}\|_F^2 \right) \\
    &\le \frac{N - 1}{2}\frac{\left( \frac{M}{\delta} \right)^{N} - 1}{M^2 - \delta^2} \left( \sum_{k=1}^{N}\|D_k\|^2 \right) . 
  \end{aligned}
\end{equation}

Thus 

\begin{equation}
  \sum_{j=1}^{N}\|D_j\|^2 \ge \frac{2}{N - 1} \frac{M^2 - \delta^2}{\left( \frac{M}{\delta} \right)^{2\lfloor N/2 \rfloor} - 1} \sum_{i=1}^{N-1} \|\Delta_{i,i+1}\|_F^2 . 
\end{equation}

Combine with Lemma \ref{regularization, total}, then the proof is done. 

\end{proof}

\begin{remark}

For $N=4$, Theorem \ref{regularization term, convergence bound} reduces to

\begin{equation}
  \frac{\mathrm{d}}{\mathrm{d} t} \left(\sum_{j=1}^{3} \| \Delta_{j,j+1} \|_F^2 \right) \le  - \frac{8a}{3} \frac{\delta^4}{M^2 + \delta^{2}} \cdot \left(\sum_{j=1}^{3} \| \Delta_{j,j+1} \|_F^2 \right) . 
\end{equation}

\end{remark}

\begin{theorem}\label{maximum and minimum singular values are irrelevant of the regularization term} Under problem settings in section \ref{section: problem formulation} with gradient flow, the change of maximum and minimum singular values of $W_j$s have bounds that are irrelevant of the regularization term: 

\begin{equation}
  \begin{aligned}
    \frac{\mathrm{d} \max_{j,k} \sigma_k^2(W_j)}{\mathrm{d}t} &\le 2 \max_{j,k} \left| \sigma_k(W_j) \right| \max_{j} \left\| \nabla_{W_j} \mathcal{L}_{\rm ori} \right\|_{op} \\
    \frac{\mathrm{d} \min_{j,k} \sigma_k^2(W_j)}{\mathrm{d}t} &\ge - 2 \min_{j,k} \left| \sigma_k(W_j) \right| \max_{j} \left\| \nabla_{W_j} \mathcal{L}_{\rm ori} \right\|_{op} . 
  \end{aligned}
\end{equation}
  
\end{theorem}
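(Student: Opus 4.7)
Since $\sigma_k^2(W_j) = \lambda_k(W_j W_j^H)$, I would begin by computing the time derivative of each Gram matrix $W_j W_j^H$ and splitting it into contributions from $\mathcal{L}_{\rm ori}$ and $\mathcal{L}_{\rm reg}$. Using $\nabla_{W_j}\mathcal{L}_{\rm reg} = -a W_j \Delta_{j-1,j} + a\Delta_{j,j+1} W_j$ together with the Hermiticity of the balance matrices, the regularization piece collapses to
\begin{equation*}
\left.\frac{\mathrm{d}}{\mathrm{d} t}(W_j W_j^H)\right|_{\rm reg}
= 2a\, W_j \Delta_{j-1,j} W_j^H - a\bigl(\Delta_{j,j+1} W_j W_j^H + W_j W_j^H \Delta_{j,j+1}\bigr),
\end{equation*}
while the $\mathcal{L}_{\rm ori}$ piece is $-(\nabla_{W_j}\mathcal{L}_{\rm ori}) W_j^H - W_j (\nabla_{W_j}\mathcal{L}_{\rm ori})^H$. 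The strategy is then to show that at the global argmax (resp.\ argmin) over $(j,k)$, the regularization contribution has a favorable sign, so that only the original-loss contribution survives in the bound.

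\textbf{Max bound.} Fix a time $t$ and let $(j^*,k^*) \in \arg\max_{j,k}\sigma_k^2(W_j(t))$ with $u^*$ a unit eigenvector of $W_{j^*} W_{j^*}^H$ for the eigenvalue $\sigma_{k^*}^2(W_{j^*})$. By a Danskin/envelope argument applied to the finite family of analytic curves $\{\sigma_k^2(W_j(t))\}_{j,k}$ (analyticity comes from Lemma \ref{ASVD existence}), it is enough to control $u^{*H}\frac{\mathrm{d}}{\mathrm{d} t}(W_{j^*} W_{j^*}^H) u^*$. Expanding $\Delta_{j^*-1,j^*} = W_{j^*-1}W_{j^*-1}^H - W_{j^*}^H W_{j^*}$ and $\Delta_{j^*,j^*+1} = W_{j^*}W_{j^*}^H - W_{j^*+1}^H W_{j^*+1}$ and substituting $W_{j^*}W_{j^*}^H u^* = \sigma_{k^*}^2(W_{j^*}) u^*$, the regularization piece simplifies to
\begin{equation*}
2a\!\left[\,u^{*H} W_{j^*} W_{j^*-1} W_{j^*-1}^H W_{j^*}^H u^* \;-\; 2\sigma_{k^*}^4(W_{j^*}) \;+\; \sigma_{k^*}^2(W_{j^*})\,\|W_{j^*+1} u^*\|^2\right].
\end{equation*}
Since $(j^*,k^*)$ is a global maximizer, $\|W_{j^*\pm 1}\|_{\rm op}^2 \le \sigma_{k^*}^2(W_{j^*})$, so each positive summand is at most $\sigma_{k^*}^4(W_{j^*})$ and the bracket is nonpositive. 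For the original-loss piece, the Cauchy--Schwarz estimate
\begin{equation*}
\bigl|u^{*H}(\nabla_{W_{j^*}}\mathcal{L}_{\rm ori}) W_{j^*}^H u^*\bigr| \le \|\nabla_{W_{j^*}}\mathcal{L}_{\rm ori}\|_{\rm op}\,\|W_{j^*}^H u^*\| = \|\nabla_{W_{j^*}}\mathcal{L}_{\rm ori}\|_{\rm op}\,\sigma_{k^*}(W_{j^*})
\end{equation*}
then yields the claimed upper bound.

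\textbf{Min bound and main obstacles.} The minimum case is mirrored: at the argmin $(j^*,k^*)$ one has $\sigma_{\min}^2(W_{j^*\pm 1}) \ge \sigma_{k^*}^2(W_{j^*})$, so both $u^{*H} W_{j^*} W_{j^*-1} W_{j^*-1}^H W_{j^*}^H u^* \ge \sigma_{k^*}^4(W_{j^*})$ and $\|W_{j^*+1} u^*\|^2 \ge \sigma_{k^*}^2(W_{j^*})$, forcing the regularization bracket to be $\ge 0$; the original-loss piece is controlled by the same Cauchy--Schwarz bound (taken with the opposite sign), giving the lower bound. The main subtlety I anticipate is the non-smoothness of $\max$ and $\min$ when the argmax/argmin set has multiple elements, which I would handle by interpreting $\mathrm{d}/\mathrm{d}t$ as the one-sided Dini derivative and invoking Danskin's theorem for a finite family of analytic functions. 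A minor secondary point is the boundary cases $j^*\in\{1,N\}$, where $\Delta_{0,1}=\Delta_{N,N+1}=O$ removes one term from the reg bracket --- this only strengthens the sign argument, so it is not a real obstacle.
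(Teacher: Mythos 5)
Your proposal is correct and follows essentially the same route as the paper: the paper computes $\mathrm{d}\sigma_k(W_j)/\mathrm{d}t$ via the SVD derivative formula and observes that the regularization contribution reduces to $a\sigma_k\bigl(u_k^H W_{j+1}^H W_{j+1} u_k + v_k^H W_{j-1}W_{j-1}^H v_k - 2\sigma_k^2\bigr)$, which is nonpositive at the global argmax and nonnegative at the global argmin because the neighboring layers' singular values are sandwiched between $\min_{j,k}\sigma_k^2$ and $\max_{j,k}\sigma_k^2$ — exactly your bracket argument phrased through $W_jW_j^H$ instead. Your Dini/Danskin treatment of non-unique extremizers is a slightly more careful version of what the paper relegates to a remark, and your sign check at the boundary layers $j\in\{1,N\}$ is sound.
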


\begin{remark}

If $\arg\max_{(j,k)} |\sigma_k(W_j)|$, $\arg\min_{(j,k)} |\sigma_k(W_j)|$ are not unique, the derivatives are not well-defined. In these cases, the inequalities become: 

\begin{equation}
  \begin{aligned}
    \frac{\mathrm{d} \sigma_{k^\prime}^2(W_{j^\prime})}{\mathrm{d}t} &\le 2 \max_{j,k} \left| \sigma_k(W_j) \right| \max_{j} \left\| \nabla_{W_j} \mathcal{L}_{\rm ori} \right\|_{op},\, (j^\prime,k^\prime)\in \arg\max_{(j,k)} |\sigma_k(W_j)| \\
    \frac{\mathrm{d} \sigma_{k^\prime}^2(W_{j^\prime})}{\mathrm{d}t} &\ge - 2 \min_{j,k} \left| \sigma_k(W_j) \right| \max_{j} \left\| \nabla_{W_j} \mathcal{L}_{\rm ori} \right\|_{op} ,\, (j^\prime,k^\prime)\in \arg\min_{(j,k)} |\sigma_k(W_j)| . 
  \end{aligned}
\end{equation}

\end{remark}

\begin{proof}

For simplicity, set $W_0 \equiv W_1$, $W_5 \equiv W_4$. 

Denote the analytic singular value decomposition of $W_j(t)$ to be $U^{(j)} \Sigma_w^{(j)} V^{(j)H}$, then from Lemma \ref{singular value derivative}, we have

\begin{equation}
 \begin{aligned}
  \frac{\mathrm{d} \sigma_k(W_j)}{\mathrm{d}t} &= \Re \left( u_k^{(j)H} \left( - \nabla_{W_j} \mathcal{L}_{\rm ori} + a W_j \Delta_{j-1,j} - a \Delta_{j,j+1} W_j \right) v_k^{(j)} \right) \\ 
  &= \Re \left( u_k^{(j)H} \left( - \nabla_{W_j} \mathcal{L}_{\rm ori} \right) v_k^{(j)}\right) \\ &+ a u_k^{(j)H} \left( W_j W_{j-1} W_{j-1}^H + W_{j+1}^H W_{j+1} W_j - 2 W_j W_j^H W_{j} \right) v_k^{(j)} \\
  &= \Re \left( u_k^{(j)H} \left( - \nabla_{W_j} \mathcal{L}_{\rm ori} \right) v_k^{(j)} \right) \\&+ a \left[ \left( u_k^{(j)H} W_{j+1}^H W_{j+1} u_k^{(j)} + v_k^{(j)H} W_{j-1} W_{j-1}^H v_k^{(j)} \right) \sigma_k(W_j)  - 2 \sigma_k(W_j)^3 \right] . 
 \end{aligned}
\end{equation}

From $ u_k^{(j)H} W_{j+1}^H W_{j+1} u_k^{(j)} ,\, v_k^{(j)H} W_{j-1} W_{j-1}^H v_k^{(j)} \in [\min_{j,k} \sigma_{k}^2(W_j), \max_{j,k} \sigma_{k}^2(W_j)] $, the proof is completed. 

\end{proof}

Note: 

\begin{equation}
  \max_{j}\left\| \nabla_{W_j} \mathcal{L}_{\rm ori} \right\|_{op} \le \max_{j,k} |\sigma_k(W_j)|^{N-1} \left( \sigma_1(\Sigma) + \max_{j,k} |\sigma_k(W_j)|^{N} \right) . 
\end{equation}

\subsection{Lemmas on Eigenvalue Change under Discrete Time}\label{subsection: Lemmas on Eigenvalue Change under Discrete Time}

\begin{lemma}\label{maximum and minimum singular values, general, discrete}

Suppose $\Sigma,S\in\mathbb{F}^{d\times d}$ are positive semi-definite matrices, $0 \le \alpha \le \frac{1}{6}\|S\|_{op}^{-1}$, $\mathbb{F} = \mathbb{C}$ or $\mathbb{R}$. Consider $S^\prime = (I + \alpha(\Sigma - S))S(I + \alpha(\Sigma - S))$, 

\begin{equation}
 \begin{aligned}
  \lambda_{\min}\left(S^\prime \right) &\ge \lambda_{\min}(S)(1 + \alpha (\lambda_{\min}(\Sigma) - \lambda_{\min}(S)))^2 + O\left(\alpha^2 \left( \| \Sigma \|_{op}^2 + \| S \|_{op}^2 \right) \| S \|_{op} \right) \\
  \lambda_{\max}\left(S^\prime \right) &\le \lambda_{\max}(S)(1 + \alpha (\lambda_{\max}(\Sigma) - \lambda_{\max}(S)))^2 . 
 \end{aligned}
\end{equation}
    
\end{lemma}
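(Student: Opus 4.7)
The plan is to rewrite $S' = MSM$ with $M \coloneqq I + \alpha(\Sigma - S)$ Hermitian. The hypothesis $\alpha \le \frac{1}{6}\|S\|_{op}^{-1}$ guarantees $\lambda_{\min}(M) \ge 1 - \alpha\|S\|_{op} \ge 5/6 > 0$, so $M \succ 0$. The upper and lower bounds call for genuinely different arguments — the former is required to hold exactly while the latter tolerates an $O(\alpha^2)$ slack, a strong hint that one should not try to handle them with the same tool.

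For the upper bound I would use the identity $\lambda_{\max}(MSM) = \|MS^{1/2}\|_{op}^2$, which follows from $MSM = (MS^{1/2})(MS^{1/2})^H$, and decompose
\begin{equation*}
MS^{1/2} = S^{1/2}(I - \alpha S) + \alpha\, \Sigma\, S^{1/2} .
\end{equation*}
The first summand is a product of two commuting PSD matrices whose eigenvalues are $\sqrt{\lambda_i(S)}\bigl(1 - \alpha\lambda_i(S)\bigr)$. Since $\lambda \mapsto \sqrt{\lambda}(1-\alpha\lambda)$ has derivative $(1-3\alpha\lambda)/(2\sqrt{\lambda})$, positive on $[0, 1/(3\alpha)] \supseteq [0,\|S\|_{op}]$, its operator norm equals $\sqrt{\lambda_{\max}(S)}\bigl(1 - \alpha\lambda_{\max}(S)\bigr)$. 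The second summand is bounded by submultiplicativity: $\alpha\|\Sigma\|_{op}\|S^{1/2}\|_{op} = \alpha\lambda_{\max}(\Sigma)\sqrt{\lambda_{\max}(S)}$. Triangle inequality then yields $\|MS^{1/2}\|_{op} \le \sqrt{\lambda_{\max}(S)}\bigl(1 + \alpha(\lambda_{\max}(\Sigma) - \lambda_{\max}(S))\bigr)$, and squaring produces the desired upper bound.

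For the lower bound I would work in the Loewner order. Since $MSM$ and $S^{1/2} M^2 S^{1/2}$ have identical spectra (as $AB$ and $BA$ for $A = MS^{1/2}$, $B = S^{1/2}M$), expand
\begin{equation*}
S^{1/2} M^2 S^{1/2} = S + 2\alpha\bigl(S^{1/2}\Sigma S^{1/2} - S^2\bigr) + \alpha^2 S^{1/2}(\Sigma - S)^2 S^{1/2} .
\end{equation*}
The $\alpha^2$ tail is PSD, so Weyl lets me drop it. Using $\Sigma \succeq \lambda_{\min}(\Sigma) I$ gives $S^{1/2}\Sigma S^{1/2} \succeq \lambda_{\min}(\Sigma)\, S$, whence $S^{1/2}M^2 S^{1/2} \succeq S(1 + 2\alpha\lambda_{\min}(\Sigma)) - 2\alpha S^2$. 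The right-hand side is a polynomial in $S$, and the analogous monotonicity argument applied to $\lambda \mapsto \lambda(1 + 2\alpha\lambda_{\min}(\Sigma) - 2\alpha\lambda)$ (whose critical point $(1 + 2\alpha\lambda_{\min}(\Sigma))/(4\alpha)$ lies above $\|S\|_{op}$) shows its smallest eigenvalue equals $\lambda_{\min}(S)\bigl(1 + 2\alpha(\lambda_{\min}(\Sigma) - \lambda_{\min}(S))\bigr)$. The gap between this and the stated target $\lambda_{\min}(S)\bigl(1 + \alpha(\lambda_{\min}(\Sigma) - \lambda_{\min}(S))\bigr)^2$ is exactly $\alpha^2 \lambda_{\min}(S)(\lambda_{\min}(\Sigma) - \lambda_{\min}(S))^2 \le 2\alpha^2 \|S\|_{op}(\|\Sigma\|_{op}^2 + \|S\|_{op}^2)$, absorbed into the advertised slack.

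The main obstacle will be resisting the temptation to apply generic spectral bounds. In particular Ostrowski's theorem only gives $\lambda_{\max}(MSM) \le \lambda_{\max}(S)\|M\|_{op}^2 = \lambda_{\max}(S)(1 + \alpha\lambda_{\max}(\Sigma - S))^2$, and Weyl then delivers only $\lambda_{\max}(\Sigma - S) \le \lambda_{\max}(\Sigma) - \lambda_{\min}(S)$, which is strictly larger than the required $\lambda_{\max}(\Sigma) - \lambda_{\max}(S)$ whenever $S$ is not a multiple of the identity. The decomposition of $MS^{1/2}$ above is what sidesteps this loss, because the $-\alpha S^{3/2}$ piece combines with $S^{1/2}$ into an exactly-computable PSD product whose norm involves $\lambda_{\max}(S)$ rather than $\lambda_{\min}(S)$; the Loewner inequality $S^{1/2}\Sigma S^{1/2} \succeq \lambda_{\min}(\Sigma)\, S$ plays the symmetric role on the lower-bound side, and it is this asymmetry — no $\alpha^2$ slack for the maximum, a small one for the minimum — that makes the two halves of the lemma need complementary but distinct proofs.
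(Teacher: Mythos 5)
Your proof is correct and takes a genuinely different route from the paper's. The paper writes $S'$ as a $\beta$-parametrized three-term split, namely $S' = \beta(I - \tfrac{\alpha}{\beta}S)S(I - \tfrac{\alpha}{\beta}S) + (1-\beta)(I + \tfrac{\alpha}{1-\beta}\Sigma)S(I + \tfrac{\alpha}{1-\beta}\Sigma) - \tfrac{\alpha^2}{\beta(1-\beta)}[(1-\beta)S+\beta\Sigma]S[(1-\beta)S+\beta\Sigma]$ with $\beta=\tfrac12$, reads off the first term's spectrum via monotonicity of $x\mapsto(1-x)^2x$ on $[0,1/3]$, sandwiches the second term's extremal eigenvalues, and discards the third PSD term as $O(\alpha^2)$ error. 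You instead use two arguments tailored to each side: for the maximum, the identity $\lambda_{\max}(MSM)=\|MS^{1/2}\|_{op}^2$ together with the split $MS^{1/2}=S^{1/2}(I-\alpha S)+\alpha\Sigma S^{1/2}$ and the triangle inequality; for the minimum, the fact that $MSM$ and $S^{1/2}M^2S^{1/2}$ are cospectral (as $AB$ versus $BA$), followed by the Loewner bound $S^{1/2}\Sigma S^{1/2}\succeq\lambda_{\min}(\Sigma)\,S$ and monotonicity of $\lambda\mapsto\lambda(1+2\alpha\lambda_{\min}(\Sigma)-2\alpha\lambda)$. Your version of the upper bound is actually tighter than what the paper's sketch delivers: summing the paper's first two terms with $\beta=\tfrac12$ gives $\lambda_{\max}(S)[1+2\alpha(\lambda_{\max}(\Sigma)-\lambda_{\max}(S))+2\alpha^2(\lambda_{\max}(\Sigma)^2+\lambda_{\max}(S)^2)]$, which exceeds the lemma's stated bound by $\alpha^2\lambda_{\max}(S)(\lambda_{\max}(\Sigma)+\lambda_{\max}(S))^2$, and since $\lambda_{\min}$ of the subtracted third term may vanish (for instance when $S$ is singular), that decomposition only gives the upper bound up to an $O(\alpha^2)$ slack rather than exactly as stated. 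Your triangle-inequality argument closes that gap and delivers the stated upper bound exactly, at the cost of needing a distinct mechanism for each side of the lemma rather than one unified decomposition.
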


This generalizes Lemma 3.2 in \cite{ye2021globalconvergencegradientdescent}. 

\begin{proof}

Following the derivations in \cite{ye2021globalconvergencegradientdescent}, $\forall \beta \in (0,1)$, rewrite the terms by the following:

\begin{equation}
 \begin{aligned}
  S^\prime &= \beta \left( I - \frac{\alpha}{\beta} S \right) S \left( I - \frac{\alpha}{\beta} S \right) + (1-\beta) \left( I + \frac{\alpha}{1-\beta}\Sigma \right) S \left( I + \frac{\alpha}{1-\beta}\Sigma \right) \\ &- \frac{\alpha^2}{\beta(1-\beta)} \left[ (1-\beta)S + \beta \Sigma \right] S \left[ (1-\beta)S + \beta \Sigma \right] . 
 \end{aligned}
\end{equation}

The first term has eigenvalues $\lambda_{i^\prime}(S^\prime) = \beta \left( 1 - \frac{\alpha}{\beta} \lambda_i(S) \right)^2 \lambda_i(S)$ (note that $f(x) = (1-x)^2 x$ is non-decreasing in $\left[0,\frac{1}{3} \right]$, so $\lambda_{i^\prime}(S^\prime)$ is exactly the $i^{th}$ eigenvalue of the first term when $\beta \ge \frac{1}{2}$), while the second term is bounded by 

\begin{equation}
 \begin{aligned}
   (1-\beta) \left( I + \frac{\alpha}{1-\beta} \lambda_{\min}(\Sigma) \right)^2 \lambda_{\min}(S) \preceq {\rm term 2} \preceq (1-\beta) \left( I + \frac{\alpha}{1-\beta} \lambda_{\max}(\Sigma) \right)^2 \lambda_{\max}(S) . 
 \end{aligned}
\end{equation}

By treating the third term as error term and taking $\beta = \frac{1}{2}$, the proof is completed. 

\end{proof}

\begin{lemma}\label{minimum singular values lower bound, general, discrete}

Suppose $D, S\in\mathbb{F}^{d\times d}$ are positive semi-definite matrices, $E \in \mathbb{F}^{d\times d}$, $\mathbb{F} = \mathbb{C}$ or $\mathbb{R}$. Denote $M = S + D$. Consider $S^\prime = \left(I + \eta\left(aM - M^3 + E\right)\right)S\left(I + \eta\left(aM - M^3 + E\right)\right)$, under $\eta < \frac{1}{16 \left( \|M\|_{op}^3 + \|E\|_{op} \right)}$, 

\begin{equation}
 \begin{aligned}
  \lambda_{\min}(S^\prime) & \ge \lambda_{\min}(S) + 2\eta \left(a  - 2 \|D\|_{op} \|M\|_{op} - \|M\|_{op} \lambda_{\min}(S) \right) \lambda_{\min}^2(S) \\
  &- 2\eta \left( \|E\|_{op} + \|D\|_{op}^2 \|M\|_{op} \right) \lambda_{\min}(S) \\
  &+ O\left( \left( a^2 \|M\|_{op}^2 + \|M\|_{op}^6 + \|E\|_{op}^2 \right) \|S\|_{op} \right) . 
 \end{aligned}
\end{equation}

\end{lemma}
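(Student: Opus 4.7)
The plan is to expand $S'$ and reduce the analysis to a first-order Rayleigh-quotient computation at the minimum eigenvector of $S$, using a three-piece decomposition (analogous to the one in the proof of Lemma \ref{maximum and minimum singular values, general, discrete}) to bypass dependence on the spectral gap of $S$. Setting $A := aM - M^3 + E$ (Hermitian, matching the intended application), expand $S' = S + \eta(AS + SA) + \eta^2 ASA$. Since $ASA = (AS^{1/2})(AS^{1/2})^H \succeq 0$, Weyl gives $\lambda_{\min}(S') \ge \lambda_{\min}(S + \eta(AS + SA))$, reducing the task to lower-bounding this Hermitian first-order expression.

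The main Rayleigh computation is at $v_0$, the unit minimum eigenvector of $S$, where $Sv_0 = \sigma v_0$ with $\sigma := \lambda_{\min}(S)$. Using $M = S + D$ with $D \succeq 0$, the identity $\|Mv_0\|^2 = \sigma^2 + 2\sigma\, v_0^H D v_0 + v_0^H D^2 v_0 \le (\sigma + \|D\|_{op})^2$ yields $v_0^H M^3 v_0 = \|M^{3/2}v_0\|^2 \le \|M\|_{op}\|Mv_0\|^2 \le \|M\|_{op}(\sigma + \|D\|_{op})^2$, while $v_0^H(aM)v_0 = a(\sigma + v_0^H D v_0) \ge a\sigma$ and $|v_0^H E v_0| \le \|E\|_{op}$. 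Substituting into $v_0^H(AS + SA)v_0 = 2\sigma\,v_0^H A v_0$ and expanding $(\sigma + \|D\|_{op})^2$ reproduces exactly the target first-order correction $2\eta(a - 2\|D\|_{op}\|M\|_{op} - \|M\|_{op}\sigma)\sigma^2 - 2\eta(\|E\|_{op} + \|D\|_{op}^2\|M\|_{op})\sigma$.

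To promote this into a genuine lower bound on $\lambda_{\min}(S')$, decompose $A = G - D'$ with growth part $G := aM + E$ and decay part $D' := M^3$, and use the matrix identity
\begin{align*}
S' &= \beta(I - (\eta/\beta)D')S(I - (\eta/\beta)D') + (1-\beta)(I + \tfrac{\eta}{1-\beta}G)S(I + \tfrac{\eta}{1-\beta}G) \\
&\quad - \tfrac{\eta^2}{\beta(1-\beta)}[(1-\beta)D' + \beta G]S[(1-\beta)D' + \beta G],
\end{align*}
which holds for any $\beta \in (0,1)$ by the same algebraic check as in Lemma \ref{maximum and minimum singular values, general, discrete}. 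Under the hypothesis $\eta \le (16(\|M\|_{op}^3 + \|E\|_{op}))^{-1}$ with, say, $\beta = 1/2$, the factor $I - 2\eta M^3$ is positive definite, so the first piece has $\lambda_{\min}$ at least $\beta\sigma(1 - 2\eta\|M\|_{op}^3)^2$ via the sandwich $PSP \succeq \lambda_{\min}(S)P^2$ for Hermitian $P \succeq 0$; the second piece carries the growth contribution matching the Rayleigh estimate above; and the third piece is bounded in operator norm by $O(\eta^2(a^2\|M\|_{op}^2 + \|M\|_{op}^6 + \|E\|_{op}^2)\|S\|_{op})$ and absorbed into the stated error.

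The main obstacle is the gap-free translation of the Rayleigh evaluation into a lower bound: a direct Weyl inequality on $S + \eta(AS + SA)$ loses $O(\eta\|A\|_{op}\|S\|_{op})$, and standard second-order Rayleigh--Schr\"odinger perturbation introduces spectral-gap denominators that are not controlled in our setting (indeed $\sigma$ may be degenerate). The three-piece decomposition is the key device for finessing this: it isolates decay (which sandwiches $S$ multiplicatively and inherits $\lambda_{\min}(S)$ without any gap assumption), growth (which injects the $+2\eta a\sigma^2$ contribution through the PSD sandwich), and a clean $O(\eta^2)$ operator-norm remainder. The most delicate bookkeeping lies in balancing the $\beta$-dependent cross-terms so that the growth estimate at $v_0$ is preserved while every remaining error is absorbed into the uniform $O(\eta^2 \cdot (a^2\|M\|_{op}^2 + \|M\|_{op}^6 + \|E\|_{op}^2)\|S\|_{op})$ bound.
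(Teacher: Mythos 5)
Your overall strategy --- a sandwich decomposition in the spirit of Lemma \ref{maximum and minimum singular values, general, discrete} so as to avoid spectral-gap denominators --- is the right one, your three-piece algebraic identity is correct, and your Rayleigh computation at $v_0$ correctly identifies the first-order terms the lemma claims. But the bridge between the two fails. Treating the entire decay part $D' = M^3$ as a single block forces you to bound the first piece by $\beta\left(1-(\eta/\beta)\|M\|_{op}^3\right)^2\lambda_{\min}(S)$, whose first-order loss is $-2\eta\|M\|_{op}^3\lambda_{\min}(S)$ for every choice of $\beta$. The lemma's $M^3$-induced decay terms sum to $-2\eta\|M\|_{op}\left(\lambda_{\min}(S)+\|D\|_{op}\right)^2\lambda_{\min}(S)$, and $\|M\|_{op}^3\lambda_{\min}(S)$ exceeds this by the unbounded factor $\left(\|M\|_{op}/(\lambda_{\min}(S)+\|D\|_{op})\right)^2$. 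Already in the commuting case $D=E=0$ your bound reads $\lambda_{\min}(S)+2\eta a\lambda_{\min}^2(S)-2\eta\|S\|_{op}^3\lambda_{\min}(S)$ versus the claimed $\lambda_{\min}(S)+2\eta\left(a-\|S\|_{op}\lambda_{\min}(S)\right)\lambda_{\min}^2(S)$; the discrepancy is a genuine $O(\eta)$ term and cannot be absorbed into the second-order error. In the intended application this is fatal: there $\lambda_{\min}(S)$ is tiny while $\|M\|_{op}\sim\sigma_1^{1/2}(\Sigma)$, so your decay term swamps the growth term $2\eta a\lambda_{\min}^2(S)$ and no saddle avoidance follows.

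The missing idea is to expand $M^3 S + SM^3$ via $M=S+D$, writing $M^3 = SMS+SMD+DMS+DMD$, \emph{before} sandwiching. The paper keeps only the $DMD$ piece (bounded by $\|D\|_{op}^2\|M\|_{op}$) together with $aM+E$ in the scalar sandwich $\frac14\left(I+4\eta(aM+E-DMD)\right)S\left(I+4\eta(aM+E-DMD)\right)$, and routes the $S$-flanked pieces $DMS^2+S^2MD$, $S(MD+DM)S$ and $SMS^2+S^2MS$ into sandwiches of the form $PS^2P^H$, $SXS$ and $SYSYS$, with a free scale parameter $s=2\|S\|_{op}$ and a residual polynomial $\frac34 S-\frac1{2s}S^2-\frac1{4s^2}S^3$ whose eigenvalues are explicit and monotone. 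Each outer factor of $S$ in these sandwiches converts a power of $\|M\|_{op}$ in the loss into a power of $\lambda_{\min}(S)$, which is precisely what produces the claimed $\lambda_{\min}^2(S)$- and $\lambda_{\min}^3(S)$-weighted decay terms. Your $v_0$-computation shows the bound is plausible, but only this finer decomposition certifies it gap-free; as written, your proof establishes a strictly weaker inequality that does not imply the lemma.
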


\begin{proof}

Expand the expression of $S^\prime$: 

\begin{equation}
 \begin{aligned}
  S^\prime &= S + \eta \left( aM + E - D M D \right) S + \eta S \left( aM + E - D M D \right) \\ 
  &- \eta (D M S^2 + S^2 M D) - \eta S (M D + D M ) S- \eta (S M S^2 + S^2 M S) + \eta^2 M_{\rm error}^\prime \\ 
  &= \frac{1}{4}\left( I + 4\eta \left( aM + E - D M D \right) \right) S \left( I + 4\eta \left( aM + E - D M D \right) \right) \\ 
  &+ \frac{1}{4s} \left( I - 4 \eta s D M \right) S^2 \left( I - 4 \eta s M D \right) + \frac{1}{4s} S \left( I - 4 \eta s \left(M D + D M\right) \right) S \\
  &+ \frac{1}{4s^2} S \left( I - 4\eta s^2 M \right) S \left( I - 4\eta s^2 M \right) S + \left( \frac{3}{4} S - \frac{1}{2 s} S^2 - \frac{1}{4 s^2} S^3 \right) + \eta^2 M_{\rm error}^\prime . 
 \end{aligned}
\end{equation}

where $\|M_{\rm error}^\prime\|_{op} = O\left( \left( a^2 \|M\|_{op}^2 + \|M\|_{op}^6 + \|E\|_{op}^2 \right) \|S\|_{op} \right)$. 

Notice that $\frac{3}{4} S - \frac{1}{2 s} S^2 - \frac{1}{4 s^2} S^3$ has eigenvalues $\lambda_{i^\prime}(S^\prime) = \frac{3}{4} \lambda_i(S) - \frac{1}{2 s} \lambda_i^2(S) - \frac{1}{4 s^2} \lambda_i^3(S)$, so by taking $s = 2\|S\|_{op}$, $\lambda_{i^\prime}(S^\prime)$ is exactly the $i^{th}$ eigenvalue of $S^\prime$. 

This further gives 

\begin{equation}
 \begin{aligned}
  \lambda_{\min}(S^\prime) &\ge \frac{1}{4}\left( 1 + 4\eta \left( a \lambda_{\min}(M) - \|E\|_{op} - \|D\|_{op}^2 \|M\|_{op} \right) \right)^2 \lambda_{\min}(S) \\ 
  &+ \frac{1}{4s} \left( 1 - 4 \eta s \|D\|_{op} \|M\|_{op} \right)^2 \lambda_{\min}^2(S) + \frac{1}{4s} \left( 1 - 8 \eta s \|M\|_{op} \|D\|_{op} \right) \lambda_{\min}^2(S) \\
  &+ \frac{1}{4s^2} \left( 1 - 4\eta s^2 \|M\|_{op} \right)^2 \lambda_{\min}^3(S) + \left( \frac{3}{4} \lambda_{\min}(S) - \frac{1}{2 s} \lambda_{\min}^2(S) - \frac{1}{4 s^2} \lambda_{\min}^3(S) \right) \\
  &+ \eta^2 \left\| M_{\rm error}^\prime \right\|_{op} \\ 
  &\ge \lambda_{\min}(S) + 2\eta \left(a \lambda_{\min}(M) - 2 \|D\|_{op} \|M\|_{op} \lambda_{\min}(S) - \|M\|_{op} \lambda_{\min}^2(S) \right) \lambda_{\min}(S) \\
  &- 2\eta \left( \|E\|_{op} + \|D\|_{op}^2 \|M\|_{op} \right) \lambda_{\min}(S) + \eta^2 \left\| M_{\rm error}^\prime \right\|_{op} . 
 \end{aligned}
\end{equation}

From $\lambda_{\min}(M) \ge \lambda_{\min}(S)$, the proof is completed. 

\end{proof}

\subsection{Lemmas on Regularization, Gradient Descent}

\begin{theorem}\label{regularization term, convergence bound, GD}

Suppose for all $j \in [1,4]\cap \mathbb{N}^*$, $\sigma_{\min}(W_j(t)) \ge \delta$, $\sigma_{\max}(W_j(t)) \le M$, then the convergence rate of the regularization term is lower bounded by: 

\begin{equation}
 \begin{aligned}
  \mathcal{L}_{\rm reg}(t+1) &\le \left(1 - \frac{8}{3} \frac{\eta a \delta^4}{M^2 + \delta^2}\right) \cdot \mathcal{L}_{\rm reg}(t) \\&+ \eta^2 O\left(a^2 M^4 \mathcal{L}_{\rm reg}(t) + \sqrt{a \mathcal{L}_{\rm reg}(t)} M^6 \mathcal{L}_{\rm ori}(t) \right) \\ &+ \eta^4 O\left(a M^{12} \mathcal{L}_{\rm ori}(t)^2 + a^3 M^4 \mathcal{L}_{\rm reg}(t)^2 \right) . 
 \end{aligned}
\end{equation}

\end{theorem}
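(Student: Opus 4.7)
The plan is to treat the discrete update as a forward-Euler perturbation of the gradient-flow contraction in Theorem~\ref{regularization term, convergence bound} (with $N=4$), with all error terms controlled by standard spectral bounds on the gradients. First I would substitute the GD update $W_j(t+1) = W_j(t) - \eta\nabla_{W_j}\mathcal{L}$ into the definition of $\Delta_{j,j+1}$ to obtain $\Delta_{j,j+1}(t+1) = \Delta_{j,j+1}(t) - \eta R_j + \eta^2 S_j$, where $R_j = (\nabla_{W_j}\mathcal{L})W_j^H + W_j(\nabla_{W_j}\mathcal{L})^H - (\nabla_{W_{j+1}}\mathcal{L})^H W_{j+1} - W_{j+1}^H(\nabla_{W_{j+1}}\mathcal{L})$ is linear and $S_j = (\nabla_{W_j}\mathcal{L})(\nabla_{W_j}\mathcal{L})^H - (\nabla_{W_{j+1}}\mathcal{L})^H(\nabla_{W_{j+1}}\mathcal{L})$ is quadratic in the layer gradients. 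The key algebraic observation is the telescoping identity $(\nabla_{W_j}\mathcal{L}_{\rm ori})W_j^H = W_{j+1}^H(\nabla_{W_{j+1}}\mathcal{L}_{\rm ori})$ that underlies the proof of Lemma~\ref{regularization, total}; it forces every contribution of $\nabla\mathcal{L}_{\rm ori}$ to $R_j$ to cancel, so $R_j$ depends only on the regularization gradients and $-R_j$ coincides exactly with the gradient-flow derivative $\mathrm{d}\Delta_{j,j+1}/\mathrm{d}t$ from Lemma~\ref{regularization, total}.

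Next I would expand $\|\Delta_{j,j+1}(t+1)\|_F^2$ into a linear piece $-2\eta\langle\Delta_{j,j+1},R_j\rangle$, the $\eta^2$ pieces $\|R_j\|_F^2 + 2\langle\Delta_{j,j+1},S_j\rangle$, a cubic cross-term $-2\eta^3\langle R_j,S_j\rangle$, and the quartic piece $\eta^4\|S_j\|_F^2$. Summing over $j$ and multiplying by $a/4$, the linear piece produces exactly the target contraction factor $1 - \tfrac{8}{3}\eta a\delta^4/(M^2+\delta^2)$ by direct application of Theorem~\ref{regularization term, convergence bound}. For the higher-order pieces, three operational estimates suffice: $\|\Delta_{j,j+1}\|_F \le 2\sqrt{\mathcal{L}_{\rm reg}/a}$ from the definition of $\mathcal{L}_{\rm reg}$; $\|\nabla_{W_j}\mathcal{L}_{\rm ori}\|_F = O(M^3\sqrt{\mathcal{L}_{\rm ori}})$ from $\|\nabla_{W_j}\mathcal{L}_{\rm ori}\|_F \le M^{N-1}\|\Sigma-W\|_F$ with $N=4$; and $\|\nabla_{W_j}\mathcal{L}_{\rm reg}\|_F = O(M\sqrt{a\mathcal{L}_{\rm reg}})$ from the explicit expression of $\nabla_{W_j}\mathcal{L}_{\rm reg}$. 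These give $\|R_j\|_F^2 = O(aM^4\mathcal{L}_{\rm reg})$, contributing the $O(\eta^2 a^2 M^4\mathcal{L}_{\rm reg})$ term after the $a/4$ rescaling. Splitting $S_j$ into ori-only, ori-reg mixed, and reg-only components, Cauchy--Schwarz then produces the $O(\sqrt{a\mathcal{L}_{\rm reg}}M^6\mathcal{L}_{\rm ori})$ contribution from $\langle\Delta_{j,j+1},S_{j,{\rm oo}}\rangle$, and the $O(aM^{12}\mathcal{L}_{\rm ori}^2 + a^3 M^4\mathcal{L}_{\rm reg}^2)$ contributions from $\|S_{j,{\rm oo}}\|_F^2$ and $\|S_{j,{\rm rr}}\|_F^2$, matching both stated error buckets.

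The main obstacle is the bookkeeping for the ori-reg mixed cross-terms in $S_j$ and for the cubic piece $\eta^3\langle R_j,S_j\rangle$, neither of which appears in the stated bound. Both should be handled by Young's inequality: $\|\nabla_{W_j}\mathcal{L}_{\rm ori}\|_F \|\nabla_{W_j}\mathcal{L}_{\rm reg}\|_F \le \tfrac{1}{2}(\|\nabla_{W_j}\mathcal{L}_{\rm ori}\|_F^2 + \|\nabla_{W_j}\mathcal{L}_{\rm reg}\|_F^2)$ collapses the mixed components of $S_j$ into the two buckets already present, and $\eta^3\|R_j\|_F\|S_j\|_F \le \tfrac{1}{2}(\eta^2\|R_j\|_F^2 + \eta^4\|S_j\|_F^2)$ absorbs the cubic cross-term into the $\eta^2$ and $\eta^4$ buckets. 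Once this bucketing is pinned down, the remainder is routine: the contraction factor is inherited verbatim from the gradient-flow bound, and the error estimates follow from the three spectral inequalities above.
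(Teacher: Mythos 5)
Your proposal is correct and follows essentially the same route as the paper's proof: expand $\Delta_{j,j+1}(t+1)-\Delta_{j,j+1}(t)$, observe that the $\mathcal{L}_{\rm ori}$ contributions cancel at first order via $(\nabla_{W_j}\mathcal{L}_{\rm ori})W_j^H = W_{j+1}^H(\nabla_{W_{j+1}}\mathcal{L}_{\rm ori})$ so the linear term reproduces the gradient-flow decay, invoke the bound $\sum_j\|D_j\|_F^2 \ge \tfrac{2}{3}\tfrac{\delta^4}{M^2+\delta^2}\sum_i\|\Delta_{i,i+1}\|_F^2$ from the continuous-time theorem for the contraction factor, and control the remaining terms with the same three spectral estimates on $\|\Delta\|_F$, $\|\nabla\mathcal{L}_{\rm ori}\|_F$, and $\|\nabla\mathcal{L}_{\rm reg}\|_F$. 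Your explicit $R_j$/$S_j$ decomposition and the Young's-inequality bucketing of the mixed and cubic cross-terms is just a more itemized version of the bookkeeping the paper absorbs into its big-$O$ terms.
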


\begin{proof}

\begin{equation}
  \begin{aligned}
    \Delta_{j,j+1}(t+1) - \Delta_{j,j+1}(t) &= 2\eta a W_j(t) \Delta_{j-1,j}(t) W_j(t)^H \\&+ 2\eta a W_{j+1}(t)^H \Delta_{j+1,j+2}(t) W_{j+1}(t) \\ &- \eta a \Delta_{j,j+1}(t) \left( W_j(t) W_j(t)^H + W_{j+1}(t)^H W_{j+1}(t) \right) \\ &- \eta a \left( W_j(t) W_j(t)^H + W_{j+1}(t)^H W_{j+1}(t) \right) \Delta_{j,j+1}(t) \\ 
    &+ \eta^2 \left[\nabla_{W_j} \mathcal{L}(t) \nabla_{W_j} \mathcal{L}(t)^H - \nabla_{W_{j+1}} \mathcal{L}(t)^H \nabla_{W_{j+1}} \mathcal{L}(t) \right] . 
  \end{aligned}
\end{equation}

From 

\begin{equation}
 \begin{aligned}
  \left\| \nabla_{W_j} \mathcal{L}(t) \right\|_F &\le \left\| \nabla_{W_j} \mathcal{L}_{\rm ori}(t) \right\|_F + \left\| \nabla_{W_j} \mathcal{L}_{\rm reg}(t) \right\|_F \\ &= O\left( M^3 \sqrt{\mathcal{L}_{\rm ori}(t)} + M \sqrt{a \mathcal{L}_{\rm reg}(t)} \right) \\ 
  \left\| \Delta_{j,j+1}(t+1) - \Delta_{j,j+1}(t) \right\|_F &= O\left( \eta M^2 \sqrt{a \mathcal{L}_{\rm reg}(t)} + \eta^2 \left\| \nabla_{W_j} \mathcal{L}(t) \right\|_F^2 \right) \\
  &= O\left( \eta M^2 \sqrt{a \mathcal{L}_{\rm reg}(t)} + \eta^2 M^6 \mathcal{L}_{\rm ori}(t) + \eta^2 a M^2 \mathcal{L}_{\rm reg}(t) \right) . 
 \end{aligned}
\end{equation}

We have 

\begin{equation}
 \begin{aligned}
  \mathcal{L}_{\rm reg}(t+1) - \mathcal{L}_{\rm reg}(t) &= 2a \sum_{j=1}^{3}\left\langle \Delta_{j,j+1}(t+1) - \Delta_{j,j+1}(t), \Delta_{j,j+1}(t) \right\rangle \\ &+ a\sum_{j=1}^{3}\left\| \Delta_{j,j+1}(t+1) - \Delta_{j,j+1}(t) \right\|_F^2 \\
  &= -4\eta a^2 \sum_{j=1}^{4} \left\| \Delta_{j,j+1}(t) W_j(t) - W_j(t) \Delta_{j-1,j}(t) \right\|_F^2 \\ &+ O\left(\eta^2\sqrt{a \mathcal{L}_{\rm reg}(t)} \left(a M^2 \mathcal{L}_{\rm reg}(t) + M^6 \mathcal{L}_{\rm ori}(t) \right)\right) \\ &+ O\left( \eta^2 a^2 M^4 \mathcal{L}_{\rm reg}(t) + \eta^4 a M^{12} \mathcal{L}_{\rm ori}(t)^2 + \eta^4 a^3 M^4 \mathcal{L}_{\rm reg}(t)^2 \right) \\ 
  &= -4\eta a^2 \sum_{j=1}^{4} \left\| \Delta_{j,j+1}(t) W_j(t) - W_j(t) \Delta_{j-1,j}(t) \right\|_F^2 \\ &+ \eta^2 O\left(a^2 M^4 \mathcal{L}_{\rm reg}(t) + \sqrt{a \mathcal{L}_{\rm reg}(t)} M^6 \mathcal{L}_{\rm ori}(t) \right) \\ &+ \eta^4 O\left(a M^{12} \mathcal{L}_{\rm ori}(t)^2 + a^3 M^4 \mathcal{L}_{\rm reg}(t)^2 \right) . 
 \end{aligned}
\end{equation}

Follow previous analysis in continuous case, 

\begin{equation}
  \sum_{j=1}^{4}\| \Delta_{j,j+1}(t) W_j(t) - W_j(t) \Delta_{j-1,j}(t) \|^2 \ge \frac{2}{3} \frac{\delta^4}{M^2 + \delta^2} \sum_{i=1}^{3} \|\Delta_{i,i+1}(t)\|_F^2 . 
\end{equation}

Then the proof is done. 

\end{proof}

\begin{theorem}\label{maximum and minimum singular values are irrelevant of the regularization term, GD} The maximum and minimum singular values of $W_j$s are irrelevant of the regularization term. 

Under $\eta \le \min\left(\frac{1}{18 a \max_{j,k} \sigma_k^2(W_j(t))}, \frac{\min_{j,k}\sigma_{k}(W_j(t))}{3 \max_{j}\left\| \nabla_{W_j} \mathcal{L}_{\rm ori}(t) \right\|_{op}}\right)$, 

\begin{equation}
 \begin{aligned}
  \max_{j,k} \sigma_k^2(W_j(t+1)) - \max_{j,k} \sigma_k^2(W_j(t)) &\le 2\eta \max_{j,k} \sigma_k(W_j(t)) \max_{j}\left\| \nabla_{W_j} \mathcal{L}_{\rm ori}(t) \right\|_{op} \\&+ \eta^2 O\left( \left\| \nabla_{W_j} \mathcal{L}_{\rm ori}(t) \right\|_{op}^2 + a^2 \max_{j,k}\sigma_k^6(W_j(t)) \right) \\
  \min_{j,k} \sigma_k^2(W_j(t+1)) - \min_{j,k} \sigma_k^2(W_j(t)) &\ge - 2\eta \min_{j,k} \sigma_k(W_j(t)) \max_{j}\left\| \nabla_{W_j} \mathcal{L}_{\rm ori}(t) \right\|_{op} \\&+ \eta^2 O\left( \left\| \nabla_{W_j} \mathcal{L}_{\rm ori}(t) \right\|_{op}^2 + a^2 \max_{j,k}\sigma_k^6(W_j(t)) \right) . 
 \end{aligned}
\end{equation}
  
\end{theorem}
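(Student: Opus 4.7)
The plan is to carry out a discrete-time analog of Theorem~\ref{maximum and minimum singular values are irrelevant of the regularization term}, exploiting the same cancellation: when one passes to $\max_{j,k}$ or $\min_{j,k}$ of $\sigma_k^2(W_j)$ across layers, the regularization gradient contributes with a sign that at leading order makes the max non-increasing and the min non-decreasing. Write the update as $W_j(t+1) = W_j(t) + \eta \Phi_j$, $\Phi_j = -\nabla_{W_j}\mathcal{L}_{\rm ori} + a\bigl(W_j \Delta_{j-1,j} - \Delta_{j,j+1} W_j\bigr)$, and set $M_t := \max_{j,k}\sigma_k^2(W_j(t))$.

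For the upper bound, let $j^\star$ attain $M_{t+1}$ and let $u^\star, v^\star$ be corresponding top singular vectors of $W_{j^\star}(t+1)$, with phases chosen so that $\sigma^\star := (u^\star)^H W_{j^\star}(t+1) v^\star = \sqrt{M_{t+1}}$. Linearizing $\sigma^\star$ around $W_{j^\star}(t)$ and squaring after the bound $|(u^\star)^H W_{j^\star}(t) v^\star| \le \sigma_{\max}(W_{j^\star}(t)) \le \sqrt{M_t}$ gives
\[M_{t+1} - M_t \le 2\eta \sqrt{M_t}\,\Re\bigl((u^\star)^H \Phi_{j^\star} v^\star\bigr)^+ + \eta^2 \|\Phi_{j^\star}\|_{op}^2.\]
To control $\Re((u^\star)^H \Phi_{j^\star} v^\star)$, substitute $W_{j^\star}(t) = W_{j^\star}(t+1) - \eta \Phi_{j^\star}$ into the two regularization terms inside $\Phi_{j^\star}$ and use the singular-vector identities $W_{j^\star}(t+1)^H u^\star = \sigma^\star v^\star$ and $W_{j^\star}(t+1) v^\star = \sigma^\star u^\star$. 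This reduces the regularization contribution, modulo $O(\eta a\|\Phi_{j^\star}\|_{op}\|\Delta\|_{op})$, to
\[a\sigma^\star \bigl[(v^\star)^H W_{j^\star-1} W_{j^\star-1}^H v^\star + (u^\star)^H W_{j^\star+1}^H W_{j^\star+1} u^\star - 2(\sigma^\star)^2\bigr].\]
Each of the first two quadratic forms is at most $M_t$, so the bracket is $\le 2(M_t - M_{t+1})$, which is non-positive in the only interesting regime $M_{t+1} > M_t$. Hence $\Re((u^\star)^H \Phi_{j^\star} v^\star) \le \|\nabla_{W_{j^\star}}\mathcal{L}_{\rm ori}\|_{op} + O(\eta \cdot \text{stuff})$; substituting back and using $\|\Phi_{j^\star}\|_{op}^2 = O(\|\nabla\mathcal{L}_{\rm ori}\|_{op}^2 + a^2 M_t^3)$ together with Young's inequality absorbs every residual $O(\eta^2)$ cross-term into the permitted error.

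The lower bound on $\min_{j,k}\sigma_k^2(W_j(t+1))$ is obtained by the mirror-image argument: use the variational principle $\lambda_{\min}(C_{j_\star}(t+1)) = \min_{\|v\|=1} v^H C_{j_\star}(t+1) v$ with $C_j = W_j^H W_j$, and let $u^\star, v^\star$ denote the \emph{bottom} singular vectors of $W_{j_\star}(t+1)$. The two quadratic forms $(v^\star)^H W_{j_\star-1}W_{j_\star-1}^H v^\star$ and $(u^\star)^H W_{j_\star+1}^H W_{j_\star+1} u^\star$ are now each bounded \emph{below} by $M^{\min}_t := \min_{j,k}\sigma_k^2(W_j(t))$, so the sign of the bracket flips and the regularization pushes the minimum singular value upward; the worst-case decrement is therefore controlled purely by $\|\nabla_{W_{j_\star}}\mathcal{L}_{\rm ori}\|_{op}$, yielding the matching lower bound.

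\textbf{Main obstacle.} The bracket estimate is implicit: the cancellation requires $M_{t+1}$ not to have already blown up relative to $M_t$, which is itself what we are trying to prove. I resolve this with a short bootstrap. Under the stated $\eta \le 1/(18 a M_t)$, a crude operator-norm bound $\|W_{j^\star}(t+1)\|_{op} \le \sqrt{M_t} + \eta \|\Phi_{j^\star}\|_{op}$ already gives $M_{t+1} \le 2M_t$, which keeps $1 - 4\eta a M_{t+1} \ge 5/9$ when one rearranges the implicit appearance of $\Re((u^\star)^H\Phi_{j^\star} v^\star)$ on both sides of the linearization, allowing the refined estimate to close. The remaining work, namely the careful bookkeeping of $O(\eta^2)$ cross-terms such as $\eta a\|\Phi_{j^\star}\|_{op} M_t$ and $\eta a^2 M_t^{5/2}$ against the allowed residual $\eta^2(\|\nabla\mathcal{L}_{\rm ori}\|_{op}^2 + a^2\max_{j,k}\sigma_k^6(W_j))$, is routine via Young / AM--GM.
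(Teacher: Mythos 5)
Your proposal is correct, but it takes a genuinely different route from the paper's. The paper rewrites the Gram matrix as a convex combination of three positive semi-definite pieces:
\[
W_j(t+1)W_j(t+1)^H=\tfrac13 W_j(I+3\eta a\Delta_{j-1,j})^2 W_j^H+\tfrac13(I-3\eta a\Delta_{j,j+1})W_jW_j^H(I-3\eta a\Delta_{j,j+1})+\tfrac13(W_j-3\eta\nabla\mathcal L_{\rm ori})(\cdot)^H+O(\eta^2),
\]
then applies the perturbation Lemma~\ref{maximum and minimum singular values, general, discrete} (the matrix analogue of $(1-x)^2 x$ monotonicity) to the first two pieces and a trivial operator-norm bound to the third, before taking $\max/\min$ over $j$ and checking that the step-size constraint $\eta\le\frac1{6a\max\sigma_k^2}$ makes the bracketed coefficient absorbable. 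You instead go through the top (respectively bottom) singular vectors $u^\star,v^\star$ of the \emph{future} iterate $W_{j^\star}(t+1)$, linearize $\sigma^\star=(u^\star)^H W_{j^\star}(t)v^\star+\eta(u^\star)^H\Phi_{j^\star}v^\star$, and push the singular-vector identities for $W_{j^\star}(t+1)$ back into $\Phi^{\rm reg}_{j^\star}$ at cost $O(\eta)$ to exhibit the explicit bracket $a\sigma^\star[(v^\star)^H W_{j^\star-1}W_{j^\star-1}^H v^\star+(u^\star)^H W_{j^\star+1}^H W_{j^\star+1}u^\star-2(\sigma^\star)^2]$, whose sign is favorable against the very quantity $M_{t+1}-M_t$ being bounded. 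Both routes hinge on the same structural cancellation (quadratic forms sandwiched between $\min$ and $\max$ of the singular values), and both need to sweep $O(\eta^2)$ cross-terms such as $\eta^2 a M_t^{3/2}\|\nabla\mathcal L_{\rm ori}\|_{op}$ into the permitted residual via Young. The paper's decomposition is arguably cleaner because it isolates the perturbation bound into a stand-alone reusable lemma and never touches singular vectors of the future iterate; your argument is a more direct Wielandt--Hoffman-style computation that avoids that lemma at the cost of an implicit inequality in $M_{t+1}$, which you correctly defuse with a bootstrap (though note the rearranged coefficient is in fact $1+4\eta a\sqrt{M_t}\sigma^\star\ge1$, so the sign works for you without needing $M_{t+1}\le 2M_t$ there; the crude a~priori bound $M_{t+1}\le 2M_t$ is only needed to tame the $O(\eta^2 a M_{t+1}\sqrt{M_t}\|\Phi\|_{op})$ error from re-expressing $(v^\star)^H W_{j^\star}(t)^H W_{j^\star}(t) v^\star$). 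Your sign in ``$1-4\eta aM_{t+1}\ge5/9$'' appears to be a slip, but it is not load-bearing: the conclusion survives with the correct positive sign.
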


\begin{proof}

For simplicity, set $W_0 \equiv W_1$, $W_5 \equiv W_4$. 

Generally, 

\begin{equation}
 \begin{aligned}
  W_j(t+1) W_j(t+1)^H &= W_j(t) W_j(t)^H - \eta W_j(t) \nabla_{W_j} \mathcal{L}(t)^H - \eta \nabla_{W_j} \mathcal{L}(t) W_j(t)^H \\ &+ \eta^2 \nabla_{W_j} \mathcal{L}(t) \nabla_{W_j} \mathcal{L}(t)^H \\ 
  &= W_j(t) W_j(t)^H - \eta W_j(t) \nabla_{W_j} \mathcal{L}_{\rm ori}(t)^H - \eta \nabla_{W_j} \mathcal{L}_{\rm ori}(t) W_j(t)^H \\&+ 2\eta a W_j(t) \Delta_{j-1,j}(t) W_j(t)^H - \eta a W_j(t) W_j(t)^H \Delta_{j,j+1}(t) \\ & - \eta a \Delta_{j,j+1}(t) W_j(t) W_j(t)^H + \eta^2 \nabla_{W_j} \mathcal{L}(t) \nabla_{W_j} \mathcal{L}(t)^H \\ 
  &= \frac{1}{3} W_j(t) \left( I + 3 \eta a \Delta_{j-1,j}(t) \right)^2 W_j(t)^H \\ &+ \frac{1}{3} \left( I - 3 \eta a \Delta_{j,j+1}(t) \right) W_j(t) W_j(t)^H \left( I - 3 \eta a \Delta_{j,j+1}(t) \right) \\ &+ \frac{1}{3}\left(W_j(t) - 3 \eta \nabla_{W_j} \mathcal{L}_{\rm ori}(t) \right) \left(W_j(t) - 3 \eta \nabla_{W_j} \mathcal{L}_{\rm ori}(t) \right)^H \\ &+ \eta^2 \nabla_{W_j} \mathcal{L}(t) \nabla_{W_j} \mathcal{L}(t)^H - 3 \eta^2 \nabla_{W_j} \mathcal{L}_{\rm ori}(t) \nabla_{W_j} \mathcal{L}_{\rm ori}(t)^H \\ &- 3 \eta^2 a^2 W_j(t) \Delta_{j-1,j}(t)^2 W_j(t)^H \\&- 3 \eta^2 a^2 \Delta_{j,j+1}(t) W_j(t) W_j(t)^H \Delta_{j,j+1}(t) . 
 \end{aligned}
\end{equation}

Notice that $W_j(t) \left( I + 3 \eta a \Delta_{j-1,j}(t) \right)^2 W_j(t)^H$ and $\left( I + 3 \eta a \Delta_{j-1,j}(t) \right) W_j(t)^H W_j(t) \left( I + 3 \eta a \Delta_{j-1,j}(t) \right)$ shares the same eigenvalues. 
Then from Lemma \ref{maximum and minimum singular values, general, discrete}, the maximum and minimum singular values of $W_j(t+1)$ satisfy 

\begin{equation}
 \begin{aligned}
  \sigma_{\max}^2(W_j(t+1)) &\le \frac{1}{3} \sigma_{\max}^2(W_j(t)) \left[1 + 3\eta a \left(\sigma_{\max}^2(W_{j-1}(t)) - \sigma_{\max}^2(W_j(t)) \right) \right]^2 \\ 
  &+ \frac{1}{3} \sigma_{\max}^2(W_j(t)) \left[1 + 3\eta a \left(\sigma_{\max}^2(W_{j+1}(t)) - \sigma_{\max}^2(W_j(t)) \right) \right]^2 \\ 
  &+ \frac{1}{3} \left[ \sigma_{\max}(W_j(t)) + 3\eta \left\| \nabla_{W_j} \mathcal{L}_{\rm ori}(t) \right\|_{op} \right]^2 \\ &+ \eta^2 O\left( \left\| \nabla_{W_j} \mathcal{L}_{\rm ori}(t) \right\|_{op}^2 + a^2 \max_{j,k}\sigma_k^6(W_j(t)) \right) \\ 
  &= \sigma_{\max}^2(W_j(t))\left[1 + 3 \eta a \left(\sigma_{\max}^2(W_{j+1}(t)) + \sigma_{\max}^2(W_{j-1}(t)) - 2 \sigma_{\max}^2(W_j(t)) \right)\right] \\
  &+ 2\eta \sigma_{\max}(W_j(t))\left\| \nabla_{W_j} \mathcal{L}_{\rm ori}(t) \right\|_{op} + \eta^2 O\left( \left\| \nabla_{W_j} \mathcal{L}_{\rm ori}(t) \right\|_{op}^2 + a^2 \max_{j,k}\sigma_k^6(W_j(t)) \right) \\
  \sigma_{\min}^2(W_j(t+1)) &\ge \frac{1}{3} \sigma_{\min}^2(W_j(t)) \left[1 + 3\eta a \left(\sigma_{\min}^2(W_{j-1}(t)) - \sigma_{\min}^2(W_j(t)) \right) \right]^2 \\ 
  &+ \frac{1}{3} \sigma_{\min}^2(W_j(t)) \left[1 + 3\eta a \left(\sigma_{\min}^2(W_{j+1}(t)) - \sigma_{\min}^2(W_j(t)) \right) \right]^2 \\ 
  &+ \frac{1}{3} \left[ \sigma_{\min}(W_j(t)) - 3\eta \left\| \nabla_{W_j} \mathcal{L}_{\rm ori}(t) \right\|_{op} \right]^2 \\ &+ \eta^2 O\left( \left\| \nabla_{W_j} \mathcal{L}_{\rm ori}(t) \right\|_{op}^2 + a^2 \max_{j,k} \sigma_k^6(W_j(t)) \right) \\
  &= \sigma_{\min}^2(W_j(t))\left[1 + 3 \eta a \left(\sigma_{\min}^2(W_{j+1}(t)) + \sigma_{\min}^2(W_{j-1}(t)) - 2 \sigma_{\min}^2(W_j(t)) \right)\right] \\
  &- 2\eta \sigma_{\min}(W_j(t))\left\| \nabla_{W_j} \mathcal{L}_{\rm ori}(t) \right\|_{op} + \eta^2 O\left( \left\| \nabla_{W_j} \mathcal{L}_{\rm ori}(t) \right\|_{op}^2 + a^2 \max_{j,k}\sigma_k^6(W_j(t)) \right) . 
 \end{aligned}
\end{equation}

By taking maximum and minimum over $j\in[1,4]\cap \mathbb{N}^*$ (for $\eta \le \frac{1}{6 a \max_{j,k}\sigma_k^2(W_j(t))}$, the first term of R.H.S can be upper bounded by $\max_{j,k}\sigma_k^2(W_j(t))$ or lower bounded by $\min_{j,k}\sigma_k^2(W_j(t))$ respectively), the proof is completed.

\end{proof}

\section{Dynamics under Balanced initialization}

This section analyzes the training dynamics under balanced initialization. 

At the beginning, We derive some properties from Lemma \ref{ASVD, non-negative diagonal}. Under balanced condition, 

\begin{equation}
 \begin{aligned}
  W_{\prod_L , j} W_{\prod_L , j}^H &= \left( \prod_{k=N}^{j} W_k \right)\left( \prod_{k=N}^{j} W_k \right)^H = \left(W_N W_N^H\right)^{N-j+1} \\
  W_{\prod_R , j}^H W_{\prod_R , j} &= \left( \prod_{k=j}^{1} W_k \right)^H \left( \prod_{k=j}^{1} W_k \right) = \left(W_1^H W_1\right)^{N-j+1} .
 \end{aligned}
\end{equation}

Consider $j=1$ and $j=N$, then 

\begin{equation}
 \begin{aligned}
  W_N W_N^H &= \left(W W^H\right)^{1/N} = U \Sigma_w^2 U^H \\
  W_1 W_1^H&= \left(W^H W\right)^{1/N} = V \Sigma_w^2 V^H . 
 \end{aligned}
\end{equation}

Suppose the non-negative ASVD of product matrix is $W = U \Sigma_w^N V^H$, then

\begin{equation}
 \begin{aligned}
  \frac{\mathrm{d}}{\mathrm{d} t} \left( U \Sigma_w^2 U^H \right) &= \frac{\mathrm{d}}{\mathrm{d} t} \left( W_N W_N^H \right) = \Sigma V \Sigma_w^N U^H + U \Sigma_w^N V^H \Sigma^H - 2 U \Sigma_w^{2N} U^H \\ 
  \frac{\mathrm{d}}{\mathrm{d} t} \left( V \Sigma_w^2 V^H \right) &= \frac{\mathrm{d}}{\mathrm{d} t} \left( W_1^H W_1 \right) = V \Sigma_w^N U^H \Sigma + \Sigma^H U \Sigma_w^N V^H - 2 V \Sigma_w^{2N} V^H \\ 
  \frac{\mathrm{d}W}{\mathrm{d} t} &= \sum_{j=1}^{N} U \Sigma_w^{2(j-1)} U^H  \Sigma  V \Sigma_w^{2(N-j)} V^H - N U \Sigma_w^{3N-2} V^H . 
 \end{aligned}
\end{equation}

The dynamics of $\sigma_r \coloneqq \sigma_{w,r}^N$ is presented in (\ref{thm 3, arora, complex}). 

\subsection{Skew-Hermitian Error}

This section formally state and prove Theorem \ref{antisym-error, general, informal}. 

\begin{theorem}\label{antisym-error, general balanced}

The skew-hermitian error is non-increasing. 

Under balanced Gaussian initialization, for $\mathbb{F} = \mathbb{C}$ or $\mathbb{R}$, suppose the ASVD of the product matrix is $W(t) = U(t) \Sigma_w(t)^N V(t)^H$, furthermore assume that the singular values of the product matrix at initialization ($W(0)$) are distinct and different from zero (refer to Lemma 2 in \cite{arora2019implicitregularizationdeepmatrix}). 

Denote $\sigma_{w,j}=(\Sigma_w)_{jj}$, $U^\prime = \Sigma^{1/2} U$, $V^\prime = \Sigma^{1/2} V$, $u_j^\prime$ and $v_j^\prime$ are the $j^{th}$ columns of $U^\prime$ and $V^\prime$ respectively, then 

\begin{equation}\label{antisym-error, general balanced, eq}
 \begin{aligned}
  \frac{\mathrm{d}}{\mathrm{d}t} \| \Sigma^{1/2} (U-V) \Sigma_w\|_{F}^2 &= -2 \sum_j \sigma_{w,j}^N \cdot \left\| \Sigma^{1/2} \left( {u_j^\prime} - {v_j^\prime} \right) \right\|^2 - 2\sum_j \sigma_{w,j}^{2N} \cdot \left\| u_j^\prime - v_j^\prime \right\|^2 \\
  &- \sum_{j,k} f_N \left( \sigma_{w,j} , \sigma_{w,k} \right) \left| {u_j^\prime}^H {v_k^\prime} - {v_j^\prime}^H {u_k^\prime} \right|^2 \\ 
  &\le 0 , 
 \end{aligned}
\end{equation}

where $f_N(x,y) = \begin{cases}
  \frac{x^2 y^2 (x^{N-2} - y^{N-2})}{x^2 - y^2} &,\, y\ne x \\
  \frac{N-2}{2} x^{N} &,\, y = x
\end{cases}$ is a non-negative real-analytic function on $[0,+\infty)^2$. 

\end{theorem}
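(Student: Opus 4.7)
The strategy is to expand the squared norm into its columnwise form and substitute the explicit gradient-flow dynamics for the singular values and singular vectors of the product matrix. Since $(U-V)\Sigma_w$ has $j$-th column $\sigma_{w,j}(u_j - v_j)$, we have
\begin{equation}
 \|\Sigma^{1/2}(U-V)\Sigma_w\|_F^2 = \sum_j \sigma_{w,j}^2 \|u_j^\prime - v_j^\prime\|^2,
\end{equation}
and the time derivative splits as
\begin{equation}
 \sum_j 2\sigma_{w,j}\dot\sigma_{w,j}\|u_j^\prime - v_j^\prime\|^2 \;+\; 2\sum_j \sigma_{w,j}^2 \,\Re\bigl\langle u_j^\prime - v_j^\prime,\, \dot u_j^\prime - \dot v_j^\prime\bigr\rangle.
\end{equation}
For $\dot\sigma_{w,j}$ I would use (\ref{thm 3, arora, complex}); for $\dot u_j,\dot v_j$, Lemma \ref{lemma 2, arora, complex}. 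With $\nabla_W\mathcal{L}_{\rm ori} = U\Sigma_w^N V^H - \Sigma$ and $P_{jk}\coloneqq(u_j^\prime)^H v_k^\prime = u_j^H \Sigma v_k$, a short calculation gives $U^H(\nabla_W\mathcal{L}_{\rm ori})V = \Sigma_w^N - P$, so
\begin{equation}
 M_U = P\Sigma_w^N + \Sigma_w^N P^H - 2\Sigma_w^{2N}, \qquad M_V = P^H\Sigma_w^N + \Sigma_w^N P - 2\Sigma_w^{2N},
\end{equation}
both Hermitian.

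I would then substitute $\dot u_j = \sum_k u_k(F\odot M_U + D_U)_{kj}$ and the analogue for $\dot v_j$ into the second sum, and split into diagonal ($k=j$) and off-diagonal ($k\ne j$) contributions. The diagonal terms combine with the $\dot\sigma_{w,j}$ sum and with the imaginary-part balance $(D_U)_{jj}-(D_V)_{jj}$ prescribed by (\ref{lemma 2, arora, complex, detail1}), assembling into the first two pieces $-2\sum_j \sigma_{w,j}^N \|\Sigma^{1/2}(u_j^\prime - v_j^\prime)\|^2 - 2\sum_j \sigma_{w,j}^{2N} \|u_j^\prime - v_j^\prime\|^2$; here the $\sigma_{w,j}^N$ weight comes from the $P\Sigma_w^N + \Sigma_w^N P^H$ block and the $\sigma_{w,j}^{2N}$ weight from the $-2\Sigma_w^{2N}$ self-interaction. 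For the off-diagonal contributions, Hermiticity of $M_U,M_V$ and the antisymmetry $F_{jk}=-F_{kj}$ force cancellations after the symmetrization $(j,k)\leftrightarrow(k,j)$, leaving only the modulus-squared terms $|P_{jk}-\bar P_{kj}|^2 = |(u_j^\prime)^H v_k^\prime - (v_j^\prime)^H u_k^\prime|^2$. The spectral-gap weight $F_{jk}=(\sigma_{w,k}^2-\sigma_{w,j}^2)^{-1}$ combines with the outer $\sigma_{w,j}^2$ and with the $\sigma_{w,k}^N,\sigma_{w,j}^N$ factors to produce exactly $f_N(\sigma_{w,j},\sigma_{w,k}) = \sigma_{w,j}^2\sigma_{w,k}^2(\sigma_{w,j}^{N-2}-\sigma_{w,k}^{N-2})/(\sigma_{w,j}^2-\sigma_{w,k}^2)$. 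Non-negativity of $f_N$ is immediate: on $[0,\infty)^2$ the factors $x^{N-2}-y^{N-2}$ and $x^2-y^2$ share sign, and the diagonal limit is $\tfrac{N-2}{2}x^N$ via the polynomial identity $(x^{N-2}-y^{N-2})/(x^2-y^2) = (x^{N-3}+x^{N-4}y+\cdots+y^{N-3})/(x+y)$.

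The main obstacle is the algebraic bookkeeping in the off-diagonal expansion: each inner product generates four families of cross terms (products of $P_{jk},\bar P_{kj}$, etc.) weighted by asymmetric $F_{kj}$ and by unequal powers of $\sigma_{w,j},\sigma_{w,k}$, and it is not obvious a priori that they reorganize into a symmetric real kernel times a Hermitian-antisymmetric modulus square. The needed cancellations rely delicately on Hermiticity of $M_U,M_V$, on $F_{jk}+F_{kj}=0$, and on the $D_U-D_V$ relation absorbing the imaginary component of $P_{jj}$ into the diagonal block; only after all three simplifications does the precise kernel $f_N$ emerge from the rational expression in $\sigma_{w,j},\sigma_{w,k}$. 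A minor technical point is the assumption of distinct nonzero singular values throughout: Lemma \ref{ASVD, non-negative diagonal} and Lemma \ref{L ori non-increasing} guarantee analytic, nonvanishing $\sigma_{w,j}(t)$, and real-analyticity of both sides of the claimed identity then extends the inequality past any isolated crossings by a density argument.
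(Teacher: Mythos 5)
Your proposal is correct and follows essentially the same route as the paper's proof: both expand $\|\Sigma^{1/2}(U-V)\Sigma_w\|_F^2$ columnwise, split the time derivative into the $\dot\sigma_{w,j}$ contribution via (\ref{thm 3, arora, complex}) and the $\dot u_j,\dot v_j$ contributions via Lemma \ref{lemma 2, arora, complex}, substitute $M_U = P\Sigma_w^N + \Sigma_w^N P^H - 2\Sigma_w^{2N}$ (and its $M_V$ analogue), and let the antisymmetry of $F$, Hermiticity of $M_U,M_V$, and the $(D_U-D_V)_{jj}$ relation reorganize the cross terms into the kernel $f_N$. One small bookkeeping imprecision: the first summand $-2\sum_j\sigma_{w,j}^N\|\Sigma^{1/2}(u_j'-v_j')\|^2$ and the $j=k$ diagonal entry $f_N(\sigma_{w,j},\sigma_{w,j})=\tfrac{N-2}{2}\sigma_{w,j}^N$ of the kernel both draw on a mixture of the $\dot\sigma$ expansion, the $D_U-D_V$ block, and the off-diagonal $\dot u,\dot v$ terms (the off-diagonal $|\langle u_j',v_k'\rangle|^2$ pieces complete the $u_j'^H\Sigma u_j'$ via a resolution of identity), so it is not quite a clean ``diagonal gives the first two pieces, off-diagonal gives the kernel'' split; this would surface and resolve itself in the algebra you flag as the main obstacle.
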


\begin{proof}

By (\ref{thm 3, arora, complex})

\begin{equation}
 \begin{aligned}
  \frac{\mathrm{d} \sigma_{w,j}}{\mathrm{d}t} = \sigma_{w,j}^{N-1} \left( \frac{\langle u_j^\prime, v_j^\prime \rangle + \langle v_j^\prime, u_j^\prime \rangle}{2} - \sigma_{w,j}^{N} \right) . 
 \end{aligned}
\end{equation}

From Lemma \ref{lemma 2, arora, complex}, 

\begin{equation}
 \begin{aligned}
  \frac{\mathrm{d} U}{\mathrm{d} t} = U \left( F \odot M_U + D_U\right),\, \frac{\mathrm{d} V}{\mathrm{d} t} = V \left( F \odot M_V + D_V \right) , 
 \end{aligned}
\end{equation}

where 

\begin{equation}
 \begin{aligned}
  \begin{cases}
   (M_U)_{jk} &= \left\langle v_k^\prime, u_j^\prime\right\rangle \sigma_{w,k}^N + \left\langle u_k^\prime, v_j^\prime \right\rangle \sigma_{w,j}^N - 2\sigma_{w,j}^{2N}\delta_{jk} \\ 
   (M_V)_{jk} &= \left\langle u_k^\prime, v_j^\prime \right\rangle \sigma_{w,k}^N + \left\langle v_k^\prime, u_j^\prime \right\rangle \sigma_{w,j}^N - 2\sigma_{w,j}^{2N}\delta_{jk} \\ 
  \end{cases},
 \end{aligned}
\end{equation}

$D_{U,V}$ are pure imaginary diagonal matrices defined by  

\begin{equation}
  (D_U)_{jj} - (D_V)_{jj} = \frac{N}{2} \sigma_{w,j}^{N-2} \left[\left\langle v_j^\prime, u_j^\prime \right\rangle - \left\langle u_j^\prime, v_j^\prime \right\rangle \right],\, \Re(D_U)=\Re(D_V)=O. 
\end{equation}

Here $\langle a, b\rangle \coloneqq b^H a$ follows the standard definition of (complex) inner product. Then 

\begin{equation}
 \begin{aligned}
  \frac{\mathrm{d} U^{\prime H} V^\prime}{\mathrm{d}t} &= \frac{\mathrm{d} U}{\mathrm{d}t}^H \Sigma V + U^H \Sigma \frac{\mathrm{d} V}{\mathrm{d}t} = \left( F^H \odot M_U^H - D_U\right) U^H \Sigma V + U^H \Sigma V (F \odot M_V + D_V) \\ 
  \frac{\mathrm{d} U^{\prime H} U^\prime}{\mathrm{d}t} &= \frac{\mathrm{d} U}{\mathrm{d}t}^H \Sigma U + U^H \Sigma \frac{\mathrm{d} U}{\mathrm{d}t} = \left( F^H \odot M_U^H - D_U\right) U^H \Sigma U + U^H \Sigma U (F \odot M_U + D_U) \\ 
  \frac{\mathrm{d} V^{\prime H} V^\prime}{\mathrm{d}t} &= \frac{\mathrm{d} V}{\mathrm{d}t}^H \Sigma V + V^H \Sigma \frac{\mathrm{d} V}{\mathrm{d}t} = \left( F^H \odot M_V^H - D_V\right) V^H \Sigma V + V^H \Sigma V (F \odot M_V + D_V) . 
 \end{aligned}
\end{equation}

For each diagonal entry, 

\begin{equation}
 \begin{aligned}
  &\frac{\mathrm{d}}{\mathrm{d}t} \left\langle v_j^\prime, u_j^\prime \right\rangle = \left( \frac{\mathrm{d} U^{\prime H} V^\prime}{\mathrm{d}t} \right)_{jj} \\ =& - \frac{N}{2} \sigma_{w,j}^{N-2} \left\langle v_j^\prime, u_j^\prime \right\rangle \left[\left\langle v_j^\prime, u_j^\prime \right\rangle - \left\langle u_j^\prime, v_j^\prime \right\rangle \right] \\+& \sum_{k \ne j} \frac{1}{\sigma_{w,j}^2 - \sigma_{w,k}^2} \left[\left( \left| \left\langle u_j^\prime, v_k^\prime \right\rangle \right|^2 + \left| \left\langle u_k^\prime, v_j^\prime \right\rangle \right|^2 \right)\sigma_{w,j}^N + 2 \left\langle v_k^\prime, u_j^\prime \right\rangle \left\langle v_j^\prime, u_k^\prime \right\rangle \sigma_{w,k}^N \right] \\ 
  &\frac{\mathrm{d}}{\mathrm{d}t} \left\langle u_j^\prime, u_j^\prime \right\rangle = \left( \frac{\mathrm{d} U^{\prime H} U^\prime}{\mathrm{d}t} \right)_{jj} \\ =& \sum_{k \ne j} \frac{1}{\sigma_{w,j}^2 - \sigma_{w,k}^2} \left[ \left( \left\langle u_k^\prime, v_j^\prime \right\rangle \left\langle u_j^\prime, u_k^\prime \right\rangle + \left\langle u_k^\prime, u_j^\prime \right\rangle \left\langle v_j^\prime, u_k^\prime \right\rangle \right) \sigma_{w,j}^N \right. \\+& \left. \left( \left\langle v_k^\prime, u_j^\prime \right\rangle \left\langle u_j^\prime, u_k^\prime \right\rangle + \left\langle u_k^\prime, u_j^\prime \right\rangle \left\langle u_j^\prime, v_k^\prime \right\rangle \right) \sigma_{w,k}^N \right] \\ 
  &\frac{\mathrm{d}}{\mathrm{d}t} \left\langle v_j^\prime, v_j^\prime \right\rangle = \left( \frac{\mathrm{d} V^{\prime H} V^\prime}{\mathrm{d}t} \right)_{jj} \\ =& \sum_{k \ne j} \frac{1}{\sigma_{w,j}^2 - \sigma_{w,k}^2} \left[ \left( \left\langle v_k^\prime, u_j^\prime \right\rangle \left\langle v_j^\prime, v_k^\prime \right\rangle + \left\langle v_k^\prime, v_j^\prime \right\rangle \left\langle u_j^\prime, v_k^\prime \right\rangle \right) \sigma_{w,j}^N \right. \\+& \left. \left( \left\langle u_k^\prime, v_j^\prime \right\rangle \left\langle v_j^\prime, v_k^\prime \right\rangle + \left\langle v_k^\prime, v_j^\prime \right\rangle \left\langle v_j^\prime, u_k^\prime \right\rangle \right) \sigma_{w,k}^N \right]  . 
 \end{aligned}
\end{equation}

Notice that for the second and third equation, $D_U$, $D_V$ terms cancel out with each other. This further gives 

\begin{equation}
 \begin{aligned}
  &\frac{\mathrm{d}}{\mathrm{d}t} \left\| u_j^\prime - v_j^\prime \right\|^2 \\ =& \frac{N}{2} \sigma_{w,j}^{N-2} \left[\left\langle v_j^\prime, u_j^\prime \right\rangle - \left\langle u_j^\prime, v_j^\prime \right\rangle \right]^2 \\+&\sum_{k \ne j} \frac{\sigma_{w,j}^N}{\sigma_{w,j}^2 - \sigma_{w,k}^2} \cdot \Big[ - 2\left( \left| \left\langle u_j^\prime, v_k^\prime \right\rangle \right|^2 + \left| \left\langle u_k^\prime, v_j^\prime \right\rangle \right|^2 \right)  \\+& \left( \left\langle u_k^\prime, v_j^\prime \right\rangle \left\langle u_j^\prime, u_k^\prime \right\rangle + \left\langle u_k^\prime, u_j^\prime \right\rangle \left\langle v_j^\prime, u_k^\prime \right\rangle \right) + \left( \left\langle v_k^\prime, u_j^\prime \right\rangle \left\langle v_j^\prime, v_k^\prime \right\rangle + \left\langle v_k^\prime, v_j^\prime \right\rangle \left\langle u_j^\prime, v_k^\prime \right\rangle \right) \Big] \\ 
  +& \sum_{k \ne j} \frac{\sigma_{w,k}^N}{\sigma_{w,j}^2 - \sigma_{w,k}^2} \cdot \left[ - 2 \left( \left\langle v_k^\prime, u_j^\prime \right\rangle \left\langle v_j^\prime, u_k^\prime \right\rangle + \left\langle u_j^\prime, v_k^\prime \right\rangle \left\langle u_k^\prime, v_j^\prime \right\rangle \right) \right. \\ +& \left. \left( \left\langle v_k^\prime, u_j^\prime \right\rangle \left\langle u_j^\prime, u_k^\prime \right\rangle + \left\langle u_k^\prime, u_j^\prime \right\rangle \left\langle u_j^\prime, v_k^\prime \right\rangle \right) + \left( \left\langle u_k^\prime, v_j^\prime \right\rangle \left\langle v_j^\prime, v_k^\prime \right\rangle + \left\langle v_k^\prime, v_j^\prime \right\rangle \left\langle v_j^\prime, u_k^\prime \right\rangle \right) \right] . 
 \end{aligned}
\end{equation}

For the L.H.S. of (\ref{antisym-error, general balanced, eq}),  

\begin{equation}
 \begin{aligned}
  &\frac{\mathrm{d}}{\mathrm{d}t} \left\| (U^\prime - V^\prime) \Sigma_w \right\|_F^2 = \sum_{j} \left\| u_j^\prime - v_j^\prime \right\|^2 \frac{\mathrm{d}}{\mathrm{d}t} \sigma_{w,j}^2 + \sum_{j} \sigma_{w,j}^2 \frac{\mathrm{d}}{\mathrm{d}t} \left\| u_j^\prime - v_j^\prime \right\|^2 . 
 \end{aligned}
\end{equation}

The first term can be written by 

\begin{equation}
 \begin{aligned}
  & \sum_{j} \left\| u_j^\prime - v_j^\prime \right\|^2 \frac{\mathrm{d}}{\mathrm{d}t} \sigma_{w,j}^2 \\=& \sum_{j} \sigma_{w,j}^{N} \left( \langle u_j^\prime, v_j^\prime \rangle + \langle v_j^\prime, u_j^\prime \rangle - 2\sigma_{w,j}^{N} \right)\left\| u_j^\prime - v_j^\prime \right\|^2 \\
  =& \sum_{j} \sigma_{w,j}^{N} \left( {u_j^\prime}^H {u_j^\prime} {u_j^\prime}^H {v_j^\prime} + {v_j^\prime}^H {u_j^\prime} {u_j^\prime}^H {u_j^\prime} + {u_j^\prime}^H {v_j^\prime} {v_j^\prime}^H {v_j^\prime} + {v_j^\prime}^H {v_j^\prime} {v_j^\prime}^H {u_j^\prime} \right) \\ -& \sum_{j} \sigma_{w,j}^{N} \left({u_j^\prime}^H {v_j^\prime} + {v_j^\prime}^H {u_j^\prime}\right)^2 - 2 \sum_{j} \sigma_{w,j}^{2 N} \cdot \left\| u_j^\prime - v_j^\prime \right\|^2 . 
 \end{aligned}
\end{equation}

For the second term, 

\begin{equation}
 \begin{aligned}
  & \sum_{j} \sigma_{w,j}^2 \frac{\mathrm{d}}{\mathrm{d}t} \left\| u_j^\prime - v_j^\prime \right\|^2 \\
  =& \frac{1}{2} \left( \sum_{j} \sigma_{w,j}^2 \frac{\mathrm{d}}{\mathrm{d}t} \left\| u_j^\prime - v_j^\prime \right\|^2 + \sum_{k} \sigma_{w,k}^2 \frac{\mathrm{d}}{\mathrm{d}t} \left\| u_k^\prime - v_k^\prime \right\|^2 \right) \\ 
  =& \frac{N}{2} \sum_{j} \sigma_{w,j}^{N} \left[\left\langle v_j^\prime, u_j^\prime \right\rangle - \left\langle u_j^\prime, v_j^\prime \right\rangle \right]^2 \\
  -& \sum_{j,k,j\ne k} \frac{\sigma_{w,j}^2 \sigma_{w,k}^2 \left(\sigma_{w,j}^{N-2} - \sigma_{w,k}^{N-2}\right)}{\sigma_{w,j}^2 - \sigma_{w,k}^2} \left|\left\langle v_k^\prime, u_j^\prime \right\rangle - \left\langle u_k^\prime, v_j^\prime \right\rangle\right|^2 \\ 
  - & 2 \sum_{j,k,j\ne k} \sigma_{w,j}^{N} \cdot \left( \left| \left\langle u_j^\prime, v_k^\prime \right\rangle \right|^2 + \left| \left\langle u_k^\prime, v_j^\prime \right\rangle \right|^2 \right) \\
  + & 2 \sum_{j,k,j\ne k} \sigma_{w,j}^{N} \cdot \Re\left( \left\langle u_k^\prime, v_j^\prime \right\rangle \left\langle u_j^\prime, u_k^\prime \right\rangle + \left\langle u_j^\prime, v_k^\prime \right\rangle \left\langle v_k^\prime, v_j^\prime \right\rangle \right) . 
 \end{aligned}
\end{equation}

Notice that 

\begin{equation}
 \begin{aligned}
  \left[\left\langle v_j^\prime, u_j^\prime \right\rangle - \left\langle u_j^\prime, v_j^\prime \right\rangle \right]^2 =& 4 \left[i \Im\left(\left\langle v_j^\prime, u_j^\prime \right\rangle \right) \right]^2 = -  \left| {u_j^\prime}^H {v_j^\prime} - {v_j^\prime}^H {u_j^\prime} \right|^2 , 
 \end{aligned}
\end{equation}

\begin{equation}
 \begin{aligned}
  & - \sum_{j} \sigma_{w,j}^{N} \left({u_j^\prime}^H {v_j^\prime} + {v_j^\prime}^H {u_j^\prime}\right)^2 - 2\sum_{j,k,j\ne k} \sigma_{w,j}^{N} \left( \left| \left\langle u_j^\prime, v_k^\prime \right\rangle \right|^2 + \left| \left\langle u_k^\prime, v_j^\prime \right\rangle \right|^2 \right) \\
  =& -\sum_{j} \sigma_{w,j}^{N} \left({u_j^\prime}^H {v_j^\prime} - {v_j^\prime}^H {u_j^\prime}\right)^2 - 2 \sum_{j} \sigma_{w,j}^{N} \left({u_j^\prime}^H {v_j^\prime} {v_j^\prime}^H {u_j^\prime} + {v_j^\prime}^H {u_j^\prime} {u_j^\prime}^H {v_j^\prime}\right) \\-& 2\sum_{j} \sigma_{w,j}^{N} \cdot \left( {u_j^\prime}^H \left( \sum_{k \ne j} v_k^\prime {v_k^\prime}^H\right) {u_j^\prime} + {v_j^\prime}^H \left( \sum_{k \ne j} u_k^\prime {u_k^\prime}^H\right) {v_j^\prime} \right) \\ 
  =& -\sum_{j} \sigma_{w,j}^{N} \left({u_j^\prime}^H {v_j^\prime} - {v_j^\prime}^H {u_j^\prime}\right)^2 - 2\sum_{j} \sigma_{w,j}^{N} \cdot \left( {u_j^\prime}^H {V^\prime} {V^\prime}^H {u_j^\prime} + {v_j^\prime}^H {U^\prime}{U^\prime}^H {v_j^\prime} \right) \\ 
  =& \sum_{j} \sigma_{w,j}^{N} \left|{u_j^\prime}^H {v_j^\prime} - {v_j^\prime}^H {u_j^\prime}\right|^2 - 2\sum_{j} \sigma_{w,j}^{N} \cdot \left( {u_j^\prime}^H \Sigma {u_j^\prime} + {v_j^\prime}^H \Sigma {v_j^\prime} \right) , 
 \end{aligned}
\end{equation}

and 

\begin{equation}
 \begin{aligned}
  &\sum_{j} \sigma_{w,j}^{N} \left( {u_j^\prime}^H {u_j^\prime} {u_j^\prime}^H {v_j^\prime} + {v_j^\prime}^H {u_j^\prime} {u_j^\prime}^H {u_j^\prime} + {u_j^\prime}^H {v_j^\prime} {v_j^\prime}^H {v_j^\prime} + {v_j^\prime}^H {v_j^\prime} {v_j^\prime}^H {u_j^\prime} \right) \\ +& 2 \sum_{j,k,j\ne k} \sigma_{w,j}^{N} \cdot \Re\left( \left\langle u_k^\prime, v_j^\prime \right\rangle \left\langle u_j^\prime, u_k^\prime \right\rangle + \left\langle u_j^\prime, v_k^\prime \right\rangle \left\langle v_k^\prime, v_j^\prime \right\rangle \right) \\ 
  =& 2 \sum_{j,k} \sigma_{w,j}^{N} \cdot \Re\left( {u_j^\prime}^H \left( U{U^\prime}^H + V{V^\prime}^H \right) v_j^\prime \right) \\
  =& 2\sum_{j} \sigma_{w,j}^{N} \cdot \left( {u_j^\prime}^H \Sigma v_j^\prime + {v_j^\prime}^H \Sigma u_j^\prime \right) . 
 \end{aligned}
\end{equation}

By combining the results above, 

\begin{equation}
 \begin{aligned}
  &\frac{\mathrm{d}}{\mathrm{d}t} \left\| \left( U^\prime - V^\prime \right) \Sigma_w \right\|_{F}^2 \\
  =& - 2\sum_{j} \sigma_{w,j}^{N} \cdot \left( {u_j^\prime}^H \Sigma {u_j^\prime} + {v_j^\prime}^H \Sigma {v_j^\prime} \right) 
  + 2\sum_{j} \sigma_{w,j}^{N} \cdot \left( {u_j^\prime}^H \Sigma v_j^\prime + {v_j^\prime}^H \Sigma u_j^\prime \right) \\-& 2 \sum_{j} \sigma_{w,j}^{2 N} \cdot \left\| u_j^\prime - v_j^\prime \right\|^2 \\ 
  -& \sum_{j,k,j\ne k} \frac{\sigma_{w,j}^2 \sigma_{w,k}^2 \left(\sigma_{w,j}^{N-2} - \sigma_{w,k}^{N-2}\right)}{\sigma_{w,j}^2 - \sigma_{w,k}^2} \left| {u_j^\prime}^H {v_k^\prime} - {v_j^\prime}^H {u_k^\prime} \right|^2 - \sum_{j} \frac{N - 2}{2} \sigma_{w,j}^{N} \left| {u_j^\prime}^H {v_j^\prime} - {v_j^\prime}^H {u_j^\prime} \right|^2 \\ 
  =& - 2 \sum_j \sigma_{w,j}^N \cdot \left\| \Sigma^{1/2} \left( {u_j^\prime} - {v_j^\prime} \right) \right\|^2 - 2\sum_j \sigma_{w,j}^{2N} \cdot \left\| u_j^\prime - v_j^\prime \right\|^2 \\
  -& \sum_{j,k} f_N \left( \sigma_{w,j} , \sigma_{w,k} \right) \left| {u_j^\prime}^H {v_k^\prime} - {v_j^\prime}^H {u_k^\prime} \right|^2 . 
 \end{aligned}
\end{equation}

This completes the proof. 

\end{proof}

For even depth $2\mid N$, we have a similar result written in matrix form: 

\begin{theorem}\label{antisym-error for 2|N}
If $2 \mid N$, the singular values of the product matrix $W(0)$ are different from zero at initialization, then 
  
\begin{equation}\label{antisym-error for 2|N, eq}
 \begin{aligned}
  \frac{\mathrm{d}}{\mathrm{d} t} \left\|\Sigma^{1/2}(U - V) \Sigma_w \right\|_F^2 &= -2 \left\|\Sigma (U - V) \Sigma_w^{N/2} \right\|_F^2 -2 \left\|\Sigma^{1/2}(U - V) \Sigma_w^{N} \right\|_F^2 \\ 
  &- 2 \Re\left(\mathrm{tr}\left( \sum_{j=1}^{N/2 - 1} \Sigma U \Sigma_w^{2j} \left(U^H \Sigma V - V^H \Sigma U\right) \Sigma_w^{N-2j} V^H \right)\right) \\
  &\le 0 . 
 \end{aligned}
\end{equation} 
\end{theorem}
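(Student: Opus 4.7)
The strategy is to differentiate $\|\Sigma^{1/2}(U-V)\Sigma_w\|_F^2$ directly, using only the three matrix dynamics already computed in the paragraph preceding Theorem~\ref{antisym-error, general balanced}---those of $W_NW_N^H$, $W_1^HW_1$, and $W$ under balanced gradient flow. This is the ``straight-forward'' route hinted at in the proof sketch of Theorem~\ref{antisym-error, general, informal}.

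First I would expand the target norm using the balanced identities $W_NW_N^H=U\Sigma_w^2U^H$ and $W_1^HW_1=V\Sigma_w^2V^H$:
\[
 \|\Sigma^{1/2}(U-V)\Sigma_w\|_F^2
  =\operatorname{tr}\bigl(\Sigma(W_NW_N^H+W_1^HW_1)\bigr)-2\Re\operatorname{tr}\bigl(\Sigma\,U\Sigma_w^2V^H\bigr).
\]
Using $\frac{\mathrm{d}}{\mathrm{d}t}(W_NW_N^H)=\Sigma W^H+W\Sigma-2WW^H$ and its symmetric counterpart for $W_1^HW_1$, the first two traces contribute $4\Re\operatorname{tr}(\Sigma^2W)-2\operatorname{tr}(\Sigma(WW^H+W^HW))$ to the derivative. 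Expanding the squared-norm terms on the target RHS by the same balanced identities (so that $\Sigma_w^{N/2}$ and $\Sigma_w^N$ are re-expressed via $(W_NW_N^H)^k$ and $W$) shows that these match the ``diagonal'' portions of $-2\|\Sigma(U-V)\Sigma_w^{N/2}\|_F^2$ and $-2\|\Sigma^{1/2}(U-V)\Sigma_w^N\|_F^2$.

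For the remaining cross term, I would use the balanced chain identity
\[
 U\Sigma_w^{2(j+1)}V^H=(W_NW_N^H)\,U\Sigma_w^{2j}V^H=U\Sigma_w^{2j}V^H\,(W_1^HW_1)\qquad (j=0,\ldots,N/2-1),
\]
with the boundary $U\Sigma_w^{N}V^H=W$. Differentiating the closed chain $(W_NW_N^H)^{N/2-1}\,U\Sigma_w^2V^H=W$ by Leibniz and substituting the known derivatives of $W$, $W_NW_N^H$, $W_1^HW_1$ expresses $\frac{\mathrm{d}}{\mathrm{d}t}(U\Sigma_w^2V^H)$ as a telescoping sum indexed by $j\in\{1,\ldots,N/2-1\}$. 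The key simplification is that the $\Sigma W^H$ and $W\Sigma$ contributions to the dynamics of $W_NW_N^H$ and $W_1^HW_1$, propagated through the telescope and re-expressed in terms of $U,V,\Sigma_w$, collapse into the skew-Hermitian sandwich $\Sigma U\Sigma_w^{2j}(U^H\Sigma V-V^H\Sigma U)\Sigma_w^{N-2j}V^H$---exactly the summand in the claim.

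The main obstacle is the bookkeeping in this last step: carefully tracking the $N/2-1$ intermediate slots and recognizing the skew-Hermitian regrouping. The restriction to even $N$ enters precisely here, since only for $2\mid N$ can $U\Sigma_w^2V^H$ be written as a polynomial chain of $W$, $W_NW_N^H$, $W_1^HW_1$ bridging the offset between $\Sigma_w^2$ and the boundary $\Sigma_w^N$; for odd $N$ the chain necessarily introduces a half-integer matrix power. For $N=2$ the sum in the claim is empty and the identity reduces to a direct three-line calculation, which I would use both as the base case of an induction on $N/2$ and as a sanity check. The non-positivity of the RHS then follows because the first two terms are manifestly $\le 0$ and the third term equals $-\sum_{j,k}f_N(\sigma_{w,j},\sigma_{w,k})|{u_j'}^Hv_k'-{v_j'}^Hu_k'|^2$ after pairing indices $(b,d)\leftrightarrow(d,b)$ and using the symmetry of $f_N$, matching the non-positive expression already established in Theorem~\ref{antisym-error, general balanced}.
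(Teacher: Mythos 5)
Your plan is essentially the paper's own proof: the step of differentiating the closed chain $(W_NW_N^H)^{N/2-1}\,U\Sigma_w^2V^H=W$ by Leibniz and solving for $\frac{\mathrm{d}}{\mathrm{d}t}(U\Sigma_w^2V^H)$ by left-multiplying with $(W_NW_N^H)^{-(N/2-1)}$ is exactly the ``take the inverse'' computation the paper packages into Lemma~\ref{more on uv by inverse} (whose case $k=1$ is then traced against $\Sigma$), and your identification of the cross term with $-\sum_{m,n}f_N(\sigma_{w,m},\sigma_{w,n})|Q_{mn}-\overline{Q_{nm}}|^2$ matches the paper's sign argument since $f_N(x,y)=\sum_{j=1}^{N/2-1}x^{2j}y^{N-2j}$. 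The only cosmetic difference is that you split $\|\Sigma^{1/2}(U-V)\Sigma_w\|_F^2$ into $UU$, $VV$, and cross pieces before differentiating, whereas the paper differentiates the $(U-V)\Sigma_w^2(U-V)^H$ block all at once via the lemma; the proposed induction on $N/2$ is unnecessary, since the Leibniz expansion already produces all $N/2-1$ summands directly.
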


We present another approach of proof which \textit{takes the inverse} of some terms. This approach \textit{adapts to the skew-hermitian term in unbalanced initialization}, where the proof of Theorem \ref{antisym-error, general balanced} in does not hold. 

To prove the theorem, we introduce the following lemma. 

\begin{lemma}\label{more on uv by inverse}

If $2 \mid N$, $\Sigma_w$ is full rank at initialization, then $\forall k = 0,1,\cdots,N/2$ we have

\begin{equation}
 \begin{aligned}
  &\frac{\mathrm{d}}{\mathrm{d} t} (U \pm V) \Sigma_w^{2k} (U \pm V)^H \\=& \sum_{j=1}^{k} \left[U \Sigma_w^{2(j-1)} U^H \Sigma V \Sigma_w^{N+2k-2j} U^H + U \Sigma_w^{N + 2(j-1)} V^H \Sigma U \Sigma_w^{2(k-j)} U^H \right. \\ 
  +& \left. V \Sigma_w^{2(j-1)} V^H \Sigma U \Sigma_w^{N+2k-2j} V^H + V \Sigma_w^{N + 2(j-1)} U^H \Sigma V \Sigma_w^{2(k-j)} V^H \right] \\ 
  \pm& \sum_{j=1}^{N/2 + k} \left[U \Sigma_w^{2(j-1)} U^H \Sigma V \Sigma_w^{N+2k-2j} V^H + V \Sigma_w^{2(j-1)} V^H \Sigma U \Sigma_w^{N+2k-2j} U^H \right] \\ 
  \mp& \sum_{j=1}^{N/2 - k} \left[U \Sigma_w^{2(j-1+k)} V^H \Sigma U \Sigma_w^{N-2j} V^H + V \Sigma_w^{2(j-1+k)} U^H \Sigma V \Sigma_w^{N-2j} U^H \right] \\
  -& 2k(U \pm V) \Sigma_w^{2(N+k-1)} (U \pm V)^H . 
 \end{aligned}
\end{equation}

\end{lemma}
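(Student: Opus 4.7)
The plan is to decompose the left-hand side into four pieces,
\[
(U \pm V)\Sigma_w^{2k}(U \pm V)^H = U\Sigma_w^{2k} U^H + V\Sigma_w^{2k} V^H \pm U\Sigma_w^{2k} V^H \pm V\Sigma_w^{2k} U^H,
\]
and to differentiate each summand separately. For the two Hermitian pieces, the balanced condition gives $U\Sigma_w^{2k} U^H = (W_N W_N^H)^k$ and $V\Sigma_w^{2k} V^H = (W_1^H W_1)^k$, so applying the matrix power rule $\tfrac{\mathrm{d}}{\mathrm{d}t}A^k = \sum_{j=1}^k A^{j-1}\dot{A}\,A^{k-j}$ together with the closed forms for $\tfrac{\mathrm{d}}{\mathrm{d}t}(W_N W_N^H)$ and $\tfrac{\mathrm{d}}{\mathrm{d}t}(W_1^H W_1)$ obtained at the start of this section produces directly the four terms inside the first sum $j=1,\ldots,k$ of the claim, together with the contribution $-2k\bigl(U\Sigma_w^{2(N+k-1)} U^H + V\Sigma_w^{2(N+k-1)} V^H\bigr)$ to the final decay block.

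For the cross piece $U\Sigma_w^{2k} V^H$ the lemma name is already suggestive: I would exploit the identity
\[
(U\Sigma_w^{N-2k} U^H)\,(U\Sigma_w^{2k} V^H) = U\Sigma_w^N V^H = W,
\]
whose left factor is invertible since $\Sigma_w$ stays full rank for all $t\ge 0$ by Lemma~\ref{ASVD, non-negative diagonal}. Differentiating and isolating the cross-term derivative,
\[
\tfrac{\mathrm{d}}{\mathrm{d}t}(U\Sigma_w^{2k} V^H) = (U\Sigma_w^{2k-N} U^H)\Bigl[\dot W - \tfrac{\mathrm{d}}{\mathrm{d}t}(U\Sigma_w^{N-2k} U^H)\cdot U\Sigma_w^{2k} V^H\Bigr].
\]
I would then substitute the explicit $\dot W$ (cf.~\eqref{equation: dW/dt is irrelevant to a}) and the first-step formula for $\tfrac{\mathrm{d}}{\mathrm{d}t}(U\Sigma_w^{N-2k} U^H)$ (the $m=N/2-k$ case of the power rule). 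The first $m$ summands of $\dot W$ cancel exactly against the $m$ summands of $\tfrac{\mathrm{d}}{\mathrm{d}t}(U\Sigma_w^{N-2k} U^H)\cdot U\Sigma_w^{2k} V^H$ of the form $U\Sigma_w^{2(j-1)} U^H\,\Sigma\,V\Sigma_w^{2(N-j)} V^H$. What survives carries at least $\Sigma_w^{N-2k}$ in the outer left slot, so left-multiplication by $U\Sigma_w^{2k-N} U^H$ leaves only non-negative powers of $\Sigma_w$; reindexing $j \mapsto j-m$ in the surviving half of $\dot W$ and leaving the other surviving sum as is then produces exactly the $U$-type halves of the second and third sums of the claim, plus the decay $-2k\,U\Sigma_w^{2(N+k-1)} V^H$. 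The remaining cross piece is then free: since $\Sigma_w$ is real-diagonal, $V\Sigma_w^{2k} U^H = (U\Sigma_w^{2k} V^H)^H$, so its derivative is the Hermitian conjugate of the one just computed and, after a mirror reindexing $j \mapsto N/2+k+1-j$ (resp. $j\mapsto N/2-k+1-j$), yields the $V$-type halves.

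Assembling the four derivatives with their $\pm$ signs finishes the proof: the four decay contributions combine into $-2k(U\pm V)\Sigma_w^{2(N+k-1)}(U\pm V)^H$, and the remaining $\pm$-weighted sums are exactly the three displayed summations. The main obstacle is the bookkeeping that removes the apparent inverse $(U\Sigma_w^{N-2k} U^H)^{-1}$ in the cross-term step---one must verify a precise index count showing that the first $N/2-k$ of the $N$ terms of $\dot W$ line up one-for-one with the $m$ terms of $\tfrac{\mathrm{d}}{\mathrm{d}t}(U\Sigma_w^{N-2k} U^H)\cdot U\Sigma_w^{2k} V^H$, and that every remaining term then carries $\Sigma_w^{N-2k}$ in the correct slot. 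The reindexings needed to produce the upper limits $N/2+k$ and $N/2-k$ are where sign errors can creep in, so I would double-check the boundary cases $k=0$ (first sum empty, reducing to a derivative of $(U\pm V)(U\pm V)^H$) and $k=N/2$ (third sum empty, with the cross terms becoming $W$ and $W^H$ directly) as consistency tests before trusting the general formula.
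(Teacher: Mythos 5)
Your proposal is correct and takes essentially the same route as the paper: differentiate the two Hermitian blocks $U\Sigma_w^{2k}U^H=(W_NW_N^H)^k$ and $V\Sigma_w^{2k}V^H=(W_1^HW_1)^k$ by the power rule, handle the cross block via the factorization $U\Sigma_w^{2k}V^H=(U\Sigma_w^{-(N-2k)}U^H)\,W$ together with the inverse--derivative identity (which is exactly what the paper does), and obtain the remaining cross block by Hermitian conjugation and reindexing. Your index bookkeeping and the resulting signs (including the minus sign on the $\sum_{j=1}^{N/2-k}$ block, which the paper's intermediate display for $\tfrac{\mathrm{d}}{\mathrm{d}t}(U\Sigma_w^{2k}V^H)$ appears to misprint as a plus, though the final lemma statement has it correct) are right.
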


\begin{proof}
$\forall l \in \mathbb{N}$ we have
\begin{equation}\label{u_power}
 \begin{aligned}
  &\frac{\mathrm{d}}{\mathrm{d} t}\left(U \Sigma_w^{2l} U^H\right) = \sum_{j=1}^{l} U \Sigma_w^{2(j-1)} U^H \left(\frac{\mathrm{d}}{\mathrm{d} t}\left(U \Sigma_w^{2} U^H\right)\right) U \Sigma_w^{2(l-j)} U^H \\ 
  =& \sum_{j=1}^{l} U \Sigma_w^{2(j-1)} U^H \left(\Sigma V \Sigma_w^N U^H + U \Sigma_w^N V^H \Sigma^H - 2 U \Sigma_w^{2N} U^H \right) U \Sigma_w^{2(l-j)} U^H . 
 \end{aligned}
\end{equation}
  
\begin{equation}\label{v_power}
 \begin{aligned}
  &\frac{\mathrm{d}}{\mathrm{d} t}\left(V \Sigma_w^{2l} V^H\right) = \sum_{j=1}^{l} V \Sigma_w^{2(j-1)} V^H \left(\frac{\mathrm{d}}{\mathrm{d} t}\left(V \Sigma_w^{2} V^H\right)\right) V \Sigma_w^{2(l-j)} V^H \\ 
  =& \sum_{j=1}^{l} V \Sigma_w^{2(j-1)} V^H \left(\Sigma U \Sigma_w^N V^H + V \Sigma_w^N U^H \Sigma^H - 2 V \Sigma_w^{2N} V^H \right) V \Sigma_w^{2(l-j)} V^H . 
 \end{aligned}
\end{equation}
  
From Lemma \ref{ASVD, non-negative diagonal}, $U \Sigma_w^{N-2k} U^H$ is invertible at arbitrary time $t\in[0,+\infty)$, thus

\begin{equation}
 \begin{aligned}
  \frac{\mathrm{d}}{\mathrm{d} t}\left(U \Sigma_w^{-(N-2k)} U^H \right) 
  &= - \left(U \Sigma_w^{N-2k} U^H \right)^{-1} \left[\frac{\mathrm{d}}{\mathrm{d} t} \left(U \Sigma_w^{N-2k} U^H \right) \right] \left(U \Sigma_w^{N-2k} U^H \right)^{-1} \\
  &= - \left(U \Sigma_w^{-(N-2k)} U^H \right) \left[\frac{\mathrm{d}}{\mathrm{d} t}\left(U \Sigma_w^{N-2k} U^H\right) \right] \left(U \Sigma_w^{-(N-2k)} U^H \right) , 
 \end{aligned}
\end{equation}

which further gives 
  
\begin{equation}\label{uv_power}
 \begin{aligned}
  &\frac{\mathrm{d}}{\mathrm{d} t}\left(U \Sigma_w^{2k} V^H \right) \\=& \left[\frac{\mathrm{d}}{\mathrm{d} t}\left(U \Sigma_w^{-(N-2k)} U^H \right) \right] U \Sigma_w^N V^H + U \Sigma_w^{-(N-2k)} U^H \left[\frac{\mathrm{d}}{\mathrm{d} t}\left(U \Sigma_w^{N} V^H \right) \right] \\ 
  =& - \left(U \Sigma_w^{-(N-2k)} U^H \right) \left[\frac{\mathrm{d}}{\mathrm{d} t}\left(U \Sigma_w^{N-2k} U^H \right) \right] \left(U \Sigma_w^{2k} V^H \right) \\+& U \Sigma_w^{-(N-2k)} U^H \left[\frac{\mathrm{d}}{\mathrm{d} t}\left(U \Sigma_w^{N} V^H \right) \right] \\ 
  =& \sum_{j=1}^{N/2+k} U \Sigma_w^{2(j-1)} U^H \Sigma V \Sigma_w^{N+2(k-j)} V^H + \sum_{j=1}^{N/2-k} U \Sigma_w^{2(k+j-1)} V^H \Sigma^H U \Sigma_w^{N-2j} V^H \\-& 2 k U \Sigma_w^{2(N+k-1)} V^H . 
 \end{aligned}
\end{equation}

Combine (\ref{u_power}), (\ref{v_power}) and (\ref{uv_power}) together, then the proof is completed. 

\end{proof}

Now we present the proof of Theorem \ref{antisym-error for 2|N}. 

\begin{proof}\label{antisym-error for 2|N, proof}

Denote $Q = U^H \Sigma V$, calculate the L.H.S. of (\ref{antisym-error for 2|N, eq}) by setting $k=1$ in Lemma \ref{more on uv by inverse}: 
  
\begin{equation}
 \begin{aligned}
  &\frac{\mathrm{d}}{\mathrm{d} t} \left\|\Sigma^{1/2}(U - V) \Sigma_w \right\|_F^2 \\=& \frac{\mathrm{d}}{\mathrm{d} t} \mathrm{tr} \left( \Sigma (U - V) \Sigma_w^{2} (U - V)^H \right) \\ 
  =&  -2 \mathrm{tr}\left(\Sigma^2 (U - V) \Sigma_w^{N} (U - V)^H\right)  - 2\mathrm{tr}\left( \Sigma (U - V) \Sigma_w^{2N} (U - V)^H \right) \\
  -& 2 \Re\left(\mathrm{tr}\left( \sum_{j=1}^{N/2 - 1} \Sigma U \Sigma_w^{2j} \left(U^H \Sigma V - V^H \Sigma U\right) \Sigma_w^{N-2j} V^H \right)\right) \\ 
  =& -2 \left\|\Sigma (U - V) \Sigma_w^{N/2} \right\|_F^2 -2 \left\|\Sigma^{1/2}(U - V) \Sigma_w^{N} \right\|_F^2 \\
  -& 2\Re\left(\mathrm{tr}\left( \sum_{j=1}^{N/2 - 1} \Sigma_w^{2j} (Q - Q^H) \Sigma_w^{N-2j} Q^H \right)\right) . 
 \end{aligned}
\end{equation}

To analyze the last term, 

\begin{equation}
 \begin{aligned}
  & \Re\left(\mathrm{tr}\left( \sum_{j=1}^{N/2 - 1} \Sigma_w^{2j} (Q - Q^H) \Sigma_w^{N-2j} Q^H \right)\right) \\ =& \Re\left( \sum_{m,n} \left( \sum_{j=1}^{N/2 - 1} \sigma_m^{2j}(\Sigma_w) (Q_{mn} - \overline{Q_{nm}}) \sigma_n^{N-2j}(\Sigma_w) \overline{Q_{mn}} \right)\right) \\ 
  =& \frac{1}{2} \sum_{m,n} \left( \sum_{j=1}^{N/2 - 1} \sigma_m^{2j}(\Sigma_w) \sigma_n^{N-2j}(\Sigma_w) (\left|Q_{mn}\right|^2 + \left|Q_{nm}\right|^2 - 2 \Re(Q_{mn} Q_{nm})) \right) \\ 
  =& \frac{1}{2} \sum_{m,n} \left|Q_{mn} - \overline{Q_{nm}} \right|^2 \left( \sum_{j=1}^{N/2 - 1} \sigma_m^{2j}(\Sigma_w) \sigma_n^{N-2j}(\Sigma_w) \right) \ge 0 . 
 \end{aligned}
\end{equation}
  
Thus for arbitrary $\Sigma \succ O$ we have 

\begin{equation}
 \begin{aligned}
  \frac{\mathrm{d}}{\mathrm{d} t} \left\|\Sigma^{1/2}(U - V) \Sigma_w \right\|_F^2 
  &= -2 \left\|\Sigma (U - V) \Sigma_w^{N/2} \right\|_F^2 -2 \left\|\Sigma^{1/2}(U - V) \Sigma_w^{N} \right\|_F^2 \\ 
  &- \sum_{m,n} \left|Q_{mn} - \overline{Q_{nm}} \right|^2 \left( \sum_{j=1}^{N/2 - 1} \sigma_m^{2j}(\Sigma_w) \sigma_n^{N-2j}(\Sigma_w) \right) \\ &\le 0 . 
 \end{aligned}
\end{equation}

which completes the proof. 

\end{proof}

\subsection{Hermitian Main Term}

This section proves Theorem \ref{dynamics of minimum singular value of hermitian term}. 

\begin{proof}\label{dynamics of minimum singular value of hermitian term, proof}

Consider 

\begin{equation}
 \begin{aligned}
  &\frac{\mathrm{d}}{\mathrm{d} t} (U + V) \Sigma_w^2 (U + V)^H \\
  =& \Sigma (U+V)\Sigma_w^N (U+V)^H + (U+V)\Sigma_w^N (U+V)^H \Sigma - 2 (U+V)\Sigma_w^{2N} (U+V)^H \\
  +& \sum_{j=1}^{N/2-1} \left[U \Sigma_w^{2j} \left( U^H \Sigma V - V^H \Sigma U \right) \Sigma_w^{N-2j} V^H + V \Sigma_w^{2j} \left( V^H \Sigma U - U^H \Sigma V \right) \Sigma_w^{N-2j} U^H \right] . 
 \end{aligned}
\end{equation}

Denote $P = \frac{(U + V)\Sigma_w}{2}$, $Q = \frac{(U - V)\Sigma_w}{2}$. Then $P^H Q = - Q^H P$, $\Sigma_w^2 = P^H P + Q^H Q$. 

From $A B C^H - C B A^H = \frac{1}{2} \left[(A-C)B(A+C)^H - (A+C)B(A-C)^H \right]$ for arbitrary $A, B, C$ we have 

\begin{equation}
 \begin{aligned}
  \frac{\mathrm{d}}{\mathrm{d} t} P P^H &= \Sigma P \Sigma_w^{N-2} P^H + P \Sigma_w^{N-2} P^H \Sigma - 2 P \Sigma_w^{2N-2} P^H \\
  &+ \sum_{j=1}^{N/2-1} \left[Q \Sigma_w^{2j-2} \left(Q^H \Sigma P - P^H \Sigma Q\right) \Sigma_w^{N-2j-2} P^H \right.\\&-\left. P \Sigma_w^{2j-2} \left(Q^H \Sigma P - P^H \Sigma Q\right) \Sigma_w^{N-2j-2} Q^H \right] . 
 \end{aligned}
\end{equation}

Suppose the $k^{th}$ eigenvalue and eigenvector of $PP^H$ are $x_k^2$ and $\xi_k$ respectively, $P^H \xi_k = x_k \eta_k$, then

\begin{equation}
 \begin{aligned}
  \frac{\mathrm{d}}{\mathrm{d} t} x_k^2 &= \xi_k^H\left( \frac{\mathrm{d}}{\mathrm{d} t} P P^H \right) \xi_k \\ &= 2 \xi_k^H \Sigma P \Sigma_w^{N-2} P^H \xi_k - 2 \xi_k^H P \Sigma_w^{2N-2} P^H \xi_k \\&+ 2 \xi_k^H \left[\sum_{j=1}^{N/2-1} Q \Sigma_w^{2j-2} \left(Q^H \Sigma P - P^H \Sigma Q\right) \Sigma_w^{N-2j-2} P^\top\right] \xi_k . 
 \end{aligned}
\end{equation}

We focus on $N=4$, $\Sigma = \sigma_1(\Sigma)I$. Then 

\begin{equation}
 \begin{aligned}
  \frac{\mathrm{d}}{\mathrm{d} t} x_k^2 &= 2\sigma_1(\Sigma) \xi_k^H P \Sigma_w^{2} P^H \xi_k - 2 \xi_k^H P \Sigma_w^{6} P^H \xi_k + 4 \sigma_1(\Sigma) \xi_k^H Q  Q^H P  P^H \xi_k \\
  &= 2\sigma_1(\Sigma) \xi_k^H P \Sigma_w^{2} P^H \xi_k - 2 \xi_k^H P \Sigma_w^{6} P^H \xi_k + 4 \sigma_1(\Sigma) x_k^2 \xi_k^H Q Q^H \xi_k . 
 \end{aligned}
\end{equation}

For the second term: 

\begin{equation}
 \begin{aligned}
  \xi_k^H P \Sigma_w^{6} P^H \xi_k &= \xi_k^H P \left(P^H P + Q^H Q\right)\Sigma_w^{2} \left(P^H P + Q^H Q\right) P^H \xi_k \\ 
  &= x_k^4 \xi_k^H P \Sigma_w^{2} P^H \xi_k +2 x_k^2 \xi_k^H P \Sigma_w^{2} Q^H Q P^H \xi_k + \xi_k^H P Q^H Q \Sigma_w^{2} Q^H Q P^H \xi_k \\
  &\le x_k^4 \xi_k^H P \Sigma_w^{2} P^H \xi_k + 2 x_k^4 \|Q\|_{op}^2 \|\Sigma_w\|_{op}^2 + x_k^2 \|Q\|_{op}^4 \|\Sigma_w\|_{op}^2 . 
 \end{aligned}
\end{equation}

From Theorem \ref{antisym-error for 2|N}, $\|Q\|_{op} \le \|Q\|_{F} \le \|Q(t=0)\|_{F}$. Then 

\begin{equation}
 \begin{aligned}
  &\frac{\mathrm{d}}{\mathrm{d} t} x_k^2 \ge \left(2\sigma_1(\Sigma) - x_k^4\right) \xi_k^H P \Sigma_w^{2} P^H \xi_k - 2 x_k^4 \|Q\|_{op}^2 \|\Sigma_w\|_{op}^2 - x_k^2 \|Q\|_{op}^4 \|\Sigma_w\|_{op}^2 \\
  \ge& \left(2\sigma_1(\Sigma) - x_k^4 - \frac{1}{2} \|\Sigma_w\|_{op}^2 \|((U-V)\Sigma_w)|_{t=0} \|_{F}^2 \right) x_k^4 - \frac{1}{16} x_k^2 \|\Sigma_w\|_{op}^2 \|((U-V)\Sigma_w)|_{t=0}\|_{F}^4 . 
 \end{aligned}
\end{equation}

The lower bound is proved. 

For the upper bound, 

\begin{equation}
 \begin{aligned}
  \frac{\mathrm{d}}{\mathrm{d} t} x_k^2 \le 2\sigma_1(\Sigma) x_k^2 \|\Sigma_w\|_{op}^{2} + 4 \sigma_1(\Sigma) x_k^2 \left\| Q \right\|_{op}^2 . 
 \end{aligned}
\end{equation}

This completes the proof.

\end{proof}

\begin{corollary}\label{never converge to optimum, balanced} If for some $k$, $\sigma_k((U+V)\Sigma_w)|_{t=0} = 0$, then $\sigma_k((U+V)\Sigma_w) \equiv 0$ for finite time $t \in [0,+\infty)$. 

\end{corollary}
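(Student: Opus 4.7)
The plan is to deduce the corollary from the upper bound in Theorem~\ref{dynamics of minimum singular value of hermitian term} via a Gr\"onwall-type comparison. Writing $x_k = \tfrac{1}{2}\sigma_k((U+V)\Sigma_w)$ as in that theorem, the key observation is that the upper bound factorises as $\tfrac{\mathrm{d}}{\mathrm{d}t}x_k^2 \le C(t)\,x_k^2$ with $C(t) := \sigma_1(\Sigma)\bigl(2\|\Sigma_w(t)\|_{op}^2 + \|(U-V)\Sigma_w|_{t=0}\|_F^2\bigr)$, so any eigenvalue branch that starts at zero should stay at zero for all finite $t$.

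First I would verify that $C(t)$ is locally bounded. Lemma~\ref{L ori non-increasing} keeps $\mathcal{L}_{\rm ori}$ non-increasing, so $\|W(t)\|_F \le \|W(0)-\Sigma\|_F + \|\Sigma\|_F$ and hence $\|W(t)\|_{op}$ is uniformly bounded on $[0,\infty)$. Under balanced initialisation $W = U\Sigma_w^N V^H$, so $\|\Sigma_w(t)\|_{op}^N = \|W(t)\|_{op}$ is uniformly bounded as well, which makes $C(t)$ locally integrable.

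Next I would exploit analyticity. By Lemma~\ref{ASVD, non-negative diagonal}, $U$, $V$ and $\Sigma_w$ are real-analytic in $t$, so the Hermitian PSD matrix $PP^H$ with $P := (U+V)\Sigma_w/2$ is analytic and therefore admits an analytic parametrisation of its eigenvalues $\tilde{x}_1^2(t),\ldots,\tilde{x}_d^2(t)$ (labelled so as to remain analytic through any crossings, rather than monotonically). The eigenvector computation used in the proof of Theorem~\ref{dynamics of minimum singular value of hermitian term} applies branchwise and yields $\tfrac{\mathrm{d}}{\mathrm{d}t}\tilde{x}_j^2 \le C(t)\tilde{x}_j^2$ for every $j$. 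Combined with $\tilde{x}_j^2 \ge 0$, Gr\"onwall then gives $\tilde{x}_j^2(t) \le \tilde{x}_j^2(0)\exp\bigl(\int_0^t C(s)\,\mathrm{d}s\bigr)$, so every branch that vanishes at $t=0$ vanishes for all finite $t$.

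To conclude, if $\sigma_k((U+V)\Sigma_w)|_{t=0}=0$ in the conventional decreasing ordering, then at $t=0$ at least $d-k+1$ of the analytic branches $\tilde{x}_j$ are zero, and by the previous paragraph each of these stays zero. Consequently $(U+V)\Sigma_w(t)$ has at least $d-k+1$ zero singular values at every finite $t\ge 0$, forcing $\sigma_k((U+V)\Sigma_w)(t)=0$. The main obstacle I expect is reconciling the decreasing-order labelling used in the statement with the analytic (unordered) labelling needed to apply the pointwise differential inequality through possible eigenvalue crossings; once that bookkeeping is in place, the argument reduces to a clean Gr\"onwall comparison.
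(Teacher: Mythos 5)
Your proposal is correct and follows essentially the same route as the paper's proof: bound $\|\Sigma_w\|_{op}$ uniformly via Lemma~\ref{L ori non-increasing}, plug this into the upper bound of Theorem~\ref{dynamics of minimum singular value of hermitian term} to get $\tfrac{\mathrm{d}}{\mathrm{d}t}x_k^2 \le C x_k^2$, then apply Gr\"onwall. Your additional care about the analytic vs.\ sorted eigenvalue labeling is a genuine subtlety that the paper leaves implicit (it relies on the same ASVD machinery from Lemma~\ref{ASVD, non-negative diagonal} without flagging the bookkeeping), but it does not change the substance of the argument.
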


\begin{proof}

Denote $x_k \equiv \frac{1}{2}\sigma_k((U+V)\Sigma_w)$. 
By Lemma \ref{L ori non-increasing}, $\|\Sigma - W\|_F \le \|\Sigma - W(0)\|_F$. Then $\|\Sigma_w\|_{op}$ is bounded: 

\begin{equation}
 \begin{aligned}
  \|\Sigma_w\|_{op} &= \|W\|_{op}^{1/N} \le \left( \|\Sigma\|_{op} + \|\Sigma - W\|_{op} \right)^{1/N} \le \left(\|\Sigma\|_{op} + \|\Sigma - W\|_{F} \right)^{1/N} \\&\le \left( \|\Sigma\|_{op} + \|\Sigma - W(0)\|_{F} \right)^{1/N} . 
 \end{aligned}
\end{equation}

Then from Theorem \ref{dynamics of minimum singular value of hermitian term}, there exists some $C \in(0, +\infty)$ such that 

\begin{equation}
 \begin{aligned}
  \frac{\mathrm{d}}{\mathrm{d} t} x_k^2 \le \sigma_1(\Sigma) \left(2 \|\Sigma_w\|_{op}^{2} + \|((U-V)\Sigma_w)|_{t=0} \|_{F}^2 \right) x_k^2 \le C x_k^2 . 
 \end{aligned}
\end{equation}

Giving 

\begin{equation}
  x_k^2(t) \le x_k^2(0) e^{Ct} = 0 . 
\end{equation}

This completes the proof. 

\end{proof}

\subsection{Convergence proof}

This section states the global convergence guarantee under balanced Gaussian initialization, with gradient flow. Below we omit the confidence level $\delta$ in $f_1(\delta) = O\left( \frac{1}{\delta} \right)$ and $f_2^\prime(\delta) = O\left( \frac{1}{\delta^2} \right)$ for simplicity.

\begin{theorem}\label{Total convergence bound, balanced, gf} Global convergence bound under balanced Gaussian initialization, gradient flow. 

For four-layer matrix factorization under gradient flow, balanced Gaussian initialization with scaling factor $\epsilon \le \frac{\sigma_1^{1/4}(\Sigma)}{4 f_1^2 f_2^\prime d^{29/8}}$, then for target matrix with identical singular values, 

1. For $\mathbb{F} = \mathbb{R}$, with probability at least $\frac{1}{2}$ the loss does not converge to zero. Specifically, 

\begin{equation}
  \mathcal{L}(t) \ge \frac{1}{2} \sigma_1^2(\Sigma),\, \forall t\in[0, +\infty) .  
\end{equation}

2. For $\mathbb{F} = \mathbb{C}$ with high probability and for $\mathbb{F} = \mathbb{R}$ with probability close to $\frac{1}{2}$, there exists $T(\epsilon_{\rm conv}) = \frac{16 {f_2^\prime}^2 d^3}{\sigma_1(\Sigma) \epsilon^2} + \frac{1}{8 \sigma_1^{3/2}(\Sigma)} \ln \left(\frac{d\sigma_1^2(\Sigma)}{\epsilon_{\rm conv}}\right)$, such that for any $\epsilon_{\rm conv} > 0$, when $t > T(\epsilon_{\rm conv})$, $\mathcal{L}(t) < \epsilon_{\rm conv}$. 

\end{theorem}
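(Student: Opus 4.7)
I split on the two probability branches identified by Theorem~\ref{Balanced initialization, final}. The event $\{s\det(Q_{N,N+1})\det(Q_{01})=-1\}$ occurs with probability $1/2$ in the real case and is vacuous in the complex case; on this event I establish non-convergence, and on the complementary event I establish polynomial-time convergence.

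\emph{Non-convergence branch.} Conditioned on $s\det(Q_{N,N+1})\det(Q_{01})=-1$, Theorem~\ref{Balanced initialization, final} gives $\sigma_{\min}((U+V)\Sigma_w)|_{t=0}=0$. Under balanced flow, $W=U\Sigma_w^NV^H$ and $(WW^H)^{1/2}=U\Sigma_w^NU^H$, so the factorization $W+(WW^H)^{1/2}=U\Sigma_w^N(U+V)^H$ together with invertibility of $\Sigma_w$ along the trajectory (Lemma~\ref{ASVD, non-negative diagonal}) makes the vanishing of $\sigma_{\min}((U+V)\Sigma_w)$ equivalent to $\sigma_{\min}(W+(WW^H)^{1/2})=0$. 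Corollary~\ref{never converge to optimum, balanced} propagates this to every $t\ge 0$. Picking a unit vector $u(t)$ in the kernel, I get $Wu=-(WW^H)^{1/2}u$ and
\[
\|(\Sigma-W)u\|^2 = \bigl\|\sigma_1(\Sigma)u + (WW^H)^{1/2}u\bigr\|^2 \ge \bigl(\sigma_1(\Sigma)+\langle u,(WW^H)^{1/2}u\rangle\bigr)^2 \ge \sigma_1^2(\Sigma),
\]
since $(WW^H)^{1/2}\succeq 0$. Hence $\mathcal{L}(t)=\mathcal{L}_{\rm ori}(t)\ge\tfrac12\|\Sigma-W\|_F^2\ge\tfrac12\sigma_1^2(\Sigma)$; the regularizer vanishes throughout because balancedness is preserved exactly by gradient flow.

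\emph{Convergence branch, alignment stage.} On the complementary event (conditional probability $\ge 1-\delta$), Theorem~\ref{Balanced initialization, final} provides $x_k(0):=\tfrac12\sigma_k((U+V)\Sigma_w)|_{t=0}\ge (2f_2')^{-1}d^{-3/2}\epsilon$ for every $k$, and $\|(U-V)\Sigma_w|_{t=0}\|_F \le 2f_1 d\,\epsilon$. Lemma~\ref{L ori non-increasing} gives the a-priori bound $\|\Sigma_w\|_{op}^4=\|W\|_{op}=O(\sqrt d\,\sigma_1(\Sigma))$ and Theorem~\ref{antisym-error, general balanced} keeps the skew-Hermitian error non-increasing. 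Plugging these into the lower bound of Theorem~\ref{dynamics of minimum singular value of hermitian term}, the hypothesis on $\epsilon$ absorbs both error terms into a fraction of the main term, leaving
\[
\frac{\mathrm d}{\mathrm dt}x_k^2 \;\gtrsim\; \sigma_1(\Sigma)\,x_k^4 \qquad \text{while } x_k^4\le\tfrac12\sigma_1(\Sigma).
\]
Integrating $\mathrm d y/\mathrm dt\ge\sigma_1(\Sigma)y^2$ with $y:=x_k^2$ produces a finite-time barrier at $T_1\le 1/(\sigma_1(\Sigma)x_k(0)^2)=O({f_2'}^2 d^3/(\sigma_1(\Sigma)\epsilon^2))$, matching the first term of $T(\epsilon_{\rm conv})$, and at that time every $x_k^2\ge\tfrac12\sigma_1^{1/2}(\Sigma)$.

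\emph{Convergence branch, local decay, and main obstacle.} For $t\ge T_1$, the lower half of Lemma~\ref{bound of eigenvalues under perturbation} gives $\sigma_{\min}(\Sigma_w(t))\ge x_{\min}(t)$, and since balanced gradient flow forces $\sigma_k(W_j)=\sigma_k(\Sigma_w)$ across every layer, Lemma~\ref{L ori non-increasing} with $N=4$ yields $\tfrac{\mathrm d}{\mathrm dt}\mathcal{L}_{\rm ori}\le -8\sigma_{\min}(\Sigma_w)^6\mathcal{L}_{\rm ori}\le -\sigma_1^{3/2}(\Sigma)\mathcal{L}_{\rm ori}$. Integration together with the crude bound $\mathcal{L}_{\rm ori}(T_1)\le d\sigma_1^2(\Sigma)$ produces the logarithmic term of $T(\epsilon_{\rm conv})$. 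The main obstacle is closing this stage: I must certify $x_{\min}(t)\gtrsim\sigma_1^{1/4}(\Sigma)$ for every $t\ge T_1$, not merely at the boundary. The plan is a bootstrap argument. The upper bound of Theorem~\ref{dynamics of minimum singular value of hermitian term} caps $x_k^2$ away from blow-up; the lower bound, after the same $\epsilon$-based absorption of error terms, forces $\mathrm d x_k^2/\mathrm dt\ge 0$ whenever $x_k^4<2\sigma_1(\Sigma)$; and the monotonicity of the skew-Hermitian error from Theorem~\ref{antisym-error, general balanced} keeps this absorption uniform in $t$, producing a forward-invariant band in which the exponential decay above propagates unchanged.
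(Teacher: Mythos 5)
Your proposal follows essentially the same route as the paper's proof: part 1 via Corollary \ref{never converge to optimum, balanced} and the kernel-vector computation giving $\|(\Sigma-W)u\|\ge\sigma_1(\Sigma)$, and part 2 via the Riccati-type lower bound of Theorem \ref{dynamics of minimum singular value of hermitian term} (with the $\epsilon$-condition absorbing the error terms, Theorem \ref{antisym-error, general balanced} keeping that absorption uniform in $t$, and Lemma \ref{bound of eigenvalues under perturbation} plus Lemma \ref{L ori non-increasing} closing the decay), and the ``main obstacle'' you flag is resolved exactly by your own plan, which is the paper's one-line observation that $\tfrac{\mathrm d}{\mathrm dt}x_k^2>0$ at the barrier. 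The only discrepancy is quantitative: stopping the alignment ODE at $x_k^4\le\tfrac12\sigma_1(\Sigma)$ yields a decay rate of only $\sigma_1^{3/2}(\Sigma)$ rather than the $8\sigma_1^{3/2}(\Sigma)$ required by the stated $\tfrac{1}{8\sigma_1^{3/2}(\Sigma)}\ln(\cdot)$ term, so you should push the barrier up to $x_k=\sigma_1^{1/4}(\Sigma)$ (the paper keeps the effective drift at $\tfrac54\sigma_1(\Sigma)-x_k^4$, which costs nothing in the blow-up time bound) to recover the exact constant.
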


\begin{remark}

The first part of this Theorem can be generalized to general (bounded) balanced initialization. 

\end{remark}

\begin{proof}

For the first conclusion, by Theorem \ref{Balanced initialization, final} and Corollary \ref{never converge to optimum, balanced}, for $\mathbb{F} = \mathbb{R}$, $\sigma_{\min}((U+V)\Sigma_w) \equiv 0$ with probability at least $\frac{1}{2}$. Consequently $\sigma_{\min}((U+V)\Sigma_w^N) \equiv 0
$. 

Suppose at time $t$, for some unit vector $y$, $(U+V)\Sigma_w^N y(t) = 0$. Then

\begin{equation}
 \begin{aligned}
  \|\Sigma - W\|_F &= \|\sigma_1(\Sigma)I - U\Sigma_w^N V^\top \|_F = \|\sigma_1(\Sigma)V - U\Sigma_w^N \|_F \\
  &\ge \|\sigma_1(\Sigma) V - U\Sigma_w^N\|_{op} \ge \left\|(\sigma_1(\Sigma) V - U\Sigma_w^N)y \right\| \\
  &=\left\|(\sigma_1(\Sigma) V +V\Sigma_w^N)y \right\| = \left\|(\sigma_1(\Sigma) + \Sigma_w^N)y \right\| \ge \sigma_1(\Sigma) . 
 \end{aligned}
\end{equation}

For the second part: 

From Lemma \ref{L ori non-increasing}, $\|\Sigma - W\|_F \le \|\Sigma - W(0)\|_F < 2 \sqrt{d} \sigma_1(\Sigma)$. Thus for any time $t$, 

\begin{equation}
 \begin{aligned}
  \|\Sigma_w\|_{op} &= \|W\|_{op}^{1/4} \le \left( \|\Sigma\|_{op} + \|\Sigma - W\|_{op} \right)^{1/4} \le \left(\|\Sigma\|_{op} + \|\Sigma - W\|_{F} \right)^{1/N} \\&\le \left( \|\Sigma\|_{op} + \|\Sigma - W(0)\|_{F} \right)^{1/4} \le \sqrt{2}d^{1/8} \sigma_1^{1/4}(\Sigma) . 
 \end{aligned}
\end{equation}

From Theorem \ref{Balanced initialization, final}, for $\mathbb{F} = \mathbb{C}$ with high probability (while for $\mathbb{F} = \mathbb{R}$ with probability close to $\frac{1}{2}$), $x_k(t=0)\ge \frac{\epsilon}{2 f_2^\prime d^{3/2}}$, $\|(U-V)\Sigma_w\|_F|_{t=0} \le 2 f_1 d \epsilon$. Thus by taking $\epsilon \le \frac{\sigma_1^{1/4}(\Sigma)}{4 f_1^2 f_2^\prime d^{29/8}}$, for $t$ such that $x_k(t) \ge x_k(0)$, 

\begin{equation}
  \frac{\mathrm{d}}{\mathrm{d} t} x_k^2 \ge \left( 2 \sigma_1(\Sigma) - \left(4 f_1^2 d^{9/4} + 8 f_1^4 {f_2^\prime}^2 d^{29/4} \right) \epsilon^2 \sigma_1^{1/2}(\Sigma) - x_k^4 \right) x_k^4 \ge \left(\frac{5}{4} \sigma_1(\Sigma) - x_k^4 \right) x_k^4 . 
\end{equation}

This indicates that all $x_k$ monotonically increase to $ \sigma_1^{1/4}(\Sigma)$ in $T_1 = \frac{4}{\sigma_1(\Sigma)} \cdot x_k(0)^{-2} = \frac{16 {f_2^\prime}^2 d^3}{\sigma_1(\Sigma) \epsilon^2}$, and never decrease to below $ \sigma_1^{1/4}(\Sigma)$ for $t > T_1$. 

By Theorem \ref{bound of eigenvalues under perturbation}, $\sigma_{\min}(\Sigma_w) \ge x_k$. Then combine with Lemma \ref{L ori non-increasing}, 

\begin{equation}
  \mathcal{L}_{\rm ori}(t) \le \mathcal{L}_{\rm ori}(0) e^{-8 \sigma_{\min}^6(\Sigma_w(T_1)) (t-T_1)} \le d \sigma_1^2(\Sigma) e^{-8\sigma_1^{3/2}(\Sigma) (t-T_1)} . 
\end{equation}

Thus it takes at most $t = T_1 + \frac{1}{8 \sigma_1^{3/2}(\Sigma)} \ln \left(\frac{d\sigma_1^2(\Sigma)}{\epsilon_{\rm conv}}\right)$ to reach $\epsilon_{\rm conv}$-convergence. 

\end{proof}

\section{Notations and Preliminaries under the Depth of four, unbalanced}\label{section: notations and Preliminaries}

To tackle the unbalanced initialization with depth $N=4$, we make the following notations and derive some basic properties. 

Below we denote $R = W_2^{-1} W_3^H$, $W_1^\prime = R W_4^H$, $W = W_4 W_3 W_2 W_1$, $M_2 = W_2^H W_2$, $M_1 = W_1 W_1^H$, $M_{\Delta1234} = W_2 W_1 W_1^H W_2^H - W_3^H W_4^H W_4 W_3$ $M_1^\prime = W_1^\prime W_1^{\prime H}$, $e_{\Delta} = \sqrt{\sum_{i=1}^{3} \|\Delta_{i,i+1}\|_F^2}$. Then: 

\begin{equation}
  W = W_1^{\prime H} M_2 W_1 , 
\end{equation}

\begin{equation}
  RR^H = W_2^{-1} W_3^H W_3 W_2^{H-1} = I - W_2^{-1} \Delta_{23} W_2^{H-1} , 
\end{equation}

\begin{equation}
  R^{-1}R^{H-1} = W_3^{H-1} W_2 W_2^H W_3^{-1} = I + W_3^{H-1} \Delta_{23} W_3^{-1} , 
\end{equation}

\begin{equation}
 \begin{aligned}
  M_{\Delta1234} &= \left( \left(W_2^H W_2\right)^2 - \left(W_3 W_3^H\right)^2 \right) + W_3^H \Delta_{34} W_3 + W_2 \Delta_{12} W_2^{H} \\ 
  &= \frac{1}{2} \left( \Delta_{23} \left(W_3^H W_3 + W_2 W_2^H\right) + \left(W_3^H W_3 + W_2 W_2^H\right) \Delta_{23} \right) \\&+ W_3^H \Delta_{34} W_3 + W_2 \Delta_{12} W_2^{H} , 
 \end{aligned}
\end{equation}

\begin{equation}
 \begin{aligned}
  M_1^\prime - M_1 &= W_2^{-1} M_{\Delta1234} W_2^{H -1} . 
 \end{aligned}
\end{equation}

Deducing that 

\begin{equation}
  \left\| R \right\|_{op} \le \sqrt{1 + \frac{1}{\sigma_{\min}^2(W_2)} \cdot \| \Delta_{23} \|_{op}}\le \sqrt{1 + \frac{1}{\min_{j,k}\sigma_k^2(W_j)} \cdot e_{\Delta} } , 
\end{equation}

\begin{equation}
  \left\| R^{-1} \right\|_{op} \le \sqrt{1 + \frac{1}{\sigma_{\min}^2(W_3)} \cdot \| \Delta_{23} \|_{op} } \le \sqrt{1 + \frac{1}{\min_{j,k}\sigma_k^2(W_j)} \cdot e_{\Delta} } , 
\end{equation}

\begin{equation}
  \left\| I - RR^H \right\|_{op} \le \frac{1}{\sigma_{\min}^2(W_2)} \cdot \|\Delta_{23}\|_{op} \le \frac{1}{\min_{j,k}\sigma_k^2(W_j)} \cdot e_{\Delta} , 
\end{equation}

\begin{equation}
  \left\| I - R^{-1}R^{H-1} \right\|_{op} \le \frac{1}{\sigma_{\min}^2(W_3)} \cdot \|\Delta_{23}\|_{op} \le \frac{1}{\min_{j,k}\sigma_k^2(W_j)} \cdot e_{\Delta} , 
\end{equation}

\begin{equation}
 \begin{aligned}
  \left\| M_{\Delta1234} \right\|_{op} &\le \left( \|W_2\|_{op}^2 + \|W_3\|_{op}^2 \right) \|\Delta_{23}\|_{op} + \|W_3\|_{op}^2 \|\Delta_{34}\|_{op} + \|W_2\|_{op}^2 \|\Delta_{12}\|_{op} \\
  & \le \sqrt{6} \max_{j,k} \sigma_{k}^2(W_j) e_{\Delta} , 
 \end{aligned}
\end{equation}

\begin{equation}
  \left\| M_1^\prime - M_1 \right\|_{op} \le \sqrt{6} \cdot\frac{\max_{j,k} \sigma_{k}^2(W_j)}{ \sigma_{\min}^2(W_2)} e_{\Delta} \le \sqrt{6} \cdot\frac{\max_{j,k} \sigma_{k}^2(W_j)}{\min_{j,k} \sigma_{k}^2(W_j)} e_{\Delta} . 
\end{equation}

Applying Lemma \ref{bound of RR^H}, 

\begin{equation}
  \left\| I - R^H R \right\|_{op} \le \frac{1}{\sigma_{\min}^2(W_2)} \cdot \|\Delta_{23}\|_{op} \le \frac{1}{\min_{j,k}\sigma_k^2(W_j)} \cdot e_{\Delta} , 
\end{equation}

\begin{equation}
  \left\| I - R^{H-1} R^{-1} \right\|_{op} \le \frac{1}{\sigma_{\min}^2(W_3)} \cdot \|\Delta_{23}\|_{op} \le \frac{1}{\min_{j,k}\sigma_k^2(W_j)} \cdot e_{\Delta} . 
\end{equation}

\section{Skew-Hermitian Error Term and Hermitian Main Term for four-layer matrix decomposition}

In this section, we construct skew-hermitian error term and hermitian main term to prepare for the convergence proof, under four-layer setting with scaled identical target matrix $\Sigma = \sigma_1(\Sigma) I$. 

\subsection{Skew-Hermitian Error Term}

The skew-hermitian error term is defined by $\left \| W_1 - W_1^\prime \right \|_F^2$. To address the dynamics: 

\subsubsection{Gradient Flow}\label{section: skew-hermitian error term, gf}

Consider $\Sigma = \sigma_1(\Sigma) I$. We study $\left\|W_1 - W_1^\prime \right\|_F^2$. From the derivative of inverse,

\begin{equation}
 \begin{aligned}
  \frac{\mathrm{d} W_2^{-1}}{\mathrm{d}t} = - W_2^{-1} \frac{\mathrm{d}W_2}{\mathrm{d}t} W_2^{-1} = - W_1^\prime (\Sigma - W) W_1^H W_2^{-1} - a \Delta_{12} W_2^{-1} + a W_2^{-1} \Delta_{23} , 
 \end{aligned}
\end{equation}

\begin{equation}
 \begin{aligned}
  \frac{\mathrm{d} R}{\mathrm{d}t} &= \frac{\mathrm{d} W_2^{-1}}{\mathrm{d}t} W_3^H + W_2^{-1} \frac{\mathrm{d}W_3^H}{\mathrm{d}t} \\
  &= - R W_4^H (\Sigma - W) W_1^H R + W_1 \left(\Sigma-W^H\right) W_4 \\&- a \Delta_{12} R + 2a W_2^{-1}\Delta_{23} W_3^H - aR \Delta_{34} , 
 \end{aligned}
\end{equation}

\begin{equation}
 \begin{aligned}
  \frac{\mathrm{d} W_1^\prime}{\mathrm{d}t} &= \frac{\mathrm{d} W_2^{-1}}{\mathrm{d}t} W_3^H W_4^H + W_2^{-1} \frac{\mathrm{d}W_3^H}{\mathrm{d}t} W_4^H + W_2^{-1} W_3^H \frac{\mathrm{d}W_4^H}{\mathrm{d}t} \\
  &= - W_1^\prime (\Sigma - W) W_1^H W_1^\prime + W_1 \left(\Sigma - W^H\right) W_1^{\prime H} R^{H-1} R^{-1} W_1^\prime \\
  &+ R R^H W_2^H W_2 W_1 \left(\Sigma - W^H\right) - a \Delta_{12} W_1^\prime + 2a W_2^{-1} \Delta_{23} W_2 W_1^\prime . 
 \end{aligned}
\end{equation}

From $\Re(\mathrm{tr}(PQ)) = 0$ if $P = P^H$ and $Q = -Q^H$, we have

\begin{equation}
 \begin{aligned}
  &\Re\left(\mathrm{tr}\left( \left(W_1^\prime W_1^H - W_1 W_1^{\prime H}\right) W_1^\prime \left(W_1 - W_1^\prime\right)^H \right)\right) \\=& -\frac{1}{2} \mathrm{tr}\left( \left(W_1^\prime W_1^H - W_1 W_1^{\prime H}\right) \left(W_1^\prime W_1^H - W_1 W_1^{\prime H}\right)^H \right) . 
 \end{aligned}
\end{equation}

Thus 

\begin{equation}
 \begin{aligned}
  \frac{\mathrm{d}}{\mathrm{d}t} \left \| W_1 - W_1^\prime \right \|_F^2 &= 2 \Re\left(\mathrm{tr}\left(\frac{\mathrm{d}(W_1 - W_1^\prime)}{\mathrm{d}t}(W_1 - W_1^\prime)^H \right) \right)\\
  &= 2 \Re\Big( \mathrm{tr}\Big( \left[ M_2 W_1^\prime (\Sigma - W) + W_1^\prime (\Sigma - W) W_1^H W_1^\prime \right. \\
  & - W_1 (\Sigma - W^H) W_1^{\prime H} R^{H-1} R^{-1} W_1^\prime - R R^H M_2 W_1 (\Sigma - W^H) \\&\left.\left. - a \Delta_{12} \left(W_1 - W_1^\prime \right) - 2a W_2^{-1} \Delta_{23} W_2 W_1^\prime \right] \left( W_1 - W_1^\prime \right)^H \right)\Big) \\ 
  &= -2 \sigma_1(\Sigma) \mathrm{tr}\left( \left( W_1 - W_1^\prime \right)^H M_2 \left(W_1 - W_1^\prime \right) \right) \\&- \sigma_1(\Sigma) \mathrm{tr}\left( \left( W_1^\prime W_1^H - W_1 W_1^{\prime H} \right) \left( W_1^\prime W_1^H - W_1 W_1^{\prime H} \right)^H \right) \\
  &- \mathrm{tr}\left( M_2 \left( M_1^\prime + M_1\right)M_2\left(W_1 - W_1^\prime\right) \left( W_1 - W_1^\prime \right)^H \right) \\
  &- \mathrm{tr}\left( M_2 \left( M_1^\prime - M_1\right)M_2\left(W_1^\prime + W_1\right) \left( W_1 - W_1^\prime \right)^H\right) \\
  &+ 2 \mathrm{tr}\left( \left[ - M_1^\prime M_2 M_1 + M_1 M_2 M_1^\prime \right] W_1^\prime \left( W_1 - W_1^\prime \right)^H\right) \\
  &+ 2 \Re\left(\mathrm{tr}\left( \left[ W_1 (\Sigma - W^H) W_4 \left( R^{H} R - I \right) W_4^H \right] \left( W_1 - W_1^\prime \right)^H\right)\right) \\
  &+ 2 \Re\left(\mathrm{tr}\left( \left[ \left(I - R R^H \right) W_2^H W_2 W_1 (\Sigma - W^H) \right] \left( W_1 - W_1^\prime \right)^H\right)\right) \\
  &- 2a \Re\left(\mathrm{tr}\left(\Delta_{12} \left(W_1 - W_1^\prime \right) \left( W_1 - W_1^\prime \right)^H \right)\right) \\
  &- 4a \Re\left(\mathrm{tr}\left( W_2^{-1} \Delta_{23} W_2 W_1^\prime \left( W_1 - W_1^\prime \right)^H \right)\right) . 
 \end{aligned}
\end{equation}

Note: $ - M_1^\prime M_2 M_1 + M_1 M_2 M_1^\prime = \frac{1}{2} \left[\left(M_1 - M_1^\prime \right)M_2\left(M_1 + M_1^\prime \right) - \left(M_1 + M_1^\prime \right)M_2\left(M_1 - M_1^\prime\right)\right]$. 

\subsubsection{Gradient Descent}\label{section: skew-hermitian error term, gd}

From Lemma \ref{error bound, inverse},

\begin{equation}
 \begin{aligned}
  &\left\|W_2(t+1)^{-1} - W_2(t)^{-1} \right. \\ -& \left. \eta \left[- W_1^\prime(t) (\Sigma - W(t)) W_1(t)^H W_2(t)^{-1} - a \Delta_{12}(t) W_2(t)^{-1} + a W_2(t)^{-1} \Delta_{23}(t) \right] \right\|_{F} \\ 
  \le& \eta^2 \left[ \left( 1 + e_\Delta(t) \left\|W_2(t)^{-1}\right\|_{op}^2 \right) \|W_1(t)\|_{op} \| \Sigma - W(t)\|_F + \sqrt{2} a e_\Delta(t) \left\|W_2(t)^{-1}\right\|_{op} \right] \\ \cdot & \|W_2(t+1)^{-1}\|_{op} \|\nabla_{W_2} \mathcal{L}(t)\|_F . 
 \end{aligned}
\end{equation}

Under $\|W_j(t+1)\|_{op} = O(\|W_j(t)\|_{op})$, $e_\Delta(t) \left\|W_2(t)^{-1}\right\|_{op}^2 = O(1)$, 

\begin{equation}
 \begin{aligned}
  &\left\| W_1^\prime(t+1) - W_1^\prime(t) \right. \\
  -& \eta \left[ - W_1^\prime(t) (\Sigma - W(t)) W_1(t)^H W_1^\prime(t) \right. \\+& W_1(t) \left(\Sigma - W(t)^H\right) W_1^{\prime}(t)^{H} R(t)^{H-1} R(t)^{-1} W_1^\prime(t) \\+& R(t) R(t)^H W_2(t)^H W_2(t) W_1(t) \left(\Sigma - W(t)^H\right) \\
  -& \left.\left. a \Delta_{12}(t) W_1^\prime(t) + 2a W_2^{-1}(t) \Delta_{23}(t) W_2(t) W_1^\prime(t) \right] \right\|_F \\
  =& \eta^2 O\left(\left[ \max_{j \in [1,4]\cap \mathbb{N}^*} \left\| W_j(t) \right\|_{op} \| \Sigma - W(t)\|_F + a e_\Delta(t) \left\|W_2(t)^{-1}\right\|_{op} \right] \right. \\ \cdot & \left. \max_{j \in [1,4]\cap \mathbb{N}^*} \left\| W_j(t) \right\|_{op}^2 \cdot \|W_2(t+1)^{-1}\|_{op} \cdot \max_{j \in [1,4]\cap \mathbb{N}^*}\|\nabla_{W_j} \mathcal{L}(t)\|_F \right) . 
 \end{aligned}
\end{equation}

Finally giving 

\begin{equation}
 \begin{aligned}
  &\left \| W_1(t+1) - W_1^\prime(t+1) \right \|_F^2 - \left \| W_1(t) - W_1^\prime(t) \right \|_F^2 \\
  =& \Re\left(\mathrm{tr}\left(\left[\left(W_1(t+1) - W_1^\prime(t+1)\right) + \left(W_1(t) - W_1^\prime(t) \right)\right] \right.\right.\\ \cdot& \left.\left. \left[\left(W_1(t+1) - W_1^\prime(t+1)\right) - \left(W_1(t) - W_1^\prime(t) \right)\right]^H \right) \right)\\
  =& -2 \eta \sigma_1(\Sigma) \mathrm{tr}\left( \left( W_1(t) - W_1^\prime(t) \right)^H M_2(t) \left(W_1(t) - W_1^\prime(t) \right) \right) \\-& \eta \sigma_1(\Sigma) \mathrm{tr}\left( \left( W_1^\prime(t) W_1(t)^H - W_1(t) W_1^{\prime}(t)^{H} \right) \left( W_1^\prime(t) W_1(t)^H - W_1(t) W_1^{\prime}(t)^{H} \right)^H \right) \\
  -& \eta \mathrm{tr}\left( M_2(t) \left( M_1^\prime(t) + M_1(t) \right)M_2(t) \left(W_1(t) - W_1^\prime(t) \right) \left( W_1(t) - W_1^\prime(t) \right)^H \right) \\
  -& \eta \mathrm{tr}\left( M_2(t) \left( M_1^\prime(t) - M_1(t)\right)M_2(t)\left(W_1^\prime(t) + W_1(t)\right) \left( W_1(t) - W_1^\prime(t) \right)^H\right) \\
  +& 2 \eta \mathrm{tr}\left( \left[ - M_1^\prime(t) M_2(t) M_1(t) + M_1(t) M_2(t) M_1^\prime(t) \right] W_1^\prime(t) \left( W_1(t) - W_1^\prime(t) \right)^H\right) \\
  +& 2 \eta \Re\left(\mathrm{tr}\left( \left[ W_1(t) (\Sigma - W(t)^H) W_4(t) \left( R(t)^{H} R(t) - I \right) W_4(t)^H \right] \left( W_1(t) - W_1^\prime(t) \right)^H\right)\right) \\
  +& 2 \eta \Re\left(\mathrm{tr}\left( \left[ \left(I - R(t) R(t)^H \right) W_2(t)^H W_2(t) W_1(t) (\Sigma - W(t)^H) \right] \left( W_1(t) - W_1^\prime(t) \right)^H\right)\right) \\
  -& 2\eta a \Re\left(\mathrm{tr}\left(\Delta_{12}(t) \left(W_1(t) - W_1^\prime(t) \right) \left( W_1(t) - W_1^\prime(t) \right)^H \right)\right) \\
  -& 4\eta a \Re\left(\mathrm{tr}\left( W_2^{-1}(t) \Delta_{23}(t) W_2(t) W_1^\prime(t) \left( W_1(t) - W_1^\prime(t) \right)^H \right)\right) \\
  +& \eta^2 O\left(\left[ \max_{j \in [1,4]\cap \mathbb{N}^*} \left\| W_j(t) \right\|_{op} \| \Sigma - W(t)\|_F + a e_\Delta(t) \left\|W_2(t)^{-1}\right\|_{op} \right]^2 \right. \\ \cdot & \left. \max_{j \in [1,4]\cap \mathbb{N}^*} \left\| W_j(t) \right\|_{op}^5 \cdot \|W_2(t+1)^{-1}\|_{op} \right) . 
 \end{aligned}
\end{equation}

\subsection{Skew-Hermitian Error Term}

\subsubsection{Gradient Flow}\label{section: hermitian main term, gf}

For gradient flow, we study the $k^{th}$ singular value of $W_1 + W_1^\prime$, or equivalently $\lambda_{k}\left(\left(W_1 + W_1^\prime \right)^H\left(W_1 + W_1^\prime \right)\right) = \sigma_{k}^2 \left(W_1 + W_1^\prime \right)$. To address the dynamics: 

Suppose the left and right singular vector of $W_1 + W_1^\prime$ corresponding to $\sigma_{k}(t) = \sigma_{k}\left(W_1 + W_1^\prime \right)(t)$ are $\eta_k(t)$ and $\chi_k(t)$ respectively, $\left(W_1 + W_1^\prime \right) \chi_k = \sigma_{k} \eta_k$, $\eta_k^H\left(W_1 + W_1^\prime \right)  = \sigma_{k} \chi_k$, 
$\left\| \chi_k \right\| = \left\| \eta_k \right\| = 1$. Then from Lemma \ref{singular value derivative}, 

\begin{equation}
 \begin{aligned}
  \frac{\mathrm{d}}{\mathrm{d}t} \lambda_{k}\left(\left(W_1 + W_1^{\prime}\right)^H\left(W_1 + W_1^{\prime}\right)\right) &= \chi_k^H \left( \frac{\mathrm{d}}{\mathrm{d}t} \left(W_1 + W_1^{\prime}\right)^H \left(W_1 + W_1^{\prime}\right)\right) \chi_k \\&= 2 \Re\left(\chi_k^H \left(W_1 + W_1^{\prime}\right)^H \left(\frac{\mathrm{d}}{\mathrm{d}t} \left(W_1 + W_1^{\prime}\right)\right)\chi_k \right) , 
 \end{aligned}
\end{equation}

where 

\begin{equation}
 \begin{aligned}
  \frac{\mathrm{d}}{\mathrm{d}t} \left(W_1 + W_1^{\prime}\right) &=  M_2 W_1^{\prime} (\Sigma - W) - W_1^{\prime} (\Sigma - W) W_1^H W_1^{\prime} \\&+  W_1(\Sigma - W^H) W_1^{\prime H} R^{H - 1} R^{-1} W_1^{\prime} + R R^H M_2 W_1 (\Sigma - W^H) \\ &- a\Delta_{12}(W_1 + W_1^{\prime}) + 2 a W_2^{-1} \Delta_{23} W_2 W_1^{\prime} \\ 
  &= M_2\left(W_1 + W_1^{\prime}\right) \Sigma + \left( W_1 \Sigma W_1^{\prime H} - W_1^{\prime} \Sigma W_1^H \right) W_1^{\prime} \\&- M_2\left( \frac{M_1 + M_1^\prime}{2} M_2 \left( W_1 + W_1^{\prime} \right) + \frac{M_1 - M_1^\prime}{2} M_2 \left( W_1 - W_1^{\prime} \right) \right) \\&+ \left( M_1^\prime M_2 M_1 - M_1 M_2 M_1^\prime \right)W_1^{\prime} \\ & - W_1(\Sigma - W^H) W_1^{\prime H} \left(I - R^{H - 1} R^{-1}\right) W_1^{\prime}  \\ &- \left(I - R R^H\right) M_2 W_1 (\Sigma - W^H)  \\ &- a\Delta_{12}(W_1 + W_1^{\prime}) + 2 a W_2^{-1} \Delta_{23} W_2 W_1^{\prime} . 
 \end{aligned}
\end{equation}

Consider arbitrary $\chi \in \mathbb{F}^d$. Notice that $\left( W_1 \Sigma W_1^{\prime H} - W_1^{\prime} \Sigma W_1^H \right)$ is a skew-hermitian matrix: 

\begin{equation}
 \begin{aligned}
  &\Re\left(2\chi^H (W_1 + W_1^{\prime})^H \left( W_1 \Sigma W_1^{\prime H} - W_1^{\prime} \Sigma W_1^H \right) W_1^{\prime} \chi \right) \\=& \Re\left(\chi^H (W_1 + W_1^{\prime})^H \left( W_1 \Sigma W_1^{\prime H} - W_1^{\prime} \Sigma W_1^H \right) W_1^{\prime} \chi \right) \\ -& \Re\left(\chi^H W_1^{\prime H} \left( W_1 \Sigma W_1^{\prime H} - W_1^{\prime} \Sigma W_1^H \right) W_1 \chi \right) \\ -& \Re\left( \chi^H W_1^H \left( W_1 \Sigma W_1^{\prime H} - W_1^{\prime} \Sigma W_1^H \right) W_1 \chi \right) \\
  =& \Re\left(\chi^H (W_1 + W_1^{\prime})^H \left( - W_1 \Sigma W_1^{\prime H} + W_1^{\prime} \Sigma W_1^H \right) (W_1 - W_1^{\prime}) \chi\right) . 
 \end{aligned}
\end{equation}

From $\Sigma = \sigma_1(\Sigma) I$,  

\begin{equation}
  - W_1 \Sigma W_1^{\prime H} + W_1^{\prime} \Sigma W_1^H = \sigma_1(\Sigma) \left(W_1 + W_1^{\prime}\right) \left(W_1 - W_1^{\prime}\right)^H + \sigma_1(\Sigma) \left( M_1^{\prime} - M_1 \right) . 
\end{equation}

Likewise, 

\begin{equation}
 \begin{aligned}
  &\Re\left(2 \chi^H (W_1 + W_1^\prime)^H \left( M_1^\prime M_2 M_1 - M_1 M_2 M_1^\prime \right)W_1^{\prime}  \chi \right) \\ 
  =& \Re\left( \chi^H (W_1 + W_1^\prime)^H \left( M_1^\prime M_2 M_1 - M_1 M_2 M_1^\prime \right) \left( W_1^{\prime} - W_1 \right) \chi \right) . 
 \end{aligned}
\end{equation}

Thus 

\begin{equation}
 \begin{aligned}
  \frac{\mathrm{d}}{\mathrm{d}t} \sigma_{k}^2 &= 2\sigma_1(\Sigma) \sigma_{k}^2 \eta_k^H M_2 \eta_k + \sigma_1(\Sigma) \sigma_{k}^2 \chi_k^H \left(W_1 - W_1^{\prime}\right)^H \left(W_1 - W_1^{\prime}\right)\chi_k \\
  &+ \sigma_1(\Sigma)\sigma_{k} \Re\left(\eta_k^H \left( M_1^\prime - M_1 \right) \left(W_1 - W_1^{\prime}\right)\chi_k\right) \\
  &- \sigma_{k}^2 \eta_k^H M_2 (M_1 + M_1^\prime ) M_2 \eta_k - \sigma_{k} \Re\left(\eta_k^H M_2(M_1 - M_1^\prime) M_2 (W_1 - W_1^{\prime}) \chi_k \right) \\
  &+ \sigma_k \Re\left( \eta_k^H \left( M_1^\prime M_2 M_1 - M_1 M_2 M_1^\prime \right) \left( W_1^{\prime} - W_1 \right) \chi_k \right) \\ 
  &- 2\sigma_{k} \Re\left(\eta_k^H W_1(\Sigma - W^H) W_4 \left(R^{H} R - I\right) W_4^{H} \chi_k \right) \\ 
  &- 2\sigma_{k} \Re\left(\eta_k^H \left(I - R R^H\right) M_2 W_1 (\Sigma - W^H) \chi_k \right) \\ 
  &- 2a\sigma_{k}^2 \Re\left(\eta_k^H \Delta_{12} \eta_k\right) + 4a \sigma_{k} \Re\left(\eta_k^H W_2^{-1} \Delta_{23} W_2 W_1^{\prime} \chi_k\right) . 
 \end{aligned}
\end{equation}

\subsubsection{Gradient Descent}\label{section: hermitian main term, gd}

For gradient descent, we study $\lambda_{\min}\left(\left(W_1 + W_1^\prime \right)^H\left(W_1 + W_1^\prime \right)\right) = \sigma_{\min}^2 \left(W_1 + W_1^\prime \right)$. To address the dynamics: 

\begin{equation}
 \begin{aligned}
  &\left(W_1(t+1) + W_1^{\prime}(t+1)\right) \\=& W_1(t) + W_1^{\prime}(t) \\+& \eta\left[ \sigma_1(\Sigma) M_2(t) - M_2(t)\frac{M_1(t) + M_1^\prime(t)}{2} M_2(t) \right] \left( W_1(t) + W_1^{\prime}(t) \right) \\+& \eta \left( M_1^\prime(t) M_2(t) M_1(t) - M_1(t) M_2(t) M_1^\prime(t) \right) W_1^{\prime}(t)\\+& \eta \sigma_1(\Sigma)\left( W_1(t) W_1^{\prime}(t)^{H} - W_1^{\prime}(t) W_1(t)^H \right) W_1^{\prime}(t) + \eta E_1(t) , 
 \end{aligned}
\end{equation}

where the error term is bounded by 

\begin{equation}
 \begin{aligned}
  \| E_1(t) \|_{op} &\le \frac{1}{2} \max_{j\in[1,4]\cap\mathbb{N}^*} \|W_j(t)\|_{op}^4 \left\|W_1(t) - W_1^\prime(t)\right\|_{op} \left\|M_1(t) - M_1^\prime(t)\right\|_{op} \\&+ \left( \left\|R(t)^{H} R(t) - I\right\|_{op} + \left\|I - R(t) R(t)^{H}\right\|_{op} \right)\max_{j\in[1,4]\cap\mathbb{N}^*} \|W_j(t)\|_{op}^3 \| \Sigma - W(t)\|_{op} \\&+ a e_\Delta(t) \left( \left\|W_1(t) + W_1^\prime(t)\right\|_{op} + 2\left\|R(t)\right\|_{op} \left\| W_2(t)^{-1} \right\|_{op} \max_{j\in[1,4]\cap\mathbb{N}^*} \|W_j(t)\|_{op}^2 \right) \\&+ \eta O\left(\left[ \max_{j \in [1,4]\cap \mathbb{N}^*} \left\| W_j(t) \right\|_{op} \| \Sigma - W(t)\|_F + a e_\Delta(t) \left\|W_2(t)^{-1}\right\|_{op} \right] \right. \\ &\cdot \left. \max_{j \in [1,4]\cap \mathbb{N}^*} \left\| W_j(t) \right\|_{op}^2 \cdot \|W_2(t+1)^{-1}\|_{op} \cdot \max_{j \in [1,4]\cap \mathbb{N}^*}\|\nabla_{W_j} \mathcal{L}(t)\|_F \right) . 
 \end{aligned}
\end{equation}

Follow the tricks in Lemma \ref{maximum and minimum singular values, general, discrete}, 

\begin{equation}
 \begin{aligned}
  &\lambda_{\min}\left( \left(W_1(t+1) + W_1^{\prime}(t+1)\right)^H \left(W_1(t+1) + W_1^{\prime}(t+1)\right) \right) \\ \ge& \lambda_{\min}\left( \left(W_1(t) + W_1^{\prime}(t)\right)^H \left( I + \eta \left[ \sigma_1(\Sigma) M_2(t) - M_2(t)\frac{M_1(t) + M_1^\prime(t)}{2} M_2(t) \right] \right)^2 \left(W_1(t) + W_1^{\prime}(t)\right) \right) \\+& \eta \|E_2(t)\|_{op} + \eta^2 O\left( \left\|\left(W_1(t+1) + W_1^{\prime}(t+1)\right) - \left(W_1(t) + W_1^{\prime}(t)\right) \right\|_{op}^2 \right) , 
 \end{aligned}
\end{equation}

where 

\begin{equation}
 \begin{aligned}
  \|E_2(t)\|_{op} &= \sigma_{\min}\left(W_1(t+1) + W_1^{\prime}(t+1)\right) \\&\cdot \left[\| E_1(t) \|_{op}+ \|W_2(t)\|_{op}^2 \left\|M_1(t) + M_1^\prime(t)\right\|_{op} \left\|M_1(t) - M_1^\prime(t)\right\|_{op} \left\|W_1(t) - W_1^\prime(t)\right\|_{op} \right] . 
 \end{aligned}
\end{equation}

\section{Convergence under Gradient Flow, staged analysis}\label{section: convergence, staged analysis, gf}

In order to present the proof more clearly, we state the complete proof of convergence under Random Gaussian Initialization \ref{subsection: random gaussian initialization} and gradient flow, before tackling gradient descent. 

At the beginning we assume (\ref{random gaussian initialization, conclusions}) holds. (For the complex case, it holds with high probability $1-\delta$; for the real case, it holds with probability $\frac{1}{2}(1-\delta)$. ) We omit the confidence level $\delta$ in $f_1(\delta) = O(\frac{1}{\delta})$ and $f_2(\delta) = O(\frac{1}{\delta^5})$ for simplicity. 

\subsection{Stage 1: alignment stage}

In this section, we set $\epsilon \le \frac{\sigma_1^{1/4}(\Sigma)}{2f_1\sqrt{d}}$, $a \ge 2^5 f_1^{20} f_2 d^{13} \sigma_1(\Sigma) b$, where $b \ge 2^4 \ln(4f_1d) + \ln f_2$. 

Without loss of generality, $f_1 \ge 2$, and for simplicity we can further relax $f_2$ appearing in the lower bounds to $f_2 \ge f_1^6$ (now $f_2 = O\left(\frac{1}{\delta^6}\right)$). 

\begin{theorem}\label{stage 1: alignment stage}

At $T_1 = \frac{1}{32 f_1^{14} f_2 d^{10} \epsilon^2 \sigma_1(\Sigma)}$, the following conclusions hold: 

\begin{equation}
 \begin{aligned}
  \left.\sigma_{\min}\left(W_1 + W_1^{\prime}\right)\right|_{t=T_1} &\ge \frac{\epsilon}{2 f_1^3 f_2 d^{9/2}} \\
  e_{\Delta}(T_1) &\le 2\sqrt{3} f_1^2 d^{3/2} \epsilon^2 \exp{\left( - \frac{a}{32f_1^{20}f_2d^{13} \sigma_1(\Sigma)}\right)} \\
  \max_{j,k} |\sigma_k(W_j(T_1))| &\le (1+2^{-21}) f_1\sqrt{d}\epsilon \\ 
  \min_{j,k} |\sigma_k(W_j(T_1))| &\ge (1-2^{-17}) \frac{\epsilon}{f_1\sqrt{d}} . 
 \end{aligned}
\end{equation}

\end{theorem}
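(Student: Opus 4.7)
The plan is a continuity argument on $[0,T_1]$ that closes a three-part bootstrap: (i) the extreme singular values $\max_{j,k}\sigma_k(W_j)$ and $\min_{j,k}\sigma_k(W_j)$ stay within the targeted multiplicative slack of their initial values; (ii) the regularization norm $e_\Delta^2 = \sum_j\|\Delta_{j,j+1}\|_F^2$ decays at the exponential rate dictated by (i); (iii) the product matrix $W$ drifts by only a polynomially small amount over $[0,T_1]$. Once all three hold simultaneously on $[0,T_1]$, the main-term bound in item 1 will follow from the identity that, under exact balance, $W+(WW^H)^{1/2} = W_4W_3W_2(W_1+W_1')$, combined with the initialization lower bound $\sigma_{\min}(W(0)+(W(0)W(0)^H)^{1/2})\ge \epsilon^4/(f_2 d^3)$ from (\ref{random gaussian initialization, conclusions}).

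\textbf{Closing items 3 and 4.}
Under invariant (i), every $\nabla_{W_j}\mathcal{L}_{\rm ori}$ has operator norm at most $\max_{k}\sigma_k^3(W_j)\,(\sigma_1(\Sigma)+\max_{k}\sigma_k^4(W_j))$, and since $\epsilon \le \sigma_1^{1/4}(\Sigma)/(2f_1\sqrt d)$ the second summand is dominated by the first, giving $\|\nabla_{W_j}\mathcal{L}_{\rm ori}\|_{op}\lesssim \sigma_1(\Sigma)\,(f_1\sqrt d\,\epsilon)^3$. Plugging into Theorem \ref{maximum and minimum singular values are irrelevant of the regularization term}, the integrated change of $\sigma_k^2(W_j)$ over $[0,T_1]$ will be at most $2 T_1 \cdot \sigma_1(\Sigma)(f_1\sqrt d\,\epsilon)^4$, which by the choice $T_1 = 1/(32 f_1^{14}f_2 d^{10}\epsilon^2\sigma_1(\Sigma))$ is minuscule compared with the slacks $2^{-21}$ and $2^{-17}$ demanded by items 3 and 4. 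This closes invariant (i).

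\textbf{Closing item 2.}
With $M=(1+2^{-21})f_1\sqrt d\,\epsilon$ and $\delta=(1-2^{-17})\epsilon/(f_1\sqrt d)$ from (i), Theorem \ref{regularization term, convergence bound} specialized to $N=4$ gives $\tfrac{\mathrm d}{\mathrm d t}e_\Delta^2 \le -\,\Omega(a\epsilon^2/(f_1^6 d^3))\,e_\Delta^2$. Gr\"onwall's inequality together with $e_\Delta(0) \le 2\sqrt 3 f_1^2 d^{3/2}\epsilon^2$ will then yield the decay factor $\exp(-\,\Omega(aT_1\epsilon^2/(f_1^6d^3))) = \exp(-\,\Omega(a/(f_1^{20}f_2 d^{13}\sigma_1(\Sigma))))$, matching item 2. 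The hypothesis $a \ge 2^5 f_1^{20}f_2 d^{13}\sigma_1(\Sigma)\,b$ with $b\gtrsim \ln(f_1 f_2 d)$ makes this factor polynomially small in every relevant parameter.

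\textbf{The main difficulty: item 1.}
Because $\sigma_{\min}(W_1+W_1')$ can oscillate during alignment (as noted after Section \ref{subsubsection: The Limit Behavior of the Hermitian main term}), I will not try to track it monotonically. Instead, I will estimate $W(T_1)$ and $e_\Delta(T_1)$ separately and then read off $\sigma_{\min}(W_1+W_1')|_{T_1}$ from a perturbed version of the balanced identity $W+(WW^H)^{1/2}=W_4W_3W_2(W_1+W_1')$. First, the derivative $\mathrm d W/\mathrm d t$ derived in (\ref{equation: dW/dt is irrelevant to a}) is independent of $a$ and bounded by $O((f_1\sqrt d\,\epsilon)^6 \sigma_1(\Sigma))$ under invariant (i); integrating over $[0,T_1]$ yields $\|W(T_1)-W(0)\|_{op}\ll \epsilon^4$, so by Lemma \ref{error bound, sqrt} applied to $WW^H$ the initialization lower bound on $\sigma_{\min}(W+(WW^H)^{1/2})$ persists up to a factor $1/2$. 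Second, I will use the perturbation estimates on $\|I-RR^H\|_{op}$, $\|I-R^HR\|_{op}$, and $\|M_1'-M_1\|_{op}$ from Section \ref{section: notations and Preliminaries} to expand $W_4W_3W_2(W_1+W_1') - (W+(WW^H)^{1/2})$ as a sum of terms each linear in some $\Delta_{j,j+1}$, yielding an error of order $\max_{j,k}\sigma_k^4(W_j)\cdot e_\Delta(T_1)/\min_{j,k}\sigma_k^2(W_j)$. Finally,
\[
\sigma_{\min}(W_1+W_1')|_{T_1}\ge \frac{\tfrac12\,\sigma_{\min}(W(0)+(W(0)W(0)^H)^{1/2})-\text{error}}{\prod_{k=2}^4 \sigma_{\max}(W_k(T_1))}
\]
will deliver the stated bound $\epsilon/(2f_1^3 f_2 d^{9/2})$, the exponential smallness of $e_\Delta(T_1)$ absorbing the perturbation error. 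The hard part is keeping that error well below the tight scale $\Theta(\epsilon^4/(f_2 d^3))$ of $\sigma_{\min}(W(0)+(W(0)W(0)^H)^{1/2})$, which is precisely what forces the lower bound on $a$ in the hypothesis.
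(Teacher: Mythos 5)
Your proposal tracks the paper's own proof quite closely: item (i) is Lemma~\ref{alignment stage, bound of max and min singular values}, item (ii) is Corollary~\ref{stage 1, e_delta}, and the treatment of item 1 mirrors Corollary~\ref{Main term at the end of alignment stage, gf} (the paper factors out $W_1^H W_2^H W_2$ on the left so that $W_1^H W_2^H W_2(W_1+W_1') = W_1^H W_2^H W_2 W_1 + W^H$, where you factor out $W_4W_3W_2$ so that $W_4W_3W_2(W_1+W_1') = W + W_4W_3W_3^HW_4^H$; these are mirror-symmetric variants of the same observation). The bootstrap structure, the use of Theorem~\ref{maximum and minimum singular values are irrelevant of the regularization term} and Theorem~\ref{regularization term, convergence bound}, the integration of $\mathrm dW/\mathrm dt$ over $[0,T_1]$, and the final division by $\max_{j,k}\sigma_k^3$ are all the same.

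One execution-level inaccuracy worth flagging: you cannot ``expand $W_4W_3W_2(W_1+W_1') - (W+(WW^H)^{1/2})$ as a sum of terms each linear in some $\Delta_{j,j+1}$'' because the difference equals $W_4W_3W_3^HW_4^H - (WW^H)^{1/2}$ and the matrix square root is not a polynomial in the $W_j$. The correct route, which the paper takes, is to compare the \emph{squares}: $(W_4W_3W_3^HW_4^H)^2 - WW^H = -W_4W_3 M_{\Delta1234} W_3^HW_4^H$ (this \emph{is} linear in the $\Delta_{j,j+1}$), and then invoke Lemma~\ref{error bound, sqrt} once more. This introduces an extra factor $\lambda_{\min}^{-1/2}$, so the error order is $\max_{j,k}\sigma_k^6(W_j)\,e_\Delta/\min_{j,k}\sigma_k^4(W_j)$ rather than your quoted $\max_{j,k}\sigma_k^4(W_j)\,e_\Delta/\min_{j,k}\sigma_k^2(W_j)$ --- a difference of a factor $\max^2/\min^2 = O(f_1^4 d^2)$. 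Since $e_\Delta(T_1) = O(f_1^{-14}f_2^{-1}d^{-29/2}\epsilon^2)$ from item 2, the corrected error is still far below the threshold $\epsilon^4/(f_2 d^3)$ set by the initialization bound, so the conclusion is unaffected, but the exponent bookkeeping in your proposal would not close as written.
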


This section proves the theorem above by following Lemmas and Corollaries. 

\begin{lemma}\label{alignment stage, bound of max and min singular values}

Maximum and minimum singular value bound of weight matrices in alignment stage. 

For $t \in \left[0,\frac{1}{16 f_1^4 d^2 \epsilon^2 \sigma_1(\Sigma)} \right]$, 

\begin{equation}
  \min_{j,k}\sigma_k(W_j) \ge \frac{\epsilon}{f_1\sqrt{d}} - 16 f_1^3 d^{3/2} \epsilon^3 \sigma_1(\Sigma) t ,\,   \max_{j,k}\sigma_k(W_j) \le \frac{f_1\sqrt{d}\epsilon}{\sqrt{1 - 4 f_1^2 d\epsilon^2 \sigma_1(\Sigma) t}} . 
\end{equation}
\end{lemma}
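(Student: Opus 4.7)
My plan is to reduce the lemma to a scalar ODE comparison via Theorem~\ref{maximum and minimum singular values are irrelevant of the regularization term}, which already states that under gradient flow the derivative of $\max_{j,k}\sigma_k^2(W_j)$ and the derivative of $\min_{j,k}\sigma_k^2(W_j)$ are bounded by expressions that do not involve the regularization coefficient $a$. Combined with the gradient estimate $\max_{j}\|\nabla_{W_j}\mathcal{L}_{\rm ori}\|_{op} \le \max_{j,k}|\sigma_k(W_j)|^{3}\bigl(\sigma_1(\Sigma)+\max_{j,k}|\sigma_k(W_j)|^4\bigr)$ recorded immediately after that theorem (specialized to $N=4$), the entire argument reduces to ODE bookkeeping on the scalar quantities $M(t):=\max_{j,k}\sigma_k^2(W_j(t))$ and $m(t):=\min_{j,k}\sigma_k^2(W_j(t))$.

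First I would handle the upper bound via a bootstrap. The hypothesis $\epsilon \le \sigma_1^{1/4}(\Sigma)/(2f_1\sqrt{d})$ and the initialization estimate $M(0)\le f_1^2 d\epsilon^2$ together give $M(0)^2 \le \sigma_1(\Sigma)/256$. Let $T_{\max}$ be the largest time on which $M(t)^2 \le \sigma_1(\Sigma)$; on this interval the gradient estimate yields $\max_j\|\nabla_{W_j}\mathcal{L}_{\rm ori}\|_{op} \le 2\sigma_1(\Sigma) M(t)^{3/2}$, so that Theorem~\ref{maximum and minimum singular values are irrelevant of the regularization term} gives $\frac{dM}{dt} \le 4\sigma_1(\Sigma) M^2$. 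Integrating this Bernoulli-type inequality yields $M(t) \le M(0)/(1-4\sigma_1(\Sigma)M(0)t)$, which upon substituting $M(0)\le f_1^2 d\epsilon^2$ produces exactly the stated upper bound $\max_{j,k}\sigma_k(W_j) \le f_1\sqrt{d}\epsilon/\sqrt{1-4f_1^2 d\epsilon^2\sigma_1(\Sigma) t}$. For $t \le 1/(16 f_1^4 d^2 \epsilon^2 \sigma_1(\Sigma))$ the denominator satisfies $1-4f_1^2 d\epsilon^2\sigma_1(\Sigma) t \ge 1-1/(4f_1^2 d) \ge 3/4$, hence $M(t)^2 \le (4/3)^2 f_1^4 d^2 \epsilon^4 \le \sigma_1(\Sigma)/9 < \sigma_1(\Sigma)$, which closes the bootstrap and extends $T_{\max}$ past the stated time window.

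For the minimum, the same theorem gives $\frac{d\sqrt{m}}{dt} \ge -\max_j\|\nabla_{W_j}\mathcal{L}_{\rm ori}\|_{op}$. Using the bound $\max_j\|\nabla_{W_j}\mathcal{L}_{\rm ori}\|_{op} \le 2\sigma_1(\Sigma) M(t)^{3/2}$ established above together with the uniform estimate $M(t)^{3/2} \le (4/3)^{3/2} f_1^3 d^{3/2} \epsilon^3$ valid on the stated interval, direct integration gives
\[
\sqrt{m(t)} \ge \sqrt{m(0)} - 2\sigma_1(\Sigma)\int_0^t M(s)^{3/2}\,ds \ge \frac{\epsilon}{f_1\sqrt{d}} - 16 f_1^3 d^{3/2} \epsilon^3 \sigma_1(\Sigma)\, t,
\]
since $2\cdot 2\cdot(4/3)^{3/2} < 16$. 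The main technical obstacle is the coupling in the bootstrap: one needs the max estimate in order to apply the gradient bound, but the max estimate itself rests on the gradient bound; this is resolved cleanly by the continuity/$T_{\max}$ argument above, with the slack built into the hypothesis $\epsilon \le \sigma_1^{1/4}(\Sigma)/(2f_1\sqrt{d})$. A minor subtlety is that the argmax/argmin attaining $M(t)$ and $m(t)$ need not be unique; this is handled by taking upper Dini derivatives and applying the inequality to each pair $(j,k)$ that attains the extremum, as indicated in the remark following Theorem~\ref{maximum and minimum singular values are irrelevant of the regularization term}.
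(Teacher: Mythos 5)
Your proof is correct and follows essentially the same route as the paper's: invoke Theorem~\ref{maximum and minimum singular values are irrelevant of the regularization term} together with the gradient bound $\max_j\|\nabla_{W_j}\mathcal{L}_{\rm ori}\|_{op}\le 2\sigma_1(\Sigma)\max_{j,k}|\sigma_k(W_j)|^3$, integrate the resulting Bernoulli inequality for the max, and feed the max bound into the min estimate; your explicit $T_{\max}$/continuity bootstrap and the Dini-derivative caveat merely spell out what the paper leaves implicit. (Your intermediate ``$M(0)^2\le\sigma_1(\Sigma)/256$'' should read $\sigma_1(\Sigma)/16$, but this slip has no downstream effect since you later use $M(0)\le f_1^2 d\epsilon^2$ directly.)
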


\begin{proof}

For $t\ge 0$ such that $\max_{j,k} \sigma_k(W_j) \le 2f_1\sqrt{d} \epsilon \le \sigma_1^{1/4}(\Sigma)$, 

\begin{equation}
  \max_{j} \left\| \nabla_{W_j} \mathcal{L}_{\rm ori} \right\|_{op} \le \max_{j,k} |\sigma_k(W_j)|^{3} \left( \sigma_1(\Sigma) + \max_{j,k} |\sigma_k(W_j)|^{4} \right) \le 2 \sigma_1(\Sigma) \max_{j,k} |\sigma_k(W_j)|^{3} . 
\end{equation}

By invoking Theorem \ref{maximum and minimum singular values are irrelevant of the regularization term}, 

\begin{equation}
 \begin{aligned}
  \frac{\mathrm{d} \max_{j,k} \sigma_k^2(W_j)}{\mathrm{d}t} &\le 4 \max_{j,k} |\sigma_k(W_j)|^{4} \sigma_1(\Sigma) \\
  \frac{\mathrm{d} \min_{j,k} \sigma_k^2(W_j)}{\mathrm{d}t} &\ge - 4 \min_{j,k} \left| \sigma_k(W_j) \right| \max_{j,k} |\sigma_k(W_j)|^{3} \sigma_1(\Sigma) . 
 \end{aligned}
\end{equation}

By solving the differential inequality,  

\begin{equation}
  \max_{j,k} \sigma_k|W_j| \le \frac{\max_{j,k} \sigma_k|W_j(0)|}{\sqrt{1 - 4 \sigma_1(\Sigma) \max_{j,k} \sigma_k|W_j(0)|^2 \cdot t}} \le \frac{f_1\sqrt{d}\epsilon}{\sqrt{1 - 4 f_1^2 d\epsilon^2 \sigma_1(\Sigma) t}},\, t \in \left[0,\frac{3}{16 f_1^2 d \epsilon^2 \sigma_1(\Sigma)}\right] . 
\end{equation}

\begin{equation}
  \min_{j,k} |\sigma_k(W_j)| \ge \frac{\epsilon}{f_1 \sqrt{d}} - 16 f_1^3 d^{3/2} \epsilon^3 \sigma_1(\Sigma) t,\,t\in \left[0,\frac{1}{16 f_1^4 d^2 \epsilon^2 \sigma_1(\Sigma)} \right] . 
\end{equation}

This completes the proof.

\end{proof}

Notice that  

\begin{equation}
 \begin{aligned}
  \max_{j,k} |\sigma_k(W_j(t\le T_1))| &\le \frac{f_1\sqrt{d} \epsilon}{\sqrt{1-\frac{1}{8f_1^{12}f_2}}} \le (1+2^{-21}) f_1\sqrt{d}\epsilon \\ 
  \min_{j,k} |\sigma_k(W_j(t\le T_1))| &\ge \left(1-\frac{1}{2f_1^{10}f_2}\right) \cdot \frac{\epsilon}{f_1\sqrt{d}} \ge (1-2^{-17}) \frac{\epsilon}{f_1\sqrt{d}} . 
 \end{aligned}
\end{equation}

\begin{corollary}\label{stage 1, e_delta}

Balanced term error in alignment stage.  

For $t \in \left[0,T_1 \right]$, 

\begin{equation}
 \begin{aligned}
  e_{\Delta}(t) &\le 2\sqrt{3} f_1^2 d^{3/2} \epsilon^2 \exp\left( - \frac{a \epsilon^2}{ f_1^6 d^3} t \right) . 
 \end{aligned}
\end{equation}

Specially, at $t = T_1$, 

\begin{equation}
 \begin{aligned}
  e_{\Delta}(T_1) &\le 2\sqrt{3} f_1^2 d^{3/2} \epsilon^2 \exp{\left( - \frac{a}{32f_1^{20}f_2d^{13} \sigma_1(\Sigma)}\right)} \le \sqrt{3} \cdot 2^{-31} f_1^{-14} f_2^{-1} d^{-29/2} \epsilon^2 . 
 \end{aligned}
\end{equation}

\end{corollary}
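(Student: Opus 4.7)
The plan is to combine Theorem \ref{regularization term, convergence bound} (linear decay of the regularization term under gradient flow, specialized to $N=4$) with the uniform spectral bounds from Lemma \ref{alignment stage, bound of max and min singular values}, then integrate a Gr\"onwall-type differential inequality and take a square root. Both ingredients are already in hand, so the work is a careful substitution and constant-tracking.

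First I would verify that $[0,T_1]$ lies inside the validity window of Lemma \ref{alignment stage, bound of max and min singular values}: since $T_1 = \frac{1}{32 f_1^{14} f_2 d^{10}\epsilon^2 \sigma_1(\Sigma)}$ is far smaller than $\frac{1}{16 f_1^{4} d^{2}\epsilon^2 \sigma_1(\Sigma)}$ (using $f_2 \ge f_1^6$, $f_1 \ge 2$), the lemma yields the uniform bounds $M(t)\coloneqq\max_{j,k}\sigma_k(W_j(t)) \le (1+2^{-21}) f_1\sqrt{d}\,\epsilon$ and $\delta(t)\coloneqq\min_{j,k}\sigma_k(W_j(t)) \ge (1-2^{-17})\,\epsilon/(f_1\sqrt{d})$ throughout $[0,T_1]$. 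Feeding these into Theorem \ref{regularization term, convergence bound} with $N=4$ gives
\[
\frac{\mathrm{d}}{\mathrm{d}t}\,e_\Delta^2 \;\le\; -\,\frac{8a}{3}\cdot \frac{\delta^4}{M^2+\delta^2}\,e_\Delta^2.
\]
Because $f_1^2 d \ge 4$ the $\delta^2$ term in the denominator is negligible compared to $M^2$, so $\delta^4/(M^2+\delta^2)$ collapses to essentially $\epsilon^2/(f_1^6 d^3)$; together with the prefactor $8/3$ this yields a contraction rate of at least $2a\epsilon^2/(f_1^6 d^3)$. Gr\"onwall followed by a square root then produces
\[
e_\Delta(t) \;\le\; e_\Delta(0)\exp\!\left(-\frac{a\epsilon^2}{f_1^6 d^3}\,t\right),
\]
with $e_\Delta(0) \le 2\sqrt{3}\,f_1^2 d^{3/2}\epsilon^2$ read off from the random Gaussian initialization bounds in (\ref{random gaussian initialization, conclusions}). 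This is exactly the first inequality.

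For the sharper second bound, I would plug in $t=T_1$: the exponent becomes $\frac{a\epsilon^2}{f_1^6 d^3}\cdot \frac{1}{32 f_1^{14} f_2 d^{10}\epsilon^2 \sigma_1(\Sigma)} = \frac{a}{32 f_1^{20} f_2 d^{13}\sigma_1(\Sigma)}$, matching the claimed form. The further numerical simplification to $\sqrt{3}\cdot 2^{-31} f_1^{-14} f_2^{-1} d^{-29/2}\epsilon^2$ then follows from the standing hypothesis $a \ge 2^5 f_1^{20} f_2 d^{13}\sigma_1(\Sigma)\,b$ with $b \ge 2^4\ln(4f_1 d)+\ln f_2$: that makes the exponent at least $b \ge 32\ln 2 + 16\ln f_1 + 16\ln d + \ln f_2$, which is precisely what is needed to absorb the prefactor $2\sqrt{3}\,f_1^2 d^{3/2}$ into the stated form.

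The only genuine obstacle is book-keeping rather than technique: one must verify that the small multiplicative losses from $(1\pm 2^{-17})$ and $(1\pm 2^{-21})$ in the spectral bounds, combined with the $8/3$ constant of Theorem \ref{regularization term, convergence bound}, still leave room for the clean coefficient $1$ in the exponent $a\epsilon^2/(f_1^6 d^3)$, and that $f_1^2 d \ge 4$ is enough to discard the $\delta^2$ contribution to $M^2+\delta^2$. No new machinery beyond the lemmas already proved in this section is required.
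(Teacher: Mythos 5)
Your proposal is correct and is exactly the paper's argument: the paper's proof of this corollary is literally the one-line "combine Theorem \ref{regularization term, convergence bound} with Lemma \ref{alignment stage, bound of max and min singular values}," and your Gr\"onwall integration, square root, and constant-tracking (the rate $\ge 2a\epsilon^2/(f_1^6 d^3)$ for $e_\Delta^2$, hence $\ge a\epsilon^2/(f_1^6 d^3)$ for $e_\Delta$, and the exponent condition $b \ge 2^4\ln(4f_1 d)+\ln f_2$ absorbing the prefactor) fill in precisely the steps the paper leaves implicit.
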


\begin{proof}

By simply combining Theorem \ref{regularization term, convergence bound} and Lemma \ref{alignment stage, bound of max and min singular values}.

\end{proof}

\begin{corollary}\label{Main term at the end of alignment stage, gf}

Main term at the end of alignment stage. 

At $t=T_1$, 

\begin{equation}
  \left.\sigma_{\min}\left(W_1 + W_1^{\prime}\right)\right|_{t=T_1} \ge \frac{\epsilon}{2 f_1^3 f_2 d^{9/2}} . 
\end{equation}

\end{corollary}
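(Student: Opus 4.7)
The strategy is to realize the heuristic of Section~\ref{subsubsection: The Limit Behavior of the Hermitian main term} rigorously at the finite horizon $t=T_1$. Under \emph{strict} balance, the chain $W_j W_j^H = W_{j+1}^H W_{j+1}$ forces $(WW^H)^{1/2} = W_4 W_3 W_3^H W_4^H$ and thus
\[
  W + (WW^H)^{1/2} \;=\; W_4 W_3 W_2 \,(W_1 + W_1').
\]
I would transfer the initialization bound $\sigma_{\min}\bigl(W(0)+(W(0)W(0)^H)^{1/2}\bigr) \ge \epsilon^4/(f_2 d^3)$ from~\eqref{random gaussian initialization, conclusions} through this identity to a lower bound on $\sigma_{\min}(W_1+W_1')|_{T_1}$, after controlling (i)~the drift of the product matrix $W$ over $[0,T_1]$ and (ii)~the residual imbalance at $T_1$.

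For (i), equation~\eqref{equation: dW/dt is irrelevant to a} shows that $dW/dt$ depends only on $\mathcal{L}_{\rm ori}$ and not on $a$. Combined with the envelope $\max_{j,k}\sigma_k(W_j(t)) \le (1+2^{-21}) f_1\sqrt{d}\,\epsilon$ from Lemma~\ref{alignment stage, bound of max and min singular values}, this yields $\|dW/dt\|_{op} = O(\sigma_1(\Sigma) f_1^6 d^3 \epsilon^6)$, so after multiplying by $T_1 = 1/(32 f_1^{14} f_2 d^{10}\epsilon^2 \sigma_1(\Sigma))$ the drift satisfies $\|W(T_1)-W(0)\|_{op} = O(\epsilon^4/(f_1^8 f_2 d^7))$. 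Since $\lambda_{\min}(W(0)W(0)^H) \ge \epsilon^8/(f_1^8 d^4)$ by the per-layer lower bound $\sigma_{\min}(W_j(0)) \ge \epsilon/(f_1\sqrt{d})$, Lemma~\ref{error bound, sqrt} converts this into a bound on $\|(W(T_1)W(T_1)^H)^{1/2} - (W(0)W(0)^H)^{1/2}\|_{op}$ strictly less than $\tfrac{1}{2}\sigma_{\min}\bigl(W(0)+(W(0)W(0)^H)^{1/2}\bigr)$, so $\sigma_{\min}\bigl(W(T_1)+(W(T_1)W(T_1)^H)^{1/2}\bigr) \ge \epsilon^4/(2 f_2 d^3)$ with room to spare.

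For (ii), set $B := W_4 W_3 W_3^H W_4^H$ and $A := W(T_1)W(T_1)^H$. A direct expansion using the $M_{\Delta 1234}$-calculus of Section~\ref{section: notations and Preliminaries} gives $A - B^2 = W_4 W_3\, M_{\Delta 1234}\, W_3^H W_4^H$, so $\|A - B^2\|_{op} \le \sqrt{6}\,\|W_4 W_3\|_{op}^2 \max_{j,k}\sigma_k^2(W_j)\cdot e_\Delta(T_1)$. By Corollary~\ref{stage 1, e_delta} the balance error $e_\Delta(T_1)$ is exponentially small in $a$, and combined with $\lambda_{\min}(B^2) \ge \epsilon^8/(f_1^8 d^4)$ a second invocation of Lemma~\ref{error bound, sqrt} yields $\|A^{1/2} - B\|_{op}$ negligible compared to the $\epsilon^4/(f_2 d^3)$ budget. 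Using the exact identity
\[
  W(T_1) + (W(T_1)W(T_1)^H)^{1/2} \;=\; W_4 W_3 W_2 \,(W_1 + W_1') + \bigl(A^{1/2} - B\bigr),
\]
taking minimum singular values, and dividing by $\|W_4 W_3 W_2\|_{op} \le (1+2^{-21})^3 f_1^3 d^{3/2} \epsilon^3$ produces the claimed $\sigma_{\min}(W_1 + W_1')|_{T_1} \ge \epsilon/(2 f_1^3 f_2 d^{9/2})$.

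The main obstacle lies in step~(ii): the balanced identity $(WW^H)^{1/2} = W_4 W_3 W_3^H W_4^H$ is quite rigid, and Lemma~\ref{error bound, sqrt} has a square-root sensitivity scaling like $\lambda_{\min}(WW^H)^{-1/2} \sim \epsilon^{-4}$, which naively would dominate. The argument is rescued by the fact that $e_\Delta(T_1) = \exp(-\Theta(a))$ under the stage-1 hypothesis on $a$; this trade—larger $a$ yields a smaller imbalance error at the price of a larger alignment horizon $T_1$—is precisely the reason for the particular form of the stage-1 parameter choices. The remaining bookkeeping is to verify that each of the three error contributions (drift in $W$, square-root perturbation, and imbalance) is strictly smaller than $\epsilon^4/(f_2 d^3)$ by a margin sufficient to deliver the constant $1/2$ in the final estimate after the division by $\|W_4 W_3 W_2\|_{op}$.
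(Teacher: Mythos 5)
Your proposal is correct and follows essentially the same route as the paper's proof: bound the drift of the product matrix over $[0,T_1]$, bound the imbalance via $M_{\Delta 1234}$ and the exponentially small $e_\Delta(T_1)$, convert these through Lemma~\ref{error bound, sqrt}, invoke the initialization lower bound on $\sigma_{\min}\bigl(W(0)+(W(0)W(0)^H)^{1/2}\bigr)$, and finally divide by a cube of the maximal singular value. The only (cosmetic) difference is that you use the left factorization $W+(WW^H)^{1/2}=W_4W_3W_2(W_1+W_1')$, whereas the paper works with the adjoint version $W^H+W_1^HW_2^HW_2W_1=W_1^HW_2^HW_2(W_1+W_1')$ and compares $W_1^HW_2^HW_2W_1$ to $(W^HW)^{1/2}$; the error accounting is identical in substance.
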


\begin{proof}

For simplicity, denote $\Delta_{X}(t) = X(t) - X(0)$ for arbitrary $X$. Note: $\Delta_{X^H} = \Delta_{X}^H$. 

At $t = T_1$, 

\begin{equation}
 \begin{aligned}
  \| \Delta_{W}(T_1)\|_{op} 
  =& \left\| \int_{0}^{T_1} \sum_{j=1}^{4} \left[ W_{\prod_L,j+1}(t^\prime) W_{\prod_L,j+1}(t^\prime)^H \left(\Sigma - W(t^\prime)\right) W_{\prod_R,j-1}^H (t^\prime) W_{\prod_R,j-1}(t^\prime) \right] \mathrm{d}t^\prime \right\|_{op} \\
  \le& \int_{0}^{T_1} \sum_{j=1}^{4} \left\| W_{\prod_L,j+1}(t^\prime) W_{\prod_L,j+1}(t^\prime)^H \left(\Sigma - W(t^\prime)\right) W_{\prod_R,j-1}^H (t^\prime) W_{\prod_R,j-1}(t^\prime) \right\|_{op} \mathrm{d}t^\prime \\
  \le& \int_{0}^{T_1} \sum_{j=1}^{4} \left(\left\| \Sigma \right\|_{op} + \left\| W(t^\prime) \right\|_{op} \right) \left( \prod_{k \in [1,4] \cap \mathbb{N}^*,\,k\ne j} \left\|W_i(t^\prime) \right\|_{op}^2 \right) \mathrm{d}t^\prime \\
  \le& \int_{0}^{T_1} 4\cdot 2 \sigma_1(\Sigma) \cdot \left( \left(1+2^{-21}\right) f_1 \sqrt{d} \epsilon \right)^6 \mathrm{d}t^\prime \\
  \le& 8\left(1+2^{-18}\right) f_1^6 d^{3} \epsilon^6 \sigma_1(\Sigma) T_1 = \left(1+2^{-18}\right) \cdot \frac{1}{4} f_1^{-8} f_2^{-1} d^{-7} \epsilon^4 . 
 \end{aligned}
\end{equation}

Thus 

\begin{equation}
 \begin{aligned}
  \left\| \Delta_{W^H W}(T_1) \right\|_{op} &= \left\| \frac{1}{2}\left[\left(W(T_1) + W(0)\right)^H \Delta_{W}(T_1) + \Delta_{W}(T_1)^H \left(W(T_1) + W(0)\right) \right] \right\|_{op} \\ 
  &\le  \left(\left\|W(T_1)\right\|_{op} + \left\|W(0)\right\|_{op}\right) \left\| \Delta_{W}(T_1) \right\|_{op} \\ 
  &\le \left[1 + \left(1 + 2^{-21}\right)^4 \right] f_1^4 d^2 \epsilon^4 \cdot \left\| \Delta_{W}(T_1) \right\|_{op} = (1+2^{-17})\cdot\frac{1}{2} f_1^{-4} f_2^{-1} d^{-5} \epsilon^8 . 
 \end{aligned}
\end{equation}

From Corollary \ref{stage 1, e_delta}, 

\begin{equation}
 \begin{aligned}
  &\left\| \left(W_1(T_1)^H W_2(T_1)^H W_2(T_1) W_1(T_1) \right)^2 - W(T_1)^H W(T_1) \right\|_{op} \\ 
  =& \left\|W_1(T_1)^H W_2(T_1)^H M_{\Delta1234}(T_1) W_2(T_1) W_1(T_1) \right\|_{op} \\ 
  \le& \left\|W_1(T_1)^H W_2(T_1)^H \right\|_{op} \left\| M_{\Delta1234}(T_1) \right\|_{op}
  \left\| W_2(T_1) W_1(T_1) \right\|_{op} \\ 
  \le& \left( \left(1+2^{-21}\right) f_1 \sqrt{d} \epsilon \right)^{4} \cdot \sqrt{6}\left( \left(1+2^{-21}\right) f_1 \sqrt{d} \epsilon \right)^{2} \cdot e_{\Delta}(T_1) \\
  \le& \sqrt{6}(1+2^{-18}) f_1^6 d^3 \epsilon^6 e_{\Delta}(T_1) \le 2^{-28} f_1^{-8} f_2^{-16} d^{-23/2} \epsilon^8 . 
 \end{aligned}
\end{equation}

Thus 

\begin{equation}
 \begin{aligned}
  &\left\| \left(W_1(T_1)^H W_2(T_1)^H W_2(T_1) W_1(T_1) \right)^2 - W(T_0)^H W(T_0) \right\|_{op} \\ 
  \le& \left\| \left(W_1(T_1)^H W_2(T_1)^H W_2(T_1) W_1(T_1) \right)^2 - W(T_1)^H W(T_1) \right\|_{op} + \left\| \Delta_{W^H W}(T_1) \right\|_{op} \\ 
  \le& (1+2^{-16})\cdot\frac{1}{2} f_1^{-4} f_2^{-1} d^{-5} \epsilon^8 . 
 \end{aligned}
\end{equation}

From Lemma \ref{error bound, sqrt}, 

\begin{equation}
 \begin{aligned}
  &\left\| W_1(T_1)^H W_2(T_1)^H W_2(T_1) W_1(T_1) - \left( W(T_0)^H W(T_0) \right)^{1/2} \right\|_{op} \\
  \le& \frac{\left\| \left(W_1(T_1)^H W_2(T_1)^H W_2(T_1) W_1(T_1) \right)^2 - W(T_0)^H W(T_0) \right\|_{op}}{2\sqrt{\lambda_{\min}\left( W(T_0)^H W(T_0) \right) - \left\|  \left(W_1(T_1)^H W_2(T_1)^H W_2(T_1) W_1(T_1) \right)^2 - W(T_0)^H W(T_0)\right\|_{op}}} \\
  \le& \frac{(1+2^{-16})\cdot\frac{1}{2} f_1^{-4} f_2^{-1} d^{-5} \epsilon^8}{2\sqrt{\left( \frac{\epsilon}{f_1 \sqrt{d}} \right)^8 - (1+2^{-16})\cdot\frac{1}{2} f_1^{-4} f_2^{-1} d^{-5} \epsilon^8}} \le 0.27 f_2^{-1} d^{-3} \epsilon^4 . 
 \end{aligned}
\end{equation}

By (\ref{subsection: random gaussian initialization}), 

\begin{equation}
 \begin{aligned}
  &\sigma_{\min} \left( W_1(T_1)^H W_2(T_1)^H W_2(T_1) W_1(T_1) + W(T_1)^H\right) \\ 
  \ge& \sigma_{\min} \left( \left( W(T_0)^H W(T_0) \right)^{1/2} + W(0)^H \right) \\ -& \left\| W_1(T_1)^H W_2(T_1)^H W_2(T_1) W_1(T_1) - \left( W(T_0)^H W(T_0) \right)^{1/2} \right\|_{op} - \left\| \Delta_W(T_1) \right\|_{op} \\ 
  \ge& f_2^{-1} d^{-3} \epsilon^4 - 0.27 f_2^{-1} d^{-3} \epsilon^4 - \left(1+2^{-18}\right) \cdot \frac{1}{4} f_1^{-8} f_2^{-1} d^{-7} \epsilon^4 \\ 
  \ge& 0.72 f_2^{-1} d^{-3} \epsilon^4 , 
 \end{aligned}
\end{equation}

which further gives 

\begin{equation}
 \begin{aligned}
  &\left. \sigma_{\min} \left(W_1 + W_1^\prime \right) \right|_{t=T_1} \\
  =& \sigma_{\min} \left( \left(W_1(T_1)^H W_2(T_1)^H W_2(T_1)\right)^{-1}\left( W_1(T_1)^H W_2(T_1)^H W_2(T_1) W_1(T_1) + W(T_1)^H\right) \right) \\ 
  \ge& \left(\frac{1}{\max_{j,k}|\sigma_k(W_j(T_1))|}\right)^3 \cdot \sigma_{\min} \left( W_1(T_1)^H W_2(T_1)^H W_2(T_1) W_1(T_1) + W(T_1)^H\right) \\ 
  \ge& \frac{\epsilon}{2 f_1^3 f_2 d^{9/2}} . 
 \end{aligned}
\end{equation}

\end{proof}

\subsection{Stage 2: saddle avoidance stage}

In this stage, we further assume $a \ge 32 f_1^{20} f_2 d^{13}\sigma_1(\Sigma) \left( 5\ln\left( \frac{\sigma_1^{1/4}(\Sigma)}{\epsilon} \right) + \frac{281}{8} \ln d + 23\ln(4f_1) + 7\ln f_2 \right)$, while $\frac{\epsilon}{\sigma_1^{1/4}(\Sigma)} \le \frac{1}{32 f_1^5 f_2 d^{53/8}} $. From Lemma \ref{regularization, total} and Theorem \ref{stage 1: alignment stage}, 

\begin{equation}\label{ineq, e, stage 2}
 \begin{aligned}
  e_{\Delta}(t \in [T_1,+\infty)) &\le e_{\Delta}(T_1) \le 2\sqrt{3} f_1^2 d^{3/2} \epsilon^2 \exp{\left( - \frac{a}{32f_1^{20}f_2d^{13} \sigma_1(\Sigma)}\right)} \\
  &\le \sqrt{3} \cdot 2^{-45} f_1^{-21} f_2^{-7} d^{-269/8} \epsilon^7 \sigma_1^{-5/4}(\Sigma)  . 
 \end{aligned}
\end{equation}

Moreover, $a \ge 32f_1^{20}f_2d^{13} \sigma_1(\Sigma) b$, where $b - \ln b \ge 3\ln\left(\frac{\sigma_1^{1/4}(\Sigma)}{\epsilon}\right) + \frac{303}{8}\ln d + 37 \ln(2f_1) + 6 \ln f_2 $. Thus 

\begin{equation}\label{ineq, a e, stage 2}
 \begin{aligned}
  a e_{\Delta}(t \in [T_1,+\infty)) &\le a e_\Delta(T_1) \le 2^6 \sqrt{3} f_1^{22} f_2 d^{29/2} \epsilon^2 \sigma_1(\Sigma) \exp(-(b- \ln b)) \\
  &\le \sqrt{3} \cdot 2^{-31} f_1^{-15} f_2^{-5} d^{-187/8} \epsilon^5 \sigma_1^{1/4}(\Sigma) . 
 \end{aligned}
\end{equation}

\begin{theorem}\label{stage 2: saddle avoidance stage, gf}

At $T_1 + T_2$, $T_2 = \frac{32 f_1^6 f_2^2 d^9}{\sigma_1(\Sigma) \epsilon^2}$, the following conclusions hold: 

\begin{equation}
 \begin{aligned}
  \left \| W_1(T_1 + T_2) - W_1^\prime(T_1 + T_2) \right \|_F &\le 3 f_1 d \epsilon \\
  \sigma_{\min}(W_1 + W_1^\prime)(T_1 + T_2) &\ge  2^{3/4} \sigma_1^{1/4}(\Sigma) . 
 \end{aligned}
\end{equation}
    
\end{theorem}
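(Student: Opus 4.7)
The plan is to lift the balanced-case structural results (Theorems \ref{antisym-error, general, informal} and \ref{dynamics of minimum singular value of hermitian term}) to the near-balanced regime that holds after the alignment stage, exploiting the bounds \eqref{ineq, e, stage 2}--\eqref{ineq, a e, stage 2} on $e_\Delta$ and $a e_\Delta$, which are exponentially small in $a$. The argument is a bootstrap over $[T_1, T_1+T_2]$ that maintains in parallel: (i) $\max_{j,k}\sigma_k(W_j(t)) \le \sqrt{2}\,d^{1/8}\sigma_1^{1/4}(\Sigma)$, a consequence of $\mathcal{L}_{\rm ori}$ being non-increasing (Lemma \ref{L ori non-increasing}); (ii) a lower bound $\sigma_{\min}(W_j(t)) \ge \frac{\epsilon}{2 f_1 \sqrt{d}}$, obtained from Theorem \ref{maximum and minimum singular values are irrelevant of the regularization term} together with the polynomial bound on $\|\nabla_{W_j}\mathcal{L}_{\rm ori}\|_{op}$; (iii) the target skew-Hermitian bound; and (iv) a monotonically growing lower bound on the Hermitian main term.

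For the skew-Hermitian error, I would substitute (i)--(ii) into the identity derived in Section \ref{section: skew-hermitian error term, gf}. The dominant term is the negative-definite $-2\sigma_1(\Sigma)\,\mathrm{tr}\bigl((W_1-W_1')^H M_2 (W_1-W_1')\bigr) \le -2\sigma_1(\Sigma)\,\sigma_{\min}^2(W_2)\,\|W_1-W_1'\|_F^2$, while every remaining term is controlled by a polynomial in $\max_{j,k}\sigma_k(W_j)$ times one of $\|M_1'-M_1\|_{op}$, $\|I-RR^H\|_{op}$, $\|I-R^{H-1}R^{-1}\|_{op}$, or $a e_\Delta$. Each of these perturbative factors is exponentially small in $a$ by \eqref{ineq, e, stage 2}--\eqref{ineq, a e, stage 2}, so a Grönwall estimate gives
\[
\|W_1-W_1'\|_F^2(t) \;\le\; \|W_1-W_1'\|_F^2(T_1)\,\exp\!\bigl(-c\,\sigma_1(\Sigma)\,\sigma_{\min}^2(W_2)\,(t - T_1)\bigr) + (\text{exp-small drift}).
\]
Stage 1 bounds on $\|W_j\|_{op}$ and $\|W_2^{-1}\|_{op}$ yield $\|W_1-W_1'\|_F(T_1) = O(f_1^3 d^{5/2}\epsilon)$, and the contraction factor $c\sigma_1(\Sigma)(\epsilon/(f_1\sqrt{d}))^2 \cdot T_2 = \Omega(f_1^4 f_2^2 d^8)$ drives the right-hand side well below $3 f_1 d \epsilon$.

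For the Hermitian main term, I would integrate the ODE of Section \ref{section: hermitian main term, gf}. The leading contribution is $+2\sigma_1(\Sigma)\,\sigma_k^2\,\eta_k^H M_2 \eta_k \ge 2\sigma_1(\Sigma)\,\sigma_{\min}^2(W_2)\,\sigma_k^2$, where $\sigma_k := \sigma_{\min}(W_1+W_1')$, and the remaining terms are either $O(e_\Delta)$, $O(a e_\Delta)$, or governed by the already-small $\|W_1-W_1'\|_F$. Seeded by Corollary \ref{Main term at the end of alignment stage, gf}, namely $\sigma_k(T_1) \ge \epsilon/(2 f_1^3 f_2 d^{9/2})$, and running at rate $\Omega(\sigma_1(\Sigma)\epsilon^2/(f_1^2 d))$, the time required to cross $2^{3/4}\sigma_1^{1/4}(\Sigma)$ is only a logarithmic factor inside $T_2$. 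As $\sigma_k$ grows, Lemma \ref{bound of eigenvalues under perturbation} combined with the near-balance $e_\Delta \ll \sigma_k^2$ forces $\sigma_{\min}(W_j)$ to grow in lockstep, so the growth rate self-accelerates and closes the bootstrap on (ii).

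The main obstacle is closing this coupled bootstrap cleanly. The decay of the skew-Hermitian error, the growth of the Hermitian main term, and the preserved lower bound on $\sigma_{\min}(W_j)$ all feed into each other through the common pre-factor $\sigma_{\min}^2(W_2)$, while the separation from the perturbative error terms is purchased only by the exponential smallness of $e_\Delta(T_1)$. This is where the scaling $a = \Omega\bigl(\sigma_1(\Sigma)\,\mathrm{poly}(f_1, f_2, d, \ln(\sigma_1^{1/4}(\Sigma)/\epsilon))\bigr)$ is essential: it is exactly what keeps every perturbative term dominated by the leading dynamics throughout the entire window $[T_1, T_1+T_2]$, including the initial phase where $\sigma_{\min}(W_j)$ has not yet begun to amplify through the Hermitian term.
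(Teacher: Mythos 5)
The proposal has a genuine gap at item (ii) of the bootstrap, and this gap propagates into both halves of the conclusion. You claim that $\sigma_{\min}(W_j(t)) \ge \frac{\epsilon}{2 f_1 \sqrt{d}}$ can be maintained on all of $[T_1, T_1+T_2]$ from Theorem~\ref{maximum and minimum singular values are irrelevant of the regularization term}, but that result only caps the \emph{rate} of decrease. Over the window $T_2 = \Theta\!\bigl(\tfrac{f_1^6 f_2^2 d^9}{\sigma_1(\Sigma)\epsilon^2}\bigr)$ the cumulative admissible drop dwarfs the initial value $\Theta(\epsilon/(f_1\sqrt{d}))$, so no useful lower bound survives. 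Indeed, the paper's Lemma~\ref{bound of w23 inv op, and relevant terms} shows only that $\|W_2^{-1}\|_{op}$ grows \emph{linearly} up to $\Theta(f_1^6 f_2^2 d^{77/8} \epsilon^{-2}\sigma_1^{1/4}(\Sigma))$, i.e.\ $\sigma_{\min}(W_2)$ is allowed to shrink to $\Theta(\epsilon^2)$ — roughly a factor $\epsilon$ worse than what you posit. Your ``lockstep'' remark via Lemma~\ref{bound of eigenvalues under perturbation} kicks in only once $\sigma_k(W_1 + W_1')$ has become comparable to $\epsilon/(f_1\sqrt{d})$; but Corollary~\ref{Main term at the end of alignment stage, gf} seeds it at the much smaller $\epsilon/(2 f_1^3 f_2 d^{9/2})$, so the bootstrap does not close through the early part of stage~2.

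Both consequences matter. For the skew-Hermitian error, your Gr\"onwall contraction rate $\sigma_1(\Sigma)\sigma_{\min}^2(W_2)$ becomes useless once $\sigma_{\min}(W_2)$ is allowed to be $\Theta(\epsilon^2)$: the accumulated contraction over $T_2$ is then $O(\epsilon^2/\sigma_1^{1/2}(\Sigma)) \ll 1$. The paper sidesteps contraction entirely and instead drops the negative-definite term, using a pure \emph{drift} bound: the drift is proportional to powers of $e_\Delta$ and $a e_\Delta$, which (by the choice of $a$) are made exponentially small in $a$, so the error merely creeps from $\approx 2 f_1 d \epsilon$ to below $3 f_1 d\epsilon$. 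For the Hermitian main term, your lower bound $\eta_k^H M_2 \eta_k \ge \sigma_{\min}^2(W_2)$ again degrades once $\sigma_{\min}(W_2)$ is small. The paper replaces it by a \emph{self-referential} bound: since $M_2 \approx \tfrac{M_1 + M_1'}{2}$ up to an exponentially small error and $\eta_k^H(PP^H)\eta_k = \sigma_k^2/4$ with $P = \tfrac{W_1+W_1'}{2}$, one gets $\eta_k^H M_2 \eta_k \ge \tfrac{1}{4}\sigma_k^2 - \text{error}$, yielding an autonomous ODE $\dot\sigma_k^2 \gtrsim \sigma_1(\Sigma)\,\sigma_k^4$ with no dependence on $\sigma_{\min}(W_j)$ at all. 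That self-consistency is precisely what allows $\sigma_k$ to escape $\Theta(\epsilon)$ territory in time $T_2$ and is the step your proposal is missing.
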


\begin{lemma}\label{bound of w_j op}

Bound of operator norms throughout the optimization process. 

For $t\in[0,+\infty)$, 

\begin{equation}
 \begin{aligned}
  \|\Sigma - W(t)\|_{op} \le \|\Sigma - W(t)\|_{F} &\le 1.01 \sqrt{d} \sigma_1(\Sigma) \\ 
  \|W\|_{op} \le \|W\|_{F} &\le 3 \sqrt{d} \sigma_1(\Sigma) \\ 
  \max_{j} \|W_j\|_{op} \le \max_{j} \|W_j\|_{F} &\le \sqrt{2} d^{1/8} \sigma_1^{1/4}(\Sigma) . 
 \end{aligned}
\end{equation}

\end{lemma}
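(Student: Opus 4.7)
The plan is to establish the three bounds in sequence, with each one feeding into the next, all relying on the monotonicity of $\mathcal{L}_{\rm ori}$ under gradient flow (Lemma \ref{L ori non-increasing}) and the approximate balancedness that has been controlled in the preceding analysis.

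First, for the bound on $\|\Sigma - W(t)\|_F$: by Lemma \ref{L ori non-increasing}, $\mathcal{L}_{\rm ori}(t) = \tfrac{1}{2}\|\Sigma - W(t)\|_F^2$ is non-increasing under gradient flow, so $\|\Sigma - W(t)\|_F \le \|\Sigma - W(0)\|_F$. I would then invoke the triangle inequality and the initialization bound (\ref{random gaussian initialization, conclusions}), which yields $\|W_j(0)\|_{op} \le f_1 \sqrt{d}\,\epsilon$ and hence $\|W(0)\|_F \le \sqrt{d}\,\|W(0)\|_{op} \le f_1^4 d^{5/2} \epsilon^4$. Under the smallness assumption on $\epsilon$ in the Stage-2 regime (in particular $\epsilon/\sigma_1^{1/4}(\Sigma) \le 1/(32 f_1^5 f_2 d^{53/8})$), this is negligible compared with $\|\Sigma\|_F \le \sqrt{d}\,\sigma_1(\Sigma)$, so $\|\Sigma - W(0)\|_F \le 1.01\sqrt{d}\,\sigma_1(\Sigma)$. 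The second bound on $\|W\|_F$ then follows from one more triangle inequality: $\|W\|_F \le \|\Sigma\|_F + \|\Sigma - W\|_F \le \sqrt{d}\,\sigma_1(\Sigma) + 1.01\sqrt{d}\,\sigma_1(\Sigma) \le 3\sqrt{d}\,\sigma_1(\Sigma)$.

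For the third bound on $\max_j \|W_j\|_{op}$ (the Frobenius-norm statement being the natural operator-norm one, since genuine concentration of $\|W_j\|_F$ at order $d^{1/8}$ cannot survive $W \to \Sigma$), the strategy is to combine approximate balancedness with the already-established bound on $\|W\|_{op}$. From Lemma \ref{regularization, total}, $e_\Delta(t)$ is non-increasing, and by Corollary \ref{stage 1, e_delta} and (\ref{ineq, e, stage 2}) it is driven exponentially small by the large coefficient $a$. Under strict balancedness, Lemma \ref{ASVD, non-negative diagonal} gives the identity $W_j W_j^H = (WW^H)^{1/N}$ and hence $\|W_j\|_{op} = \|W\|_{op}^{1/N}$; under approximate balancedness, successive applications of Weyl's inequality to $\Delta_{j,j+1} = W_j W_j^H - W_{j+1}^H W_{j+1}$ together with Lemma \ref{error bound, sqrt} for the matrix square root (applied iteratively for the fourth root) give $\|W_j\|_{op}^2 \le \|W\|_{op}^{1/2} + O(e_\Delta^{1/2})$. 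Since $\|W\|_{op} \le \sigma_1(\Sigma) + \|\Sigma - W\|_{op} \le 2\sqrt{d}\,\sigma_1(\Sigma)$, this yields $\|W_j\|_{op} \le 2^{1/4} d^{1/8} \sigma_1^{1/4}(\Sigma) + o(1) \le \sqrt{2}\, d^{1/8} \sigma_1^{1/4}(\Sigma)$, with the slack between $2^{1/4}$ and $\sqrt{2}$ absorbing the balancedness perturbation.

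The main obstacle is propagating the fourth-root identity through the approximate-balancedness regime. The clean identity $W_j W_j^H = (WW^H)^{1/N}$ holds only at strict balancedness, and Lemma \ref{error bound, sqrt} degrades if $\lambda_{\min}$ is small, so the perturbation bound must be applied carefully: one can either iteratively take square roots (absorbing a factor of $\lambda_{\min}(WW^H)^{1/4}$ in the denominator at each stage), or work directly with the intermediate quantities $W_{\prod_L,j}W_{\prod_L,j}^H$ and exploit Weyl's inequality at the level of each $\Delta_{j,j+1}$. The exponentially small size of $e_\Delta$ from Stage 1 makes either route quantitatively comfortable, but the bookkeeping is the delicate part of the proof.
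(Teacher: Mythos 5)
Your treatment of the first two bounds is essentially the paper's: both use the monotonicity of $\mathcal{L}_{\rm ori}$ under gradient flow (Lemma~\ref{L ori non-increasing}) to get $\|\Sigma-W(t)\|_F \le \|\Sigma-W(0)\|_F$, absorb the tiny $\|W(0)\|_F = O(f_1^4 d^{5/2}\epsilon^4)$ into the $1.01$ slack, and then triangle-inequality to get the $\|W\|_F$ bound. The paper additionally notes that for $t\in[0,T_1]$ the claim is trivial from the alignment-stage bounds (Theorem~\ref{stage 1: alignment stage}); your argument silently covers this since the singular values there are of order $f_1\sqrt{d}\epsilon \ll \sigma_1^{1/4}(\Sigma)$, but the split is worth making explicit since the $e_\Delta$ control you invoke is a post-$T_1$ fact.

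The third bound is where your proposal and the paper diverge, and where there is a genuine gap. Your primary route — propagating $W_j W_j^H \approx (WW^H)^{1/4}$ through Lemma~\ref{error bound, sqrt} applied iteratively for the fourth root — does not survive inspection, and for the reason you yourself flag: Lemma~\ref{error bound, sqrt} has $\lambda_{\min}(X)^{1/2}$ in the denominator, and at times shortly after $T_1$ the relevant $\lambda_{\min}(WW^H)$ is of order $\epsilon^8/(f_1\sqrt{d})^8$, so the perturbation bound blows up by a factor that is polynomially large in $1/\epsilon$. Your claimed error $O(e_\Delta^{1/2})$ also does not match what this route produces (one gets an error of order $(e_\Delta M^6)^{1/4}$ at best, with $M = \max_j\|W_j\|_{op}$, which reintroduces the quantity you are trying to bound). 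Your second suggested route — ``work directly with the intermediate quantities $W_{\prod_L,j}W_{\prod_L,j}^H$ and exploit Weyl's inequality at the level of each $\Delta_{j,j+1}$'' — is the right idea but is left as a gesture; carried out naively it gives $M^8 \le \|W\|_{op}^2 + C\,e_\Delta M^6$, which is circular in $M$. The paper resolves this circularity by a proof by contradiction: assume $\max_j\|W_j\|_{op} \ge \sqrt{2}\,d^{1/8}\sigma_1^{1/4}(\Sigma)$, so that under Corollary~\ref{stage 1, e_delta} one has the crucial normalization $e_\Delta \le 2^{-15}\max_j\|W_j\|_{op}^2$, then expand $WW^H$ into $(W_4 W_4^H)^4$ plus six balance-error terms each of size at most $e_\Delta M^6 \le 2^{-15}M^8$, and conclude $\|W\|_{op}^2 \ge (M^2 - 3e_\Delta)^4 - 6e_\Delta M^6$ is too large to be consistent with the second bound $\|W\|_F \le 3\sqrt{d}\sigma_1(\Sigma)$. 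The contradiction hypothesis is what makes the error terms controllable relative to the main term, so the circular dependence on $M$ is cut cleanly. To make your proposal rigorous you would need to replicate this bootstrap/contradiction step (or a functional-equation argument solving the inequality $M^8 \le C_1 + C_2 e_\Delta M^6$ for $M$), not merely a direct perturbation estimate.
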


\begin{proof}

For $t\in[0,T_1]$, the result is obvious from Theorem \ref{stage 1: alignment stage} and Lemma \ref{alignment stage, bound of max and min singular values}. 

For $t\in(T_1,+\infty)$: from Lemma \ref{L ori non-increasing}, 

\begin{equation}
  \|\Sigma - W(t)\|_{op} \le \|\Sigma - W(t)\|_{F} \le \|\Sigma - W(0)\|_{F} \le \|\Sigma\|_{F} + \| W(0)\|_{F} \le \sqrt{2d} \sigma_1(\Sigma) . 
\end{equation}

Giving 

\begin{equation}\label{proof for bound of w_j op, contradiction}
  \|W(t)\|_{op} \le \|W(t)\|_{F} \le \|\Sigma - W(t)\|_{F} + \|\Sigma\|_{F} \le 3 \sqrt{d} \sigma_1(\Sigma) . 
\end{equation}

For the last inequality, prove by contradiction. 

Suppose $\max_{j} \|W_j\|_{op} \ge \sqrt{2} d^{1/8} \sigma_1^{1/4}(\Sigma)$, then by invoking Corollary \ref{stage 1, e_delta}, 

\begin{equation}
  e_{\Delta} (t) \le e_{\Delta} (T_1) \le \sqrt{3} \cdot 2^{-15} f_1^{-14} f_2^{-16} d^{-29/2} \epsilon^2 \le 2^{-15} \max_{j} \|W_j\|_{op}^2 . 
\end{equation}

Thus for $t > T_1$, 

\begin{equation}
 \begin{aligned}
  \|W\|_{op}^2 &= \left\|WW^H\right\|_{op} = \left\|W_4 W_3 W_2 W_1 W_1^H W_2^H W_3^H W_4^H \right\|_{op} \\
  &\ge \left\|W_4 W_4^H\right\|_{op} - \left\|W_4 W_3 W_2 \Delta_{12} W_2^H W_3^H W_4^H \right\|_{op} \\&- \left\|W_4 W_3 \Delta_{23} W_2 W_2^H W_3^H W_4^H \right\|_{op} - \left\|W_4 W_3 W_2 W_2^H \Delta_{23} W_3^H W_4^H \right\|_{op} \\&- \left\|W_4 \Delta_{34} \left(W_3 W_3^H\right)^2 W_4^H \right\|_{op} - \left\|W_4 W_3 W_3^H \Delta_{34} W_3 W_3^H W_4^H \right\|_{op} - \left\|W_4 \left(W_3 W_3^H\right)^2 \Delta_{34} W_4^H \right\|_{op} \\ 
  &\ge \left( \max_{j} \|W_j\|_{op}^2 - 3 e_{\Delta} \right)^4 - 6 e_{\Delta} \max_{j} \|W_j\|_{op}^6 > 15 \sqrt{d} \sigma_1(\Sigma) . 
 \end{aligned}
\end{equation}

which contradicts inequality (\ref{proof for bound of w_j op, contradiction}). This completes the proof. 

\end{proof}

\begin{lemma}\label{bound of w23 inv op, and relevant terms}

Bound of $\left \| W_2^{-1} \right \|_{op}$, $\left \| W_3^{-1} \right \|_{op}$, and relevant terms. 

For $t\in[T_1, T_1 + T_2]$, 

\begin{equation}\label{ineq: inverse op norm are bounded}
  \max\left(\left\| W_2^{-1} (t) \right\|_{op},\, \left\| W_3^{-1} (t) \right\|_{op} \right) \le 128 f_1^{6} f_2^{2} d^{77/8} \epsilon^{-2} \sigma_1^{1/4}(\Sigma) , 
\end{equation}

\begin{equation}
  \max\left( e_\Delta (t) \left\| W_2^{-1} (t) \right\|_{op}^2,\, e_\Delta (t) \left\| W_3^{-1} (t) \right\|_{op}^2 \right) \le \sqrt{3} \cdot 2^{-31} f_1^{-9} f_2^{-3} d^{-115/8} \epsilon^{3} \sigma_1^{-3/4}(\Sigma) . 
\end{equation}

\end{lemma}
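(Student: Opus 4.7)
The second inequality follows by simple arithmetic from the first combined with the bound (\ref{ineq, e, stage 2}) on $e_\Delta(t)$, so the plan focuses entirely on the first inequality, $\sigma_{\min}(W_2), \sigma_{\min}(W_3) \ge \epsilon^2/(128 f_1^6 f_2^2 d^{77/8} \sigma_1^{1/4}(\Sigma))$ throughout $[T_1, T_1+T_2]$. I would argue by continuity: let $t^\ast$ be the supremum of $t \in [T_1, T_1+T_2]$ for which the bound holds on $[T_1, t]$. Theorem \ref{stage 1: alignment stage} gives $\|W_j^{-1}(T_1)\|_{op} \le 2 f_1 \sqrt{d}/\epsilon$, which under $\epsilon \le \sigma_1^{1/4}(\Sigma)/(32 f_1^5 f_2 d^{53/8})$ is many polynomial factors tighter than the claimed bound, so $t^\ast > T_1$. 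Suppose for contradiction $t^\ast < T_1 + T_2$.

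The key step is a refined ODE for $\sigma_{\min}(W_2)$ that is cubic, not linear, in the singular value itself. Let $\xi_{\min}$ be a unit eigenvector of $W_2^H W_2$ with eigenvalue $\sigma_{\min}^2(W_2)$ and set $\eta_{\min} := W_2 \xi_{\min}/\sigma_{\min}(W_2)$. Differentiating $\sigma_{\min}^2(W_2) = \xi_{\min}^H W_2^H W_2 \xi_{\min}$ and using $\dot W_2 = W_3^H W_4^H(\Sigma - W) W_1^H + a W_2 \Delta_{12} - a \Delta_{23} W_2$ yields
\begin{equation*}
\tfrac{d\sigma_{\min}^2(W_2)}{dt} = 2 \sigma_{\min}(W_2) \Re\!\bigl(\xi_{\min}^H W_1 (\Sigma-W)^H W_4 W_3 \eta_{\min}\bigr) + 2 a \sigma_{\min}^2(W_2) \bigl[\xi_{\min}^H \Delta_{12} \xi_{\min} - \eta_{\min}^H \Delta_{23} \eta_{\min}\bigr].
\end{equation*}
The regularization bracket is bounded in absolute value by $2 e_\Delta$, so that part contributes at least $-4 a e_\Delta \sigma_{\min}^2(W_2)$, which is tiny by (\ref{ineq, a e, stage 2}). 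The novelty is bounding the original-loss inner product not by $\|\nabla_{W_2}\mathcal{L}_{\rm ori}\|_{op}$ (which would yield the catastrophic linear decay rate $\sim d^{7/8} \sigma_1^{7/4}$) but via approximate balance: $\|W_1^H \xi_{\min}\|^2 = \sigma_{\min}^2(W_2) + \xi_{\min}^H \Delta_{12} \xi_{\min} \le \sigma_{\min}^2(W_2) + e_\Delta$, and iterating $W_3^H W_3 = W_2 W_2^H - \Delta_{23}$, $W_4^H W_4 = W_3 W_3^H - \Delta_{34}$ together with $W_2 W_2^H \eta_{\min} = \sigma_{\min}^2(W_2) \eta_{\min}$ gives $\|W_4 W_3 \eta_{\min}\|^2 \le (\sigma_{\min}^2(W_2) + e_\Delta)^2 + (\sigma_{\min}^2(W_2) + e_\Delta) e_\Delta$. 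Since $e_\Delta \ll \sigma_{\min}^2(W_2)$ under the bootstrap hypothesis, combining these two estimates with Cauchy--Schwarz and $\|\Sigma - W\|_{op} \le 1.01 \sqrt{d} \sigma_1(\Sigma)$ from Lemma \ref{bound of w_j op} gives
\begin{equation*}
\tfrac{d\sigma_{\min}(W_2)}{dt} \ge -C \sqrt{d} \sigma_1(\Sigma) \sigma_{\min}^3(W_2) - 2 a e_\Delta \sigma_{\min}(W_2)
\end{equation*}
for an absolute constant $C$; the cubic term dominates throughout $[T_1, t^\ast]$.

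Integrating the dominant ODE yields $\sigma_{\min}^{-2}(W_2)(t) \le \sigma_{\min}^{-2}(W_2)(T_1) + 2 C \sqrt{d} \sigma_1(\Sigma)(t - T_1)$. Plugging in $\sigma_{\min}(W_2)(T_1) \ge \epsilon/(2 f_1 \sqrt{d})$ and $t - T_1 \le T_2 = 32 f_1^6 f_2^2 d^9/(\sigma_1(\Sigma) \epsilon^2)$ gives $\sigma_{\min}(W_2)(t^\ast) \ge \epsilon/O(f_1^3 f_2 d^{19/4})$, a factor of order $f_1^8 f_2^2 d^{23/2}$ stronger than the claimed bound and thereby contradicting the assumption $t^\ast < T_1 + T_2$. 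The argument for $\sigma_{\min}(W_3)$ is symmetric (swap the roles of $W_2, W_3$ and use the mirror balance identities).

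The main obstacle is a regularity one: the ODE is clean only when $\sigma_{\min}(W_2)$ is a simple eigenvalue of $W_2^H W_2$, so that $\xi_{\min}$ and $\eta_{\min}$ are analytic in $t$ and the formula $d\sigma_{\min}^2/dt = \xi_{\min}^H \frac{d(W_2^H W_2)}{dt} \xi_{\min}$ is legitimate. When the smallest singular value becomes degenerate, one must work with the analytic SVD of Lemma \ref{ASVD existence} and treat the degenerate block together (any choice of eigenvector within the degenerate subspace gives the same bound, so the conclusion is unaffected), or alternatively apply Arora--Cohen--Hazan Theorem 3 to $\sigma_{\min}(W)$ directly and control the $O(e_\Delta \cdot \max_j \|W_j\|_{op}^6)$ correction arising from unbalanced dynamics, which is negligible under (\ref{ineq, e, stage 2}).
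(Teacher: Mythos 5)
Your proposal is correct and proves the lemma, but it takes a genuinely different route from the paper. The paper's proof works directly with $\|W_2^{-1}\|_{op}$ and uses the one-sided inequality $\frac{\mathrm{d}}{\mathrm{d}t}\|M\|_{op}\le\|\dot M\|_{op}$ (valid unconditionally, no eigenvalue-simplicity needed). It then factors $\dot W_2^{-1} = -R\,W_4^H(\Sigma-W)W_1^H W_2^{-1}-a\Delta_{12}W_2^{-1}+aW_2^{-1}\Delta_{23}$ and observes that both $R$ and $W_1^H W_2^{-1}$ are $O(1)$ by approximate balance (e.g.\ $\|W_1^H W_2^{-1}\|_{op}^2 = \|I+W_2^{-H}\Delta_{12}W_2^{-1}\|_{op}\le 1+e_\Delta\|W_2^{-1}\|_{op}^2$ under the bootstrap), so $\|\dot W_2^{-1}\|_{op}\le O(d^{5/8}\sigma_1^{5/4}(\Sigma))$, a constant, giving linear growth of $\|W_2^{-1}\|_{op}$ over $T_2$. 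You instead compute the pointwise derivative of $\sigma_{\min}^2(W_2)$ using the bottom eigenvector, and use approximate balance to trade the products $W_1^H\xi_{\min}$ and $W_4W_3\eta_{\min}$ for powers of $\sigma_{\min}(W_2)$, yielding a cubic ODE $\dot\sigma_{\min}\gtrsim -C\sqrt{d}\,\sigma_1(\Sigma)\sigma_{\min}^3$ and hence linear growth of $\|W_2^{-1}\|_{op}^2$. Both use the same balance mechanism, but at different places: the paper uses it to kill the $\|W_2^{-1}\|_{op}$ and $\|W_1\|_{op}$ factors in $\|\dot W_2^{-1}\|_{op}$ (leaving $\sigma_{\max}(W_4)$), whereas you use it to show the gradient in the minimal-singular direction is small $\sim\sigma_{\min}^3$ rather than $\sim\sigma_{\max}^3$. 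Your bound is polynomially tighter (scaling like $\epsilon^{-1}$ rather than $\sigma_1^{1/4}\epsilon^{-2}$), which comfortably implies the lemma, at the cost of the eigenvalue-regularity issue you flag; the paper's route sidesteps that issue entirely, which is one reason it is phrased in terms of $\|W_2^{-1}\|_{op}$ rather than $\sigma_{\min}(W_2)$. Your remedy (analytic SVD, or almost-everywhere validity of the eigenvalue derivative plus absolute continuity of $\sigma_{\min}^{-2}$) is standard and sound. One small numerical note: Theorem \ref{stage 1: alignment stage} gives $\|W_j^{-1}(T_1)\|_{op}\le f_1\sqrt d/((1-2^{-17})\epsilon)$, slightly sharper than your $2f_1\sqrt d/\epsilon$, but that slack is harmless.
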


\begin{proof}

We begin with the time derivative of $W_2^{-1}$ and $W_3^{-1}$: 

\begin{equation}
 \begin{aligned}
  \frac{\mathrm{d}W_2^{-1}}{\mathrm{d}t} &= - R W_4^H (\Sigma - W) W_1^H W_2^{-1} - a \Delta_{12} W_2^{-1} + a W_2^{-1} \Delta_{23} \\ 
  \frac{\mathrm{d}W_3^{-1}}{\mathrm{d}t} &= - W_3^{-1} W_4^H (\Sigma - W) W_1^H R^{H-1} - a \Delta_{23} W_3^{-1} + a W_3^{-1} \Delta_{34} . 
 \end{aligned}
\end{equation}

From $\frac{\mathrm{d}}{\mathrm{d}t} \left\|M\right\|_{op} \le \left\|\frac{\mathrm{d}}{\mathrm{d}t} M \right\|_{op}$ (this in equality is from triangular inequality and standard calculus analysis), 

\begin{equation}
 \begin{aligned}
  \frac{\mathrm{d}}{\mathrm{d}t} \left\|W_2^{-1}\right\|_{op} &\le \left\| R \right\|_{op} \left\| W_4 \right\|_{op} \left\| \Sigma - W \right\|_{op} \left\| W_1^H W_2^{-1} \right\|_{op} \\&+ a \left\| \Delta_{12} \right\|_{op} \left\| W_2^{-1} \right\|_{op} + a \left\| W_2^{-1} \right\|_{op} \left\| \Delta_{23} \right\|_{op} \\ 
  \frac{\mathrm{d}}{\mathrm{d}t} \left\|W_3^{-1}\right\|_{op} &\le \left\| W_3^{-1} W_4^H \right\|_{op} \left\| \Sigma - W \right\|_{op} \left\| W_1 \right\|_{op} \left\| R \right\|_{op} \\&+ a \left\| \Delta_{23} \right\|_{op} \left\| W_3^{-1} \right\|_{op} + a \left\| W_3^{-1} \right\|_{op} \left\| \Delta_{34} \right\|_{op} . 
 \end{aligned}
\end{equation}

From Lemma \ref{bound of w_j op} and 

\begin{equation}
 \begin{aligned}
  \left\| R \right\|_{op} &\le \sqrt{1 + \frac{1}{\sigma_{\min}^2(W_2)} \cdot \| \Delta_{23} \|_{op}} \\ 
  \left\| R^{-1} \right\|_{op} &\le \sqrt{1 + \frac{1}{\sigma_{\min}^2(W_3)} \cdot \| \Delta_{23} \|_{op} } \\
  \left\| W_1^H W_2^{-1} \right\|_{op} &= \sqrt{\left\| W_2^{H-1} W_1 W_1^H W_2^{-1} \right\|_{op}} = \sqrt{\left\| I + W_2^{H-1} \Delta_{12} W_2^{-1} \right\|} \\&\le \sqrt{1+e_\Delta\left\| W_2^{-1} \right\|_{op}^2} \\
  \left\| W_3^{-1} W_4^H \right\|_{op} &= \sqrt{\left\| W_3^{-1} W_4^H W_4 W_3^{H-1} \right\|_{op}} = \sqrt{\left\| I - W_3^{-1} \Delta_{34} W_3^{H-1} \right\|} \\&\le \sqrt{1+e_\Delta\left\| W_3^{-1} \right\|_{op}^2} . 
 \end{aligned}
\end{equation}

Further we have 

\begin{equation}
 \begin{aligned}
  \frac{\mathrm{d}}{\mathrm{d}t} \left\|W_2^{-1}\right\|_{op} &\le 2\sqrt{2} \left( 1 + e_\Delta \left\|W_2^{-1}\right\|_{op}^2 \right) d^{5/8} \sigma_1^{5/4}(\Sigma) + \sqrt{2} a e_\Delta \left\|W_2^{-1}\right\|_{op} \\ 
  \frac{\mathrm{d}}{\mathrm{d}t} \left\|W_3^{-1}\right\|_{op} &\le 2\sqrt{2} \left( 1 + e_\Delta \left\|W_3^{-1}\right\|_{op}^2 \right) d^{5/8} \sigma_1^{5/4}(\Sigma) + \sqrt{2} a e_\Delta \left\|W_3^{-1}\right\|_{op} . 
 \end{aligned}
\end{equation}

Combine with (\ref{ineq, e, stage 2}) and (\ref{ineq, a e, stage 2}), for $t \ge T_1$ such that (\ref{ineq: inverse op norm are bounded}) holds, 

\begin{equation}
 \begin{aligned}
  &\max\left( \frac{\mathrm{d}}{\mathrm{d}t}  \left\|W_2^{-1}\right\|_{op} ,\, \frac{\mathrm{d}}{\mathrm{d}t} \left\|W_3^{-1}\right\|_{op} \right) \\
  \le& 2\sqrt{2} (1+\sqrt{3}\cdot 2^{-31}) d^{5/8} \sigma_1^{5/4}(\Sigma) + 2^{-22} f_1^{-9} f_2^{-3} d^{-55/4} \epsilon^{3} \sigma_1^{1/2}(\Sigma) \\
  \le& 2\sqrt{2} (1 + 2^{-20}) d^{5/8} \sigma_1^{5/4}(\Sigma) . 
 \end{aligned}
\end{equation}

From Theorem \ref{stage 1: alignment stage}, $\max\left( \left\|W_2(T_1)^{-1}\right\|_{op} ,\, \left\|W_3(T_1)^{-1}\right\|_{op} \right) \le \frac{1}{\min_{j,k}|\sigma_k(W_j(T_1))|} \le \frac{f_1\sqrt{d}}{(1-2^{-17})\epsilon}$, then the proof of the first inequality is completed via integration during the time interval $[T_1, T_1 + T_2]$. The second inequality follows immediately. 

\end{proof}

\begin{remark}

This Lemma verifies that $W_{2,3}^{-1}$ are bounded (consequently $W_{2,3}$ are full rank), then $R$ is well defined throughout this stage. For $t > T_1 + T_2$, further analysis shows that the minimum singular values of $W_2$ and $W_3$ are lower bounded by $\Omega(\sigma_1^{1/4}(\Sigma))$. 

\end{remark}

\begin{lemma}\label{stage 2, skew-hermitian error}

Skew-hermitian error. 

For $t\in[T_1, T_1+T_2]$, 

\begin{equation}
  \left \| W_1 - W_1^\prime \right \|_F \le 3 f_1 d \epsilon . 
\end{equation}

\end{lemma}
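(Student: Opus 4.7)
The plan is to apply a Grönwall-type argument to the derivative formula for $\|W_1 - W_1^\prime\|_F^2$ derived in Section \ref{section: skew-hermitian error term, gf}. The guiding intuition is that the dominant term $-2\sigma_1(\Sigma)\,\mathrm{tr}\bigl((W_1 - W_1^\prime)^H M_2 (W_1 - W_1^\prime)\bigr)\le 0$ makes the skew-hermitian error approximately non-increasing; all other terms on the right-hand side are either non-positive (the ``skew difference'' $W_1^\prime W_1^H - W_1 W_1^{\prime H}$ contribution) or can be absorbed into an error budget controlled by the balance error $e_\Delta$ and the ratios of operator norms already controlled by Lemmas \ref{bound of w_j op} and \ref{bound of w23 inv op, and relevant terms}.

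First I would establish a tight bound at the starting time $t = T_1$. Writing
\[
W_1(T_1) - W_1^\prime(T_1) = W_2(T_1)^{-1}\bigl(W_2(T_1) W_1(T_1) - W_3(T_1)^H W_4(T_1)^H\bigr),
\]
the inner discrepancy measures how misaligned the two halves of the product are. Under exact balance $W_2 W_1 (W_2 W_1)^H = (W_2 W_2^H)^2 = (W_3^H W_3)^2 = W_3^H W_4^H (W_3^H W_4^H)^H$, so the two matrices differ only by a unitary factor, and the extent of this unitary rotation is governed by (i) the initialization quantity $\sigma_{\min}(W + (WW^H)^{1/2})$ from Theorem \ref{Gaussian random matrix ensemble product, eigenvalues}; (ii) the small change $\|W(T_1) - W(0)\|_{op}$ computed inside the proof of Corollary \ref{Main term at the end of alignment stage, gf}; and (iii) the exponentially small balance error $e_\Delta(T_1)$ from Corollary \ref{stage 1, e_delta}. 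Combining these with $\|W_2^{-1}(T_1)\|_{op} \le f_1\sqrt{d}/\bigl((1-2^{-17})\epsilon\bigr)$ yields a clean initial estimate of the form $\|W_1(T_1) - W_1^\prime(T_1)\|_F \le c\,f_1 d\epsilon$ for some $c < 3$.

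Next I would bootstrap on $[T_1, T_1+T_2]$: assume the target bound $\|W_1 - W_1^\prime\|_F \le 3 f_1 d \epsilon$ holds up to some time $\tau$, and use the ODE to bound its growth. Each error group on the RHS is quantified by the bounds $\max_j\|W_j\|_{op}\le \sqrt{2}d^{1/8}\sigma_1^{1/4}(\Sigma)$, the $\|W_{2,3}^{-1}\|_{op}$ bound of Lemma \ref{bound of w23 inv op, and relevant terms}, and the smallness of $\|I - RR^H\|_{op}$, $\|I - R^HR\|_{op}$, $\|M_1^\prime - M_1\|_{op}$, and $\|\Delta_{jk}\|_F$, each of which carries at least one factor of $e_\Delta\|W_2^{-1}\|_{op}^2$ or $a\,e_\Delta$. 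Inequalities (\ref{ineq, e, stage 2}) and (\ref{ineq, a e, stage 2}) guarantee both are exponentially small in $a$. After integration over $T_2 = 32 f_1^6 f_2^2 d^9/(\sigma_1(\Sigma)\epsilon^2)$, the total accrued growth is bounded by $\tfrac{1}{2} f_1 d \epsilon$, closing the bootstrap.

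The main obstacle is bookkeeping: the derivative formula contains roughly nine groups of error terms, each with nontrivial dependence on $\|W_j\|_{op}$, $\|W_{2,3}^{-1}\|_{op}$, $e_\Delta$, and $\|W_1 - W_1^\prime\|_F$ itself. Showing that every such term, after being integrated over the long interval $T_2$, contributes at most $O(f_1 d\epsilon)$ forces us to exploit the exponential decay of $e_\Delta$ established in stage 1 to defeat the polynomial blow-up of $T_2 \cdot \|W_2^{-1}\|_{op}^k$; this is precisely why the lower bound on $a$ imposed at the start of stage 2 includes the $5\ln(\sigma_1^{1/4}(\Sigma)/\epsilon)$ correction. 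A secondary difficulty is the cross terms in which $(M_1^\prime - M_1)$ couples \emph{linearly} to $(W_1 - W_1^\prime)$: these cannot be absorbed directly into the negative quadratic main term and must be handled by Cauchy-Schwarz splitting (half absorbed, half paid by $e_\Delta$) or by invoking the skew-symmetric cancellation that is visible in the bilinear form $M_1^\prime M_2 M_1 - M_1 M_2 M_1^\prime$.
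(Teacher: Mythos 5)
Your bootstrap argument on $[T_1,T_1+T_2]$ is essentially the paper's proof: assume $\|W_1-W_1^\prime\|_F\le 3f_1d\epsilon$, drop the non-positive terms in the derivative formula of Section \ref{section: skew-hermitian error term, gf}, bound every remaining term by operator-norm products carrying a factor of $e_\Delta\|W_2^{-1}\|_{op}^2$ or $ae_\Delta$ via Lemmas \ref{bound of w_j op} and \ref{bound of w23 inv op, and relevant terms}, and integrate the resulting constant bound $2^{-17}f_1^{-8}f_2^{-3}d^{-25/2}\epsilon^4\sigma_1(\Sigma)$ over $T_2$ to get an increment of order $f_1^{-2}f_2^{-1}d^{-7/2}\epsilon^2\ll f_1d\epsilon$. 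Where you diverge is the initial estimate at $t=T_1$, and there your reasoning is off target: the paper simply uses the triangle inequality $\|W_1(T_1)-W_1^\prime(T_1)\|_F\le\|W_1(T_1)\|_F+\|W_4(T_1)\|_F\|R(T_1)\|_{op}\le(1+2^{-20})\,2f_1d\epsilon$, which already gives $c\approx 2<3$. Your proposed route through $\sigma_{\min}\bigl(W+(WW^H)^{1/2}\bigr)$ cannot yield the needed upper bound, because that quantity controls the \emph{hermitian} term $W_1+W_1^\prime$ from below (it is the content of Corollary \ref{Main term at the end of alignment stage, gf}), not the skew-hermitian difference $W_1-W_1^\prime$ from above; an upper bound on the ``unitary misalignment'' would require control of $\|(U-V)\Sigma_w\|$-type quantities, which at this scale is no better than the trivial sum of norms. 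Also note the paper does not need the Cauchy--Schwarz absorption you anticipate for the $(M_1^\prime-M_1)$ cross terms: because $e_\Delta$ is exponentially small in $a$, those terms are simply dominated outright without borrowing from the negative quadratic term.
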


\begin{proof}

From section \ref{section: skew-hermitian error term, gf}, 

\begin{equation}
 \begin{aligned}
  \frac{\mathrm{d}}{\mathrm{d}t} \left \| W_1 - W_1^\prime \right \|_F^2 
  &= -2 \sigma_1(\Sigma) \mathrm{tr}\left( \left( W_1 - W_1^\prime \right)^H M_2 \left(W_1 - W_1^\prime \right) \right) \\&- \sigma_1(\Sigma) \mathrm{tr}\left( \left( W_1^\prime W_1^H - W_1 W_1^{\prime H} \right) \left( W_1^\prime W_1^H - W_1 W_1^{\prime H} \right)^H \right) \\
  &- \mathrm{tr}\left( M_2 \left( M_1^\prime + M_1\right)M_2\left(W_1 - W_1^\prime\right) \left( W_1 - W_1^\prime \right)^H \right) \\
  &- \mathrm{tr}\left( M_2 \left( M_1^\prime - M_1\right)M_2\left(W_1^\prime + W_1\right) \left( W_1 - W_1^\prime \right)^H\right) \\
  &+ 2 \mathrm{tr}\left( \left[ - M_1^\prime M_2 M_1 + M_1 M_2 M_1^\prime \right] W_1^\prime \left( W_1 - W_1^\prime \right)^H\right) \\
  &+ 2 \Re\left(\mathrm{tr}\left( \left[ W_1 (\Sigma - W^H) W_4 \left( R^{H} R - I \right) W_4^H \right] \left( W_1 - W_1^\prime \right)^H\right)\right) \\
  &+ 2 \Re\left(\mathrm{tr}\left( \left[ \left(I - R R^H \right) W_2^H W_2 W_1 (\Sigma - W^H) \right] \left( W_1 - W_1^\prime \right)^H\right)\right) \\
  &- 2a \Re\left(\mathrm{tr}\left(\Delta_{12} \left(W_1 - W_1^\prime \right) \left( W_1 - W_1^\prime \right)^H \right)\right) \\
  &- 4a \Re\left(\mathrm{tr}\left( W_2^{-1} \Delta_{23} W_2 W_1^\prime \left( W_1 - W_1^\prime \right)^H \right)\right) . 
 \end{aligned}
\end{equation}

Note: $ - M_1^\prime M_2 M_1 + M_1 M_2 M_1^\prime = \frac{1}{2} \left[\left(M_1 - M_1^\prime \right)M_2\left(M_1 + M_1^\prime \right) - \left(M_1 + M_1^\prime \right)M_2\left(M_1 - M_1^\prime\right)\right]$. 

From Lemma \ref{bound of w23 inv op, and relevant terms}, for $t\in[T_1, T_1+T_2]$, 

\begin{equation}
 \begin{aligned}
  \max\left(\left\| R^{H} R - I \right\|_{op},\, \left\| I - R R^H \right\|_{op} \right) &\le e_\Delta\left\| W_2^{-1} \right\|_{op}^2 \\&\le \sqrt{3} \cdot 2^{-31} f_1^{-9} f_2^{-3} d^{-115/8} \epsilon^{3} \sigma_1^{-3/4}(\Sigma) , 
 \end{aligned}
\end{equation}

\begin{equation}
 \begin{aligned}
  \left\| M_1 - M_1^\prime \right\|_{op} &\le \sqrt{6} \cdot \frac{\max_{j,k} \sigma_{k}^2(W_j)}{ \sigma_{\min}^2(W_2)} e_{\Delta} \\
  &\le 2^{-27} f_1^{-9} f_2^{-3} d^{-113/8} \epsilon^{3} \sigma_1^{-1/4}(\Sigma) , 
 \end{aligned}
\end{equation}

\begin{equation}
 \begin{aligned}
  \left\|M_2 - \frac{M_1 + M_1^\prime}{2} \right\|_{op} &\le \left\| \Delta_{12} \right\|_{op} + \frac{1}{2} \left\|M_1 - M_1^\prime \right\|_{op} \le \left[1 + \frac{\sqrt{6}}{2} \cdot \frac{\max_{j,k} \sigma_{k}^2(W_j)}{ \sigma_{\min}^2(W_2)} \right] e_{\Delta} \\
  &\le 2^{-28} f_1^{-9} f_2^{-3} d^{-113/8} \epsilon^{3} \sigma_1^{-1/4}(\Sigma) . 
 \end{aligned}
\end{equation}

Consequently: 

\begin{equation}
 \begin{aligned}
  \left\| R\right\|_{op} &\le \sqrt{1+e_\Delta\left\| W_2^{-1} \right\|_{op}^2} \le 1 + \sqrt{3} \cdot 2^{-32} f_1^{-9} f_2^{-3} d^{-115/8} \epsilon^{3} \sigma_1^{-3/4}(\Sigma) , 
 \end{aligned}
\end{equation}

\begin{equation}
  \left\| W_1^\prime \right\|_{op} \le \left\| W_1^\prime \right\|_{F} \le \sqrt{2} d^{1/8} \sigma_1^{1/4}(\Sigma) \left\| R\right\|_{op} \le \left(1+2^{-31}\right)\sqrt{2} d^{1/8} \sigma_1^{1/4}(\Sigma) , 
\end{equation}

\begin{equation}
  \left\| \frac{M_1 + M_1^\prime}{2} \right\|_{op} \le \left\| M_2 \right\|_{op} + \left\|M_2 - \frac{M_1 + M_1^\prime}{2} \right\|_{op} \le \left(1 + 2^{-29}\right)2 d^{1/4} \sigma_1^{1/2}(\Sigma) , 
\end{equation}

\begin{equation}
 \begin{aligned}
  \left\| M_1^\prime M_2 M_1 - M_1 M_2 M_1^\prime \right\|_{op} &\le \left\| M_1 - M_1^\prime \right\| \left\| M_2 \right \| \left\| M_1 + M_1^\prime \right\| \\ &\le \left(1 + 2^{-29}\right) 2^{-25} f_1^{-9} f_2^{-3} d^{-109/8} \epsilon^{3} \sigma_1^{3/4}(\Sigma) . 
 \end{aligned}
\end{equation}

By combining all results above, for $t \in [T_1, T_1 + T_2]$ such that $\left \| W_1 - W_1^\prime \right \|_F \le 3 f_1 d \epsilon$ holds, 

\begin{equation}
 \begin{aligned}
  \frac{\mathrm{d}}{\mathrm{d}t} \left \| W_1 - W_1^\prime \right \|_F^2 
  &\le - 0 - 0 - 0 \\
  &+ \|M_2\|_F \left\| M_1^\prime - M_1\right\|_{op} \|M_2\|_{op} \left( \left\| W_1^\prime \right\|_{op} + \left\| W_1 \right\|_{op} \right) \left\| W_1 - W_1^\prime \right\|_F \\
  &+ 2  \left\| - M_1^\prime M_2 M_1 + M_1 M_2 M_1^\prime \right\|_{op} \left\| W_1^\prime \right\|_{F} \left\| W_1 - W_1^\prime \right\|_F \\
  &+ 2 \max_j \|W_j\|_{op}^3 \|\Sigma - W\|_F \left( \left\| R^{H} R - I \right\|_{op} + \left\|I - R R^H \right\|_{op} \right) \left\| W_1 - W_1^\prime \right\|_F \\
  &+ 2a e_\Delta \left\|W_1 - W_1^\prime \right\|_F^2  \\
  &+ 4a e_\Delta \left\|W_2^{-1}\right\|_{op} \|W_2\|_F \left\| W_1^\prime \right\|_{op} \left\| W_1 - W_1^\prime \right\|_F \\
  &\le 2^{-22} f_1^{-8} f_2^{-3} d^{-25/2} \epsilon^4 \sigma_1(\Sigma) \\
  &+ 2^{-21} f_1^{-8} f_2^{-3} d^{-25/2} \epsilon^4 \sigma_1(\Sigma) \\ 
  &+ 2^{-24} f_1^{-8} f_2^{-3} d^{-25/2} \epsilon^4 \sigma_1(\Sigma) \\ 
  &+ 2^{-26} f_1^{-13} f_2^{-5} d^{-171/8} \epsilon^7 \sigma_1^{1/4}(\Sigma) \\ 
  &+ 2^{-18} f_1^{-8} f_2^{-3} d^{-25/2} \epsilon^4 \sigma_1(\Sigma) \\ 
  &\le 2^{-17} f_1^{-8} f_2^{-3} d^{-25/2} \epsilon^4 \sigma_1(\Sigma) . 
 \end{aligned}
\end{equation}

From Theorem \ref{stage 1: alignment stage}, at $t = T_1$, 

\begin{equation}
 \begin{aligned}
  \left \| W_1(T_1) - W_1^\prime(T_1) \right \|_F &\le \| W_1(T_1) \|_F + \left \| W_1^\prime(T_1) \right \|_F \le \| W_1(T_1) \|_F + \left \| W_4(T_1) \right \|_F \|R(T_1)\|_{op} \\
  &\le \left(1+2^{-32}\right) 2 \sqrt{d} \cdot \left(1 + 2^{-21}\right)f_1 \sqrt{d} \epsilon \le \left(1+2^{-20}\right) 2 f_1 d \epsilon . 
 \end{aligned}
\end{equation}

Thus $\left \| W_1 - W_1^\prime \right \|_F^2 \le \sqrt{\left[\left(1+2^{-20}\right) 2 f_1 d \epsilon \right]^2 + 2^{-17} f_1^{-8} f_2^{-3} d^{-25/2} \epsilon^4 \sigma_1(\Sigma) (t - T_1)}$ , when both $t \in [T_1, T_1 + T_2]$ and $\left \| W_1 - W_1^\prime \right \|_F^2 \le 3 f_1 d \epsilon$ hold. Then 

\begin{equation}
 \begin{aligned}
  &\left \| W_1(T_1 + T_2) - W_1^\prime(T_1 + T_2) \right \|_F^2 \\ \le& \sqrt{\left[\left(1+2^{-20}\right) 2 f_1 d \epsilon \right]^2 + 2^{-17} f_1^{-8} f_2^{-3} d^{-25/2} \epsilon^4 \sigma_1(\Sigma) T_2} \\
  \le& \sqrt{\left[\left(1+2^{-20}\right) 2 f_1 d \epsilon \right]^2 + 2^{-12} f_1^{-2} f_2^{-1} d^{-7/2} \epsilon^2} < 3 f_1 d \epsilon . 
 \end{aligned}
\end{equation}

which completes the proof. 

\end{proof}

\begin{corollary}\label{stage 2, main term}

The minimum eigenvalue of Hermitian term. 

For any $\sigma_k(W_1 + W_1^\prime)(T_1) \ge\frac{\epsilon}{2 f_1^3 f_2 d^{9/2}}$, it takes at most time $T_2$ to increase to $2^{3/4} \sigma_1^{1/4}(\Sigma)$. 

\end{corollary}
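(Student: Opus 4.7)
The approach is to track the growth of $\sigma_k^2 \coloneqq \sigma_k^2(W_1 + W_1')$ via the derivative formula derived in Section~\ref{section: hermitian main term, gf} and to show that it satisfies a Bernoulli-type inequality of the form $\tfrac{d}{dt}\sigma_k^2 \gtrsim \sigma_1(\Sigma)\,\sigma_k^4$ throughout the relevant regime, which then integrates to give the desired hitting time $T_2$.

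First I would instantiate the formula from Section~\ref{section: hermitian main term, gf} and identify the driving term $2\sigma_1(\Sigma)\sigma_k^2\,\eta_k^H M_2 \eta_k$. The crucial lower bound is $\eta_k^H M_2 \eta_k \gtrsim \tfrac{1}{2}\sigma_k^2$, which I plan to derive as follows: since $(W_1+W_1')\chi_k = \sigma_k\eta_k$, we have $\eta_k^H M_2 \eta_k = \sigma_k^{-2}\|W_2(W_1+W_1')\chi_k\|^2$; expanding $(W_1+W_1')^H M_2 (W_1+W_1')$ and using $M_2 = \tfrac{M_1 + M_1'}{2} + O(e_\Delta)$ together with the identity $(W_1+W_1')^H(M_1+M_1')(W_1+W_1') \succeq \tfrac{1}{2}((W_1+W_1')^H(W_1+W_1'))^2$ (from $\tfrac{M_1+M_1'}{2} \succeq \tfrac{1}{2}(W_1+W_1')(W_1+W_1')^H \cdot (\text{bound})$ up to the $W_1-W_1'$ skew part), yields the required estimate, modulo $O(e_\Delta)$ tails.

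Second, I would bound each of the remaining terms in the derivative, treating them as perturbations. The cross-terms $M_2(M_1'-M_1)M_2(W_1-W_1')$, $M_1'M_2M_1 - M_1M_2M_1'$, and the $R^HR-I$, $I-RR^H$ contributions all carry at least one factor of $\|M_1-M_1'\|_{op}$, $\|W_1-W_1'\|_F$, $\|R^HR-I\|_{op}$, or $e_\Delta$; using Lemma~\ref{bound of w_j op}, Lemma~\ref{bound of w23 inv op, and relevant terms}, Lemma~\ref{stage 2, skew-hermitian error}, and the decay bounds \eqref{ineq, e, stage 2}, \eqref{ineq, a e, stage 2}, each is shown to be bounded by $\sigma_1(\Sigma)\sigma_k^2\cdot o(1)$ in the regime $\sigma_k \le 2^{3/4}\sigma_1^{1/4}(\Sigma)$. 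The regularization contributions $2a\sigma_k^2 \Re(\eta_k^H\Delta_{12}\eta_k)$ and $4a\sigma_k \Re(\eta_k^H W_2^{-1}\Delta_{23} W_2 W_1'\chi_k)$ are absorbed using the $ae_\Delta$ bound \eqref{ineq, a e, stage 2}. Finally, the saturation term $-\sigma_k^2\,\eta_k^H M_2(M_1+M_1')M_2\eta_k \ge -C\,\sqrt{d}\,\sigma_1(\Sigma)\,\sigma_k^4$ is controlled by the upper bound $\|W_j\|_{op}\le \sqrt{2}d^{1/8}\sigma_1^{1/4}(\Sigma)$, but only while $\sigma_k^2 \le \sqrt{2}\sigma_1^{1/2}(\Sigma)$, which is precisely the stopping condition.

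Third, collecting the above gives $\tfrac{d}{dt}\sigma_k^2 \ge c_0\,\sigma_1(\Sigma)\,\sigma_k^4$ for an absolute constant $c_0 \ge \tfrac{1}{8}$, valid as long as $\sigma_k \le 2^{3/4}\sigma_1^{1/4}(\Sigma)$. Rewriting this as $-\tfrac{d}{dt}\sigma_k^{-2} \ge c_0\sigma_1(\Sigma)$ and integrating from $T_1$, I obtain
\[
\sigma_k^{-2}(T_1+s) \le \sigma_k^{-2}(T_1) - c_0\sigma_1(\Sigma)\,s \le 4f_1^6 f_2^2 d^9/\epsilon^2 - c_0\sigma_1(\Sigma)\,s.
\]
With $T_2 = 32 f_1^6 f_2^2 d^9/(\sigma_1(\Sigma)\epsilon^2)$, the right-hand side drops below $2^{-3/2}\sigma_1(\Sigma)^{-1/2}$, so $\sigma_k^2(T_1+T_2) \ge \sqrt{2}\sigma_1^{1/2}(\Sigma)$, i.e.\ $\sigma_k(T_1+T_2) \ge 2^{3/4}\sigma_1^{1/4}(\Sigma)$. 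A standard bootstrap argument ensures the ``as long as'' condition holds throughout $[T_1,T_1+T_2]$: the first exit time either lies beyond $T_1+T_2$, or $\sigma_k$ has already hit $2^{3/4}\sigma_1^{1/4}(\Sigma)$, in which case we are done.

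\textbf{Main obstacle.} The hardest step is the bookkeeping in the second stage: each of the eight to ten terms in the derivative must be paired with a small factor ($e_\Delta$, $\|M_1-M_1'\|_{op}$, $\|W_1-W_1'\|_F$, $\|R^HR-I\|_{op}$, or $ae_\Delta$) while simultaneously avoiding loss of the dominant $\sigma_k^4$ scale. Terms such as $M_1'M_2M_1 - M_1M_2M_1'$ must be symmetrized via $ABC - CBA = \tfrac{1}{2}[(A-C)B(A+C) - (A+C)B(A-C)]$ to expose an $M_1-M_1'$ factor, and the cross-term involving $W_1-W_1'$ and $M_1-M_1'$ must be handled by Cauchy--Schwarz to avoid spurious $\sigma_k^3$ contributions that would destabilize the Bernoulli comparison.
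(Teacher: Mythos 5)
Your overall plan is the same as the paper's: take the time derivative of $\sigma_k^2(W_1+W_1')$ from Section~\ref{section: hermitian main term, gf}, decompose via $P=\tfrac{W_1+W_1'}{2}$, $Q=\tfrac{W_1-W_1'}{2}$ with $\tfrac{M_1+M_1'}{2}=PP^H+QQ^H$, bound the non-balanced errors using the Stage~2 estimates, reduce to a Bernoulli-type inequality and integrate. That matches the paper, and your integration in reciprocal form is a minor (but equivalent) repackaging of their integral.

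However, your treatment of the saturation term contains a genuine gap. You claim
\[
-\sigma_k^2\,\eta_k^H M_2(M_1+M_1')M_2\,\eta_k \;\ge\; -C\sqrt{d}\,\sigma_1(\Sigma)\,\sigma_k^4
\]
by invoking the crude operator-norm bound $\|W_j\|_{op}\le\sqrt{2}d^{1/8}\sigma_1^{1/4}(\Sigma)$ plus the regime restriction $\sigma_k^2\le\sqrt{2}\sigma_1^{1/2}(\Sigma)$. This does not work: a naive estimate gives $\eta_k^H M_2(M_1+M_1')M_2\eta_k\le\|M_2\|_{op}^2\|M_1+M_1'\|_{op}=O(d^{3/4}\sigma_1^{3/2}(\Sigma))$, which is $O(d^{3/4}\sigma_1^{3/2}(\Sigma))\sigma_k^2$ after multiplication, not $O(\sqrt{d}\sigma_1(\Sigma)\sigma_k^4)$ --- the scalings in $\sigma_k$ and $d$ are both off, and even if the $\sigma_k$-power came out right, any $d$-dependent coefficient in front of $-\sigma_1(\Sigma)\sigma_k^4$ would overwhelm the driving term $+\tfrac{1}{2}\sigma_1(\Sigma)\sigma_k^4$ and the Bernoulli comparison would fail. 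The step that makes the paper's argument go through is that $\eta_k$ is an eigenvector of $PP^H$ with eigenvalue $\tfrac{\sigma_k^2}{4}$ (since $P^H\eta_k=\tfrac{\sigma_k}{2}\chi_k$, $P\chi_k=\tfrac{\sigma_k}{2}\eta_k$), so that
\[
\eta_k^H\Bigl(\tfrac{M_1+M_1'}{2}\Bigr)^3\eta_k
= \tfrac{\sigma_k^4}{16}\,\eta_k^H\Bigl(\tfrac{M_1+M_1'}{2}\Bigr)\eta_k
+ \Bigl\|\tfrac{M_1+M_1'}{2}\Bigr\|_{op}\Bigl(\tfrac{1}{2}\sigma_k^2\|Q\|_{op}^2 + \|Q\|_{op}^4\Bigr),
\]
and the leading piece $\tfrac{\sigma_k^4}{16}\eta_k^H(\tfrac{M_1+M_1'}{2})\eta_k$ is then combined with the driving term as $(\sigma_1(\Sigma)-\tfrac{\sigma_k^4}{16})\eta_k^H(\tfrac{M_1+M_1'}{2})\eta_k$, after which the \emph{lower} bound $\eta_k^H(\tfrac{M_1+M_1'}{2})\eta_k\ge\tfrac{\sigma_k^2}{4}$ is applied; no operator-norm upper bound on $\tfrac{M_1+M_1'}{2}$ (and hence no $d^{1/4}$ factor) is ever used on this term. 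Without this cancellation, your bound for $c_0$ cannot close. You should also correct the stated constant $\gtrsim\tfrac{1}{2}\sigma_k^2$ for $\eta_k^H M_2\eta_k$: the tight bound from $PP^H\eta_k=\tfrac{\sigma_k^2}{4}\eta_k$ is $\tfrac{1}{4}\sigma_k^2$, which is what makes the downstream constants line up with $T_2$.
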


\begin{proof}

We analyze the dynamics of $\lambda_{k}\left(\left(W_1 + W_1^\prime \right)^H\left(W_1 + W_1^\prime \right)\right) = \sigma_{k}^2 $. The definition of $\eta_k(t)$ and $\chi_k(t)$ follows section \ref{section: hermitian main term, gf}. The dynamics can be expressed as below: 

\begin{equation}
 \begin{aligned}
  \frac{\mathrm{d}}{\mathrm{d}t} \sigma_{k}^2 &= 2\sigma_1(\Sigma) \sigma_{k}^2 \eta_k^H M_2 \eta_k + \sigma_1(\Sigma) \sigma_{k}^2 \chi_k^H \left(W_1 - W_1^{\prime}\right)^H \left(W_1 - W_1^{\prime}\right)\chi_k \\
  &+ \sigma_1(\Sigma)\sigma_{k} \Re\left(\eta_k^H \left( M_1^\prime - M_1 \right) \left(W_1 - W_1^{\prime}\right)\chi_k\right) \\
  &- \sigma_{k}^2 \eta_k^H M_2 (M_1 + M_1^\prime ) M_2 \eta_k - \sigma_{k} \Re\left(\eta_k^H M_2(M_1 - M_1^\prime) M_2 (W_1 - W_1^{\prime}) \chi_k \right) \\
  &+ \sigma_k \Re\left( \eta_k^H \left( M_1^\prime M_2 M_1 - M_1 M_2 M_1^\prime \right) \left( W_1^{\prime} - W_1 \right) \chi_k \right) \\ 
  &- 2\sigma_{k} \Re\left(\eta_k^H W_1(\Sigma - W^H) W_4 \left(R^{H} R - I\right) W_4^{H} \chi_k \right) \\ 
  &- 2\sigma_{k} \Re\left(\eta_k^H \left(I - R R^H\right) M_2 W_1 (\Sigma - W^H) \chi_k \right) \\ 
  &- 2a\sigma_{k}^2 \Re\left(\eta_k^H \Delta_{12} \eta_k\right) + 4a \sigma_{k} \Re\left(\eta_k^H W_2^{-1} \Delta_{23} W_2 W_1^{\prime} \chi_k\right) . 
 \end{aligned}
\end{equation}

From $\left\|M_2 - \frac{M_1 + M_1^\prime}{2} \right\|_{op} \le 2^{-28} f_1^{-9} f_2^{-3} d^{-113/8} \epsilon^{3} \sigma_1^{-1/4}(\Sigma)$ and $\left\| \frac{M_1 + M_1^\prime}{2} \right\|_{op}  \le \left(1 + 2^{-29}\right) 2 d^{1/4} \sigma_1^{1/2}(\Sigma)$, 

\begin{equation}
 \begin{aligned}
  \eta_k^H M_2 \eta_k &\ge \eta_k^H \left( \frac{M_1 + M_1^\prime}{2} \right) \eta_k - \left\|M_2 - \frac{M_1 + M_1^\prime}{2} \right\|_{op}\\ 
  &\ge \eta_k^H \left( \frac{M_1 + M_1^\prime}{2} \right) \eta_k - 2^{-28} f_1^{-9} f_2^{-3} d^{-113/8} \epsilon^{3} \sigma_1^{-1/4}(\Sigma) \\
  \eta_k^H M_2 (M_1 + M_1^\prime ) M_2 \eta_k &\le \eta_k^H \left( \frac{M_1 + M_1^\prime}{2} \right) (M_1 + M_1^\prime) \left( \frac{M_1 + M_1^\prime}{2} \right) \eta_k \\&+ 2\left\|M_2 - \frac{M_1 + M_1^\prime}{2} \right\|_{op} \left\| \frac{M_1 + M_1^\prime}{2} \right\|_{op} \left( \left\|M_2 \right\|_{op} + \left\| \frac{M_1 + M_1^\prime}{2} \right\|_{op} \right) \\
  &\le \eta_k^H \left( \frac{M_1 + M_1^\prime}{2} \right) (M_1 + M_1^\prime) \left( \frac{M_1 + M_1^\prime}{2} \right) \eta_k \\&+ \left(1 + 2^{-28}\right)2^{-24} f_1^{-9} f_2^{-3} d^{-109/8} \epsilon^3 \sigma_1^{3/4}(\Sigma) . 
 \end{aligned}
\end{equation}

By Lemma \ref{stage 2, skew-hermitian error}, $\left\|W_1 - W_1^{\prime}\right\|_{op} \le \left\|W_1 - W_1^{\prime}\right\|_{F} \le 3f_1 d \epsilon $, 

\begin{equation}
 \begin{aligned}
  \frac{\mathrm{d}}{\mathrm{d}t} \sigma_{k}^2 &\ge 2\sigma_1(\Sigma) \sigma_{k}^2 \eta_k^H M_2 \eta_k + 0 \\
  &- \sigma_1(\Sigma)\sigma_{k} \left\| M_1^\prime - M_1 \right\|_{op} \left\|W_1 - W_1^{\prime}\right\|_{op} \\
  &- \sigma_{k}^2 \eta_k^H M_2 (M_1 + M_1^\prime ) M_2 \eta_k - \sigma_{k} \max_{j}\|W_j\|_{op}^4 \left\|M_1 - M_1^\prime \right\|_{op} \left\|W_1 - W_1^{\prime} \right\|_{op} \\
  &- \sigma_k \left\| M_1^\prime M_2 M_1 - M_1 M_2 M_1^\prime \right\|_{op} \left\| W_1^{\prime} - W_1 \right\|_{op} \\ 
  &- 2\sigma_{k} \max_{j}\|W_j\|_{op}^3 \left\|\Sigma - W\right\|_{op} \left( \left\|R^{H} R - I\right\|_{op} + \left\|I - R R^H\right\|_{op} \right) \\
  &- 2a e_{\Delta} \sigma_{k}^2 - 4a e_{\Delta} \sigma_{k} \left\|W_2^{-1}\right\|_{op} \max_{j}\|W_j\|_{op}^2 \|R\|_{op}  \\
  &\ge 2\sigma_1(\Sigma) \sigma_{k}^2 \left( \eta_k^H \left( \frac{M_1 + M_1^\prime}{2} \right) \eta_k - 2^{-28} f_1^{-9} f_2^{-3} d^{-113/8} \epsilon^{3} \sigma_1^{-1/4}(\Sigma) \right) \\
  &- \sigma_{k} \left\|W_1 - W_1^{\prime}\right\|_{op} \cdot 2^{-27} f_1^{-9} f_2^{-3} d^{-113/8} \epsilon^3 \sigma_1^{3/4}(\Sigma) \\
  &- \sigma_{k}^2 \left[\eta_k^H \left( \frac{M_1 + M_1^\prime}{2} \right) (M_1 + M_1^\prime) \left( \frac{M_1 + M_1^\prime}{2} \right) \eta_k + \left(1 + 2^{-28}\right)2^{-24} f_1^{-9} f_2^{-3} d^{-109/8} \epsilon^3 \sigma_1^{3/4}(\Sigma)\right] \\
  &- \sigma_{k}\left\| W_1 - W_1^{\prime} \right\|_{op} \cdot 2^{-25} f_1^{-9} f_2^{-3} d^{-109/8} \epsilon^3 \sigma_1^{3/4}(\Sigma) \\
  &- \sigma_k \left\| W_1 - W_1^{\prime} \right\|_{op} \cdot \left(1 + 2^{-29}\right) 2^{-25} f_1^{-9} f_2^{-3} d^{-109/8} \epsilon^{3} \sigma_1^{3/4}(\Sigma) \\ 
  &- \sigma_{k} \cdot 2^{-25} f_1^{-9} f_2^{-3} d^{-27/2} \epsilon^3 \sigma_1(\Sigma) \\
  & - \sigma_{k}^2 \cdot 2^{-29} f_1^{-15} f_2^{-5} d^{-187/8} \epsilon^5 \sigma_1^{1/4}(\Sigma) - \sigma_{k} \cdot 2^{-22} f_1^{-9} f_2^{-3} d^{-27/2} \epsilon^3 \sigma_1(\Sigma) \\
  &\ge 2 \sigma_{k}^2 \eta_k^H \left[ \sigma_1(\Sigma) \left( \frac{M_1 + M_1^\prime}{2} \right) - \left( \frac{M_1 + M_1^\prime}{2} \right)^3 \right] \eta_k \\ 
  &- \sigma_{k} \cdot \left(1+2^{-1}\right) 2^{-22} f_1^{-9} f_2^{-3} d^{-27/2} \epsilon^3 \sigma_1(\Sigma) - \sigma_{k}^2 \cdot 2^{-23} f_1^{-9} f_2^{-3} d^{-109/8} \epsilon^3 \sigma_1(\Sigma) . 
 \end{aligned}
\end{equation}

under $\sigma_k \ge \frac{\epsilon}{2 f_1^3 f_2 d^{9/2}}$, 

\begin{equation}
 \begin{aligned}
  \frac{\mathrm{d}}{\mathrm{d}t} \sigma_{k}^2 &\ge 2 \sigma_{k}^2 \eta_k^H \left[ \sigma_1(\Sigma) \left( \frac{M_1 + M_1^\prime}{2} \right) - \left( \frac{M_1 + M_1^\prime}{2} \right)^3 \right] \eta_k - 2^{-18} \sigma_1(\Sigma) \sigma_{k}^4 . 
 \end{aligned}
\end{equation}

Denote $P = \frac{W_1 + W_1^{\prime}}{2}$, $Q = \frac{W_1 - W_1^{\prime}}{2}$. Notice that 

\begin{equation}
  PP^H + QQ^H = \frac{M_1 + M_1^\prime}{2}, P^H \eta_k = \frac{1}{2}
  \sigma_k \chi_k , 
\end{equation}

\begin{equation}
 \begin{aligned}
  \eta_k^H \left( \frac{M_1 + M_1^\prime}{2} \right) \eta_k &= \eta_k^H \left( PP^H + QQ^H \right) \eta_k \ge \frac{1}{4} \sigma_k^2 , 
 \end{aligned}
\end{equation}

\begin{equation}
 \begin{aligned}
  \eta_k^H \left( \frac{M_1 + M_1^\prime}{2} \right)^3 \eta_k &= \eta_k^H \left( PP^H + QQ^H \right) \left( \frac{M_1 + M_1^\prime}{2} \right) \left( PP^H + QQ^H \right) \eta_k \\ 
  &= \frac{1}{16} \sigma_k^4 \eta_k^H \left( \frac{M_1 + M_1^\prime}{2} \right) \eta_k + \eta_k^H QQ^H \left( \frac{M_1 + M_1^\prime}{2} \right) QQ^H \eta_k \\ & + \frac{1}{4} \sigma_k^2 \eta_k^H\left[ QQ^H \left( \frac{M_1 + M_1^\prime}{2} \right) + \left( \frac{M_1 + M_1^\prime}{2} \right) QQ^H \right] \eta_k \\ 
  &\le \frac{1}{16} \sigma_k^4 \eta_k^H \left( \frac{M_1 + M_1^\prime}{2} \right) \eta_k + \left\| \frac{M_1 + M_1^\prime}{2} \right\|_{op} \left( \frac{1}{2} \sigma_k^2 \|Q\|_{op}^2 + \|Q\|_{op}^4 \right) . 
 \end{aligned}
\end{equation}

Notice $\|Q\|_{op} = \frac{1}{2} \left\|W_1 - W_1^{\prime}\right\|_{F} \le \frac{3}{2} f_1 d \epsilon \le \sigma_k \cdot 3 f_1^4 f_2 d^{11/2}$, $\epsilon \le \frac{1}{32 f_1^5 f_2 d^{53/8}} \sigma_1^{1/4}(\Sigma)$, 

\begin{equation}
 \begin{aligned}
  \frac{\mathrm{d}}{\mathrm{d}t} \sigma_{k}^2 &\ge 2 \sigma_{k}^2 \left[ \left(\sigma_1(\Sigma) - \frac{1}{16} \sigma_k^4 \right) \eta_k^H \left( \frac{M_1 + M_1^\prime}{2} \right) \eta_k - \left\| \frac{M_1 + M_1^\prime}{2} \right\|_{op} \left( \frac{1}{2} \sigma_k^2 \|Q\|_{op}^2 + \|Q\|_{op}^4 \right) \right] \\&- 2^{-18} \sigma_1(\Sigma) \sigma_{k}^4 \\ 
  &\ge \frac{1}{2} \sigma_{k}^4  \left(\sigma_1(\Sigma) - \frac{1}{16} \sigma_k^4 \right) - 2 \sigma_{k}^2\left\| \frac{M_1 + M_1^\prime}{2} \right\|_{op} \|Q\|_{op}^2 \left( \frac{1}{2} \sigma_k^2  + \|Q\|_{op}^2 \right) - 2^{-18} \sigma_1(\Sigma) \sigma_{k}^4 \\ 
  &\ge \frac{1}{2} \sigma_{k}^4  \sigma_1(\Sigma) - \frac{1}{32} \sigma_k^8 - 81\left(1+2^{-5}\right) f_1^{10} f_2^{2} d^{53/4} \epsilon^2 \sigma_1^{1/2}(\Sigma) \sigma_{k}^4 - 2^{-18} \sigma_1(\Sigma) \sigma_{k}^4 \\ 
  &\ge \frac{3}{8} \sigma_{k}^4 \sigma_1(\Sigma) - \frac{1}{32} \sigma_k^8 . 
 \end{aligned}
\end{equation}

This indicates that for $\sigma_k \in \left[\frac{\epsilon}{2 f_1^3 f_2 d^{9/2}}, 2^{3/4} \sigma_1^{1/4}(\Sigma)\right]$, $\sigma_k$ is monotonically increasing. By standard calculus, it takes at most time $\Delta t \left(\sigma_k \ge 2^{3/4} \sigma_1^{1/4}(\Sigma) \right) \le T_2$ for $\sigma_k$ to increase from at least $\frac{\epsilon}{2 f_1^3 f_2 d^{9/2}}$ to $2^{3/4} \sigma_1^{1/4}(\Sigma)$: 

\begin{equation}
 \begin{aligned}
  \Delta t \left(\sigma_k \ge 2^{3/4} \sigma_1^{1/4}(\Sigma) \right) &\le \int_{\frac{\epsilon}{2 f_1^3 f_2 d^{9/2}}}^{2 \cdot \sqrt[4]{\frac{\sigma_1(\Sigma)}{{2}} }} \left(\frac{3}{8} \sigma_1(\Sigma) \sigma_{k}^4 - \frac{1}{32} \sigma_k^8 \right)^{-1} \mathrm{d} \left(\sigma_k^2\right) \\ 
  &= \int_{\frac{\epsilon}{4 f_1^6 f_2^2 d^{9}}}^{4 \cdot \sqrt{\frac{\sigma_1(\Sigma)}{{2}}} } \left(\frac{3}{8} \sigma_1(\Sigma) \lambda_k^2 - \frac{1}{32} \lambda_k^4 \right)^{-1} \mathrm{d} \lambda_k \\
  &\le \int_{\frac{\epsilon}{4 f_1^6 f_2^2 d^{9}}}^{4 \cdot \sqrt{\frac{\sigma_1(\Sigma)}{{2}}} } \left(\frac{3}{8} \sigma_1(\Sigma) \lambda_k^2 - \frac{1}{4} \sigma_1(\Sigma) \lambda_k^2 \right)^{-1} \mathrm{d} \lambda_k \\
  &\le 8 \left[ \left( \frac{\epsilon}{4 f_1^6 f_2^2 d^{9}} \right)^{-1} - \left( 4 \cdot \sqrt{\frac{\sigma_1(\Sigma)}{{2}}}  \right)^{-1} \right]\sigma_1^{-1}(\Sigma) \le T_2 . 
 \end{aligned}
\end{equation}

And for $t \in \left[T_1 + \Delta t \left(\sigma_k \ge 2^{3/4} \sigma_1^{1/4}(\Sigma) \right), T_1 + T_2\right]$, $\sigma_k$ does not decrease to less than $2^{3/4} \sigma_1^{1/4}(\Sigma)$ if $t \le T_1 + T_2$. This is from the continuity of $\sigma_k$ and the time derivative of $\sigma_k^2$ at $\sigma_k = 2^{3/4} \sigma_1^{1/4}(\Sigma)$, $t \le T_1 + T_2$ is positive: 

\begin{equation}
  \left. \frac{\mathrm{d}}{\mathrm{d}t}\sigma_{k}^2 \right|_{\sigma_k = 2^{3/4} \sigma_1^{1/4}(\Sigma), t\le T_1+T_2} \ge \frac{1}{8} \sigma_1(\Sigma) \cdot \left( 2^{3/4} \sigma_1^{1/4}(\Sigma) \right)^4 > 0 . 
\end{equation}

\end{proof}

\subsection{Stage 3: local convergence stage}\label{stage 3: delta convergence, gf}

In this stage, we analysis the time to reach $\epsilon_{\rm conv}$-convergence, that is 

\begin{equation}
  T(\epsilon_{\rm conv}) = \inf_t \{\mathcal{L}(t) \le \epsilon_{\rm conv} \} . 
\end{equation}

\begin{lemma}\label{loval convergence stage: lower bound of main term}

$\sigma_{\min}\left(W_1 + W_1^\prime\right)$ is lower bounded, while the skew-hermitian error is upper bounded. 

For $t \ge T_1 + T_2$, 

\begin{equation}\label{ineq: stage 3, lower bound of main term}
 \begin{aligned}
  \sigma_{\min}\left(W_1 + W_1^\prime\right)(t) &\ge 2^{3/4} \sigma_1^{1/4}(\Sigma) \\ 
  \left \| W_1 - W_1^\prime \right \|_F &\le 3 f_1 d \epsilon . 
 \end{aligned}
\end{equation}

\end{lemma}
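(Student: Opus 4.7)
The plan is to establish (\ref{ineq: stage 3, lower bound of main term}) by a continuity/bootstrap argument starting from the endpoint of stage 2. By Theorem \ref{stage 2: saddle avoidance stage, gf}, both inequalities already hold at $t = T_1 + T_2$. Let $t^{\star}$ denote the supremum of all $t \ge T_1 + T_2$ such that both inequalities are valid on the whole interval $[T_1 + T_2, t]$. Assume for contradiction $t^{\star} < \infty$. By continuity, at $t^{\star}$ at least one of the two inequalities must saturate, so it suffices to rule out both scenarios by showing that the corresponding quantity moves strictly in the favorable direction at $t^{\star}$.

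Before splitting into cases I would first upgrade the bootstrap assumptions to a uniform singular-value lower bound $\sigma_{\min}(W_j)(s) = \Omega(\sigma_1^{1/4}(\Sigma))$ on $[T_1+T_2, t^{\star}]$ for every $j \in [1,4] \cap \mathbb{N}^*$. The identity $W_1 + W_1' = 2 W_1 - (W_1 - W_1')$ combined with the hypothesized bounds gives $\sigma_{\min}(W_1) \ge 2^{-1/4}\sigma_1^{1/4}(\Sigma) - \tfrac{3}{2} f_1 d \epsilon$, which is $\Omega(\sigma_1^{1/4}(\Sigma))$ under the smallness assumption on $\epsilon$. Balance, namely $W_2^H W_2 = W_1 W_1^H + \Delta_{12}$ together with the Lemma \ref{regularization, total} bound $e_{\Delta}(s) \le e_{\Delta}(T_1)$ from Corollary \ref{stage 1, e_delta} (which is exponentially small in $a$), then propagates the lower bound to $W_2$, and iterating gives it for $W_3$ and $W_4$. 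In particular $\lambda_{\min}(M_2) = \Omega(\sigma_1^{1/2}(\Sigma))$ throughout the interval, which is the key quantitative improvement over stage 2.

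Case A: $\sigma_{\min}(W_1 + W_1')(t^{\star}) = 2^{3/4}\sigma_1^{1/4}(\Sigma)$. I would reuse the derivative expression for $\sigma_k^2$ from Section \ref{section: hermitian main term, gf} and repeat the algebraic rearrangement of Corollary \ref{stage 2, main term}, but now with the stronger uniform singular-value bounds just obtained. All error contributions carrying either $M_1' - M_1$, $RR^H - I$, or explicit $a e_{\Delta}$ factors inherit the exponential smallness from $e_{\Delta}(T_1)$. Consequently the same inequality $\frac{\mathrm{d}}{\mathrm{d}t}\sigma_k^2 \ge \tfrac{3}{8}\sigma_1(\Sigma)\sigma_k^4 - \tfrac{1}{32}\sigma_k^8$ holds, and at $\sigma_k = 2^{3/4}\sigma_1^{1/4}(\Sigma)$ this is strictly positive, contradicting the maximality of $t^{\star}$.

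Case B: $\|W_1 - W_1'\|_F(t^{\star}) = 3 f_1 d \epsilon$ with the hermitian lower bound still strict. Here I would invoke the derivative computation for $\|W_1 - W_1'\|_F^2$ in Section \ref{section: skew-hermitian error term, gf}. The leading negative contribution is $-2\sigma_1(\Sigma)\,\mathrm{tr}((W_1-W_1')^H M_2 (W_1-W_1')) \le -2\sigma_1(\Sigma)\lambda_{\min}(M_2)\|W_1-W_1'\|_F^2 = -\Omega(\sigma_1^{3/2}(\Sigma))\|W_1-W_1'\|_F^2$, using the lower bound on $\lambda_{\min}(M_2)$ from the previous paragraph. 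Every remaining term factors through either $M_1^\prime - M_1$, $I - RR^H$, $R^H R - I$, or an explicit $a e_{\Delta}$ via the identities in Section \ref{section: notations and Preliminaries}, and is therefore bounded by a product of the weight operator norms and $e_{\Delta}(T_1)$ or $a e_{\Delta}(T_1)$, all of which are exponentially small by the stage 2 choice of $a$. The combined error is thus $o(\sigma_1^{3/2}(\Sigma)\|W_1-W_1'\|_F^2)$, giving $\tfrac{\mathrm{d}}{\mathrm{d}t}\|W_1-W_1'\|_F^2 < 0$ at $t^{\star}$, again a contradiction.

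The main obstacle I expect is Case B, specifically the bookkeeping required to make every one of the many error terms in the skew-hermitian derivative (roughly ten of them in Section \ref{section: skew-hermitian error term, gf}) strictly dominated by the coercive term. The $a e_{\Delta}$ terms are the most delicate, since $a$ is very large; controlling them forces a precise accounting that $a \cdot e_{\Delta}(T_1) \ll \sigma_1^{3/2}(\Sigma)$ at the end of stage 1, and this traces back to the logarithmic-in-$\epsilon$ lower bound imposed on $a$ at the start of stage 2. The uniform lower bound $\sigma_{\min}(W_j) = \Omega(\sigma_1^{1/4}(\Sigma))$ derived from the hermitian main term bound, near-balancedness and smallness of $\|W_1 - W_1'\|_F$ is the single lynchpin on which both cases rest, so proving it cleanly is the first priority.
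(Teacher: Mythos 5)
Your proposal is correct and follows essentially the same route as the paper: a continuity/bootstrap argument from $t=T_1+T_2$, upgrading the two bootstrap hypotheses to $\sigma_{\min}(W_2)=\Omega(\sigma_1^{1/4}(\Sigma))$ via $W_1=\tfrac12[(W_1+W_1')+(W_1-W_1')]$ and near-balancedness, and then reusing the stage-2 derivative estimates to show the derivative of $\|W_1-W_1'\|_F^2$ is nonpositive and that of $\sigma_k^2(W_1+W_1')$ is nonnegative at the respective saturation points. The only cosmetic difference is that the paper's additive error term $2^{-17}f_1^{-8}f_2^{-3}d^{-25/2}\epsilon^4\sigma_1(\Sigma)$ is not literally $o(\sigma_1^{3/2}\|W_1-W_1'\|_F^2)$ as a function, but it is dominated by the coercive term at the saturation value $\|W_1-W_1'\|_F=3f_1d\epsilon$, which is all that is needed.
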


\begin{proof}

(\ref{ineq: stage 3, lower bound of main term}) holds at $t = T_1 + T_2$. Since both L.H.S. change continuously, it left to prove that the derivatives at the critical points (to be specific, $t^\prime\ge T_2$ such that $\left.\|W_1 - W_1^\prime\|_F\right|_{t=t^\prime} = 3f_1 d\epsilon$ or $\left. \sigma_{k} \left( W_1 + W_1^\prime \right) \right|_{t=t^\prime} = 2^{3/4} \sigma_1(\Sigma)$) are positive/negative. (If such time does not exist, then the proof is done. )

From 

\begin{equation}
 \begin{aligned}
  \sigma_{\min}^2(W_1) + \sigma_{\min}^2(W_1^\prime) &\ge \frac{1}{2} \lambda_{\min}\left((W_1 + W_1 ^\prime)(W_1 + W_1 ^\prime)^H + (W_1 - W_1 ^\prime)(W_1 - W_1 ^\prime)^H \right) \\
  &\ge \frac{1}{2} \sigma_{\min}^2\left(W_1 + W_1^\prime\right) , 
 \end{aligned}
\end{equation}

and 

\begin{equation}
  \sigma_{\min}(W_1^\prime) \le  \sigma_{\min}(W_1) + \left \| W_1 - W_1^\prime \right \|_F . 
\end{equation}

For $t > T_1 + T_2$ such as (\ref{ineq: stage 3, lower bound of main term}) holds, 

\begin{equation}
  \sigma_{\min} (W_2) \ge \sigma_{\min}(W_1) - e_\Delta \ge \frac{1}{\sqrt{2}} \sigma_1^{1/4}(\Sigma) . 
\end{equation}

Then by following almost the same arguments as Lemma \ref{stage 2, skew-hermitian error} and \ref{stage 2, main term}, 

\begin{equation}
 \begin{aligned}
  \frac{\mathrm{d}}{\mathrm{d}t} \left \| W_1 - W_1^\prime \right \|_F^2 
  &\le -2 \sigma_1(\Sigma) \mathrm{tr}\left( \left( W_1 - W_1^\prime \right)^H \sigma_{\min}^2(W_2) \left(W_1 - W_1^\prime \right) \right) - 0 - 0 \\
  &+ 2^{-17}f_1^{-8} f_2^{-3} d^{-25/2} \epsilon^4 \sigma_1(\Sigma) \\ 
  &\le - \sigma_1^{3/2}(\Sigma)\left \| W_1 - W_1^\prime \right \|_F^2  + 2^{-17}f_1^{-8} f_2^{-3} d^{-25/2} \epsilon^4 \sigma_1(\Sigma) , 
 \end{aligned}
\end{equation}

\begin{equation}
  \frac{\mathrm{d}}{\mathrm{d}t} \sigma_{k}^2 \left( W_1 + W_1^\prime \right) \ge \frac{3}{8} \sigma_{k}^4 \left( W_1 + W_1^\prime \right) \sigma_1(\Sigma) - \frac{1}{32} \sigma_k^8 \left( W_1 + W_1^\prime \right) . 
\end{equation}

Suppose for some $t_1,t_2 \ge T_1 + T_2$ such that $\left.\|W_1 - W_1^\prime\|_F\right|_{t=t_1} = 3f_1 d\epsilon$, $\left. \sigma_{k} \left( W_1 + W_1^\prime \right) \right|_{t=t_2} = 2^{3/4} \sigma_1(\Sigma)$, then 

\begin{equation}
 \begin{aligned}
  \left. \frac{\mathrm{d}}{\mathrm{d}t} \left\| W_1 - W_1^\prime \right \|_F^2 \right|_{t=t_1} &\le 0 \\
  \left. \frac{\mathrm{d}}{\mathrm{d}t} \sigma_{k}^2 \left( W_1 + W_1^\prime \right) \right|_{t=t_2} &\ge 0 . 
 \end{aligned}
\end{equation}

This completes the proof. 

\end{proof}

\begin{theorem}\label{Total convergence bound, gf} Global convergence bound. 

For four-layer matrix factorization under gradient flow, with random Gaussian initialization with scaling factor $\epsilon \le \frac{\sigma_1^{1/4}(\Sigma)}{32 f_1^5 f_2 d^{53/8}} $, regularization factor $a \ge 32 f_1^{20} f_2 d^{13}\sigma_1(\Sigma) b$, where $b$ satisfies 

\begin{equation}
 \begin{aligned}
  b &\ge 5\ln\left( \frac{\sigma_1^{1/4}(\Sigma)}{\epsilon} \right) + \frac{281}{8} \ln d + 23\ln(4f_1) + 7\ln f_2 \\ 
  b &- \ln b \ge 3\ln\left(\frac{\sigma_1^{1/4}(\Sigma)}{\epsilon}\right) + \frac{303}{8}\ln d + 37 \ln(2f_1) + 6 \ln f_2 . 
 \end{aligned}
\end{equation}

Then for target matrix with identical singular values, there exists following $T(\epsilon_{\rm conv})$, such that for any $\epsilon_{\rm conv} > 0$, (1) with high probability over the complex initialization (2) with probability close to $\frac{1}{2}$ over the real initialization, when $t > T(\epsilon_{\rm conv})$, $\mathcal{L}(t) < \epsilon_{\rm conv}$.

\begin{equation}
 \begin{aligned}
  T(\epsilon_{\rm conv}) &\le T_1 + T_2 + \sigma_1^{-3/2}(\Sigma) \ln\left( \frac{d \sigma_1^2(\Sigma)}{\epsilon_{\rm conv}} \right) \\ &= \frac{1}{32 f_1^{14} f_2 d^{10} \epsilon^2 \sigma_1(\Sigma)} + \frac{32 f_1^6 f_2^2 d^9}{\sigma_1(\Sigma) \epsilon^2} + \sigma_1^{-3/2}(\Sigma) \ln\left( \frac{d \sigma_1^2(\Sigma)}{\epsilon_{\rm conv}} \right)  \\ 
  &= O\left( \frac{f_1^6 f_2^2 d^9}{\sigma_1(\Sigma) \epsilon^2} + \frac{1}{\sigma_1^{3/2}(\Sigma)} \ln\left( \frac{d \sigma_1^2(\Sigma)}{\epsilon_{\rm conv}} \right)\right) . 
 \end{aligned}
\end{equation}

\end{theorem}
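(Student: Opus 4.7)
\textbf{Proof proposal for Theorem \ref{Total convergence bound, gf}.} The plan is to stitch together the three staged theorems already established — Theorem \ref{stage 1: alignment stage} (alignment), Theorem \ref{stage 2: saddle avoidance stage, gf} (saddle avoidance), and Lemma \ref{loval convergence stage: lower bound of main term} (persistent main term / small skew-hermitian error) — and then upgrade the persistent spectral control into an exponential decay of $\mathcal{L}_{\rm ori}$ via Lemma \ref{L ori non-increasing}. The non-increasing regularization (Lemma \ref{regularization, total}) will handle $\mathcal{L}_{\rm reg}$ essentially for free, since by the end of stage 1 the balance error $e_{\Delta}$ is already exponentially small in $a$, so $\mathcal{L}_{\rm reg}(t) \le \mathcal{L}_{\rm reg}(T_1)$ for all $t \ge T_1$ is negligible relative to any polynomial $\epsilon_{\rm conv}$.

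First I would invoke Theorem \ref{stage 1: alignment stage} to conclude that at $t = T_1$ the weight matrices remain in the window $[\,(1-2^{-17})\epsilon/(f_1\sqrt d),\; (1+2^{-21}) f_1\sqrt d\,\epsilon\,]$ for singular values, $e_{\Delta}(T_1)$ is exponentially small in $a$, and $\sigma_{\min}(W_1 + W_1')(T_1) \ge \epsilon/(2 f_1^3 f_2 d^{9/2})$. Then I would invoke Theorem \ref{stage 2: saddle avoidance stage, gf} to carry this into $t = T_1 + T_2$ with $\sigma_{\min}(W_1 + W_1') \ge 2^{3/4} \sigma_1^{1/4}(\Sigma)$ and $\|W_1 - W_1'\|_F \le 3 f_1 d \epsilon$. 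By Lemma \ref{loval convergence stage: lower bound of main term} both bounds persist for all $t \ge T_1 + T_2$.

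Next I would translate the main-term lower bound into a lower bound on $\sigma_{\min}(W_j)$ uniformly in $j$. Using
\begin{equation}
\sigma_{\min}^2(W_1) + \sigma_{\min}^2(W_1') \;\ge\; \tfrac{1}{2}\,\sigma_{\min}^2\!\left(W_1 + W_1'\right),
\end{equation}
together with $\|W_1 - W_1'\|_F \le 3 f_1 d\epsilon$ (which is much smaller than $\sigma_1^{1/4}(\Sigma)$ under our scaling $\epsilon \le \sigma_1^{1/4}(\Sigma)/(32 f_1^5 f_2 d^{53/8})$), I get $\min(\sigma_{\min}(W_1),\sigma_{\min}(W_1')) \ge \tfrac{1}{\sqrt{2}}\sigma_1^{1/4}(\Sigma)$. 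Since $e_{\Delta}$ is negligible, the balanced-approximate identities $W_j W_j^H \approx W_{j+1}^H W_{j+1}$ propagate this lower bound to every $\sigma_{\min}(W_j)$, giving $\min_{j,k}\sigma_k(W_j) \ge (1-o(1))\,\sigma_1^{1/4}(\Sigma)/\sqrt{2}$. Plugging this into Lemma \ref{L ori non-increasing} with $N = 4$ yields
\begin{equation}
\frac{\mathrm{d}}{\mathrm{d}t}\mathcal{L}_{\rm ori}(t) \;\le\; -\,c\,\sigma_1^{3/2}(\Sigma)\,\mathcal{L}_{\rm ori}(t), \qquad t \ge T_1 + T_2,
\end{equation}
for some absolute constant $c > 0$. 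Combined with the stage-2 bound $\mathcal{L}_{\rm ori}(T_1 + T_2) = O(d\,\sigma_1^2(\Sigma))$ from Lemma \ref{bound of w_j op}, Gronwall then gives the claimed $T(\epsilon_{\rm conv}) = T_1 + T_2 + O(\sigma_1^{-3/2}(\Sigma) \ln(d\sigma_1^2(\Sigma)/\epsilon_{\rm conv}))$.

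The main obstacle I anticipate is the bookkeeping between the two ``error budgets'': the skew-hermitian error $\|W_1 - W_1'\|_F$ must remain $O(f_1 d\epsilon)$, and the balance error $e_{\Delta}$ must remain exponentially small in $a$, simultaneously with the spectral lower bound on $W_1 + W_1'$. In particular, at the critical-time argument inside Lemma \ref{loval convergence stage: lower bound of main term}, the derivative of $\|W_1 - W_1'\|_F^2$ must be verified negative exactly when the error saturates its allowed value, and the derivative of $\sigma_k^2(W_1 + W_1')$ must be positive exactly when it touches the threshold $2^{3/4}\sigma_1^{1/4}(\Sigma)$; this relies on $\sigma_{\min}(W_2)$ being already controlled by the newly-improved main-term bound, producing a mild circularity that must be resolved by a continuity/bootstrap argument. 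Once that loop is closed, the remaining combination is routine.
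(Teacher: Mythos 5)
Your overall plan — stitch the three stage results, propagate the persistent lower bound on $\sigma_{\min}(W_1 + W_1')$ to a uniform lower bound $\min_{j,k}\sigma_k(W_j) \ge \sigma_1^{1/4}(\Sigma)/\sqrt{2}$, and feed that into Lemma~\ref{L ori non-increasing} to get exponential decay of $\mathcal{L}_{\rm ori}$ — is exactly the paper's route, and the circularity you flag is already resolved inside Lemma~\ref{loval convergence stage: lower bound of main term}, so you do not need to reopen it.

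There is, however, a genuine gap in your handling of $\mathcal{L}_{\rm reg}$. You propose to dispose of it with only Lemma~\ref{regularization, total} (non-increasingness) plus the observation that $e_\Delta(T_1)$ is exponentially small in $a$, calling $\mathcal{L}_{\rm reg}(T_1)$ ``negligible relative to any polynomial $\epsilon_{\rm conv}$.'' But the theorem quantifies over \emph{all} $\epsilon_{\rm conv} > 0$, with $T(\epsilon_{\rm conv})$ growing only logarithmically in $1/\epsilon_{\rm conv}$. If someone picks $\epsilon_{\rm conv}$ below $\mathcal{L}_{\rm reg}(T_1)$ --- a positive number that, while tiny, is fixed once $a$ is fixed --- then the monotone bound $\mathcal{L}_{\rm reg}(t)\le\mathcal{L}_{\rm reg}(T_1)$ gives you nothing, and no finite $T$ is produced by your argument. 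You must show that $\mathcal{L}_{\rm reg}$ itself decays exponentially in $t$ on $[T_1+T_2,\infty)$, and that is precisely what Theorem~\ref{regularization term, convergence bound} supplies: with $\min_{j,k}\sigma_k(W_j)\ge\sigma_1^{1/4}(\Sigma)/\sqrt{2}$ and $\max_{j,k}\sigma_k(W_j)\le\sqrt{2}d^{1/8}\sigma_1^{1/4}(\Sigma)$ one gets
\begin{equation}
  \frac{\mathrm{d}}{\mathrm{d}t}\,\mathcal{L}_{\rm reg}
  \;\le\; -\,\frac{4a}{3}\,\frac{\min_{j,k}\sigma_k^4(W_j)}{\max_{j,k}\sigma_k^2(W_j)}\,\mathcal{L}_{\rm reg}
  \;\le\; -\,\Omega\!\left(a\,d^{-1/4}\sigma_1^{1/2}(\Sigma)\right)\mathcal{L}_{\rm reg},
\end{equation}
whose decay rate dominates $\sigma_1^{3/2}(\Sigma)$ because of the standing lower bound on $a$. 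Only then does the logarithm of the sum of the two exponential bounds yield the claimed $T(\epsilon_{\rm conv})$. Swap Lemma~\ref{regularization, total} for Theorem~\ref{regularization term, convergence bound} in your stage-3 bookkeeping and the argument closes.
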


\begin{proof}

Following the derivations in Lemma \ref{loval convergence stage: lower bound of main term}, 

\begin{equation}
  \min_{j,k} \sigma_k(W_j)(t>T_1 + T_2) \ge \frac{1}{\sqrt{2}} \sigma_1^{1/4}(\Sigma) . 
\end{equation}

By Lemma \ref{L ori non-increasing} and \ref{bound of w_j op}, 

\begin{equation}
 \begin{aligned}
  \mathcal{L}_{\rm ori}(t) &\le \mathcal{L}_{\rm ori}(T_1 + T_2) \exp\left(- 8 \min_{j,k} |\sigma_{k}(W_j)(t>T_1 + T_2)|^{6} (t - T_1 - T_2) \right) \\&\le \mathcal{L}_{\rm ori}(0) \exp\left(- 8 \min_{j,k} |\sigma_{k}(W_j)(t>T_1 + T_2)|^{6} (t - T_1 - T_2) \right) \\ 
  &\le 0.52 d \sigma_1^2(\Sigma) \exp\left(- \sigma_1^{3/2}(\Sigma) (t - T_1 - T_2) \right) . 
 \end{aligned}
\end{equation}

For regularization term, by invoking Theorem \ref{regularization term, convergence bound}, \ref{stage 1: alignment stage} and Lemma \ref{bound of w_j op}, 

\begin{equation}
 \begin{aligned}
  \mathcal{L}_{\rm reg}(t) &\le \mathcal{L}_{\rm reg}(T_1 + T_2) \exp\left( - \frac{4a}{3} \frac{\min_{j,k} |\sigma_{k}(W_j)(t>T_1 + T_2)|^{4}}{\max_{j,k} |\sigma_{k}(W_j)|^2} \cdot (t - T_1 - T_2) \right) \\
  &\le \frac{a}{4} e_{\Delta}^2(T_1+T_2) \exp\left( - \frac{4a}{3} \frac{\min_{j,k} |\sigma_{k}(W_j)(t>T_1 + T_2)|^{4}}{\max_{j,k} |\sigma_{k}(W_j)|^2} \cdot (t - T_1 - T_2) \right) \\
  &\le \frac{a}{4} e_{\Delta}^2(T_1) \exp\left( - \frac{4a}{3} \frac{\min_{j,k} |\sigma_{k}(W_j)(t>T_1 + T_2)|^{4}}{\max_{j,k} |\sigma_{k}(W_j)|^2} \cdot (t - T_1 - T_2) \right) \\
  &\le 2^{-76} f_1^{-36} f_2^{-12} d^{-57} \epsilon^{12} \sigma_1^{-1}(\Sigma) \exp\left( - 16 f_1^{20} f_2 d^{51/4} \sigma_1^{3/2}(\Sigma) (t - T_1 - T_2) \right) . 
 \end{aligned}
\end{equation}

By taking logarithm on the summation of these two inequalities, the proof is completed. 

\end{proof}

\section{Convergence under Gradient Descent, staged analysis}\label{section: convergence, staged analysis, gd}

This section states the complete proof of convergence under Random Gaussian Initialization \ref{subsection: random gaussian initialization}.

At the beginning we still assume (\ref{random gaussian initialization, conclusions}) holds. (For the complex case, it holds with high probability $1-\delta$; for the real case, it holds with probability $\frac{1}{2}(1-\delta)$. ) 

\begin{theorem}\label{Total convergence bound, gd} Global convergence bound under random Gaussian initialization, gradient descent. 

For four-layer matrix factorization under gradient descent, random Gaussian initialization with scaling factor $\epsilon \le \frac{\sigma_1^{1/4}(\Sigma)}{32 f_1^5 f_2 d^{53/8}} $, regularization factor $a \ge 32 f_1^{20} f_2 d^{13}\sigma_1(\Sigma) b$, where $b$ satisfies 

\begin{equation}
 \begin{aligned}
  b &\ge \max\left( 5\ln\left( \frac{\sigma_1^{1/4}(\Sigma)}{\epsilon} \right) + \frac{281}{8} \ln d + 23\ln(4f_1) + 7\ln f_2,\, 16 \ln(2 f_1 f_2 d)\right) \\ 
  b - \ln b &\ge 3\ln\left(\frac{\sigma_1^{1/4}(\Sigma)}{\epsilon}\right) + \frac{303}{8}\ln d + 37 \ln(2f_1) + 6 \ln f_2 . 
 \end{aligned}
\end{equation}

Then for target matrix with identical singular values, there exists following learning rate $\eta$ and convergence time $T(\epsilon_{\rm conv},\eta)$, such that for any $\epsilon_{\rm conv} > 0$, (1) with high probability over the complex initialization (2)  with probability close to $\frac{1}{2}$ over the real initialization, when $t > T(\epsilon_{\rm conv},\eta)$, $\mathcal{L}(t) < \epsilon_{\rm conv}$. 

\begin{equation}
 \begin{aligned}
  \eta &= O\Big(\min \Big( a^{-2}f_1^{-4}d^{-2}\epsilon^{-2} \sigma_1(\Sigma), \\& a f_1^{-56} f_2^{-14} d^{-301/4} \epsilon^8 \sigma_1^{-9/2}(\Sigma) , a^{-1} f_1^{-44} f_2^{-10} d^{-219/4} \epsilon^4 \sigma_1^{-3/2}(\Sigma), \\ & f_1^{-27} f_2^{-9} d^{-355/8} \epsilon^9 \sigma_1^{-15/4}(\Sigma), a^{-1}f_1^{-21} f_2^{-7} d^{-273/8} \epsilon^7 \sigma_1^{-9/4}(\Sigma)\Big)\Big) \\
  T(\epsilon_{\rm conv},\eta) &\le T_1 + T_2 + \eta^{-1} \sigma_1^{-3/2}(\Sigma) \ln\left( \frac{d \sigma_1^2(\Sigma)}{\epsilon_{\rm conv}} \right) \\ 
  &= O\left( \frac{f_1^6 f_2^2 d^9}{\eta\sigma_1(\Sigma) \epsilon^2} + \frac{1}{\eta\sigma_1^{3/2}(\Sigma)} \ln\left( \frac{d \sigma_1^2(\Sigma)}{\epsilon_{\rm conv}} \right)\right) . 
 \end{aligned}
\end{equation}

\end{theorem}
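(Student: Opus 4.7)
The plan is to mirror the three-stage gradient-flow proof of Section \ref{section: convergence, staged analysis, gf}, replacing each continuous-time estimate by its discrete-time counterpart modulo an $O(\eta^2)$ error, and then to choose $\eta$ small enough that these discretization errors never exceed the slack already budgeted in the GF proof. The initialization step is unchanged: with probability $1-\delta$ over complex Gaussian draws (respectively probability close to $(1-\delta)/2$ over real draws conditioned on $\det W(0)>0$), Theorem \ref{Gaussian random matrix ensemble product, eigenvalues} gives the bounds in (\ref{random gaussian initialization, conclusions}). This fixes the starting values of $e_\Delta$, $\min_{j,k}\sigma_k(W_j)$, $\max_{j,k}\sigma_k(W_j)$, and $\sigma_{\min}(W(0)+(W(0)W(0)^H)^{1/2})$.

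For Stage 1, I would propagate the max/min singular-value bounds for $W_j$ inductively over $t\in[0,T_1]$ using Theorem \ref{maximum and minimum singular values are irrelevant of the regularization term, GD}, which at each step adds a controlled one-step drift plus an $O(\eta^2 a^2 \max_{j,k}\sigma_k^6(W_j))$ perturbation. A discrete Grönwall argument over $T_1$ iterations recovers the same $(1\pm 2^{-\Theta(1)})f_1\sqrt{d}\epsilon$ envelopes as Lemma \ref{alignment stage, bound of max and min singular values}. In parallel, Theorem \ref{regularization term, convergence bound, GD} gives contraction of $\mathcal{L}_{\rm reg}$ by factor $1-\Omega(\eta a\delta^4/M^2)$ modulo an $O(\eta^2 a^2)$-scale additive term that is absorbed under the chosen $\eta$. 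The endpoint bound on the main term then follows from Corollary \ref{Main term at the end of alignment stage, gd} by the same unbalancedness-error calculation as in Corollary \ref{Main term at the end of alignment stage, gf}.

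For Stage 2, I would turn the derivations in Sections \ref{section: skew-hermitian error term, gd} and \ref{section: hermitian main term, gd} into one-step recursions and close a coupled induction on the invariants $\|W_1-W_1^\prime\|_F\le 3f_1 d\epsilon$ and $\sigma_{\min}(W_1+W_1^\prime)\ge $ (a gradually growing lower bound). The skew-Hermitian recursion is a one-step version of the GF drift plus an $O(\eta^2)$ curvature term; under the stage-1 bounds on $W_2^{-1}$ and $e_\Delta$ (Lemma \ref{bound of w23 inv op, and relevant terms, GD}) this gives approximate monotonicity as in Lemma \ref{stage 2, skew-hermitian error, GD}. For the main term, I would apply Lemma \ref{minimum singular values lower bound, general, discrete} with $S=(W_1+W_1^\prime)^H(W_1+W_1^\prime)$, $D$ encoding the drift from $\sigma_1(\Sigma) M_2$, and $E$ the perturbation from $M_2\neq (M_1+M_1^\prime)/2$, unbalancedness through $R$, and the regularization-gradient term; this yields the same $\frac{3}{8}\sigma_1(\Sigma)\sigma_k^4$ drift as in Corollary \ref{stage 2, main term}, so $\sigma_{\min}(W_1+W_1^\prime)$ reaches $2^{3/4}\sigma_1^{1/4}(\Sigma)$ within $T_2$ iterations, as stated in Lemma \ref{stage 2, main term, GD}.

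Stage 3 is then Theorem \ref{local convergence, GD, informal}: with $\min_{j,k}\sigma_k(W_j)\ge 2^{-1/2}\sigma_1^{1/4}(\Sigma)$, the discrete analogue of the PL-type decay of $\mathcal{L}_{\rm ori}$ gives a contraction of $1-\Omega(\eta\sigma_1^{3/2}(\Sigma))$ per step (plus $O(\eta^2)$ coming from the regularization-coupling cross terms), and $\mathcal{L}_{\rm reg}$ continues its geometric decay with a rate now of order $\eta a$; summing the two tails yields $\mathcal{L}(t)<\epsilon_{\rm conv}$ after an additional $O(\eta^{-1}\sigma_1^{-3/2}(\Sigma)\ln(d\sigma_1^2(\Sigma)/\epsilon_{\rm conv}))$ iterations. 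The learning-rate bound in the theorem is then obtained by intersecting the five requirements imposed along the way: $\eta a^2\max_{j,k}\sigma_k^2(W_j)\ll 1$ so Stage 1 singular values stay in their envelope; $\eta^2 a^2\ll $ (one-step contraction of $\mathcal{L}_{\rm reg}$); $\eta^2$-errors in the skew-Hermitian and hermitian recursions in Stage 2 are dominated by the leading first-order drifts; and the $\eta^2$-cross terms from the regularization gradient do not overwhelm the local contraction in Stage 3.

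The main obstacle is closing the coupled stage-2 induction in discrete time. In continuous time the skew-Hermitian non-increase and the hermitian growth decouple once $\|W_2^{-1}\|_{op}$ is bounded, but discretization introduces $O(\eta^2)$ cross terms in both recursions that depend simultaneously on $\|W_1-W_1^\prime\|_F$, $\sigma_{\min}(W_1+W_1^\prime)$, $e_\Delta$, $\max_{j,k}\sigma_k(W_j)$, and $\|W_2^{-1}\|_{op}$. The delicate point is that, while the hermitian drift scales as $\sigma_1(\Sigma)\sigma_k^4$ and is tiny during early Stage 2 (when $\sigma_k\sim\epsilon/\mathrm{poly}(d)$), the $\eta^2$ error scales with $\max_{j,k}\sigma_k^{O(1)}(W_j)\sim\sigma_1^{O(1)/4}(\Sigma)$; matching these forces the tightest of the five constraints on $\eta$ in the statement. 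Once this $\eta$ is fixed, all other estimates from the GF proof, together with the discrete eigenvalue-perturbation lemmas of Appendix \ref{subsection: Lemmas on Eigenvalue Change under Discrete Time}, go through mechanically.
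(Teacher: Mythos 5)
Your proposal matches the paper's own three-stage discrete-time proof essentially point by point: same initialization bound, same Stage-1 extreme-singular-value and regularization-term recursions, same Stage-2 skew-Hermitian and Hermitian recursions with the bound on $\|W_2^{-1}\|_{op}$, same Stage-3 local contraction, and the same intersection of the five learning-rate constraints. One small imprecision worth noting: when invoking Lemma~\ref{minimum singular values lower bound, general, discrete} you describe $D$ as ``encoding the drift from $\sigma_1(\Sigma)M_2$,'' whereas the paper's instantiation takes $S = PP^H$ and $D = QQ^H$ with $P = \tfrac{1}{2}(W_1+W_1^\prime)$ and $Q = \tfrac{1}{2}(W_1-W_1^\prime)$, so that $M = S+D = \tfrac{1}{2}(M_1+M_1^\prime)$; the drift coefficient $\sigma_1(\Sigma)$ then enters as the scalar multiplying $M$ in the lemma's $aM - M^3$ term, while $D$ is the PSD skew-Hermitian error contribution.
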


The following section completes the proof. 

\subsection{Stage 1: alignment stage}

In this section, we set $\epsilon \le \frac{\sigma_1^{1/4}(\Sigma)}{4f_1\sqrt{d}}$, $a \ge 2^5 f_1^{20} f_2 d^{13} \sigma_1(\Sigma) b$, where $b \ge 2^4 \ln(4f_1d) + \ln f_2$. $\eta = O\left( \frac{\sigma_1(\Sigma)}{a^2 f_1^4 d^2 \epsilon^2} \right)$, with appropriate small constant. 
Without loss of generality, $f_1 \ge 2$, $f_2 \ge f_1^6$. 

\begin{theorem}\label{stage 1: alignment stage, GD}

At $T_1 = \frac{1}{32 f_1^{14} f_2 d^{10} \epsilon^2 \sigma_1(\Sigma) \eta}$, the following conclusions hold: 

\begin{equation}
 \begin{aligned}
  \left.\sigma_{\min}\left(W_1 + W_1^{\prime}\right)\right|_{t=T_1} &\ge \frac{\epsilon}{2 f_1^3 f_2 d^{9/2}} \\
  e_{\Delta}(T_1) &\le 2\sqrt{3 f_1^4 d^3 \epsilon^4 e^{-2b} + \eta O\left(a^{-1} f_1^{14} d^8 \epsilon^6 \sigma_1^2(\Sigma) \right)} \\ 
  \max_{j,k} |\sigma_k(W_j(T_1))| &\le (1+2^{-21}) f_1\sqrt{d}\epsilon \\ 
  \min_{j,k} |\sigma_k(W_j(T_1))| &\ge (1-2^{-17}) \frac{\epsilon}{f_1\sqrt{d}} . 
 \end{aligned}
\end{equation}

\end{theorem}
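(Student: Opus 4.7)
The plan is to discretize the proof of Theorem \ref{stage 1: alignment stage} step by step, carrying a joint induction hypothesis along the iterates $t=0,1,\dots,T_1$ that controls the three quantities appearing in the conclusion simultaneously, and then upgrading each continuous-time estimate using its discrete-time analog (Theorem \ref{maximum and minimum singular values are irrelevant of the regularization term, GD} and Theorem \ref{regularization term, convergence bound, GD}). Concretely, I would maintain as the induction hypothesis at step $t$ the bounds $\max_{j,k}\sigma_k(W_j(t))\le(1+2^{-21})f_1\sqrt{d}\,\epsilon$, $\min_{j,k}\sigma_k(W_j(t))\ge(1-2^{-17})\epsilon/(f_1\sqrt{d})$, plus a one-step-refined version of the asserted $e_\Delta$ decay. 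The discrete theorem $T_1=\Theta(1/\eta)$ scales so that $\eta T_1$ matches the continuous time horizon, and the extra $\eta^2$-terms accumulate into the residual under the square root.

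First, assuming the singular-value invariants hold through step $t$, Theorem \ref{maximum and minimum singular values are irrelevant of the regularization term, GD} bounds the one-step change of $\max_{j,k}\sigma_k^2(W_j)$ by $2\eta\max_{j,k}\sigma_k(W_j)\cdot\max_j\|\nabla_{W_j}\mathcal{L}_{\rm ori}\|_{op}+\eta^2 O(\|\nabla_{W_j}\mathcal{L}_{\rm ori}\|_{op}^2+a^2\max_{j,k}\sigma_k^6(W_j))$, and analogously from below for the minimum. Using $\|\Sigma-W\|_{op}\le 2\sigma_1(\Sigma)$ and the invariant to estimate $\max_j\|\nabla_{W_j}\mathcal{L}_{\rm ori}\|_{op}\le 2\sigma_1(\Sigma)\max\sigma_k^3$, one obtains a discrete differential inequality that mirrors the one in Lemma \ref{alignment stage, bound of max and min singular values}; summing over $T_1$ steps and using the imposed $\eta=O(\sigma_1(\Sigma)/(a^2 f_1^4 d^2\epsilon^2))$ shows that the $\eta^2$ drift stays below the $2^{-21}$ (resp.\ $2^{-17}$) margin. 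This closes the singular-value part of the induction.

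Second, for the regularization decay, Theorem \ref{regularization term, convergence bound, GD} gives
\begin{equation}
\mathcal{L}_{\rm reg}(t+1)\le \Bigl(1-\tfrac{8}{3}\tfrac{\eta a\delta^4}{M^2+\delta^2}\Bigr)\mathcal{L}_{\rm reg}(t)+\eta^2 E(t),
\end{equation}
with $E(t)=O(a^2 M^4\mathcal{L}_{\rm reg}(t)+\sqrt{a\mathcal{L}_{\rm reg}(t)}M^6\mathcal{L}_{\rm ori}(t))+\eta^2 O(\cdots)$. Under the maintained bounds on $\delta,M$, the contraction rate is $\Theta(\eta a\epsilon^2/(f_1^6 d^3))$, so after $T_1$ steps the pure geometric part contracts by $\exp(-a/(32 f_1^{20}f_2 d^{13}\sigma_1(\Sigma)))=e^{-b}$, matching Corollary \ref{stage 1, e_delta} up to the $\eta$-sized residual. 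Unrolling the recursion and bounding the accumulated inhomogeneous term by a geometric series yields exactly the $\eta O(a^{-1}f_1^{14}d^8\epsilon^6\sigma_1^2(\Sigma))$ additive residual inside the square root; this additive contribution is why the bound on $e_\Delta(T_1)$ appears under a square root rather than as a clean exponential.

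Third, for the main-term lower bound, I would re-run the argument of Corollary \ref{Main term at the end of alignment stage, gf} in discrete time: bound the one-step change of $W$ by $\eta$ times the continuous-time $\mathrm{d}W/\mathrm{d}t$ plus an $\eta^2$ correction, sum telescopically over $t\in[0,T_1]$ to get $\|W(T_1)-W(0)\|_{op}\le O(\eta T_1\cdot \sigma_1(\Sigma) \max\sigma_k^6)$, then pass through Lemma \ref{error bound, sqrt} and Lemma \ref{bound of eigenvalues under perturbation} exactly as in the continuous case, using the initialization bound $\sigma_{\min}(W(0)+(W(0)W(0)^H)^{1/2})\ge\epsilon^4/(f_2 d^3)$ from Theorem \ref{Gaussian random matrix ensemble product, eigenvalues}. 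The main obstacle I anticipate is bookkeeping: the $\eta^2$ error of the regularization recursion, the $\eta^2$ error in the singular-value evolution, and the $\eta^2$ error accumulated into $W(T_1)-W(0)$ must all be simultaneously absorbed without shrinking the margins in the three invariants, which forces the learning-rate constraint and a fairly tight choice of constants. Once $\eta$ is chosen small enough relative to $a$, $\sigma_1(\Sigma)$, and the initialization scale, the discrete-time errors enter only into the residual inside the square root in the $e_\Delta(T_1)$ bound, and the other three conclusions match their continuous-time counterparts verbatim.
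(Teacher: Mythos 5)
Your proposal is correct and follows essentially the same route as the paper: the paper proves the singular-value invariants via the discrete-time Theorem \ref{maximum and minimum singular values are irrelevant of the regularization term, GD} and a discrete differential inequality (Lemma \ref{alignment stage, bound of max and min singular values, GD}), derives the $e_\Delta(T_1)$ bound by unrolling the recursion of Theorem \ref{regularization term, convergence bound, GD} into a geometric contraction plus an $\eta$-sized inhomogeneous residual (Corollary \ref{stage 1, e_delta, GD}), and obtains the main-term lower bound by telescoping the one-step change of $W$ with $\eta^2$ corrections and passing through Lemma \ref{error bound, sqrt} and the initialization estimate (Corollary \ref{Main term at the end of alignment stage, gd}). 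The only cosmetic difference is that the paper runs the singular-value and regularization arguments sequentially rather than as a joint induction, which is possible precisely because the extreme singular values evolve independently of the regularization term.
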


This section proves the theorem above by following Lemmas and Corollaries. 

\begin{lemma}\label{alignment stage, bound of max and min singular values, GD}

Maximum and minimum singular value bound of weight matrices in alignment stage. 

For $t \in \left[0,\frac{1}{32 f_1^4 d^2 \epsilon^2 \sigma_1(\Sigma) \eta} \right]$, 

\begin{equation}
  \min_{j,k}\sigma_k(W_j) \ge \frac{\epsilon}{f_1\sqrt{d}} - 16 f_1^3 d^{3/2} \epsilon^3 \sigma_1(\Sigma) t ,\,   \max_{j,k}\sigma_k(W_j) \le \frac{f_1\sqrt{d}\epsilon}{\sqrt{1 - 4 f_1^2 d\epsilon^2 \sigma_1(\Sigma) t}} . 
\end{equation}
\end{lemma}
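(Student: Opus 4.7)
The plan is to reduce to the continuous-time argument of Lemma \ref{alignment stage, bound of max and min singular values} via a one-step perturbation analysis, using Theorem \ref{maximum and minimum singular values are irrelevant of the regularization term, GD} as the workhorse. Because that theorem already shows the regularization term only contributes at order $\eta^{2}$, the extremal singular values evolve, to leading order, under the same differential inequality as in the gradient-flow case. The argument will proceed by induction on $t$, carrying an inductive hypothesis that simultaneously bounds $\max_{j,k}\sigma_k^2(W_j(t))$ from above and $\min_{j,k}\sigma_k^2(W_j(t))$ from below by the claimed quantities.

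First, I would verify the hypotheses needed to invoke Theorem \ref{maximum and minimum singular values are irrelevant of the regularization term, GD}: under the inductive hypothesis, $\max_{j,k}\sigma_k(W_j(t)) \le (1+2^{-21})f_1\sqrt{d}\epsilon \le \sigma_1^{1/4}(\Sigma)$, so $\max_j\|\nabla_{W_j}\mathcal{L}_{\rm ori}(t)\|_{op}\le 2\sigma_1(\Sigma)\max_{j,k}\sigma_k^3(W_j(t))$ as in the continuous proof, and the stated upper bound on $\eta$ (namely $\eta \le \tfrac{1}{18 a\max_{j,k}\sigma_k^2(W_j(t))}$ and $\eta \le \tfrac{\min\sigma_k}{3\max\|\nabla_{W_j}\mathcal{L}_{\rm ori}\|_{op}}$) is implied by $\eta = O(\sigma_1(\Sigma)/(a^2 f_1^4 d^2 \epsilon^2))$ together with the bounds on $\max \sigma_k$ and $\min \sigma_k$ from the inductive hypothesis. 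Plugging these in yields
\begin{equation}
\begin{aligned}
\max_{j,k}\sigma_k^2(W_j(t+1)) - \max_{j,k}\sigma_k^2(W_j(t)) &\le 4\eta\sigma_1(\Sigma)\max_{j,k}\sigma_k^4(W_j(t)) + \eta^2 E(t),\\
\min_{j,k}\sigma_k^2(W_j(t+1)) - \min_{j,k}\sigma_k^2(W_j(t)) &\ge -4\eta\sigma_1(\Sigma)\min_{j,k}\sigma_k(W_j(t))\max_{j,k}\sigma_k^3(W_j(t)) - \eta^2 E(t),
\end{aligned}
\end{equation}
with $E(t)=O(\|\nabla_{W_j}\mathcal{L}_{\rm ori}\|_{op}^2 + a^2\max_{j,k}\sigma_k^6(W_j(t)))$.

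Next, I would show that the $\eta^{2}E(t)$ remainder is strictly dominated by the linear-in-$\eta$ term on the specified time horizon. Under the learning-rate scaling $\eta=O(\sigma_1(\Sigma)/(a^2 f_1^4 d^2 \epsilon^2))$, one computes $\eta a^2 \max\sigma_k^6=O(\eta f_1^2 d\,\epsilon^4\sigma_1(\Sigma))$ and $\eta\sigma_1^2(\Sigma)\max\sigma_k^6=O(\eta\sigma_1^3(\Sigma) f_1^2 d\epsilon^4/a^2)$, both of which are a factor $O(1/(f_1^2 d))$ smaller than the first-order term $\eta\sigma_1(\Sigma)\max\sigma_k^4$. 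Absorbing these error terms into a constant in front of the first-order term (say by replacing $4$ with $4(1+o(1))$ which is still $\le 16/3$) gives a clean per-step recursion
\begin{equation}
\max_{j,k}\sigma_k^2(W_j(t+1)) \le \max_{j,k}\sigma_k^2(W_j(t)) + 8\eta\sigma_1(\Sigma)\max_{j,k}\sigma_k^4(W_j(t)),
\end{equation}
and an analogous lower bound on $\min_{j,k}\sigma_k^2$.

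Finally, I would convert these one-step recursions into the claimed closed-form bounds. For the upper bound, the recursion is the forward Euler scheme for the scalar ODE $\dot y = 8\sigma_1(\Sigma) y^2$ whose solution is $y(t)=y(0)/(1-8\sigma_1(\Sigma)y(0)t)$; standard discrete-versus-continuous comparison (using convexity of the solution and that the step size is much smaller than the reciprocal of the local Lipschitz constant on the horizon considered) then gives the quoted bound $\max\sigma_k \le f_1\sqrt{d}\epsilon/\sqrt{1-4f_1^2 d\epsilon^2\sigma_1(\Sigma)\eta t}$ after rescaling time by $\eta$. For the lower bound, telescoping the minimum recursion gives the linear decay $\min\sigma_k \ge \epsilon/(f_1\sqrt{d}) - 16 f_1^3 d^{3/2}\epsilon^3 \sigma_1(\Sigma)\eta t$. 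The hardest part of the proof will be ensuring the induction closes in the presence of the inflated $a^2$ remainder: the risk is that $\eta a^2$ becomes too large once $a$ is pushed to the exponential size required in Theorem \ref{Total convergence bound, gd}. What makes this work is precisely the learning-rate scaling $\eta \propto 1/a^2$ in the theorem's hypothesis, which contains $\eta a^2$ at the price of a longer horizon $T_1$, and this scaling must be tracked carefully to verify that the $O(\eta^2 a^2 \sigma_k^6)$ term remains negligible over the full alignment window.
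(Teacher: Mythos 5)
Your proposal follows essentially the same route as the paper: invoke Theorem \ref{maximum and minimum singular values are irrelevant of the regularization term, GD}, absorb the $\eta^2 O(a^2\max_{j,k}\sigma_k^6(W_j))$ remainder into the first-order term using $\eta = O(\sigma_1(\Sigma)/(a^2 f_1^4 d^2\epsilon^2))$, and solve the resulting discrete Riccati-type recursion (the paper's ``differential inequality'') under an implicit bootstrap on the extremal singular values. One caveat: your constants do not close to the stated bound. The paper uses $\max_{j,k}\sigma_k(W_j)\le 2f_1\sqrt{d}\epsilon\le\tfrac12\sigma_1^{1/4}(\Sigma)$ to get $\max_j\|\nabla_{W_j}\mathcal{L}_{\rm ori}\|_{op}\le\tfrac32\sigma_1(\Sigma)\max_{j,k}\sigma_k^3(W_j)$, hence a base coefficient of $3\eta$ inflated to $4\eta$ after absorbing the error, which yields exactly the denominator $1-4f_1^2 d\epsilon^2\sigma_1(\Sigma)\eta t$; your factor-$2$ gradient bound followed by doubling to $8\eta$ would instead produce $1-8f_1^2 d\epsilon^2\sigma_1(\Sigma)\eta t$, which does not imply the lemma as stated, so you need the sharper gradient bound and a tighter absorption of the $\eta^2$ term.
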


\begin{proof}

For $t\ge 0$ such that $\max_{j,k} \sigma_k(W_j) \le 2f_1\sqrt{d} \epsilon \le \frac{1}{2}\sigma_1^{1/4}(\Sigma)$, 

\begin{equation}
  \max_{j} \left\| \nabla_{W_j} \mathcal{L}_{\rm ori} \right\|_{op} \le \max_{j,k} |\sigma_k(W_j)|^{3} \left( \sigma_1(\Sigma) + \max_{j,k} |\sigma_k(W_j)|^{4} \right)  \le \frac{3}{2} \max_{j,k} |\sigma_k(W_j)|^{3} \sigma_1(\Sigma) . 
\end{equation}

By invoking Corollary \ref{maximum and minimum singular values are irrelevant of the regularization term, GD}, for $t\ge 0$ such that $\min_{j,k} \sigma_k(W_j(t)) \ge \frac{\epsilon}{2 f_1 \sqrt{d}}$, 

\begin{equation}
 \begin{aligned}
  \max_{j,k} \sigma_k^2(W_j(t+1)) - \max_{j,k} \sigma_k^2(W_j(t)) &\le 3 \eta \max_{j,k} |\sigma_k(W_j(t))|^{4} \sigma_1(\Sigma) \\&+ \eta^2 O\left( a^2 \left( \epsilon f_1 \sqrt{d} \right)^6 \right) \\ 
  & \le 4 \eta \max_{j,k} |\sigma_k(W_j(t))|^{4} \sigma_1(\Sigma) \\ 
  \min_{j,k} \sigma_k^2(W_j(t+1)) - \min_{j,k} \sigma_k^2(W_j(t)) &\ge - 3 \eta \min_{j,k} \left| \sigma_k(W_j(t)) \right| \max_{j,k} |\sigma_k(W_j(t))|^{3} \sigma_1(\Sigma) \\&+ \eta^2 O\left( a^2 \left( \epsilon f_1 \sqrt{d} \right)^6 \right) \\
  &\ge - 2 \eta \left(\min_{j,k} \left| \sigma_k(W_j(t+1)) \right| + \min_{j,k} \left| \sigma_k(W_j(t)) \right| \right) \\& \cdot  \max_{j,k} |\sigma_k(W_j(t))|^{3} \sigma_1(\Sigma) . 
 \end{aligned}
\end{equation}

By solving the differential inequality,  

\begin{equation}
  \max_{j,k} \sigma_k|W_j(t)| \le \frac{\max_{j,k} \sigma_k|W_j(0)|}{\sqrt{1 - 4 \sigma_1(\Sigma) \max_{j,k} \sigma_k|W_j(0)|^2 \eta t}} \le \frac{f_1\sqrt{d}\epsilon}{\sqrt{1 - 4 f_1^2 d\epsilon^2 \sigma_1(\Sigma) \eta t}},\, t \in \left[0,\frac{3}{16 f_1^2 d \epsilon^2 \sigma_1(\Sigma) \eta}\right] , 
\end{equation}

\begin{equation}
  \min_{j,k} |\sigma_k(W_j(t))| \ge \frac{\epsilon}{f_1 \sqrt{d}} - 16 f_1^3 d^{3/2} \epsilon^3 \sigma_1(\Sigma) \eta t,\,t\in \left[0,\frac{1}{32 f_1^4 d^2 \epsilon^2 \sigma_1(\Sigma) \eta} \right] . 
\end{equation}

This completes the proof.

\end{proof}

Notice that  

\begin{equation}
 \begin{aligned}
  \max_{j,k} |\sigma_k(W_j(t\le T_1))| &\le \frac{f_1\sqrt{d} \epsilon}{\sqrt{1-\frac{1}{8f_1^{12}f_2}}} \le (1+2^{-21}) f_1\sqrt{d}\epsilon \\ 
  \min_{j,k} |\sigma_k(W_j(t\le T_1))| &\ge \left(1-\frac{1}{2f_1^{10}f_2}\right) \cdot \frac{\epsilon}{f_1\sqrt{d}} \ge (1-2^{-17}) \frac{\epsilon}{f_1\sqrt{d}} . 
 \end{aligned}
\end{equation}

\begin{corollary}\label{stage 1, e_delta, GD}

Balanced term error in alignment stage.  

\begin{equation}
 \begin{aligned}
  e_{\Delta}(T_1) &\le \sqrt{3} \cdot 2^{-31} f_1^{-14} f_2^{-1} d^{-29/2} \epsilon^2 . 
 \end{aligned}
\end{equation}

\end{corollary}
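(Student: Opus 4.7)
The plan is to read off the bound directly from Theorem \ref{stage 1: alignment stage, GD}, substitute the chosen lower bound on the regularization parameter $b$, and verify that the discretization error is dominated by the exponential decay contribution. No new dynamical argument is required beyond what has already been established.

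First I would invoke Theorem \ref{stage 1: alignment stage, GD} at $t = T_1$, which gives
\[
e_\Delta(T_1) \le 2\sqrt{\,3 f_1^4 d^3 \epsilon^4 e^{-2b} + \eta \cdot O\!\left(a^{-1} f_1^{14} d^8 \epsilon^6 \sigma_1^2(\Sigma)\right)\,}.
\]
The two summands inside the square root correspond respectively to the linearly-decaying continuous-time contribution of the regularizer (cf.\ Theorem \ref{regularization term, convergence bound, GD}) and to the accumulated second-order gradient descent correction over the horizon $T_1$. I will show that the first term dominates, and that each term alone is bounded by $\tfrac{1}{2}(\sqrt{3}\cdot 2^{-31} f_1^{-14} f_2^{-1} d^{-29/2} \epsilon^2)^2$.

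For the exponential term, the hypothesis $b \ge 16\ln(4 f_1 d) + \ln f_2$ (the second clause of the assumption $b \ge 2^4 \ln(4 f_1 d) + \ln f_2$) gives
\[
e^{-2b} \le (4 f_1 d)^{-32} f_2^{-2} = 2^{-64} f_1^{-32} d^{-32} f_2^{-2},
\]
so that
\[
3 f_1^4 d^3 \epsilon^4 e^{-2b} \le 3 \cdot 2^{-64} f_1^{-28} d^{-29} f_2^{-2} \epsilon^4,
\]
whose square root, multiplied by $2$, is exactly $\sqrt{3}\cdot 2^{-31} f_1^{-14} f_2^{-1} d^{-29/2} \epsilon^2$. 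For the discretization term, I would use the learning-rate constraint from Theorem \ref{Total convergence bound, gd}, in particular $\eta = O(a^{-2} f_1^{-4} d^{-2} \epsilon^{-2} \sigma_1(\Sigma))$, to conclude
\[
\eta \cdot a^{-1} f_1^{14} d^8 \epsilon^6 \sigma_1^2(\Sigma) = O\!\left(a^{-3} f_1^{10} d^6 \epsilon^4 \sigma_1^3(\Sigma)\right),
\]
which, combined with $a \ge 32 f_1^{20} f_2 d^{13} \sigma_1(\Sigma) b$, is of order $O(b^{-3} f_1^{-50} f_2^{-3} d^{-33} \epsilon^4)$ and hence strictly smaller than the exponential contribution by several polynomial factors. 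Hence the exponential term controls the bound and the constant $\sqrt{3}\cdot 2^{-31}$ follows after absorbing the discretization term into a trivial $\sqrt{2}$ slack.

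The only place that requires any care is the second clause, namely checking that the learning rate constraint imposed by Theorem \ref{Total convergence bound, gd} is indeed tight enough to make the $\eta$-dependent error negligible; this is routine arithmetic in the exponents of $f_1$, $f_2$, $d$, $\epsilon$, and $\sigma_1(\Sigma)$, with no conceptual difficulty. The main (mild) obstacle is simply bookkeeping the numerical constants so that the factor $2^{-31}$ survives after the $2\sqrt{\cdot}$ and after absorbing the discretization slack; taking any constant in $\eta$ sufficiently small provides enough margin.
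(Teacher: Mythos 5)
Your route is essentially the paper's: the paper's proof of Corollary~\ref{stage 1, e_delta, GD} derives the one-step recursion from Theorem~\ref{regularization term, convergence bound, GD} together with the singular-value bounds from Lemma~\ref{alignment stage, bound of max and min singular values, GD}, telescopes it to $\mathcal{L}_{\rm reg}(T_1) \le 3 a f_1^4 d^3 \epsilon^4 e^{-2b} + \eta\,O(f_1^{14} d^8 \epsilon^6\sigma_1^2(\Sigma))$, converts to $e_\Delta = 2\sqrt{\mathcal{L}_{\rm reg}/a}$, and then plugs in the hypothesis on $b$ and the smallness of $\eta$ --- precisely the arithmetic you carry out. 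Two small points are worth fixing, though.

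First, you ``invoke Theorem~\ref{stage 1: alignment stage, GD} at $t = T_1$'' to obtain the intermediate bound $e_\Delta(T_1)\le 2\sqrt{3f_1^4d^3\epsilon^4 e^{-2b}+\eta\,O(a^{-1}f_1^{14}d^8\epsilon^6\sigma_1^2(\Sigma))}$; but Corollary~\ref{stage 1, e_delta, GD} is itself one of the ingredients proving Theorem~\ref{stage 1: alignment stage, GD} (it sits under ``This section proves the theorem above by following Lemmas and Corollaries''), so that step is circular as phrased. You should instead obtain the intermediate bound directly from Theorem~\ref{regularization term, convergence bound, GD} and Lemma~\ref{alignment stage, bound of max and min singular values, GD}, which you in fact gesture at with your ``cf.\ Theorem~\ref{regularization term, convergence bound, GD}'' remark; just make that the actual starting point rather than the Theorem.

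Second, your bookkeeping of the slack is internally inconsistent. You announce that ``each term alone is bounded by $\tfrac12 B^2$'' with $B=\sqrt{3}\cdot 2^{-31}f_1^{-14}f_2^{-1}d^{-29/2}\epsilon^2$, and later that the discretization term is ``absorbed into a trivial $\sqrt{2}$ slack''; but your own computation of the exponential term gives $3f_1^4d^3\epsilon^4 e^{-2b}\le 3\cdot 2^{-64}f_1^{-28}d^{-29}f_2^{-2}\epsilon^4 = \tfrac14 B^2$, which after $2\sqrt{\cdot}$ is \emph{exactly} $B$. So under the stated hypothesis $b \ge 2^4\ln(4f_1 d)+\ln f_2$ (which permits equality) the exponential term alone already saturates the target, and there is no $\sqrt{2}$ margin at all. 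The only thing that makes room for the $\eta$-term is the ``appropriate small constant'' in $\eta$ (the same implicit assumption the paper relies on), and this is what your argument should say explicitly; the ``$\sqrt{2}$ slack'' claim is simply false. The rest of the exponent-chasing ($\eta\, a^{-1}f_1^{14}d^8\epsilon^6\sigma_1^2(\Sigma) = O(b^{-3}f_1^{-50}f_2^{-3}d^{-33}\epsilon^4)$ after applying the learning-rate and $a$ hypotheses) is fine and correctly shows the $\eta$-term is polynomially smaller in $f_1,f_2,d,b$ than the exponential contribution.
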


\begin{proof}

By simply combining Theorem \ref{regularization term, convergence bound, GD} and Lemma \ref{alignment stage, bound of max and min singular values, GD}, denote $M = \max_{j,k,t\le T_1}(W_j(t))$, 

\begin{equation}
 \begin{aligned}
  \mathcal{L}_{\rm reg}(t+1) &\le \left(1 - 2.509 \frac{\eta a \epsilon^2}{f_1^6 d^3}\right) \cdot \mathcal{L}_{\rm reg}(t) + \eta^2 O\left(a^2 M^4 \mathcal{L}_{\rm reg}(t) + \sqrt{a \mathcal{L}_{\rm reg}(t)} M^6 \mathcal{L}_{\rm ori}(t) \right) \\ &+ \eta^4 O\left(a M^{12} \mathcal{L}_{\rm ori}(t)^2 + a^3 M^4 \mathcal{L}_{\rm reg}(t)^2 \right) \\ 
  &\le \left(1 - \frac{2 \eta a \epsilon^2}{f_1^6 d^3}\right) \cdot \mathcal{L}_{\rm reg}(t) + \eta^2 O\left(a M^8 \mathcal{L}_{\rm ori}(t) \right) \\ 
  &\le \left(1 - \frac{2 \eta a \epsilon^2}{f_1^6 d^3}\right) \cdot \mathcal{L}_{\rm reg}(t) + \eta^2 O\left(a f_1^8 d^5 \epsilon^8 \sigma_1^2(\Sigma) \right) , 
 \end{aligned}
\end{equation}

giving 

\begin{equation}
 \begin{aligned}
  \mathcal{L}_{\rm reg}(t) &\le \mathcal{L}_{\rm reg}(0) e^{-\frac{2 \eta a \epsilon^2}{f_1^6 d^3} t} + \eta O\left( f_1^{14} d^8 \epsilon^6 \sigma_1^2(\Sigma) \right) . 
 \end{aligned}
\end{equation}

\begin{equation}
 \begin{aligned}
  \mathcal{L}_{\rm reg}(T_1) 
  &\le 3 a f_1^4 d^3 \epsilon^4 e^{-2b} + \eta O\left( f_1^{14} d^8 \epsilon^6 \sigma_1^2(\Sigma) \right) , 
 \end{aligned}
\end{equation}

\begin{equation}
  e_\Delta(T_1) = 2\sqrt{\frac{\mathcal{L}_{\rm reg}(T_1)}{a}} \le \sqrt{3} \cdot 2^{-31} f_1^{-14} f_2^{-1} d^{-29/2} \epsilon^2 . 
\end{equation}

\end{proof}

\begin{corollary}\label{Main term at the end of alignment stage, gd}

Main term at the end of alignment stage. 

At $t=T_1$, 

\begin{equation}
  \left.\sigma_{\min}\left(W_1 + W_1^{\prime}\right)\right|_{t=T_1} \ge \frac{\epsilon}{2 f_1^3 f_2 d^{9/2}} . 
\end{equation}

\end{corollary}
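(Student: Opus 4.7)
The statement has exactly the same form as its gradient-flow counterpart (Corollary~\ref{Main term at the end of alignment stage, gf}), so the plan is to port that argument to the discrete setting, treating the per-step discretization error as a small additive perturbation. Writing $W_1 + W_1'=(W_1^H W_2^H W_2)^{-1}\bigl(W_1^H W_2^H W_2 W_1 + W^H\bigr)$ reduces the goal to two sub-tasks: lower bounding $\sigma_{\min}\bigl(W_1^H W_2^H W_2 W_1 + W^H\bigr)$ at $t=T_1$, and upper bounding $\|W_1^H W_2^H W_2\|_{\mathrm{op}}$. The second is already handled by the singular-value bound in Theorem~\ref{stage 1: alignment stage, GD}, which gives $\|W_1^H W_2^H W_2\|_{\mathrm{op}}\le (1+2^{-21})^{3} f_1^{3}d^{3/2}\epsilon^{3}$.

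\textbf{Step 1: Control the drift of $W$ over $[0,T_1]$.} Under GD, $W(t+1)-W(t)=\eta\sum_j W_{\prod_L,j+1}W_{\prod_L,j+1}^H(\Sigma-W(t))W_{\prod_R,j-1}^H W_{\prod_R,j-1}+\eta^2 E_W(t)$, where the quadratic error term $E_W(t)$ is controlled by $\max_j\|\nabla_{W_j}\mathcal L(t)\|_{\mathrm{op}}$. Using the uniform singular value bounds from Lemma~\ref{alignment stage, bound of max and min singular values, GD} and the loss bound $\|\Sigma-W(t)\|_F\le 2\sqrt{d}\sigma_1(\Sigma)$, I sum the per-step bound over $t=0,\ldots,T_1-1$. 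The first-order term contributes at most $8(1+2^{-18})f_1^6 d^3\epsilon^6\sigma_1(\Sigma)\cdot \eta T_1$ (identical to the GF bound since $\eta T_1$ plays the role of the continuous time), while the $\eta^2$ terms aggregate to $O(\eta T_1\cdot\eta a^2 f_1^{12}d^6\epsilon^{12})$, which is absorbed into the chosen $\eta$. This yields $\|W(T_1)-W(0)\|_{\mathrm{op}}\le (1+2^{-17})\cdot\tfrac14 f_1^{-8}f_2^{-1}d^{-7}\epsilon^4$, matching the GF bound.

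\textbf{Step 2: Relate $W_1^H W_2^H W_2 W_1$ to $(W^H W)^{1/2}$.} Using $M_{\Delta 1234}$ from Section~\ref{section: notations and Preliminaries}, one has $(W_1^H W_2^H W_2 W_1)^2-W^H W = W_1^H W_2^H M_{\Delta 1234}W_2 W_1$. Bounding this by the updated $e_\Delta(T_1)$ from Corollary~\ref{stage 1, e_delta, GD} (which is larger than the GF version by an $O(\eta)$ additive term but still $\ll f_1^{-8}f_2^{-1}d^{-5}\epsilon^8$ thanks to the choice of $a$ and $\eta$), then combining with the $\Delta_W(T_1)$ bound from Step~1 and a square-root perturbation via Lemma~\ref{error bound, sqrt}, I obtain $\|W_1^H W_2^H W_2 W_1-(W(0)^H W(0))^{1/2}\|_{\mathrm{op}}\le 0.27 f_2^{-1}d^{-3}\epsilon^4$, exactly as in the GF case.

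\textbf{Step 3: Invoke the initialization lower bound and conclude.} By the random-Gaussian initialization guarantee (\ref{eq, Gaussian random matrix ensemble product, eigenvalues, complex})/(\ref{eq, Gaussian random matrix ensemble product, eigenvalues, real}), $\sigma_{\min}\bigl((W(0)^H W(0))^{1/2}+W(0)^H\bigr)\ge f_2^{-1}d^{-3}\epsilon^4$. Subtracting the Step~2 perturbation and the Step~1 drift $\|\Delta_W(T_1)\|_{\mathrm{op}}$ gives $\sigma_{\min}(W_1^H W_2^H W_2 W_1+W^H)|_{T_1}\ge 0.72\,f_2^{-1}d^{-3}\epsilon^4$. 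Dividing by $\|W_1^H W_2^H W_2\|_{\mathrm{op}}\le(1+2^{-21})^3 f_1^3 d^{3/2}\epsilon^3$ yields the claimed bound $\sigma_{\min}(W_1+W_1')|_{T_1}\ge \epsilon/(2 f_1^3 f_2 d^{9/2})$.

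\textbf{Main obstacle.} The only real difficulty is bookkeeping the discretization errors in Step~1: naively, each step contributes an $\eta^2$ term scaling like $a^2$ (from the regularizer gradients), so one must verify that the hypothesis $\eta=O(a^{-2}f_1^{-4}d^{-2}\epsilon^{-2}\sigma_1(\Sigma))$ forces the total $\eta T_1\cdot \eta a^2 (\cdot)$ contribution to remain strictly smaller than the continuous-time drift bound. Once this is checked, every subsequent estimate follows the GF proof verbatim, because the thresholds used there were chosen with a multiplicative slack that already absorbs the discretization residuals.
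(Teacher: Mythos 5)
Your proposal is correct and follows essentially the same route as the paper's proof: bound the drift $\|W(T_1)-W(0)\|_{op}$ by summing the GD updates (absorbing the $\eta^2 a^2$ residuals via the learning-rate hypothesis), relate $W_1^HW_2^HW_2W_1$ to $(W(0)^HW(0))^{1/2}$ through $M_{\Delta1234}$, the balance error $e_\Delta(T_1)$, and Lemma~\ref{error bound, sqrt}, then invoke the initialization lower bound on $\sigma_{\min}\bigl((W(0)^HW(0))^{1/2}+W(0)^H\bigr)$ and divide by $\|W_1^HW_2^HW_2\|_{op}$. The obstacle you flag (the $\eta^2$ bookkeeping in the drift sum) is exactly the one extra step the paper handles relative to the gradient-flow version, and your treatment of it is consistent with the paper's.
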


\begin{proof}

Denote $\Delta_{X}(t) = X(t) - X(0)$ for arbitrary $X$. 

At $t = T_1$, 

\begin{equation}
 \begin{aligned}
  \| \Delta_{W}(T_1)\|_{op} 
  \le& \left\| \sum_{t^\prime = 0}^{T_1-1} \eta \left[ \sum_{j=1}^{4} W_{\prod_L,j+1}(t^\prime) W_{\prod_L,j+1}(t^\prime)^H \left(\Sigma - W(t^\prime)\right) W_{\prod_R,j-1}^H (t^\prime) W_{\prod_R,j-1}(t^\prime) \right] \right\|_{op} \\
  +& \eta^2 \sum_{t^\prime = 0}^{T_1-1} O\left( \max_{j \in [1,4]\cap \mathbb{N}^*} \left\| \nabla_{W_j} \mathcal{L}(t^\prime) \right\|_{F}^2 \cdot \max_{j \in [1,4]\cap \mathbb{N}^*} \|W_j(t^\prime)\|_{op}^2 \right) \\ 
  \le& \eta T_1 \cdot 6 \sigma_1(\Sigma) \cdot \left( \left(1+2^{-21}\right) f_1 \sqrt{d} \epsilon \right)^6 + \eta^2 T_1 O\left( a^2 d \left( f_1 \sqrt{d} \epsilon \right)^8 \right) \\
  \le& \eta T_1 \cdot 8 \sigma_1(\Sigma) \cdot \left( \left(1+2^{-21}\right) f_1 \sqrt{d} \epsilon \right)^6 \\
  \le& \left(1+2^{-18}\right) \cdot \frac{1}{4} f_1^{-8} f_2^{-1} d^{-7} \epsilon^4 . 
 \end{aligned}
\end{equation}

Thus 

\begin{equation}
 \begin{aligned}
  \left\| \Delta_{W^H W}(T_1) \right\|_{op} &= \left\| \frac{1}{2}\left[\left(W(T_1) + W(0)\right)^H \Delta_{W}(T_1) + \Delta_{W}(T_1)^H \left(W(T_1) + W(0)\right) \right] \right\|_{op} \\ 
  &\le (1+2^{-17})\cdot\frac{1}{2} f_1^{-4} f_2^{-1} d^{-5} \epsilon^8 . 
 \end{aligned}
\end{equation}

From Corollary \ref{stage 1, e_delta, GD}, 

\begin{equation}
 \begin{aligned}
  &\left\| \left(W_1(T_1)^H W_2(T_1)^H W_2(T_1) W_1(T_1) \right)^2 - W(T_1)^H W(T_1) \right\|_{op} \\ \le& \left\|W_1(T_1)^H W_2(T_1)^H \right\|_{op} \left\| M_{\Delta1234}(T_1) \right\|_{op}
  \left\| W_2(T_1) W_1(T_1) \right\|_{op} \\ \le& 2^{-12} f_1^{-8} f_2^{-16} d^{-23/2} \epsilon^8 . 
 \end{aligned}
\end{equation}

Thus 

\begin{equation}
 \begin{aligned}
  &\left\| \left(W_1(T_1)^H W_2(T_1)^H W_2(T_1) W_1(T_1) \right)^2 - W(T_0)^H W(T_0) \right\|_{op} \\ 
  \le& \left\| \left(W_1(T_1)^H W_2(T_1)^H W_2(T_1) W_1(T_1) \right)^2 - W(T_1)^H W(T_1) \right\|_{op} + \left\| \Delta_{W^H W}(T_1) \right\|_{op} \\ 
  \le& (1+2^{-16})\cdot\frac{1}{2} f_1^{-4} f_2^{-1} d^{-5} \epsilon^8 . 
 \end{aligned}
\end{equation}

From Lemma \ref{error bound, sqrt}, 

\begin{equation}
 \begin{aligned}
  &\left\| W_1(T_1)^H W_2(T_1)^H W_2(T_1) W_1(T_1) - \left( W(T_0)^H W(T_0) \right)^{1/2} \right\|_{op} \\
  \le& \frac{\left\| \left(W_1(T_1)^H W_2(T_1)^H W_2(T_1) W_1(T_1) \right)^2 - W(T_0)^H W(T_0) \right\|_{op}}{2\sqrt{\lambda_{\min}\left( W(T_0)^H W(T_0) \right) - \left\|  \left(W_1(T_1)^H W_2(T_1)^H W_2(T_1) W_1(T_1) \right)^2 - W(T_0)^H W(T_0)\right\|_{op}}} \\
  \le& \frac{(1+2^{-16})\cdot\frac{1}{2} f_1^{-4} f_2^{-1} d^{-5} \epsilon^8}{2\sqrt{\left( \frac{\epsilon}{f_1 \sqrt{d}} \right)^8 - (1+2^{-16})\cdot\frac{1}{2} f_1^{-4} f_2^{-1} d^{-5} \epsilon^8}} \le 0.27 f_2^{-1} d^{-3} \epsilon^4 . 
 \end{aligned}
\end{equation}

By (\ref{subsection: random gaussian initialization}), 

\begin{equation}
 \begin{aligned}
  &\sigma_{\min} \left( W_1(T_1)^H W_2(T_1)^H W_2(T_1) W_1(T_1) + W(T_1)^H\right) \\ 
  \ge& \sigma_{\min} \left( \left( W(T_0)^H W(T_0) \right)^{1/2} + W(0)^H \right) \\ -& \left\| W_1(T_1)^H W_2(T_1)^H W_2(T_1) W_1(T_1) - \left( W(T_0)^H W(T_0) \right)^{1/2} \right\|_{op} - \left\| \Delta_W(T_1) \right\|_{op} \\  
  \ge& 0.72 f_2^{-1} d^{-3} \epsilon^4 , 
 \end{aligned}
\end{equation}

which further gives 

\begin{equation}
 \begin{aligned}
  &\left. \sigma_{\min} \left(W_1 + W_1^\prime \right) \right|_{t=T_1} \\
  =& \sigma_{\min} \left( \left(W_1(T_1)^H W_2(T_1)^H W_2(T_1)\right)^{-1}\left( W_1(T_1)^H W_2(T_1)^H W_2(T_1) W_1(T_1) + W(T_1)^H\right) \right) \\ 
  \ge& \left(\frac{1}{\max_{j,k}|\sigma_k(W_j(T_1))|}\right)^3 \cdot \sigma_{\min} \left( W_1(T_1)^H W_2(T_1)^H W_2(T_1) W_1(T_1) + W(T_1)^H\right) \\ 
  \ge& \frac{\epsilon}{2 f_1^3 f_2 d^{9/2}} . 
 \end{aligned}
\end{equation}

\end{proof}

\subsection{Stage 2: saddle avoidance stage}

In this stage, we further assume $a \ge 32 f_1^{20} f_2 d^{13}\sigma_1(\Sigma) b$, where $b \ge \left( 5\ln\left( \frac{\sigma_1^{1/4}(\Sigma)}{\epsilon} \right) + \frac{281}{8} \ln d + 23\ln(4 f_1) + 7\ln f_2 \right)$. Meanwhile,  $\frac{\epsilon}{\sigma_1^{1/4}(\Sigma)} \le \frac{1}{32 f_1^5 f_2 d^{53/8}} $. 

From Theorem \ref{stage 1: alignment stage, GD}, for $\eta = O\left( a f_1^{-56} f_2^{-14} d^{-301/4} \epsilon^8 \sigma_1^{-9/2}(\Sigma) \right)$ with appropriate small constant, 

\begin{equation}\label{ineq, e, stage 2, GD}
 \begin{aligned}
  e_{\Delta}(T_1) &\le 2\sqrt{3 f_1^4 d^3 \epsilon^4 e^{-2b} + \eta O\left(a^{-1} f_1^{14} d^8 \epsilon^6 \sigma_1^2(\Sigma) \right)} \\
  &\le 2^{-44} f_1^{-21} f_2^{-7} d^{-269/8} \epsilon^7 \sigma_1^{-5/4}(\Sigma) . 
 \end{aligned}
\end{equation}

Moreover, $b - \ln b \ge 3\ln\left(\frac{\sigma_1^{1/4}(\Sigma)}{\epsilon}\right) + \frac{303}{8}\ln d + 37 \ln(2f_1) + 6 \ln f_2 $. Thus for $\eta = O\left( a^{-1} f_1^{-44} f_2^{-10} d^{-219/4} \epsilon^4 \sigma_1^{-3/2}(\Sigma) \right)$ with appropriate small constant, 

\begin{equation}\label{ineq, a e, stage 2, GD}
 \begin{aligned}
  a e_{\Delta}(T_1) &\le 2\sqrt{3\cdot 2^{10} f_1^{44} f_2^2 d^{29} \epsilon^4 \sigma_1^2(\Sigma) \exp(-2(b- \ln b)) + \eta O\left(a f_1^{14} d^8 \epsilon^6 \sigma_1^2(\Sigma) \right)} \\
  &\le 2^{-30} f_1^{-15} f_2^{-5} d^{-187/8} \epsilon^5 \sigma_1^{1/4}(\Sigma) . 
 \end{aligned}
\end{equation}

\begin{theorem}\label{stage 2: saddle avoidance stage, gd}

At $T_1 + T_2$, $T_2 = \frac{32 f_1^6 f_2^2 d^9}{\eta \sigma_1(\Sigma) \epsilon^2}$, the following conclusions hold: 

\begin{equation}
 \begin{aligned}
  \left \| W_1(T_1+T_2) - W_1^\prime(T_1+T_2) \right \|_F &\le 3 f_1 d \epsilon \\
  \sigma_{\min}(W_1 + W_1^\prime)(T_1+T_2) &\ge  2^{3/4} \sigma_1^{1/4}(\Sigma) . 
 \end{aligned}
\end{equation}
    
\end{theorem}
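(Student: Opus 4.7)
The plan is to mirror the gradient flow argument in Theorem \ref{stage 2: saddle avoidance stage, gf}, but systematically upgrade each continuous-time ingredient to gradient descent by quantifying the discretization error introduced per step, and then verifying that, over the $O(\eta^{-1})$ steps of stage 2, these errors accumulate to a strictly smaller scale than the main-order terms so that the target inequalities survive.

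First I would establish a discrete-time analog of Lemma \ref{bound of w_j op} (operator norms of $W_j$, $W$, $\Sigma-W$ stay controlled) and Lemma \ref{bound of w23 inv op, and relevant terms} (bounds on $\|W_2^{-1}\|_{op}$, $\|W_3^{-1}\|_{op}$ and on the quantities $\|I-RR^H\|_{op}$, $\|M_1-M_1'\|_{op}$, $\|R\|_{op}$). For the inverse bounds I would reuse Lemma \ref{error bound, inverse} to write $W_2(t+1)^{-1}=W_2(t)^{-1}+\eta \cdot(\text{main})+\eta^2\cdot(\text{second-order})$, take operator norms, and plug in the bounds on $e_\Delta(t)$ and $a e_\Delta(t)$ from inequalities (\ref{ineq, e, stage 2, GD}) and (\ref{ineq, a e, stage 2, GD}) together with Lemma \ref{bound of w_j op}. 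A bootstrap argument identical to the continuous case but with an added $\eta^2$ remainder shows $\max(\|W_2^{-1}\|_{op},\|W_3^{-1}\|_{op})=O(f_1^6 f_2^2 d^{77/8}\epsilon^{-2}\sigma_1^{1/4}(\Sigma))$ throughout $[T_1,T_1+T_2]$, provided $\eta$ is chosen small enough that the $\eta^2$ remainder is dominated by the $\eta$ main term.

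Next I would bound the skew-hermitian error $\|W_1-W_1'\|_F^2$. The detailed expansion in Section \ref{section: skew-hermitian error term, gd} expresses $\|W_1(t+1)-W_1'(t+1)\|_F^2-\|W_1(t)-W_1'(t)\|_F^2$ as $\eta$ times the same six terms that appear in Lemma \ref{stage 2, skew-hermitian error} (the gradient-flow derivative), plus a tracked $\eta^2$ remainder whose operator-norm bound is given explicitly there. Using the same estimates as in the continuous proof --- the dominant $-2\sigma_1(\Sigma)\mathrm{tr}((W_1-W_1')^H M_2(W_1-W_1'))$ and $-\sigma_1(\Sigma)$ skew-symmetry terms are non-positive and can be discarded; the remaining five cross terms are each bounded by $O(e_\Delta\cdot(\cdots))$ via the estimates on $\|M_1-M_1'\|_{op}$, $\|R^HR-I\|_{op}$ and $\|M_1'M_2M_1-M_1M_2M_1'\|_{op}$ --- one gets the per-step increment $\le 2^{-17}\eta f_1^{-8}f_2^{-3}d^{-25/2}\epsilon^4\sigma_1(\Sigma)+\eta^2 (\text{remainder})$. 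Summing over $T_2=O(\eta^{-1})$ steps contributes $\le 2^{-12}f_1^{-2}f_2^{-1}d^{-7/2}\epsilon^2$, and combined with the initial value $\|W_1(T_1)-W_1'(T_1)\|_F\le (1+2^{-20})2f_1d\epsilon$ from Theorem \ref{stage 1: alignment stage, GD}, we stay strictly below $3f_1d\epsilon$. Choosing $\eta$ small enough that the cumulative $\eta^2$-remainder is dominated by the main contribution closes the bootstrap.

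Finally, for $\sigma_{\min}(W_1+W_1')$, I would apply Lemma \ref{minimum singular values lower bound, general, discrete} to the decomposition in Section \ref{section: hermitian main term, gd}, using $S=(W_1+W_1')^H(W_1+W_1')$ and identifying $aM-M^3$ with $\sigma_1(\Sigma)M_2-M_2((M_1+M_1')/2)M_2$ (after absorbing the $\|M_2-\frac{M_1+M_1'}{2}\|_{op}$ correction into the $\|E\|_{op}$ bound), plus the skew-hermitian and $R^HR-I$ contributions. This yields the discrete-time analog of the continuous lower bound $\frac{\mathrm{d}}{\mathrm{d}t}\sigma_k^2\ge\frac{3}{8}\sigma_k^4\sigma_1(\Sigma)-\frac{1}{32}\sigma_k^8$, with an additive $\eta$-scaled error controlled by the quantities bounded in steps 1--2. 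Using the same calculus-style integration as Corollary \ref{stage 2, main term} but in discrete time (summing geometric/polynomial increments), one shows $\sigma_k$ grows from its stage-1 value $\ge\epsilon/(2f_1^3f_2d^{9/2})$ past $2^{3/4}\sigma_1^{1/4}(\Sigma)$ within $T_2$ steps, and remains above that threshold because the per-step increment at the threshold is strictly positive.

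The hardest step will be step 3 as applied to the minimum singular value: in continuous time $\sigma_k^2$ has an instantaneous derivative expressible through its singular vectors $(\eta_k,\chi_k)$, but in discrete time the singular vectors rotate between steps, so one cannot directly iterate the gradient-flow ODE. Lemma \ref{minimum singular values lower bound, general, discrete} is the device that bypasses this by working at the matrix level (Hermitian sandwich with an identity-plus-small-step), but matching the gradient-descent update of $W_1+W_1'$ against the $I+\eta(aM-M^3+E)$ template requires carefully absorbing the skew-hermitian $\sigma_1(\Sigma)(W_1W_1'^H-W_1'W_1^H)W_1'$ term and the $R^HR-I$ correction into $E$ while checking that $\|E\|_{op}$ is genuinely smaller than the quadratic-in-$\sigma_k$ main growth rate --- this tight budget is what forces the lower bounds on $a$ and upper bounds on $\epsilon,\eta$ stated in Theorem \ref{Total convergence bound, gd}.
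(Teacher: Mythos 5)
Your proposal is essentially the same proof the paper uses: it builds the discrete analogs of the operator-norm lemmas (Lemmas \ref{bound of w_j op, GD}, \ref{bound of w23 inv op, and relevant terms, GD}), then runs the per-step recursion for $\|W_1-W_1'\|_F^2$ from Section \ref{section: skew-hermitian error term, gd} and sums over the $T_2 = O(\eta^{-1})$ steps, and finally applies Lemma \ref{minimum singular values lower bound, general, discrete} to $PP^H$ with $M = \tfrac12(M_1+M_1')$ to track $\sigma_{\min}(W_1+W_1')$ — all of which matches Lemmas \ref{stage 2, skew-hermitian error, GD} and \ref{stage 2, main term, GD}.

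One small technical imprecision in your plan for the hermitian main term: the skew-Hermitian correction $\sigma_1(\Sigma)(W_1 W_1'^H - W_1' W_1^H)W_1'$ and the $R^HR-I$ correction are \emph{additive} perturbations of $W_1+W_1'$, not left-multipliers of it, so they cannot literally be folded into the $E$ of Lemma \ref{minimum singular values lower bound, general, discrete} (which sits inside the symmetric sandwich $(I+\eta(aM-M^3+E))\,S\,(I+\eta(aM-M^3+E))$). The paper instead collects them into a separate additive remainder $E_1(t)$ (Section \ref{section: hermitian main term, gd}) that feeds into a $-2^{-18}\sigma_1(\Sigma)\sigma_{\min}^4$ slack beneath $\lambda_{\min}(W_{\rm new}^H W_{\rm new})$; only the $M_2$ versus $\tfrac12(M_1+M_1')$ discrepancy goes into the Lemma's $E$. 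This is a bookkeeping detail you would discover in executing the plan, not a missing idea.
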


\begin{lemma}\label{L ori non-increasing, GD}

$\mathcal{L}_{\rm ori}$ is approximately non-increasing. 

For $t\in[0,+\infty)$, suppose $\left\| W_{j\in[1,N] \cap \mathbb{N}^*}(t) \right\|_{op} \le M$, then 

\begin{equation}
 \begin{aligned}
  \mathcal{L}_{\rm ori}(t+1) - \mathcal{L}_{\rm ori}(t) &\le - 2\eta N \min_{j,k} |\sigma_{k}(W_j(t))|^{2(N-1)} \mathcal{L}_{\rm ori}(t) \\
  &+ \eta^2 O\left( M^8 \left(M^4 + \sqrt{\mathcal{L}_{\rm ori}(t)} \right) \mathcal{L}_{\rm ori}(t) + a M^4 \sqrt{\mathcal{L}_{\rm ori}(t)} \mathcal{L}_{\rm reg}(t)  \right) \\
  &+ \eta^4 O\left( M^{16} \mathcal{L}_{\rm ori}(t)^2 + a^2 M^8 \mathcal{L}_{\rm reg}(t)^2 \right) . 
 \end{aligned}
\end{equation}

\end{lemma}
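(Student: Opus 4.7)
}
The plan is to mimic the gradient-flow proof of Lemma \ref{L ori non-increasing} one time step at a time, expanding $\mathcal{L}_{\rm ori}(t+1)$ around $\mathcal{L}_{\rm ori}(t)$ in powers of $\eta$ and then absorbing everything beyond the first-order descent into the $\eta^2$ and $\eta^4$ remainders. Write $G_j(t) = -\nabla_{W_j}\mathcal{L}(t)$ so that $W_j(t+1) = W_j(t)+\eta G_j(t)$, and expand the product matrix as
\begin{equation}
W(t+1) = W(t) + \eta A(t) + \eta^2 R(t), \qquad A(t) = \sum_{j=1}^{N} W_{\prod_L,j+1}(t)\, G_j(t)\, W_{\prod_R,j-1}(t),
\end{equation}
where $R(t)$ collects all terms in which at least two of the layer factors are $G_k$'s. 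The crucial observation, reused from the derivation of equation (\ref{equation: dW/dt is irrelevant to a}) in the gradient-flow case, is that the regularization part of the sum defining $A(t)$ telescopes to zero, so $A(t)$ equals the same expression that appears under gradient flow, namely $\sum_j W_{\prod_L,j+1} W_{\prod_L,j+1}^H(\Sigma-W)W_{\prod_R,j-1}^H W_{\prod_R,j-1}$ at time $t$.

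Next I would plug this into the identity
\begin{equation}
\mathcal{L}_{\rm ori}(t+1) - \mathcal{L}_{\rm ori}(t) = -\eta\,\Re\langle \Sigma-W(t), A(t)\rangle - \eta^2\,\Re\langle \Sigma-W(t), R(t)\rangle + \tfrac{1}{2}\|\eta A(t)+\eta^2 R(t)\|_F^2.
\end{equation}
The inner product with $A(t)$ reproduces exactly the gradient-flow computation, giving $-\eta\sum_j \|W_{\prod_L,j+1}^H(\Sigma-W)W_{\prod_R,j-1}^H\|_F^2$, which is bounded above by $-2\eta N \min_{j,k}|\sigma_k(W_j(t))|^{2(N-1)}\mathcal{L}_{\rm ori}(t)$, matching the leading negative term in the statement. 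So the proof reduces to a careful bookkeeping of the error terms.

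To control the errors I would use the gradient bounds $\|\nabla_{W_j}\mathcal{L}_{\rm ori}\|_F = O(M^{N-1}\sqrt{\mathcal{L}_{\rm ori}})$ and $\|\nabla_{W_j}\mathcal{L}_{\rm reg}\|_F = O(M\sqrt{a\mathcal{L}_{\rm reg}})$ (already used in the proof of Theorem \ref{regularization term, convergence bound, GD}), which give $\|A(t)\|_F = O(M^{2(N-1)}\sqrt{\mathcal{L}_{\rm ori}})$ and $\|R(t)\|_F = O(M^{N-2}(M^{2(N-1)}\mathcal{L}_{\rm ori} + aM^2\mathcal{L}_{\rm reg} + M^{N}\sqrt{a\mathcal{L}_{\rm ori}\mathcal{L}_{\rm reg}}))$. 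Substituting into the expansion: the $\|\eta A\|_F^2/2$ term contributes $O(\eta^2 M^{4(N-1)}\mathcal{L}_{\rm ori})$, which for $N=4$ is the $\eta^2 M^{12}\mathcal{L}_{\rm ori}$ piece of the stated bound; the cross term $-\eta^2\Re\langle \Sigma-W,R\rangle$ contributes $O(\eta^2 M^{N-2}\sqrt{\mathcal{L}_{\rm ori}}(M^{2(N-1)}\mathcal{L}_{\rm ori} + aM^2\mathcal{L}_{\rm reg}))$, matching the $\eta^2 M^8\sqrt{\mathcal{L}_{\rm ori}}\mathcal{L}_{\rm ori}$ and $\eta^2 aM^4\sqrt{\mathcal{L}_{\rm ori}}\mathcal{L}_{\rm reg}$ pieces; and $\|\eta^2 R\|_F^2/2$ plus the cross $\eta^3\langle A,R\rangle$ contribute $O(\eta^4 (M^{16}\mathcal{L}_{\rm ori}^2 + a^2 M^8\mathcal{L}_{\rm reg}^2))$, accounting for the final line of the bound (an $\eta^3$ term, if present, is dominated by geometric mean of the $\eta^2$ and $\eta^4$ contributions).

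I expect the main obstacle to be the careful accounting of which cross terms in $R(t)$ produce $\sqrt{\mathcal{L}_{\rm ori}\mathcal{L}_{\rm reg}}$-type contributions and verifying that these are dominated by the listed $\mathcal{L}_{\rm ori}$-only and $\mathcal{L}_{\rm reg}$-only terms via AM-GM, rather than producing a new independent term. The telescoping argument that eliminates the regularization contribution at first order is conceptually the key input; once established, the rest is a mechanical multinomial expansion followed by Cauchy-Schwarz bounds, so I would state the telescoping identity as a short lemma (essentially reusing the middle lines of equation (\ref{equation: dW/dt is irrelevant to a})) and then grind through the error bounds.
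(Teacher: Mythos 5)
Your proposal is correct and follows essentially the same route as the paper's proof: both rely on the telescoping identity showing the first-order change of the product matrix is regularization-free (the discrete analogue of equation~\ref{equation: dW/dt is irrelevant to a}), apply the exact quadratic expansion of $\mathcal{L}_{\rm ori}$, recover the gradient-flow descent term at first order, and absorb the remaining pieces into the $\eta^2$ and $\eta^4$ remainders via the gradient bounds $\|\nabla_{W_j}\mathcal{L}_{\rm ori}\|_F=O(M^{N-1}\sqrt{\mathcal{L}_{\rm ori}})$ and $\|\nabla_{W_j}\mathcal{L}_{\rm reg}\|_F=O(M\sqrt{a\mathcal{L}_{\rm reg}})$. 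Your explicit handling of the mixed $\sqrt{\mathcal{L}_{\rm ori}\mathcal{L}_{\rm reg}}$ and $\eta^3$ cross terms by AM--GM is exactly what the paper leaves implicit.
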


\begin{proof}

Following the continuous case (\ref{equation: dW/dt is irrelevant to a}), the change of product matrix satisfy 

\begin{equation}\label{equation: delta W is irrelevant to a, gd}
 \begin{aligned}
  &\left\| W(t+1)-W(t) - \eta \sum_{j=1}^{N} W_{\prod_L , j+1}(t) W_{\prod_L , j+1}(t)^H \left(\Sigma - W(t) \right) W_{\prod_R , j-1}(t)^H W_{\prod_R , j-1}(t) \right\|_{F} \\=& \eta^2 O\left( \max_{j \in [1,4]\cap \mathbb{N}^*} \left\| \nabla_{W_j} \mathcal{L}(t) \right\|_{F}^2 \cdot \max_{j \in [1,4]\cap \mathbb{N}^*} \|W_j(t)\|_{op}^2 \right) . 
 \end{aligned}
\end{equation}

Then 

\begin{equation}
 \begin{aligned}
  \mathcal{L}_{\rm ori}(t+1) - \mathcal{L}_{\rm ori}(t) &= - \Re\left(\left\langle \Sigma - \frac{W(t+1) + W(t)}{2}, W(t+1) - W(t) \right\rangle\right) \\ 
  &= - \eta \sum_{j=1}^{N} \left\| W_{\prod_L , j+1}(t)^H \left(\Sigma - W(t) \right) W_{\prod_R , j-1}(t)^H \right\|_F^2  \\ 
  &+ \eta^2 O\left( M^2 \sqrt{\mathcal{L}_{\rm ori}(t)} \cdot \max_{j \in [1,4]\cap \mathbb{N}^*} \left\| \nabla_{W_j} \mathcal{L}(t) \right\|_{F}^2 \right) \\
  &+ \eta^2 O\left( M^{6} \cdot \max_{j \in [1,4]\cap \mathbb{N}^*} \left\| \nabla_{W_j} \mathcal{L}_{\rm ori}(t) \right\|_{F}^2 \right) \\
  &+ \eta^4 O\left( M^4 \cdot \max_{j \in [1,4]\cap \mathbb{N}^*} \left\| \nabla_{W_j} \mathcal{L}(t) \right\|_{F}^4 \right) \\
  &\le - 2\eta N \min_{j,k} |\sigma_{k}(W_j(t))|^{2(N-1)} \mathcal{L}_{\rm ori}(t) \\
  &+ \eta^2 O\left( M^8 \left(M^4 + \sqrt{\mathcal{L}_{\rm ori}(t)} \right) \mathcal{L}_{\rm ori}(t) + a M^4 \sqrt{\mathcal{L}_{\rm ori}(t)} \mathcal{L}_{\rm reg}(t)  \right) \\
  &+ \eta^4 O\left( M^{16} \mathcal{L}_{\rm ori}(t)^2 + a^2 M^8 \mathcal{L}_{\rm reg}(t)^2 \right) . 
 \end{aligned}
\end{equation}

\end{proof}

Below we further assume $\eta = O\left(\min \left( f_1^{-27} f_2^{-9} d^{-355/8} \epsilon^9 \sigma_1^{-15/4}(\Sigma), a^{-1}f_1^{-21} f_2^{-7} d^{-273/8} \epsilon^7 \sigma_1^{-9/4}(\Sigma) \right)\right)$ with appropriate small constant. 

\begin{lemma}\label{bound of w_j op, GD}

Bound of operator norms. 

For $t\in[T_1, T_1+T_2]$, 

\begin{equation}
 \begin{aligned}
  \|\Sigma - W(t)\|_{F} \le& 1.01 \sqrt{d} \sigma_1(\Sigma) \\ 
  e_{\Delta} (t) \le& 1.01 \cdot 2^{-44} f_1^{-21} f_2^{-7} d^{-269/8} \epsilon^7 \sigma_1^{-5/4}(\Sigma) \\ 
  a e_{\Delta} (t) \le& 1.01 \cdot 2^{-30} f_1^{-15} f_2^{-5} d^{-187/8} \epsilon^5 \sigma_1^{1/4}(\Sigma) \\
  \|W\|_{op} \le \|W\|_{F} \le& 3 \sqrt{d} \sigma_1(\Sigma) \\ 
  \max_{j} \|W_j\|_{op} \le \max_{j} \|W_j\|_{F} \le& \sqrt{2} d^{1/8} \sigma_1^{1/4}(\Sigma) . 
 \end{aligned}
\end{equation}

\end{lemma}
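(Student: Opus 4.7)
The plan is to extend the gradient-flow bounds of Lemma \ref{bound of w_j op} to the discrete setting by carefully accumulating the $\eta^2$ error terms over the $T_2 = O(1/\eta)$ iterations of Stage 2, using the end-of-Stage-1 estimates of Theorem \ref{stage 1: alignment stage, GD} as initial data. I will argue the five bounds in a coupled induction on $t \in [T_1, T_1 + T_2]$: assuming all five hold at time $t$, I will show they hold at $t+1$; combined with the base case at $t = T_1$ (where the bounds are slack by a definite margin, say by a factor $\le 1.005$ instead of $1.01$), this closes the induction.

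First I would handle $\mathcal{L}_{\rm ori}$ and then $e_\Delta$. Under the induction hypothesis, $\max_j \|W_j\|_{op}$ is uniformly bounded by $M = \sqrt{2} d^{1/8} \sigma_1^{1/4}(\Sigma)$, so Lemma \ref{L ori non-increasing, GD} applies with this $M$. Dropping the negative leading term, the worst case per-step increase of $\mathcal{L}_{\rm ori}$ is $\eta^2 \cdot O(M^{12} \mathcal{L}_{\rm ori}(t) + a M^4 \sqrt{\mathcal{L}_{\rm ori}(t)} \mathcal{L}_{\rm reg}(t)) + \eta^4 \cdot O(\ldots)$. Summing over $T_2$ steps and using the chosen $\eta \le O(f_1^{-27} f_2^{-9} d^{-355/8} \epsilon^9 \sigma_1^{-15/4}(\Sigma))$ and $\eta \le O(a^{-1} f_1^{-21} f_2^{-7} d^{-273/8} \epsilon^7 \sigma_1^{-9/4}(\Sigma))$, the cumulative error is below a tiny fraction of $d \sigma_1^2(\Sigma)$, giving $\mathcal{L}_{\rm ori}(t+1) \le \mathcal{L}_{\rm ori}(T_1) + o(d \sigma_1^2(\Sigma))$ and hence $\|\Sigma - W\|_F \le 1.01 \sqrt{d} \sigma_1(\Sigma)$. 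An analogous argument with Theorem \ref{regularization term, convergence bound, GD} gives the $e_\Delta$ bound: the linear contraction factor $1 - \Omega(\eta a \delta^4 / M^2)$ keeps $\mathcal{L}_{\rm reg}$ close to its Stage-1-end value, while the $O(\eta^2 a^2 M^4 \mathcal{L}_{\rm reg} + \eta^2 \sqrt{a \mathcal{L}_{\rm reg}} M^6 \mathcal{L}_{\rm ori})$ additive terms are absorbed by the slack built into the factor $1.01$ in the statement (using the same two $\eta$-smallness conditions as above).

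The bound on $\|W\|_F$ follows from the triangle inequality $\|W\|_F \le \|\Sigma\|_F + \|\Sigma - W\|_F \le (1 + 1.01)\sqrt{d}\,\sigma_1(\Sigma) < 3\sqrt{d}\,\sigma_1(\Sigma)$. The last bound, $\max_j \|W_j\|_F \le \sqrt{2} d^{1/8} \sigma_1^{1/4}(\Sigma)$, is the one that does not follow from monotonicity and requires a contradiction step, mirroring Lemma \ref{bound of w_j op}: if for some $t^* \in (T_1, T_1+T_2]$ the bound were violated for the first time, then at $t^*-1$ all five bounds still hold, in particular the $e_\Delta$ bound, which forces $W_4 W_3 W_2 W_1 W_1^H W_2^H W_3^H W_4^H \succeq (\max_j \|W_j\|_{op}^2 - 3 e_\Delta)^4 I - 6 e_\Delta \max_j \|W_j\|_{op}^6 I$, so $\|W\|_{op}^2$ would strictly exceed $15\sqrt{d}\,\sigma_1(\Sigma)$, contradicting the third bound. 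A single-step perturbation of size $\eta \cdot O(M^3 \sigma_1(\Sigma) + a e_\Delta M) \ll M$ cannot cross the gap, so the violation time $t^*$ cannot exist.

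The main obstacle I anticipate is bookkeeping the arithmetic of powers of $(f_1, f_2, d, \epsilon, \sigma_1(\Sigma))$ across the $\eta^2$ error terms in both Lemma \ref{L ori non-increasing, GD} and Theorem \ref{regularization term, convergence bound, GD}: the discrete contribution over $T_2$ steps scales as $\eta \cdot T_2 \cdot (\text{per-step error}/\eta^2)$, and one must verify that the two stated smallness conditions on $\eta$ are jointly sufficient to absorb every such cross-term into the slack built into the constants $1.01$. This is essentially a routine but delicate calculation; the contradiction argument in the last step, being identical in spirit to the gradient-flow version, is comparatively painless once $e_\Delta$ has been controlled.
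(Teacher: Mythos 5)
Your proposal is correct and follows essentially the same route as the paper: a coupled induction over $[T_1, T_1+T_2]$ in which $\mathcal{L}_{\rm ori}$ and $\mathcal{L}_{\rm reg}$ are controlled by accumulating the $\eta^2$ error terms of Lemma \ref{L ori non-increasing, GD} and Theorem \ref{regularization term, convergence bound, GD} over the $T_2 = O(1/\eta)$ steps under the stated smallness conditions on $\eta$, with $\|W\|_F$ obtained by the triangle inequality and $\max_j\|W_j\|_{op}$ by the same contradiction argument as in the gradient-flow Lemma \ref{bound of w_j op}. The only (harmless) organizational difference is that the paper first shows the last two bounds follow pointwise from the first three and then inducts only on the first three, and your explicit remark that a single discrete step cannot jump across the gap in the contradiction argument is a detail the paper leaves implicit.
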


\begin{proof}

We first prove that if the first three inequalities hold at some time $t$, then the rest follows. Then we prove the first three by mathematical induction. 

1. For some $t$, it the first two hold, then 

\begin{equation}\label{proof for bound of w_j op, contradiction, GD}
  \|W(t)\|_{op} \le \|W(t)\|_{F} \le \|\Sigma - W(t)\|_{F} + \|\Sigma\|_{F} \le 3 \sqrt{d} \sigma_1(\Sigma) . 
\end{equation}

For the last inequality, prove by contradiction. (Omit $t$ here)

Suppose $\max_{j} \|W_j\|_{op} \ge \sqrt{2} d^{1/8} \sigma_1^{1/4}(\Sigma)$, then 

\begin{equation}
  e_{\Delta} (t) \le 1.01 e_{\Delta} (T_1) \le 2^{-15} \max_{j} \|W_j\|_{op}^2 . 
\end{equation}

Thus for $t > T_1$, 

\begin{equation}
 \begin{aligned}
  \|W\|_{op}^2 &= \left\|W_4 W_3 W_2 W_1 W_1^H W_2^H W_3^H W_4^H \right\|_{op} \\
  &\ge \left\|W_4 W_4^H\right\|_{op} - \left\|W_4 W_3 W_2 \Delta_{12} W_2^H W_3^H W_4^H \right\|_{op} \\&- \left\|W_4 W_3 \Delta_{23} W_2 W_2^H W_3^H W_4^H \right\|_{op} - \left\|W_4 W_3 W_2 W_2^H \Delta_{23} W_3^H W_4^H \right\|_{op} \\&- \left\|W_4 \Delta_{34} \left(W_3 W_3^H\right)^2 W_4^H \right\|_{op} - \left\|W_4 W_3 W_3^H \Delta_{34} W_3 W_3^H W_4^H \right\|_{op} - \left\|W_4 \left(W_3 W_3^H\right)^2 \Delta_{34} W_4^H \right\|_{op} \\ 
  &\ge \left( \max_{j} \|W_j\|_{op}^2 - 3 e_{\Delta} \right)^4 - 6 e_{\Delta} \max_{j} \|W_j\|_{op}^6 > 15 \sqrt{d} \sigma_1(\Sigma) , 
 \end{aligned}
\end{equation}

which contradicts inequality (\ref{proof for bound of w_j op, contradiction, GD}). 

2. Mathematical induction. 

For $t=T_1$, 

\begin{equation}
  \|\Sigma - W(T_1)\|_{F} \le \|\Sigma\|_{F} + \| W(T_1)\|_{F} \le \left(1+2^{-39} \right)\sqrt{d} \sigma_1(\Sigma) . 
\end{equation}

Suppose for $t^\prime \in [T_1,t]$ ($T_1 \le t < T_2$), the first two properties hold. Denote $M = \max_j \|W_j(t^\prime \in [T_1,t])\|_{op}$. By invoking Lemma \ref{L ori non-increasing, GD} and \ref{regularization term, convergence bound, GD}, at $t+1$, 

\begin{equation}
 \begin{aligned}
  \mathcal{L}_{\rm ori}(t+1) &= \mathcal{L}_{\rm ori}(T_1) + \eta^2 (t - T_1) O\left( M^8 \left(M^4 + \sqrt{\mathcal{L}_{\rm ori}(T_1)} \right) \mathcal{L}_{\rm ori}(T_1) + a M^4 \sqrt{\mathcal{L}_{\rm ori}(T_1)} \mathcal{L}_{\rm reg}(T_1)  \right) \\
  &+ \eta^4 (t - T_1) O\left( M^{16} \mathcal{L}_{\rm ori}(T_1)^2 + a^2 M^8 \mathcal{L}_{\rm reg}(T_1)^2 \right) \\ 
  &= \mathcal{L}_{\rm ori}(T_1) + \eta^2 T_2 O\left( d^2 \sigma_1(\Sigma)^4  + d \sigma_1(\Sigma)^2 (ae_\Delta(T_1))^2  \right)  \le 1.01^2 \sqrt{d} \sigma_1(\Sigma) . 
 \end{aligned}
\end{equation}

Note that $\mathcal{L}_{\rm ori} = \frac{a}{4} e_\Delta^2$. 
Under $\eta = O\left(\min \left( f_1^{-27} f_2^{-9} d^{-355/8} \epsilon^9 \sigma_1^{-15/4}(\Sigma), a^{-1}f_1^{-21} f_2^{-7} d^{-273/8} \epsilon^7 \sigma_1^{-9/4}(\Sigma) \right)\right)$ with appropriate small constant, 

\begin{equation}
 \begin{aligned}
  \mathcal{L}_{\rm reg}(t+1) &\le \mathcal{L}_{\rm reg}(T_1) + \eta^2 (t - T_1) O\left(a^2 M^4 \mathcal{L}_{\rm reg}(t) + \sqrt{a \mathcal{L}_{\rm reg}(t)} M^6 \mathcal{L}_{\rm ori}(t) \right) \\ &+ \eta^4 (t - T_1) O\left(a M^{12} \mathcal{L}_{\rm ori}(t)^2 + a^3 M^4 \mathcal{L}_{\rm reg}(t)^2 \right) \\ 
  &\le \mathcal{L}_{\rm reg}(T_1) + \eta^2 T_2 O\left( \sqrt{a \mathcal{L}_{\rm reg}(t)} M^6 \mathcal{L}_{\rm ori}(t) \right) + \eta^4 T_2 O\left( a M^{12} \mathcal{L}_{\rm ori}(t)^2 \right) \\ 
  &\le \frac{1.01^2}{4} \min\left( a \cdot \left[2^{-44} f_1^{-21} f_2^{-7} d^{-269/8} \epsilon^7 \sigma_1^{-5/4}(\Sigma) \right]^2, \frac{1}{a} \cdot \left[ 2^{-30} f_1^{-15} f_2^{-5} d^{-187/8} \epsilon^5 \sigma_1^{1/4}(\Sigma) \right]^2 \right) . 
 \end{aligned}
\end{equation}

This completes the proof. 

\end{proof}

\begin{lemma}\label{bound of w23 inv op, and relevant terms, GD}

Bound of $\left \| W_2^{-1} \right \|_{op}$ and relevant term. 

For $t\in[T_1, T_1 + T_2]$, 

\begin{equation}\label{ineq: inverse op norm are bounded, GD}
  \left\| W_2^{-1} (t) \right\|_{op} \le 128 f_1^{6} f_2^{2} d^{77/8} \epsilon^{-2} \sigma_1^{1/4}(\Sigma) , 
\end{equation}

\begin{equation}
  e_\Delta (t) \left\| W_2^{-1} (t) \right\|_{op}^2 \le 1.01 \cdot 2^{-30} f_1^{-9} f_2^{-3} d^{-115/8} \epsilon^{3} \sigma_1^{-3/4}(\Sigma) . 
\end{equation}

\end{lemma}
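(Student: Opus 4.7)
The plan is to mirror the continuous-time proof of Lemma \ref{bound of w23 inv op, and relevant terms}, but carefully absorb the discrete-time $\eta^2$ corrections into acceptable error terms using the uniform bounds established in Lemma \ref{bound of w_j op, GD}. The strategy is a bootstrap/induction on $t \in [T_1, T_1+T_2]$: assume \eqref{ineq: inverse op norm are bounded, GD} holds up to step $t$, derive a tight per-step increment on $\|W_2^{-1}\|_{op}$, then sum over all $T_2$ steps to close the induction.

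First I would invoke Lemma \ref{error bound, inverse} to write
\begin{equation*}
W_2(t+1)^{-1} = W_2(t)^{-1} - \eta \, W_2(t)^{-1}\bigl[-W_1^{\prime}(t)(\Sigma - W(t))W_1(t)^H - a\Delta_{12}(t) + a \Delta_{23}(t)\text{ (via $W_2$)}\bigr]W_2(t)^{-1} + E(t),
\end{equation*}
where the remainder $E(t)$ is controlled by $\eta^2 \|W_2(t)^{-1}\|_{op}\|W_2(t+1)^{-1}\|_{op}\|\nabla_{W_2}\mathcal{L}(t)\|_F^2$. Using Lemma \ref{bound of w_j op, GD} to bound $\max_j\|W_j\|_{op}$, $\|\Sigma-W\|_F$, $ae_\Delta$ and $\|R\|_{op}$, and using the identities for $\|W_1^H W_2^{-1}\|_{op}$ and $\|R\|_{op}$ exactly as in the continuous proof, the leading-order increment satisfies
\begin{equation*}
\|W_2(t+1)^{-1}\|_{op} \le \|W_2(t)^{-1}\|_{op} + \eta\Bigl[2\sqrt{2}(1+e_\Delta\|W_2^{-1}\|_{op}^2)d^{5/8}\sigma_1^{5/4}(\Sigma) + \sqrt{2}\,ae_\Delta\|W_2^{-1}\|_{op}\Bigr] + \eta^2 R_2(t),
\end{equation*}
with $R_2(t)$ bounded polynomially in $M$, $a$, and $\|W_2^{-1}\|_{op}$ under the inductive assumption.

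Second, I would check that under the inductive assumption $\|W_2^{-1}(t)\|_{op} \le 128 f_1^6 f_2^2 d^{77/8}\epsilon^{-2}\sigma_1^{1/4}(\Sigma)$, the quantities $e_\Delta\|W_2^{-1}\|_{op}^2$ and $ae_\Delta\|W_2^{-1}\|_{op}$ are both of much smaller order (using \eqref{ineq, e, stage 2, GD} and \eqref{ineq, a e, stage 2, GD}), so the bracketed factor reduces to essentially $2\sqrt{2}(1+o(1))d^{5/8}\sigma_1^{5/4}(\Sigma)$, matching the continuous case. The $\eta^2$ remainder is then made negligible by choosing $\eta$ sufficiently small (the order of $\eta$ is already constrained by the preceding stage), e.g.\ of order $\eta \le \eta^* \cdot d^{-5/8}\sigma_1^{-5/4}(\Sigma)\cdot (\text{polynomial factors})^{-1}$, ensuring that at each step the discrete drift is dominated by the continuous-time drift plus a factor of at most $1+2^{-20}$.

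Third, summing the per-step bound over $t\in[T_1, T_1+T_2]$ gives
\begin{equation*}
\|W_2^{-1}(T_1+T_2)\|_{op} \le \|W_2^{-1}(T_1)\|_{op} + 2\sqrt{2}(1+2^{-20})\eta T_2 d^{5/8}\sigma_1^{5/4}(\Sigma).
\end{equation*}
Substituting $\|W_2^{-1}(T_1)\|_{op} \le f_1\sqrt{d}/((1-2^{-17})\epsilon)$ from Theorem \ref{stage 1: alignment stage, GD} and $T_2 = 32 f_1^6 f_2^2 d^9/(\eta\sigma_1(\Sigma)\epsilon^2)$ gives the claimed bound \eqref{ineq: inverse op norm are bounded, GD}. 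The second inequality on $e_\Delta\|W_2^{-1}\|_{op}^2$ then follows immediately by multiplying the squared bound by the $e_\Delta$ estimate from Lemma \ref{bound of w_j op, GD}. The main obstacle I anticipate is bookkeeping the discrete $\eta^2$ remainder so that it neither feeds back through the inductive hypothesis nor inflates the constant beyond $1.01$; this requires verifying that the constraints on $\eta$ already in place (those used in Lemmas \ref{bound of w_j op, GD} and Lemma \ref{L ori non-increasing, GD}) are sufficient, and, if not, adding one more upper bound on $\eta$ of the form $\eta \le c \cdot \sigma_1^{-5/4}(\Sigma)\cdot (\text{polynomial in } f_1, f_2, d, \epsilon^{-1})^{-1}$, which is benign because it is still polynomial.
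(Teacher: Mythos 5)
Your proposal matches the paper's proof essentially step for step: the paper likewise uses Lemma \ref{error bound, inverse} to expand $W_2(t+1)^{-1}$, bounds the per-step increment of $\|W_2^{-1}\|_{op}$ by $2\sqrt{2}(1+2^{-20})\eta\, d^{5/8}\sigma_1^{5/4}(\Sigma)$ under the bootstrap hypothesis (absorbing the $e_\Delta\|W_2^{-1}\|_{op}^2$, $a e_\Delta\|W_2^{-1}\|_{op}$, and $\eta^2$ remainders via Lemma \ref{bound of w_j op, GD} and the standing constraints on $\eta$), and then sums over the $T_2$ steps starting from $\|W_2(T_1)^{-1}\|_{op}\le f_1\sqrt{d}/((1-2^{-17})\epsilon)$, with the second inequality obtained by multiplying the squared bound by the $e_\Delta$ estimate. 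Your arithmetic for the final constant and exponents is consistent with the stated bound, so the argument is correct and no genuinely new constraint on $\eta$ is needed beyond those already imposed.
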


\begin{proof}

We begin with the update of $W_2^{-1}$. From Lemma \ref{error bound, inverse}, 

\begin{equation}
 \begin{aligned}
  &\left\| W_2^{-1}(t+1) - W_2^{-1}(t) \right. \\ -&\left. \eta\left[  - R(t) W_4(t)^H (\Sigma - W(t)) W_1(t)^H W_2(t)^{-1} - a \Delta_{12}(t) W_2(t)^{-1} + a W_2(t)^{-1} \Delta_{23}(t) \right] \right\|_{op} \\ \le& \eta^2 \| W_2(t)^{-1} \|_{op}^2 \| W_2(t+1)^{-1} \|_{op} \| \nabla_{W_2} \mathcal{L}(t) \|_{op}^2 . 
 \end{aligned}
\end{equation}

By triangular inequality, 

\begin{equation}
 \begin{aligned}
  \left\|W_2(t+1)^{-1}\right\|_{op} - \left\|W_2(t)^{-1}\right\|_{op} &\le \eta\left\| R(t) \right\|_{op} \left\| W_4(t) \right\|_{op} \left\| \Sigma - W(t) \right\|_{op} \left\| W_1(t)^H W_2(t)^{-1} \right\|_{op} \\&+ \eta a \left\| \Delta_{12}(t) \right\|_{op} \left\| W_2(t)^{-1} \right\|_{op} + \eta a \left\| W_2(t)^{-1} \right\|_{op} \left\| \Delta_{23}(t) \right\|_{op} \\&+ \eta^2 \| W_2(t)^{-1} \|_{op}^2 \| W_2(t+1)^{-1} \|_{op} \| \nabla_{W_2} \mathcal{L}(t) \|_{op}^2 . 
 \end{aligned}
\end{equation}

From 

\begin{equation}
 \begin{aligned}
  \left\| R \right\|_{op} &\le \sqrt{1 + \frac{1}{\sigma_{\min}^2(W_2)} \cdot \| \Delta_{23} \|_{op}} \\ 
  \left\| W_1^H W_2^{-1} \right\|_{op} &= \sqrt{\left\| W_2^{H-1} W_1 W_1^H W_2^{-1} \right\|_{op}} = \sqrt{\left\| I + W_2^{H-1} \Delta_{12} W_2^{-1} \right\|} \le \sqrt{1+e_\Delta\left\| W_2^{-1} \right\|_{op}^2} . 
 \end{aligned}
\end{equation}

Further we have 

\begin{equation}
 \begin{aligned}
  \left\|W_2(t+1)^{-1}\right\|_{op} - \left\|W_2(t)^{-1}\right\|_{op} &\le 2\sqrt{2} \eta \left( 1 + e_\Delta(t) \left\|W_2(t)^{-1}\right\|_{op}^2 \right) d^{5/8} \sigma_1^{5/4}(\Sigma) \\&+ \sqrt{2} \eta a e_\Delta(t) \left\|W_2(t)^{-1}\right\|_{op} \\ &+ \eta^2 O\left(  \| W_2(t)^{-1} \|_{op}^2 \| W_2(t+1)^{-1} \|_{op} \| \nabla_{W_2} \mathcal{L}(t) \|_{op}^2 \right) . 
 \end{aligned}
\end{equation}

Combine with Lemma \ref{bound of w_j op, GD}, for $t \ge T_1$ such that (\ref{ineq: inverse op norm are bounded, GD}) holds, 

\begin{equation}
 \begin{aligned}
  & \left\|W_2(t+1)^{-1}\right\|_{op} - \left\|W_2(t)^{-1}\right\|_{op} \\
  \le& 2\sqrt{2} (1+1.01\cdot 2^{-30}) \eta  d^{5/8} \sigma_1^{5/4}(\Sigma) + 2^{-22} \eta f_1^{-9} f_2^{-3} d^{-55/4} \epsilon^{3} \sigma_1^{1/2}(\Sigma) \\+& \eta^2 O\left( f_1^{18} f_2^6 d^{245/8} \epsilon^{-6} \sigma_1^{17/4}(\Sigma) \right) \\
  \le& 2\sqrt{2} (1 + 2^{-20}) \eta d^{5/8} \sigma_1^{5/4}(\Sigma) . 
 \end{aligned}
\end{equation}

From Theorem \ref{stage 1: alignment stage, GD}, $\max\left( \left\|W_2(T_1)^{-1}\right\|_{op} ,\, \left\|W_3(T_1)^{-1}\right\|_{op} \right) \le \frac{1}{\min_{j,k}|\sigma_k(W_j(T_1))|} \le \frac{f_1\sqrt{d}}{(1-2^{-17})\epsilon}$, then the proof of the first inequality is completed via integration during the time interval $[T_1, T_1 + T_2]$. The second inequality follows immediately. 

\end{proof}

\begin{remark}

This Lemma verifies that $W_{2,3}^{-1}$ are bounded (consequently $W_{2,3}$ are full rank), then $R$ is well defined throughout this stage. For $t > T_1 + T_2$, further analysis shows that the minimum singular values of $W_2$ and $W_3$ are lower bounded by $\Omega(\sigma_1^{1/4}(\Sigma))$. 

\end{remark} 

Now we begin the proof of Lemma \ref{stage 2, skew-hermitian error, GD} and \ref{stage 2, main term, GD}. 

Proof for Lemma \ref{stage 2, skew-hermitian error, GD}: 

\begin{proof}\label{stage 2, skew-hermitian error, GD, proof}

From Lemma \ref{bound of w23 inv op, and relevant terms, GD}, for $t\in[T_1, T_1+T_2]$, 

\begin{equation}
 \begin{aligned}
  \max\left(\left\| R^{H} R - I \right\|_{op},\, \left\| I - R R^H \right\|_{op} \right) &\le e_\Delta\left\| W_2^{-1} \right\|_{op}^2 \\&\le 1.01 \cdot 2^{-30} f_1^{-9} f_2^{-3} d^{-115/8} \epsilon^{3} \sigma_1^{-3/4}(\Sigma) , 
 \end{aligned}
\end{equation}

\begin{equation}
 \begin{aligned}
  \left\| M_1 - M_1^\prime \right\|_{op} &\le \sqrt{6} \cdot \frac{\max_{j,k} \sigma_{k}^2(W_j)}{ \sigma_{\min}^2(W_2)} e_{\Delta} \\
  &\le 2^{-27} f_1^{-9} f_2^{-3} d^{-113/8} \epsilon^{3} \sigma_1^{-1/4}(\Sigma) , 
 \end{aligned}
\end{equation}

\begin{equation}
 \begin{aligned}
  \left\|M_2 - \frac{M_1 + M_1^\prime}{2} \right\|_{op} &\le \left\| \Delta_{12} \right\|_{op} + \frac{1}{2} \left\|M_1 - M_1^\prime \right\|_{op} \le \left[1 + \frac{\sqrt{6}}{2} \cdot \frac{\max_{j,k} \sigma_{k}^2(W_j)}{ \sigma_{\min}^2(W_2)} \right] e_{\Delta} \\
  &\le 2^{-28} f_1^{-9} f_2^{-3} d^{-113/8} \epsilon^{3} \sigma_1^{-1/4}(\Sigma) . 
 \end{aligned}
\end{equation}

Consequently: 

\begin{equation}
 \begin{aligned}
  \left\| R\right\|_{op} &\le \sqrt{1+e_\Delta\left\| W_2^{-1} \right\|_{op}^2} \le 1 + 1.01 \cdot 2^{-31} f_1^{-9} f_2^{-3} d^{-115/8} \epsilon^{3} \sigma_1^{-3/4}(\Sigma) , 
 \end{aligned}
\end{equation}

\begin{equation}
  \left\| W_1^\prime \right\|_{op} \le \left\| W_1^\prime \right\|_{F} \le \sqrt{2} d^{1/8} \sigma_1^{1/4}(\Sigma) \left\| R\right\|_{op} \le \left(1+ 1.01 \cdot 2^{-31}\right)\sqrt{2} d^{1/8} \sigma_1^{1/4}(\Sigma) , 
\end{equation}

\begin{equation}
  \left\| \frac{M_1 + M_1^\prime}{2} \right\|_{op} \le \left\| M_2 \right\|_{op} + \left\|M_2 - \frac{M_1 + M_1^\prime}{2} \right\|_{op} \le \left(1 + 2^{-29}\right)2 d^{1/4} \sigma_1^{1/2}(\Sigma) , 
\end{equation}

\begin{equation}
 \begin{aligned}
  \left\| M_1^\prime M_2 M_1 - M_1 M_2 M_1^\prime \right\|_{op} &\le \left\| M_1 - M_1^\prime \right\| \left\| M_2 \right \| \left\| M_1 + M_1^\prime \right\| \\ &\le \left(1 + 2^{-29}\right) 2^{-25} f_1^{-9} f_2^{-3} d^{-109/8} \epsilon^{3} \sigma_1^{3/4}(\Sigma) . 
 \end{aligned}
\end{equation}

By combining all results above, for $t \in [T_1, T_1 + T_2-1]$ such that $\left \| W_1 - W_1^\prime \right \|_F \le 3 f_1 d \epsilon$ holds, 

\begin{equation}
 \begin{aligned}
  &\left \| W_1(t+1) - W_1^\prime(t+1) \right \|_F^2 - \left \| W_1(t) - W_1^\prime(t) \right \|_F^2 \\
  \le& -2\eta \sigma_1(\Sigma)\sigma_{\min}(W_2)^2 \left \| W_1(t) - W_1^\prime(t) \right \|_F^2 \\
  +& \eta \|M_2(t)\|_F \left\| M_1^\prime(t) - M_1(t)\right\|_{op} \|M_2(t)\|_{op} \left( \left\| W_1^\prime(t) \right\|_{op} + \left\| W_1(t) \right\|_{op} \right) \left\| W_1(t) - W_1^\prime(t) \right\|_F \\
  +& 2 \eta \left\| - M_1^\prime(t) M_2(t) M_1(t) + M_1(t) M_2(t) M_1^\prime(t) \right\|_{op} \left\| W_1^\prime(t) \right\|_{F} \left\| W_1(t) - W_1^\prime(t) \right\|_F \\
  +& 2 \eta \max_j \|W_j(t)\|_{op}^3 \|\Sigma - W(t)\|_F \left( \left\| R(t)^{H} R(t) - I \right\|_{op} + \left\|I - R(t) R(t)^H \right\|_{op} \right) \left\| W_1(t) - W_1^\prime(t) \right\|_F \\
  +& 2\eta a e_\Delta(t) \left\|W_1(t) - W_1^\prime(t) \right\|_F^2  \\
  +& 4\eta a e_\Delta(t) \left\|W_2(t)^{-1}\right\|_{op} \|W_2(t)\|_F \left\| W_1^\prime(t) \right\|_{op} \left\| W_1(t) - W_1^\prime(t) \right\|_F \\
  +& \eta^2 O\left(\left[ \max_{j \in [1,4]\cap \mathbb{N}^*} \left\| W_j(t) \right\|_{op} \| \Sigma - W(t)\|_F + a e_\Delta(t) \left\|W_2(t)^{-1}\right\|_{op} \right]^2 \right. \\ \cdot & \left. \max_{j \in [1,4]\cap \mathbb{N}^*} \left\| W_j(t) \right\|_{op}^5 \cdot \|W_2(t+1)^{-1}\|_{op} \right)  \\
  \le& -2\eta \sigma_1(\Sigma)\sigma_{\min}(W_2)^2 \left \| W_1(t) - W_1^\prime(t) \right \|_F^2 + 2^{-17} \eta f_1^{-8} f_2^{-3} d^{-25/2} \epsilon^4 \sigma_1(\Sigma) . 
 \end{aligned}
\end{equation}

From Theorem \ref{stage 1: alignment stage, GD}, at $t = T_1$, 

\begin{equation}
 \begin{aligned}
  \left \| W_1(T_1) - W_1^\prime(T_1) \right \|_F &\le \| W_1(T_1) \|_F + \left \| W_1^\prime(T_1) \right \|_F \le \| W_1(T_1) \|_F + \left \| W_4(T_1) \right \|_F \|R(T_1)\|_{op} \\
  &\le \left(1+2^{-20}\right) 2 f_1 d \epsilon . 
 \end{aligned}
\end{equation}

Thus $\left \| W_1(t) - W_1^\prime(t) \right \|_F^2 \le \sqrt{\left[\left(1+2^{-20}\right) 2 f_1 d \epsilon \right]^2 + 2^{-17} f_1^{-8} f_2^{-3} d^{-25/2} \epsilon^4 \sigma_1(\Sigma) \eta (t - T_1)}$, when both $t \in [T_1, T_1 + T_2]$ and $\left \| W_1(t) - W_1^\prime(t) \right \|_F^2 \le 3 f_1 d \epsilon$ hold. Then 

\begin{equation}
 \begin{aligned}
  \left \| W_1(T_1 + T_2) - W_1^\prime(T_1 + T_2) \right \|_F^2 &\le \sqrt{\left[\left(1+2^{-20}\right) 2 f_1 d \epsilon \right]^2 + 2^{-17} f_1^{-8} f_2^{-3} d^{-25/2} \epsilon^4 \sigma_1(\Sigma) \eta T_2} \\
  &\le \sqrt{\left[\left(1+2^{-20}\right) 2 f_1 d \epsilon \right]^2 + 2^{-12} f_1^{-2} f_2^{-1} d^{-7/2} \epsilon^2} < 3 f_1 d \epsilon , 
 \end{aligned}
\end{equation}

which completes the proof. 

\end{proof}

Proof for Lemma \ref{stage 2, main term, GD}: 

\begin{proof}\label{stage 2, main term, GD, proof}

We analyze the dynamics of $\lambda_{\min}\left(\left(W_1 + W_1^\prime \right)^H\left(W_1 + W_1^\prime \right)\right) = \sigma_{\min}^2 $. 

From $\left\|M_2 - \frac{M_1 + M_1^\prime}{2} \right\|_{op} \le 2^{-28} f_1^{-9} f_2^{-3} d^{-113/8} \epsilon^{3} \sigma_1^{-1/4}(\Sigma)$ and $\left\| \frac{M_1 + M_1^\prime}{2} \right\|_{op}  \le \left(1 + 2^{-29}\right) 2 d^{1/4} \sigma_1^{1/2}(\Sigma)$, define 

\begin{equation}
 \begin{aligned}
  E(t) \coloneqq \sigma_1(\Sigma)\left(M_2(t) - \frac{M_1(t) + M_1^\prime(t)}{2}\right) - \left(M_2(t) \left(\frac{M_1(t) + M_1^\prime(t)}{2}\right) M_2(t) - \left(\frac{M_1(t) + M_1^\prime(t)}{2}\right)^3 \right) . 
 \end{aligned}
\end{equation}

Then 

\begin{equation}
 \begin{aligned}
  \|E(t)\|_{op} &\le 2^{-28} f_1^{-9} f_2^{-3} d^{-113/8} \epsilon^{3} \sigma_1^{3/4}(\Sigma) + \left(1 + 2^{-28}\right)2^{-24} f_1^{-9} f_2^{-3} d^{-109/8} \epsilon^3 \sigma_1^{3/4}(\Sigma) \\ 
  &\le \left(1 + 2^{-4} + 2^{-28}\right)2^{-24} f_1^{-9} f_2^{-3} d^{-109/8} \epsilon^3 \sigma_1^{3/4}(\Sigma) . 
 \end{aligned}
\end{equation}

By Lemma \ref{stage 2, skew-hermitian error, GD}, $\left\|W_1 - W_1^{\prime}\right\|_{op} \le \left\|W_1 - W_1^{\prime}\right\|_{F} \le 3f_1 d \epsilon $, and under $\sigma_{\min}(t) \ge \frac{\epsilon}{2 f_1^3 f_2 d^{9/2}}$, 

\begin{equation}
 \begin{aligned}
  \sigma_{\min}(t+1)^2 &\ge \lambda_{\min} \left( W_{\rm new}(t)^H W_{\rm new}(t) \right) - 2^{-18} \sigma_1(\Sigma) \sigma_{\min}(t)^4 , 
 \end{aligned}
\end{equation}

where 

\begin{equation}
 \begin{aligned}
  W_{\rm new}(t) = \left(I + \eta\left[ \sigma_1(\Sigma) \left( \frac{M_1(t) + M_1^\prime(t)}{2} \right)  - \left( \frac{M_1(t) + M_1^\prime(t)}{2} \right)^3 + E(t) \right]\right) \left( W_1(t) + W_1^\prime(t) \right) . 
 \end{aligned}
\end{equation}

Denote $P = \frac{W_1 + W_1^{\prime}}{2}$, $Q = \frac{W_1 - W_1^{\prime}}{2}$. Notice that $PP^H + QQ^H = \frac{M_1 + M_1^\prime}{2}$. 
Then by invoking Lemma \ref{minimum singular values lower bound, general, discrete} (omit $t$ here) the first term becomes 

\begin{equation}
 \begin{aligned}
  \lambda_{\min} \left( W_{\rm new}^H W_{\rm new} \right) &= \lambda_{\min} \left( W_{\rm new} W_{\rm new}^H \right) \\
  &= 4\lambda_{\min} \left( \left(I + \eta\left[ \sigma_1(\Sigma) \left( PP^H + QQ^H \right)  - \left( PP^H + QQ^H \right)^3 + E \right]\right) PP^H \right. \\ &\cdot \left. \left(I + \eta\left[ \sigma_1(\Sigma) \left( PP^H + QQ^H \right)  - \left( PP^H + QQ^H \right)^3 + E \right]\right) \right) \\ 
  &\ge \sigma_{\min}^2 + 8\eta \left(\sigma_1(\Sigma) - 2 \|Q\|_{op}^2 \left\| \frac{M_1 + M_1^\prime}{2} \right\|_{op} \right) \left(\frac{\sigma_{\min}^2}{4}\right)^2 \\
  &- 8 \eta \left\| \frac{M_1 + M_1^\prime}{2} \right\|_{op} \left(\frac{\sigma_{\min}^2}{4}\right)^3 \\
  &- 8\eta \left( \|E\|_{op} + \|Q\|_{op}^4 \left\| \frac{M_1 + M_1^\prime}{2} \right\|_{op} \right) \left(\frac{\sigma_{\min}^2}{4}\right) \\
  &+ \eta^2 O\left( \left( \sigma_1(\Sigma)^2 \left\| \frac{M_1 + M_1^\prime}{2} \right\|_{op}^2 + \left\| \frac{M_1 + M_1^\prime}{2} \right\|_{op}^6 + \|E\|_{op}^2 \right) \left\| \frac{M_1 + M_1^\prime}{2} \right\|_{op} \right) . 
 \end{aligned}
\end{equation}

Notice $\|Q\|_{op} = \frac{1}{2} \left\|W_1 - W_1^{\prime}\right\|_{F} \le \frac{3}{2} f_1 d \epsilon \le \sigma_k \cdot 3 f_1^4 f_2 d^{11/2}$, $\epsilon \le \frac{1}{32 f_1^5 f_2 d^{53/8}} \sigma_1^{1/4}(\Sigma)$, then under $\sigma_{\min}(t) \ge \frac{\epsilon}{2 f_1^3 f_2 d^{9/2}}$, 

\begin{equation}
 \begin{aligned}
  \sigma_{\min}(t+1)^2 &\ge \sigma_{\min}(t)^2 + (2^{-1} - 81(1+2^{-4})2^{-10})\eta \sigma_1(\Sigma) \sigma_{\min}(t)^4 - \frac{1}{32} \eta \sigma_{\min}(t)^8 . 
 \end{aligned}
\end{equation}

Notice that $\sigma_{\min}(t)$ is bounded by $O\left(d^{1/8} \sigma_1^{1/4}(\Sigma)\right)$. By taking reciprocal, 

\begin{equation}
 \begin{aligned}
  \frac{1}{\sigma_{\min}(t+1)^2} &\le \frac{1}{\sigma_{\min}(t)^2} + \frac{(2^{-1} - 81(1+2^{-4})2^{-10})\eta \sigma_1(\Sigma) \sigma_{\min}(t)^4 - \frac{1}{32} \eta \sigma_{\min}(t)^8}{\sigma_{\min}(t)^4 + (2^{-1} - 81(1+2^{-4})2^{-10})\eta \sigma_1(\Sigma) \sigma_{\min}(t)^6 - \frac{1}{32} \eta \sigma_{\min}(t)^{10}} \\
  &\le \frac{1}{\sigma_{\min}(t)^2} + \frac{3}{8} \eta \sigma_1(\Sigma) - \frac{1}{32} \eta \sigma_{\min}(t)^{4} . 
 \end{aligned}
\end{equation}

This indicates that $\sigma_{\min}(t)$ takes at most time $\Delta t^\prime = \frac{1}{\frac{1}{8} \eta \sigma_1(\Sigma)}\left[\frac{1}{\sigma_{\min}(t=0)^2} - \frac{1}{\left(2^{3/4} \sigma_1^{1/4}(\Sigma) \right)^2} \right] <T_2$ to increase to $2^{3/4} \sigma_1^{1/4}(\Sigma)$, and never decrease to less than $2^{3/4} \sigma_1^{1/4}(\Sigma)$ afterwards (in $t\in[T_1 + \Delta t^\prime,T_2]$). 

\end{proof}

\subsection{Stage 3: local convergence stage}\label{stage 3: delta convergence, gd}

In this stage, we analysis the time to reach $\epsilon_{\rm conv}$-convergence, that is 

\begin{equation}
  T(\epsilon_{\rm conv},\eta) = \inf_t \{\mathcal{L}(t) \le \epsilon_{\rm conv} \} . 
\end{equation}

\begin{theorem}\label{local convergence, GD}

Local convergence. 

For $t \in[T_1 + T_2, +\infty)$, 

\begin{equation}
 \begin{aligned}
  \mathcal{L}_{\rm ori}(t) &\le \mathcal{L}_{\rm ori}(T_1 + T_2) \exp\left(- \eta \sigma_1^{3/2}(\Sigma) (t - T_1 - T_2) \right) \\ 
  \mathcal{L}_{\rm reg}(t) &\le l_{\rm reg} \exp\left( - \eta \sigma_1^{3/2}(\Sigma) (t - T_1 - T_2) \right) \\ 
  \sigma_{\min}\left(W_1(t) + W_1^\prime(t) \right) &\ge 2^{3/4} \sigma_1^{1/4}(\Sigma) \\ 
  \left \| W_1(t) - W_1^\prime(t) \right \|_F &\le 3 f_1 d \epsilon , 
 \end{aligned}
\end{equation}

where $\mathcal{L}_{\rm ori}(T_1 + T_2) = \frac{1.01^2}{2}\cdot d \sigma_1^2(\Sigma)$, and $l_{\rm reg} = \min\left( \frac{a}{4}\left(1.01 \cdot 2^{-44} f_1^{-21} f_2^{-7} d^{-269/8} \epsilon^7 \sigma_1^{-5/4}(\Sigma))\right)^2 , \frac{1}{4a} \left( 1.01 \cdot 2^{-30} f_1^{-15} f_2^{-5} d^{-187/8} \epsilon^5 \sigma_1^{1/4}(\Sigma) \right)^2 \right) $. 

\end{theorem}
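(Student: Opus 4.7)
I would prove the four conclusions simultaneously by strong induction on $t \ge T_1 + T_2$, taking Theorem \ref{stage 2: saddle avoidance stage, gd} as the base case. The structural observation is that the singular-value bound $\sigma_{\min}(W_1 + W_1^\prime) \ge 2^{3/4}\sigma_1^{1/4}(\Sigma)$ together with the skew-Hermitian bound $\|W_1 - W_1^\prime\|_F \le 3 f_1 d\epsilon$ force $\sigma_{\min}(W_j) \ge \tfrac{1}{\sqrt{2}} \sigma_1^{1/4}(\Sigma)$ for each $j \in \{1,2,3,4\}$, via the identity $\sigma_{\min}^2(W_1) + \sigma_{\min}^2(W_1^\prime) \ge \tfrac{1}{2}\sigma_{\min}^2(W_1+W_1^\prime)$ already exploited in Lemma \ref{loval convergence stage: lower bound of main term}, combined with the Gram-balance inequality $\sigma_{\min}(W_{j+1}) \ge \sqrt{\sigma_{\min}^2(W_j) - e_\Delta}$ and the smallness of $e_\Delta$ supplied by the exponentially decaying $\mathcal{L}_{\rm reg}$.

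Granting those singular-value bounds, the two contractive estimates follow directly. Lemma \ref{L ori non-increasing, GD} with $M = \sqrt{2}d^{1/8}\sigma_1^{1/4}(\Sigma)$ (from Lemma \ref{bound of w_j op, GD}) yields $\mathcal{L}_{\rm ori}(t{+}1) \le (1 - \eta\sigma_1^{3/2}(\Sigma))\mathcal{L}_{\rm ori}(t) + \eta^2 R_{\rm ori}(t)$, and Theorem \ref{regularization term, convergence bound, GD} with $\delta \ge \tfrac{1}{\sqrt{2}}\sigma_1^{1/4}(\Sigma)$ gives $\mathcal{L}_{\rm reg}(t{+}1) \le (1 - \Omega(\eta a/\sqrt{\sigma_1(\Sigma)}))\mathcal{L}_{\rm reg}(t) + \eta^2 R_{\rm reg}(t)$. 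Since $a \gg \sigma_1^{3/2}(\Sigma)$, both rates dominate $\eta\sigma_1^{3/2}(\Sigma)$. Under the inductive hypothesis that both losses already satisfy the claimed exponential bounds at time $t$, the $\eta^2$ remainders $R_{\rm ori}, R_{\rm reg}$ — each a polynomial in $\mathcal{L}_{\rm ori}(t), \mathcal{L}_{\rm reg}(t)$ and constants — are themselves $O(\exp(-\eta\sigma_1^{3/2}(\Sigma)(t-T_1-T_2)))$, so they can be absorbed into the leading contraction provided $\eta$ is chosen small relative to the inverse of these polynomial coefficients, giving the bounds at $t+1$ with the advertised $l_{\rm reg}$ prefactor (picked exactly to swallow the residual).

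Finally, I would verify persistence of the two invariants by checking the derivatives at the critical thresholds. For $\|W_1 - W_1^\prime\|_F$, the discrete update from Subsection \ref{section: skew-hermitian error term, gd} has dominant term $-2\eta\sigma_1(\Sigma)\sigma_{\min}^2(W_2)\|W_1-W_1^\prime\|_F^2$ which contracts strictly at the threshold $3 f_1 d\epsilon$; all remaining terms are bounded, using Lemmas \ref{bound of w23 inv op, and relevant terms, GD} and \ref{stage 2, skew-hermitian error, GD}, by quantities of order $\sqrt{\mathcal{L}_{\rm reg}}$, $ae_\Delta$, or $e_\Delta\|W_2^{-1}\|_{op}^2$, each of which decays exponentially by the inductive hypothesis and thus cannot push the error past the threshold. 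For $\sigma_{\min}(W_1 + W_1^\prime)$, the discrete update from Subsection \ref{section: hermitian main term, gd} plus Lemma \ref{minimum singular values lower bound, general, discrete} repeats the computation in the proof of Lemma \ref{stage 2, main term, GD} but with the much stronger lower bound $\sigma_{\min}(W_2) \ge \tfrac{1}{\sqrt{2}}\sigma_1^{1/4}(\Sigma)$; the derivative at $\sigma_{\min} = 2^{3/4}\sigma_1^{1/4}(\Sigma)$ stays strictly positive, so the invariant cannot be violated.

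\textbf{Main obstacle.} The delicate point is the coupling between the two exponential decays and the two invariants. The skew-Hermitian and main-term updates both inherit error terms driven by $\sqrt{\mathcal{L}_{\rm reg}}$, $e_\Delta\|W_2^{-1}\|_{op}$ and $O(\eta)$ remainders, while the very decay of $\mathcal{L}_{\rm reg}$ is conditional on the invariants. The induction therefore has to be set up simultaneously for all four quantities with a common contraction rate $\eta\sigma_1^{3/2}(\Sigma)$, even though $\mathcal{L}_{\rm ori}(T_1+T_2) = \Theta(d\sigma_1^2(\Sigma))$ and $\mathcal{L}_{\rm reg}(T_1+T_2) = O(\epsilon^{14})$ are on vastly different scales. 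Choosing $l_{\rm reg}$ and the learning-rate constant so that every $O(\eta^2)$ remainder is at most a constant fraction of the leading contractive term at \emph{every} time $t$ — and doing so without introducing circular dependencies in the induction — is the nontrivial bookkeeping step.
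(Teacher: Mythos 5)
Your proposal is correct and follows essentially the same route as the paper: a simultaneous induction over all four quantities with Theorem \ref{stage 2: saddle avoidance stage, gd} as base case, deriving $\min_{j,k}\sigma_k(W_j)\ge 2^{-3/8}\sigma_1^{1/4}(\Sigma)$ from the two invariants, invoking Lemma \ref{L ori non-increasing, GD} and Theorem \ref{regularization term, convergence bound, GD} for the one-step contractions, and checking the updates at the critical thresholds to preserve the invariants. The only cosmetic difference is that the paper obtains the layer singular-value lower bound through the $\left\|M_2-\tfrac{M_1+M_1^\prime}{2}\right\|_{op}$ perturbation rather than your Gram-balance chain, and $l_{\rm reg}$ is simply the bound on $\mathcal{L}_{\rm reg}(T_1+T_2)$ carried over from Lemma \ref{bound of w_j op, GD} rather than a constant tuned to absorb residuals.
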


\begin{proof}\label{local convergence, GD, proof}

Prove by induction. 

At $t=T_2$ these properties holds. 

Suppose at some time $t \in [T_2, +\infty)$ they holds, then follow the same arguments in Lemma \ref{bound of w_j op, GD}, $\max_j \|W_j(t)\|_{op} \le \sqrt{2} d^{1/8} \sigma_1^{1/4}(\Sigma)$. 

To address the bound of $\left\|W_2^{-1} \right\|_{op}$, 

\begin{equation}
 \begin{aligned}
  \left\| \frac{M_1(t) - M_1^\prime(t)}{2} \right\|_{op} &\le \left\| W_1(t) - W_1^\prime(t) \right\|_{op} \left\| \frac{W_1(t) + W_1^\prime(t)}{2} \right\|_{op} \le 8 f_1 d^{9/8} \sigma_1^{1/4}(\Sigma) \epsilon \\
  \left\|M_2(t) - \frac{M_1(t) + M_1^\prime(t)}{2} \right\|_{op} &\le \|\Delta_{12}(t)\|_{op} + \left\| \frac{M_1(t) - M_1^\prime(t)}{2} \right\|_{op} \le 16 f_1 d^{9/8} \sigma_1^{1/4}(\Sigma) \epsilon \\
  \sigma_{\min}(W_2(t)) &= \sqrt{\lambda_{\min}(M_2(t))} \ge \sqrt{\lambda_{\min}\left( \frac{M_1(t) + M_1^\prime(t)}{2} \right) - 16 f_1 d^{9/8} \sigma_1^{1/4}(\Sigma) \epsilon} \\
  &\ge \sqrt{\sigma_{\min}^2\left( \frac{W_1(t) + W_1^\prime(t)}{2} \right) - 16 f_1 d^{9/8} \sigma_1^{1/4}(\Sigma) \epsilon} \ge \frac{1}{2^{3/8}} \sigma_1^{1/4}(\Sigma) . 
 \end{aligned}
\end{equation}

Similarly, $\min_{j,k}(\sigma_k(W_j(t))) \ge \frac{1}{2^{3/8}} \sigma_1^{1/4}(\Sigma)$. 

Then following the derivations in Lemma \ref{stage 2, skew-hermitian error, GD} and \ref{stage 2, main term, GD}, 

\begin{equation}
 \begin{aligned}
  \left \| W_1(t+1) - W_1^\prime(t+1) \right \|_F^2 &\le \left(1-2\eta \sigma_1(\Sigma)\sigma_{\min}(W_2)^2 \right) \left \| W_1(t) - W_1^\prime(t) \right \|_F^2 + 2^{-17} \eta f_1^{-8} f_2^{-3} d^{-25/2} \epsilon^4 \sigma_1(\Sigma) \\ &\le \left(1-\eta \sigma_1^{3/2}(\Sigma) \right) \left \| W_1(t) - W_1^\prime(t) \right \|_F^2 + 2^{-17} \eta f_1^{-8} f_2^{-3} d^{-25/2} \epsilon^4 \sigma_1(\Sigma) \le 3 f_1 d \epsilon \\ 
  \frac{1}{\sigma_{\min}\left(W_1(t+1) + W_1^\prime(t+1) \right)^2} &\le \frac{1}{\sigma_{\min}(t)^2} + \frac{3}{8} \eta \sigma_1(\Sigma) - \frac{1}{32} \eta \sigma_{\min}(t)^{4} < \frac{1}{\left(2^{3/4} \sigma_1^{1/4}(\Sigma)\right)^2} . 
 \end{aligned}
\end{equation}

Then by Theorem \ref{L ori non-increasing, GD} and \ref{regularization term, convergence bound, GD}, 

\begin{equation}
 \begin{aligned}
  \mathcal{L}_{\rm ori}(t+1) &\le \mathcal{L}_{\rm ori}(t) - 2^{3/4}\eta \sigma_{1}^{3/2}(\Sigma) \mathcal{L}_{\rm ori}(t) \\
  &+ \eta^2 O\left( \max_j \|W_j(t)\|_{op}^8 \left(\max_j \|W_j(t)\|_{op}^4 + \sqrt{\mathcal{L}_{\rm ori}(t)} \right) \mathcal{L}_{\rm ori}(t) + a \max_j \|W_j(t)\|_{op}^4 \sqrt{\mathcal{L}_{\rm ori}(t)} \mathcal{L}_{\rm reg}(t)  \right) \\
  &+ \eta^4 O\left( \max_j \|W_j(t)\|_{op}^{16} \mathcal{L}_{\rm ori}(t)^2 + a^2 \max_j \|W_j(t)\|_{op}^8 \mathcal{L}_{\rm reg}(t)^2 \right) \\ 
  &\le \left(1-\eta \sigma_{1}^{3/2}(\Sigma)\right) \mathcal{L}_{\rm ori}(t) , 
 \end{aligned}
\end{equation}

\begin{equation}
 \begin{aligned}
  \mathcal{L}_{\rm reg}(t+1) &\le \left(1 - \frac{1}{3} \eta a d^{-1/4} \sigma_1^{1/2}(\Sigma) \right) \cdot \mathcal{L}_{\rm reg}(t) + \eta^2 O\left(a^2 M^4 \mathcal{L}_{\rm reg}(t) + \sqrt{a \mathcal{L}_{\rm reg}(t)} M^6 \mathcal{L}_{\rm ori}(t) \right) \\ &+ \eta^4 O\left(a M^{12} \mathcal{L}_{\rm ori}(t)^2 + a^3 M^4 \mathcal{L}_{\rm reg}(t)^2 \right) \\ 
  &\le \left(1 - \frac{1}{4} \eta a d^{-1/4} \sigma_1^{1/2}(\Sigma) \right) \cdot \mathcal{L}_{\rm reg}(t) \le \left(1 - \eta a d^{-1/4} \sigma_1^{3/2}(\Sigma) \right) \cdot \mathcal{L}_{\rm reg}(t) . 
 \end{aligned}
\end{equation}

This completes the proof. 

\end{proof}

By Combining the three-stage results, the global convergence guarantee of Theorem \ref{Total convergence bound, gd} is proved.

\section{Numerical Simulations}\label{section: Numerical Simulations}

Through out this section, we consider numerical simulations under four-layer matrix factorization on square matrices with dimension of $5$. 

\subsection{Saddle avoidance dynamics under balance initialization}

This section presents numerical simulations of the saddle avoidance stage under balanced initialization. In this experiment, $\epsilon=0.05$, $\eta = 0.1$, $\Sigma_w(0) = \epsilon \cdot \operatorname{diag}(1, 0.8, 0.6, 0.5, 0.9)$. 

We set the target matrix to $\Sigma = I$ in Figure \ref{fig: balanced init, log singular values, identity target} and to $\Sigma = \operatorname{diag}(2.00, 1.55, 1.10, 0.65, 0.20)$ in Figure \ref{fig: balanced init, log singular values, non-identity target}. Each pair of solid and dashed lines of the same color represents the logarithms of the $k$-th singular value of $\Sigma_W$ and that of $\frac{1}{2}(U+V)\Sigma_W$, respectively.

These figures clearly exhibit the following properties: 

\begin{itemize}
    \item $\sigma_k\left(\frac{1}{2}(U+V)\Sigma_W\right)$ provides a tight lower bound for $\sigma_k\left(\Sigma_W\right)$, verifying the conclusion of Lemma \ref{bound of eigenvalues under perturbation}. 

    \item The eigen-gap of the target matrix introduces non-smoothness and non-monotonicity into the original lower bound for singular values of the product matrix, leading to segmented rather than global smoothness and monotonicity. This explains why the dynamics are easier to analyze when the target matrix is the identity. 

    \item The $1/2$ probability of converging to a saddle point under real balanced initialization is a general phenomenon, even if the target matrix is not identity. However, in the setting of Figure \ref{fig: balanced init, log singular values, non-identity target}, initializations with $\det(U^\top V) = 1$ fail to converge, which contrasts with the identity target case.
    
\end{itemize}

\begin{figure*}[htbp]
\centering
\begin{subfigure}{0.49\linewidth}
    \includegraphics[width=\linewidth]{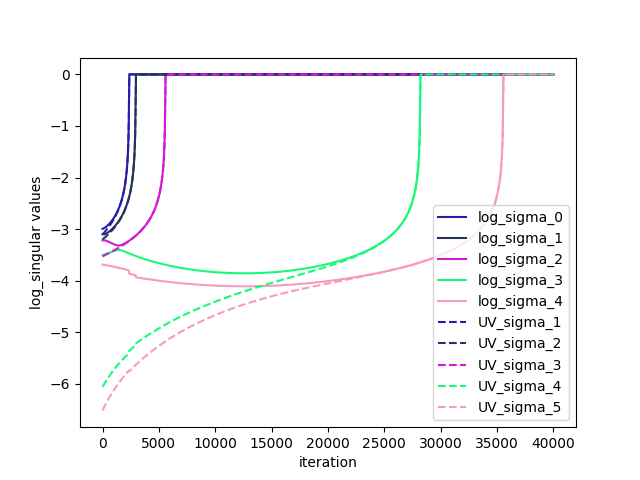} 
\end{subfigure}
\begin{subfigure}{0.49\linewidth}
    \includegraphics[width=\linewidth]{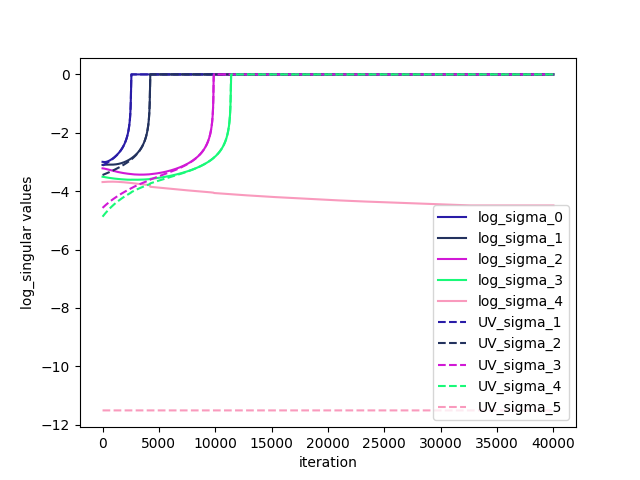} 
\end{subfigure}
\begin{subfigure}{0.49\linewidth}
    \includegraphics[width=\linewidth]{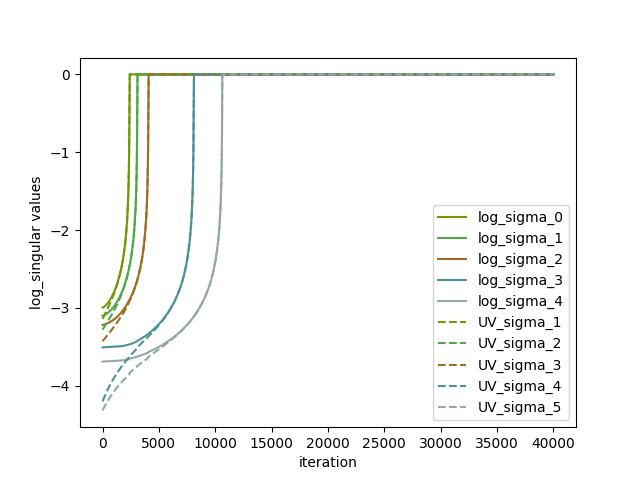} 
\end{subfigure}
\caption{Dynamics of singular values (log scale) for an identity target matrix. From left to right, up to down: real initialization with $\det(U^\top V) = 1$, $\det(U^\top V) = -1$, and complex initialization. }
\label{fig: balanced init, log singular values, identity target}
\end{figure*}

\begin{figure*}[htbp]
\centering
\begin{subfigure}{0.49\linewidth}
    \includegraphics[width=\linewidth]{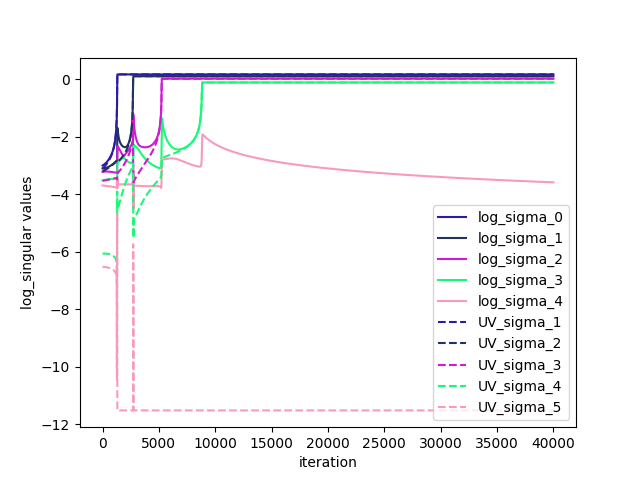} 
\end{subfigure}
\begin{subfigure}{0.49\linewidth}
    \includegraphics[width=\linewidth]{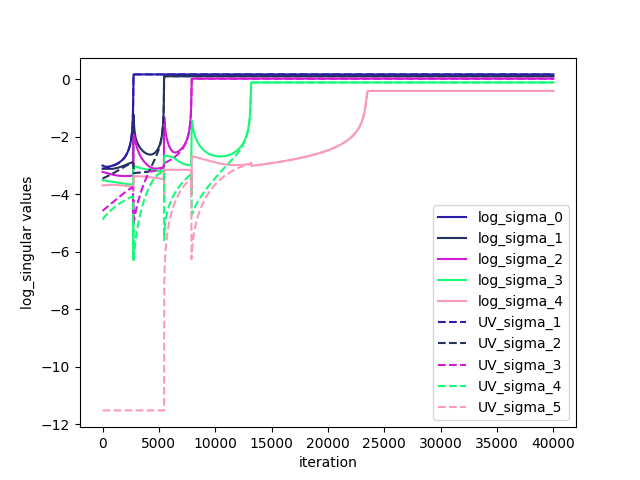} 
\end{subfigure}
\begin{subfigure}{0.49\linewidth}
    \includegraphics[width=\linewidth]{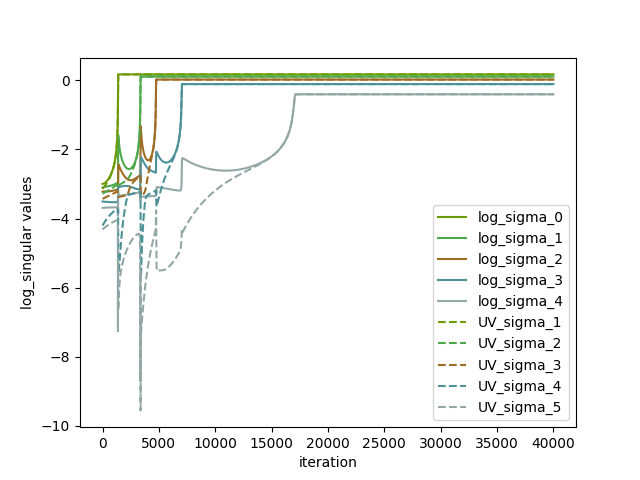} 
\end{subfigure}
\caption{Dynamics of singular values (log scale) for a non-identity target matrix. From left to right, up to down: real initialization with $\det(U^\top V) = 1$, $\det(U^\top V) = -1$, and complex initialization. }
\label{fig: balanced init, log singular values, non-identity target}
\end{figure*}

\subsection{Alignment dynamics under balance regularization term}

This section exhibits the dynamics of weight matrices under regularization term. The original square loss $\mathcal{L}_{\rm ori}$ is omitted. Here $a=1$, $\epsilon=1$, $\eta=0.001$. 

Figure \ref{fig: random init, log singular values, max-min} illustrates the conclusion of Theorem \ref{maximum and minimum singular values are irrelevant of the regularization term} and \ref{maximum and minimum singular values are irrelevant of the regularization term, GD}. Clearly the maximum among all the singular values are non-increasing while the minimum is non-decreasing. 

\begin{figure*}[htbp]
\centering
\begin{subfigure}{0.49\linewidth}
    \includegraphics[width=\linewidth]{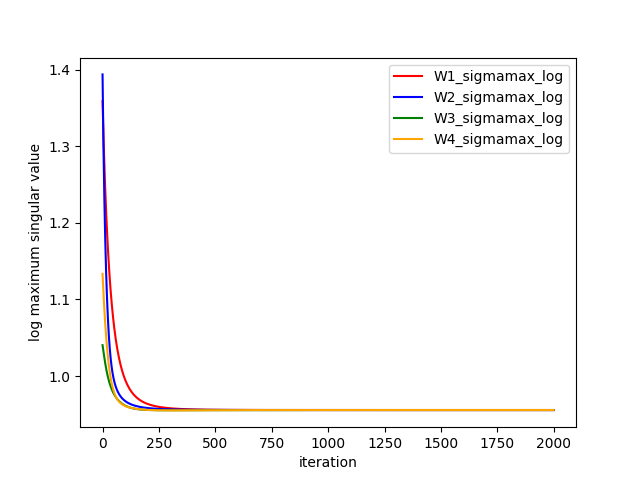} 
\end{subfigure}
\begin{subfigure}{0.49\linewidth}
    \includegraphics[width=\linewidth]{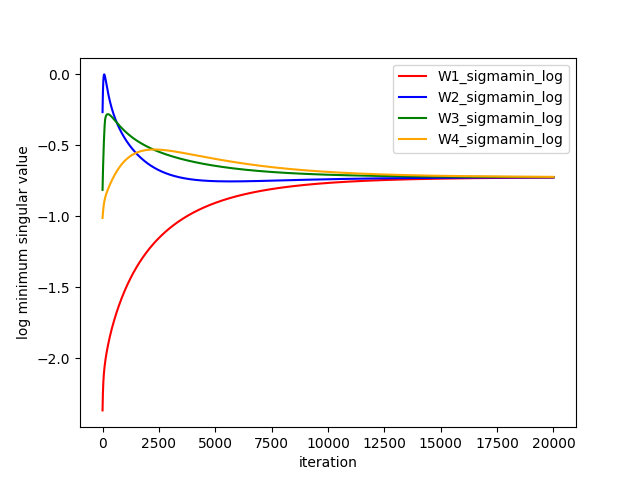} 
\end{subfigure}
\caption{Dynamics of extreme singular values (log scale) for four weight matrices. }
\label{fig: random init, log singular values, max-min}
\end{figure*}

Figure \ref{fig: random init, log singular values, main term} illustrates the dynamics of main term $\sigma_{\min}(W_1 + W_2^{-1} W_3^H W_4^H)$. For real initialization with $\det(W(0))<0$, $\sigma_{\min}(W_1 + W_2^{-1} W_3^H W_4^H)$ decays to 0 at a linear rate, while for $\det(W(0))>0$ and complex initialization it stays at a small value after some oscillation. 

\begin{figure*}[htbp]
\centering
\begin{subfigure}{0.49\linewidth}
    \includegraphics[width=\linewidth]{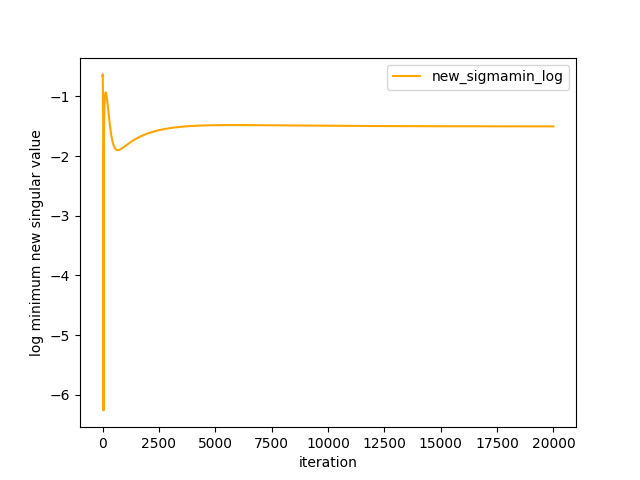} 
\end{subfigure}
\begin{subfigure}{0.49\linewidth}
    \includegraphics[width=\linewidth]{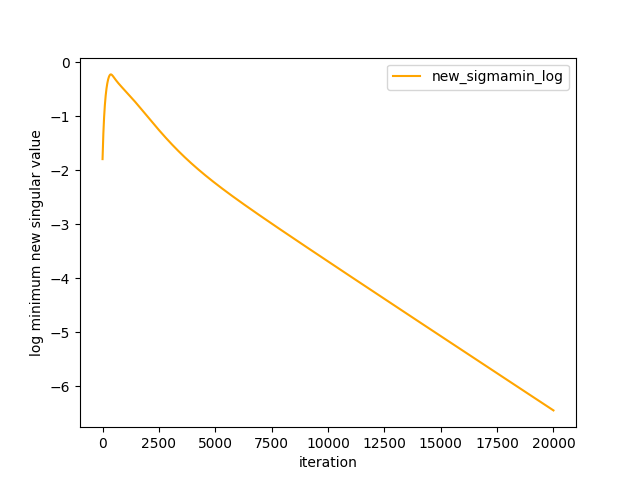} 
\end{subfigure}
\begin{subfigure}{0.49\linewidth}
    \includegraphics[width=\linewidth]{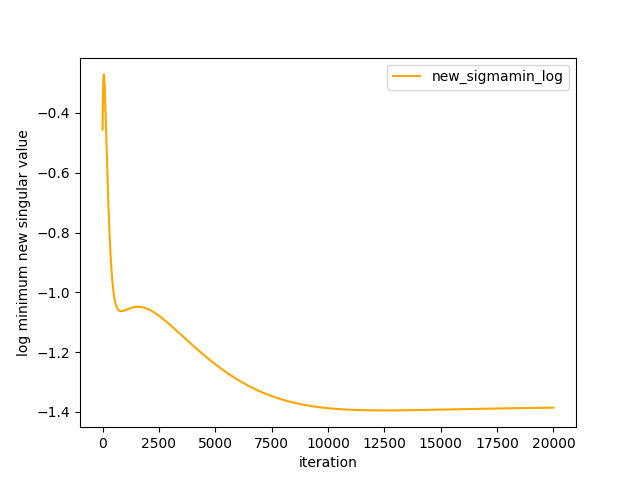} 
\end{subfigure}
\caption{Dynamics of the minimum singular value of hermitian main term $W_1 + W_2^{-1} W_3^H W_4^H$ (log scale). From left to right, up to down: real initialization with $\det(W) > 0$, $\det(W) < 0$, and complex initialization. }
\label{fig: random init, log singular values, main term}
\end{figure*}

\newpage

\section{LLM usage declaration}

In the preparation of this paper, large language models (LLMs) served only as an auxiliary tool for enhancing writing clarity, checking grammar, and assisting in the drafting and debugging of simulation code. These tasks were performed under the authors' complete oversight. The central scientific ideas, theoretical results, and research contributions are entirely the work of the authors. 

\end{document}